\newtheorem{theoreme}{Theorem}[section]
\newtheorem{proposition}[theoreme]{Proposition}
\newtheorem{lemma}[theoreme]{Lemma}
\newtheorem{corollary}[theoreme]{Corollary}
\theoremstyle{definition}
\newtheorem*{definition}{Definition}
\newtheorem{remarque}[theoreme]{Remark}
\def\edc{\end{document}}
\numberwithin{equation}{section}
\begin{document}

\author{Nasab Yassine}
\address{Universit\'e Bretagne Sud
\\
LMBA\\
CNRS UMR 6205\\
BP 573,56017 Vannes, France}
\email{nasab.yassine@univ-ubs.fr}

\title[Quantitative recurrence for $\mathbb{Z}$-extension of three-dimensional Axiom A flows]{Quantitative recurrence for $\mathbb{Z}$-extension of three-dimensional Axiom A flows}
\begin{abstract}
In this paper, we study the quantitative recurrence properties in the case of $\mathbb{Z}$-extension of Axiom A flows on a Riemannian manifold. We study the asymptotic behavior of the first return time to a small neighborhood of the starting point. We establish results of almost everywhere convergence, and of convergence in distribution with respect to any probability measure absolutely continuous with respect to the infinite invariant measure. In particular, our results apply to geodesic flows on $\mathbb{Z}$-cover of compact smooth surfaces of negative curvature.\\

\end{abstract}

\maketitle

\section{Introduction}


Quantitative recurrence has been recently a popular topic of dynamical systems and ergodic theory since the first works obtained by \cite{Doeblin} for the Gauss map and \cite{Pitskel} for the case of Markov chains. These properties are set by estimating the first return time of a dynamical system into a small neighborhood of its starting point. For further references, we refer to \cite{saussol} where some recurrence results in the context of probability preserving dynamical systems have been presented. We mention works in this concern \cite{abadigalves}, and for rapidly mixing systems \cite{saussolrapidly} and \cite{saussol}. Few studies have been established in the case where the dynamical system preserves an infinite measure, let us mention \cite{Bressaud-Zweimuller}, where Bressaud and Zweim\"uller have established first results in the case of piecewise affine maps of the interval with infinite measure. In \cite{yassine}, the recurrence of $\mathbb{Z}-$extension of subshifts of finite type was investigated, as well as the $\mathbb{Z}^2$ case in \cite{penesaussol}, and in \cite{penesaussol2024} quantitative recurrence results for $T$, $T^{-1}$ transformations have been proved. Other relevant contributions can be found in \cite{penesaussolbilliard}, \cite{pzs}, and \cite{pzs2}.\\
In this paper we extend the study to include dynamical systems with continuous time, in the case where the measure is infinite. More precisely, we consider the $\mathbb{Z}-$extension of Axiom A flows, including the geodesic flows on $\mathbb{Z}$-periodic negatively curved manifolds.\\ 
Thus, we consider a Riemannian manifold $\tilde{M}$ of dimension 3, given by a $\mathbb{Z}-$extension, endowed with a $\sigma-$finite measure $\tilde{\mu}$, and a flow $(\tilde{g}_{t})_{t\in\mathbb{R}}$ on $\tilde{M}$ that preserves the measure $\tilde{\mu}$. We set $\Gamma$ to be an infinite group of isometries of $\tilde{M}$ that also preserves the measure $\tilde{\mu}$. We then suppose that $M=\tilde{M}/\Gamma$ is a compact manifold, and we define the quotient flow $(g_{t})_{t\in\mathbb{R}}$ on $M$. We define a measure $\mu$ on $M$ that is obtained from the measure $\tilde{\mu}$ by passing through the quotient. We further assume that $\mu$ is an equilibrium measure for the flow $g_{t}$ on $M$, and that $(M, (g_{t})_{t})$ is an Axiom A flow.\\
We are interested in the first return time $\tau_{\epsilon}(x)$ of the flow $\tilde{g}_{t}$ to an $\epsilon-$neighborhood of its starting point $x$, defined by:
\begin{equation*}
\tau_{\epsilon}(x):=\inf\ \{t>1: \tilde{g}_{t}(x)\in B(x,\epsilon)\},
\end{equation*}
where $B(x,\epsilon)$ is the ball of center $x$ and radius $\epsilon$.
Studying the asymptotic behavior of $\tau_{\epsilon}$ as $\epsilon\rightarrow0$, we first prove in Theorem \ref{theorem1flow} the following almost everywhere convergence result:

\begin{equation*}
\underset{\epsilon\rightarrow0}{\lim}\frac{\log\sqrt{\tau_{\epsilon}}}{-\log\epsilon}=\dim_H \mu -1,
\end{equation*}

where  $\dim_H \mu$ is the Hausdorff dimension of the measure $\mu$.
Moreover, the following convergence holds in distribution, with respect to any probability measure absolutely continuous with respect to $\tilde{\mu}$:

$$\tilde{\nu}_{0}(B(.,\epsilon))\sqrt{\tau_{\epsilon}(.)}\underset{\epsilon\rightarrow0}{\longrightarrow} \sigma_{flow}\frac{\mathcal{E}}{|\mathcal{N}|},$$ where $\tilde{\nu}_{0}$ is a sectional measure (defined in \eqref{nu_0}), $\sigma^2_{flow}$ is an asymptotic variance related to the flow, and $\mathcal{E}$ and $\mathcal{N}$ are independent random variables, $\mathcal{E}$ having an exponential distribution of mean 1 and $\mathcal{N}$ having a standard Gaussian distribution (the statement of this result is precised in Theorem \ref{theorem2}).\\

Moreover, we apply our result in the case of the unit geodesic flow on a $\mathbb{Z}-$cover of a $C^2$ compact surface $\mathcal{S}$ of negative curvature, the following convergence in distribution holds with respect to any probability measure absolutely continuous with respect to $\tilde{\mu}$:

$$\frac{\epsilon^2}{2 \text{Vol}( \mathcal{S})}\sqrt{\tau_{\epsilon}(.)}\underset{\epsilon\rightarrow0}{\longrightarrow} \sigma_{flow}\frac{\mathcal{E}}{|\mathcal{N}|}.$$
where $\mathcal{E}$ and $\mathcal{N}$ are two independent random variables with respective exponential distribution of mean 1 and standard normal distribution (See Theorem \ref{geodesicflow} for precised statement).\\

In this article, we start by introducing in section 2 all the necessary notions and results from hyperbolic dynamics concerning Axiom A flows and a hyperbolic set for a flow. In particular, we consider the Markov section constructed by Bowen \cite{Bowen} and Ratner \cite{Ratner} for a locally maximal hyperbolic set. We note that any smooth flow with a hyperbolic set can be represented as a suspension flow over a mixing subshift of finite type.\\
Another point is to describe how a Markov section for a hyperbolic set gives rise to a symbolic dynamics (see Section 2.1).
We consider a set $\Sigma_{A}$ together with the shift map $\sigma$. And then we give a definition of the coding map $\chi:\Sigma_{A}\rightarrow X$. And thus in the same way we define the symbolic suspension flow $\mathbf{S}=\{\mathbf{S}_{t}\}_{t\in\mathbb{R}}$ over $\sigma_{|\Sigma_{A}}$.
We establish some properties on balls and coding. We introduce in section 3 the model of $\mathbb{Z}-$extension we are interested in. We show that a given ball contains and is contained in a suitable cylinder. This serves in studying the asymptotic behavior of our return time to a cylinder and then inferring the return time to a ball, and yet establishing the desired almost sure convergence (in section 4) and  convergence in distribution results (in section 5). At the end we apply our results to prove the convergence in distribution theorem in the case where the flow is a geodesic flow on $\mathbb{Z}$-periodic negatively curved manifold (in section 6).\\

\section{Definition and properties of Axiom A Flows}
Let $M$ be a Riemannian smooth manifold of dimension 3. We consider a $C^{1}$ flow $(g_{t})_{t\in\mathbb{R}}$ on $M$. A compact $g_{t}$-invariant set $\Lambda\subset M$ is said to be a hyperbolic set for $g_{t}$ if there exists a continuous splitting of the tangent space, $T_{\Lambda}M= E^{s}\oplus E^{u}\oplus E^{0}$, and constants $c>0$ and $\lambda\in(0,1)$ such that for each $x\in \Lambda$ and $t\in \mathbb{R}$;
\begin{itemize}
\item[1.]the vector $\frac{d g_{t}(x)}{dt}\mid_{t=0}$ generates $E^{0}(x)$, which is the flow direction;
\item[2.]$d_{x}g_{t}E^{s}(x)=E^{s}(g_{t}(x))$ and $d_{x}g_{t}E^{u}(x)=E^{u}(g_{t}(x))$;
\item[3.] for all $t>0$, $||d_{x}g_{t}v||\leq c\lambda^{t}||v|| \text{ for } v\in E^{s}(x)$, and $
||d_{x}g_{-t}v||\leq c\lambda^{t}||v|| \text{ for } v\in E^{u}(x).$
\end{itemize}

\begin{center}
\begin{figure}[!h]
\centering
\includegraphics[scale=0.17]{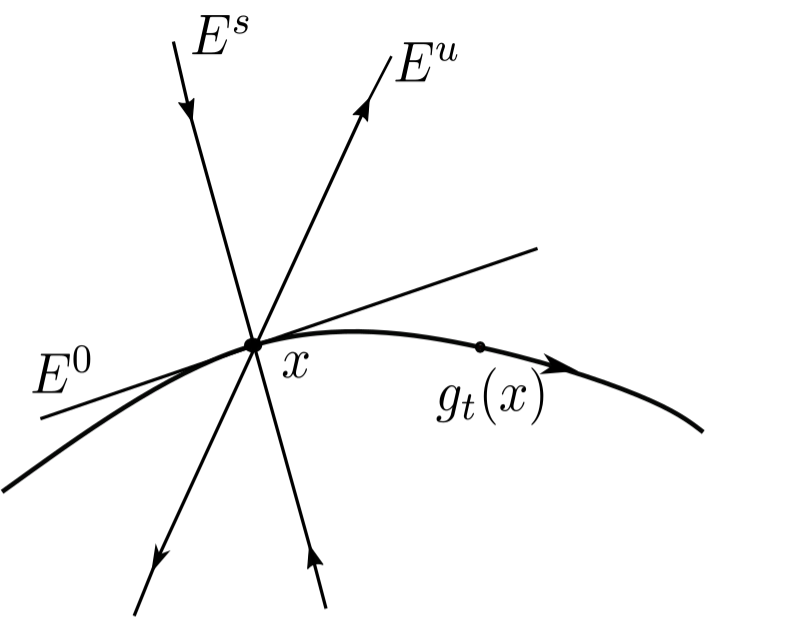}
\end{figure}
\label{hyperbolic}
\end{center}
The subspaces $E^{s}(x)$ and $E^{u}(x)$ are called the stable and unstable subspaces at $x$, they depend continuously on the point and are invariant.
For each $x\in\Lambda$, there exist stable and unstable local manifolds at the point $x$, $W^{s}_{loc}(x)$ and $W^{u}_{loc}(x)$, which are tangent to the contracting and expanding subspaces and contain the point $x$:
\begin{itemize}
\item[1.] $x\in W^{s}_{loc}(x) $ and $x\in W^{u}_{loc}(x) $;
\item[2.] $T_{x}W^{s}_{loc}(x)=E^{s}(x)$ and $T_{x}W^{u}_{loc}(x)=E^{u}(x)$;
\item[3.] for each $t>0$, $g_{t}(W^{s}_{loc}(x))\subset W^{s}_{loc}(g_{t}(x))$, $g_{-t}(W^{u}_{loc}(x))\subset W^{u}_{loc}(g_{-t}(x))$;
\item[4.] there exist $\kappa>0$ and $\vartheta>0$ such that for each $t>0$, we have
\begin{equation}\label{ds}
d(g_{t}(x), g_{t}(y))\leq \kappa e^{-\vartheta t}d(x,y) \text{ for } y\in W^{s}_{loc}(x),
\end{equation}
and
\begin{equation*}
d(g_{-t}(x), g_{-t}(y))\leq \kappa e^{-\vartheta t}d(x,y) \text{ for } y\in W^{u}_{loc}(x),
\end{equation*}
\end{itemize}
where $d$ here is the distance induced in $M$ by the Riemannian metric.
A hyperbolic set $\Lambda$ is said to be locally maximal (with respect to a flow $g_{t}$) if there exists an open neighborhood $U$ of $\Lambda$ such that
\begin{equation*}
\Lambda=\bigcap_{t\in\mathbb{R}}g_{t}(U).
\end{equation*}
Let $\Lambda$ be a locally maximal hyperbolic set. For every $\epsilon>0$ there is $\delta>0$ such that for any $x,y$ in $\Lambda$ with $d(x,y)\leq\delta$ there exists a unique $t=t(x,y)\in[-\epsilon,\epsilon]$, for which the following intersection:
\begin{equation*}
[x,y]:= W^{s}_{loc}(g_{t}(x))\cap W^{u}_{loc}(y)
\end{equation*}
consists of a single point in $\Lambda$; moreover, the map $(x,y)\mapsto t(x,y)$ is continuous.\\
We define the global stable and unstable manifolds at a point $x\in\Lambda$ by:
$$W^{s}(x)=\bigcup_{t\geq 0}g_{-t}\left(W^{s}_{loc}(g_{t}(x))\right)\mbox{ , } W^{u}(x)=\bigcup_{t\geq 0}g_{t}\left(W^{u}_{loc}(g_{-t}(x))\right).$$
They can be characterized as follows:
\begin{equation*}
W^{s}(x)=\{y\in \Lambda: d(g_{t}(x), g_{t}(y))\rightarrow0 \text{ as } t\rightarrow\infty\},
\end{equation*}
\begin{equation*}
W^{u}(x)=\{y\in \Lambda: d(g_{-t}(x), g_{-t}(y))\rightarrow0 \text{ as } t\rightarrow\infty\}.
\end{equation*}

A flow $(g_{t})_{t\in\mathbb{R}}$ is said to be an Axiom $A$ flow if its set of non-wandering points is hyperbolic: a point $x \in M$ is called a non-wandering point if for every open neighborhood $\mathcal{O}$ of $x$ there is a time $t_{x} > 0$, such that $g_{t_{x}}(\mathcal{O})\cap \mathcal{O}\neq\emptyset$.
Throughout this paper, we consider $(g_{t})_{t\in\mathbb{R}}$ an Axiom A flow and topologically mixing on a locally maximal hyperbolic set $\Lambda$. A decomposition of this set provides an analysis of its dynamics, that is the construction of the Markov collection, which was developed by Bowen \cite{Bowen} and Ratner \cite{Ratner}.
%
Given a point $x\in\Lambda$, consider a small compact disk $D\subset M$ containing $x$ of co-dimension one which is transversal to the flow $g_{t}$. This disk is a local section of the flow, i.e., there exists $\tau>0$ such that the map $(x,t)\rightarrow g_{t}(x)$ is a diffeomorphism of the direct product $D\times[-\tau,\tau]$ onto a neighborhood $U_{\tau}(D)$:
\begin{eqnarray*}
\varphi_{D}: D\times [0,\tau]&\rightarrow& U_{\tau}(D):= \varphi_{D}(D\times[0,\tau])\subset M\\
(x,t)&\rightarrow& y= g_{t}(x)
\end{eqnarray*}
The function $\varphi_{D}$ is Lipschitz. We set $k_{\varphi_{D}}:=\underset{D}{\max}(Lip\ \varphi_{D}, Lip\ \varphi_{D}^{-1})$.
\\
The projection $P_{D}: U_{\tau}(D)\rightarrow  D$, defined such that $ P_{D}(z):=g_{-s}(z)$, where $s>0$ is the minimum value by $g_{t}(z)\in D$ is a differentiable map.
Consider now a closed set $\Pi\subset \Lambda\cap D$ which doesn't intersect the boundary $\partial D$. For any two points $y$, $z\in \Pi$, we set
\begin{equation}\label{proj}
\{y,z\}:=P_{D}([y,z]).
\end{equation}
The set $\Pi$ is said to be a rectangle if $\Pi=\overline{\text{int} \Pi}$ (where the interior is considered with respect to the induced topology of $\Lambda\cap D$ ) and $\{y,z\}\in \Pi$ for any $y,z\in \Pi$.\\
Since we are working on a three-dimensional manifold, the flow $(g_t)_t$ is conformal. We have:
\begin{proposition}{(\cite{barreira,hasselblatt})}\label{barreiracL}
Let $\Pi$ be a rectangle. The maps $x\mapsto E^{s}(x)\oplus E^{0}(x)$ and $x\mapsto E^{u}(x)\oplus E^{0}(x)$ are Lipschitz. This implies that there exists $c_{L}>0$ such that $\Pi\times\Pi \ni(x,y)\mapsto\{x,y\}\in \Pi$ is a Lipschitz map of Lipschitz constant $c_{L}>0$ and that $\{x,.\}$ and $\{.,y\}$ have uniformly Lipschitz inverses. 
\end{proposition}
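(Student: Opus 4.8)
The plan is to reduce everything to the two statements about regularity of the weak-stable and weak-unstable distributions, and then to propagate that regularity through the bracket and the projection $P_D$. First I would invoke Proposition \ref{barreiracL}'s hypotheses: because $\dim M = 3$, the stable and unstable subbundles $E^s$ and $E^u$ are one-dimensional, and the flow direction $E^0$ is one-dimensional as well, so each of the two-dimensional distributions $x\mapsto E^s(x)\oplus E^0(x)$ and $x\mapsto E^u(x)\oplus E^0(x)$ is a codimension-one distribution on a three-manifold. The classical fact (this is where I would cite \cite{barreira,hasselblatt}) is that for a three-dimensional Axiom A flow the weak-stable and weak-unstable foliations are Lipschitz — this is the ``conformality in dimension three'' phenomenon, where the usual Hölder-but-not-Lipschitz obstruction for the Anosov splitting disappears because the non-conformality defect that breaks Lipschitz regularity in higher dimensions cannot occur when the stable and unstable bundles are lines. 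I would take this as the input and not reprove it.

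Next I would show the bracket $[\cdot,\cdot]$ on $\Lambda$ inherits Lipschitz regularity from the weak foliations. The point $[x,y]$ is, by definition, the unique intersection $W^s_{loc}(g_{t(x,y)}(x))\cap W^u_{loc}(y)$, which sits at the transverse intersection of the weak-stable leaf through $x$ and the weak-unstable leaf through $y$ (the time shift $t(x,y)$ is exactly the slack needed to make the intersection a single point rather than a whole flow segment). Since these two leaves are Lipschitz graphs over their tangent planes, with Lipschitz constants uniform over the compact set $\Lambda$, and since the intersection is uniformly transverse (the angle between $E^s\oplus E^0$ and $E^u\oplus E^0$ is bounded below away from zero by compactness and the directness of the splitting), the implicit function theorem in its quantitative Lipschitz form gives that $(x,y)\mapsto[x,y]$ is Lipschitz, and simultaneously that $t(x,y)$ is Lipschitz. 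The same uniform transversality gives that the partial maps $[x,\cdot]$ and $[\cdot,y]$ are bi-Lipschitz onto their images, since fixing one leaf makes the intersection map a Lipschitz reparametrization of the other leaf with Lipschitz inverse.

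Finally I would compose with the projection $P_D$. By hypothesis $P_D\colon U_\tau(D)\to D$ is $P_D(z) = g_{-s(z)}(z)$ for the first positive return time $s(z)$ to $D$; since $D$ is a smooth transversal and the flow is $C^1$, $s(z)$ is differentiable and $P_D$ is Lipschitz on a neighborhood of $\Pi$, with Lipschitz constant controlled by $k_{\varphi_D}$ and the lower bound on the transversality angle between $D$ and the flow. Then $\{y,z\} = P_D([y,z])$ is a composition of Lipschitz maps, hence Lipschitz, with constant $c_L$ depending only on the Lipschitz constants of the weak foliations, the transversality angles, and $k_{\varphi_D}$; and the bi-Lipschitz property of $\{x,\cdot\}$ and $\{\cdot,y\}$ follows from the bi-Lipschitz property of $[x,\cdot]$, $[\cdot,y]$ composed with the local diffeomorphism $P_D$ restricted to the relevant weak leaf (on which $P_D$ is injective for $\Pi$ small enough, since the leaf is uniformly transverse to the flow direction and so meets each flow segment $g_{[0,\tau]}(\cdot)$ at most once). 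The main obstacle is really the first step: everything downstream is soft, uniform-transversality bookkeeping, but the Lipschitz regularity of the weak foliations for a three-dimensional flow is the substantive ingredient, and I would want to state precisely which theorem of \cite{barreira} or \cite{hasselblatt} is being quoted and check that its hypotheses (Axiom A, $C^1$ or $C^{1+\alpha}$ flow, $\dim M = 3$) match ours — in particular that $C^1$ suffices or that we are implicitly assuming the flow is $C^{1+\alpha}$.
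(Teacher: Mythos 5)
The paper does not actually supply a proof of this proposition; it cites \cite{barreira,hasselblatt} for the Lipschitz regularity of the weak-stable and weak-unstable distributions and then asserts the implication with ``This implies.'' Your proposal is a reasonable reconstruction of what is being left implicit: the substantive input is exactly the one you isolate (Lipschitz weak foliations for a three-dimensional Axiom A flow, cited to the references), and the passage from that to Lipschitz regularity of $(x,y)\mapsto\{x,y\}$ and to uniformly bi-Lipschitz partial maps $\{x,\cdot\}$, $\{\cdot,y\}$ is indeed the soft transversality-plus-implicit-function-theorem bookkeeping you describe, composed at the end with the Lipschitz projection $P_D$. So your argument takes essentially the same route the paper intends.

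Two small points. First, your heuristic for \emph{why} the weak foliations are Lipschitz in dimension three (``the non-conformality defect cannot occur when $E^s$ and $E^u$ are lines'') is a fair slogan, but the precise mechanism in \cite{hasselblatt} is a bunching condition on the contraction/expansion rates, which becomes vacuous when the stable and unstable bundles are one-dimensional; it is not merely that the bundles are lines. Second, your closing caveat about $C^1$ versus $C^{1+\alpha}$ is well taken and in fact flags a genuine looseness in the paper: Section 2 introduces a $C^1$ flow, but Lemma~\ref{LemmaB} later assumes $C^{1+\alpha}$, and the regularity results of \cite{barreira,hasselblatt} being invoked here do require more than $C^1$. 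So the proposition as stated needs the $C^{1+\alpha}$ (or $C^2$) hypothesis that appears elsewhere in the paper, and you are right to insist on checking which theorem of \cite{barreira} or \cite{hasselblatt} is being used and that its smoothness hypotheses are met.
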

Let $\Pi$ be a rectangle, then for every $x\in\Pi$, we set:\\
$$W^{s}_{loc}(x,\Pi)=\{\{x,y\}:y\in\Pi\}=\Pi\cap P_{D}\left(U_{\tau}(D)\cap W^{s}_{loc}(x)\right),$$ $$W^{u}_{loc}(x,\Pi)=\{\{x,y\}:y\in\Pi\}=\Pi\cap P_{D}\left(U_{\tau}(D)\cap W^{u}_{loc}(x)\right).$$
Now we consider a collection of rectangles $\Pi_{1},...,\Pi_{n}$ (each contained in some open disk transversal to the flow) such that $\Pi_{i}\cap \Pi_{j}=\partial \Pi_{i}\cap\partial \Pi_{j} \mbox{ for } i\neq j.$ Set $X=\bigcup_{i=1}^{n}\Pi_{i}$. We assume that there exists $\beta_{0}>0$ such that:
\begin{itemize}
\item[1.] $\Lambda=\bigcup_{t\in[0,\beta_{0}]}g_{t}(X)$;
\item[2.] for each $i\neq j$, for every $t\in[0,\beta_{0}]$, at least $g_{t}(\Pi_{i})\cap \Pi_{j}=\emptyset \mbox{ or }g_{t}(\Pi_{j})\cap \Pi_{i}=\emptyset$.
\end{itemize}
The set $X$ is a Poincar\'e section for the flow $g_{t}$. The height function $R$ is defined such that for every $ x\in X$, one can find the smallest positive number $R(x)$ such that $g_{R(x)}(x)\in X$. The Poincar\'e map $T:X\longrightarrow X$ is defined by $T(x)= g_{R(x)}(x)$.
A Markov collection is a finite collection of rectangles $\Pi_{1},...,\Pi_{n}$ satisfying, for every $x\in \text{int} \Pi_{i}\cap \text{int}T^{-1}(\Pi_{j})$:
$$T\left(\text{int} (W_{loc}^{s}(x)\cap \Pi_{i})\right)\subset\text{int} \left(W_{loc}^{s}(T(x))\cap \Pi_{j}\right)$$
and for every  $x\in \text{int} \Pi_{i}\cap \text{int}T(\Pi_{k})$:
$$T^{-1}\left(\text{int}(W_{loc}^{u}(x)\cap \Pi_{i})\right)\subset\text{int}\left( W_{loc}^{u}(T^{-1}(x))\cap \Pi_{k}\right).$$
It follows from the work of Bowen and Ratner \cite{Bowen} and \cite{Ratner} that any locally maximal hyperbolic set $\Lambda$ has a Markov collection of arbitrary small diameter.
 Given a rectangle $\Pi_{i}$, we call the set
\begin{equation}
R_{i}=\bigcup_{x\in \Pi_{i}}\bigcup_{0\leq t\leq R(x)}g_{t}(x)\subset \Lambda
\end{equation}
a Markov set. Note that $R_{i}=\overline{\text{int} R_{i}}$ and $\text{int} R_{i}\cap \text{int} R_{j}=\emptyset$ for $i\neq j$.

\subsection{Suspension Flow}\label{suspensionflow}
$T:X\rightarrow X$ being the Poincar\'e map restricted on $X$, and $R$ the height function, we consider the space $ X_{R}=\{(x,s)\in X\times\mathbb{R} \text{ : } 0\leq s\leq R(x) \},$ where the points $(x,R(x))$ and $(T(x), 0)$ are identified for each $x\in X$.
The suspension flow over $T$, with height function $R$ is the flow $\psi=\{\psi_{t}\}_{t\in\mathbb{R}}$ in $X_{R}$, given by
\begin{equation}\label{psit}
\psi_{t}(x,s)=(T^{n}x, s+t-S_{n}R(x)),
\end{equation}
where $n$ is the unique non negative integer such that $0\leq s+t-S_{n}R(x)<R(T^{n}x)$, i.e. $$S_{n}R(x)\leq s+t<S_{n+1}R(x),$$ where we set as usual $S_{n}R=\sum\limits_{k=0}^{n-1}R\circ T^{k}$.
%
\begin{center}
\begin{figure}[!h]
\centering
\includegraphics[width=9cm, height=6.75cm]{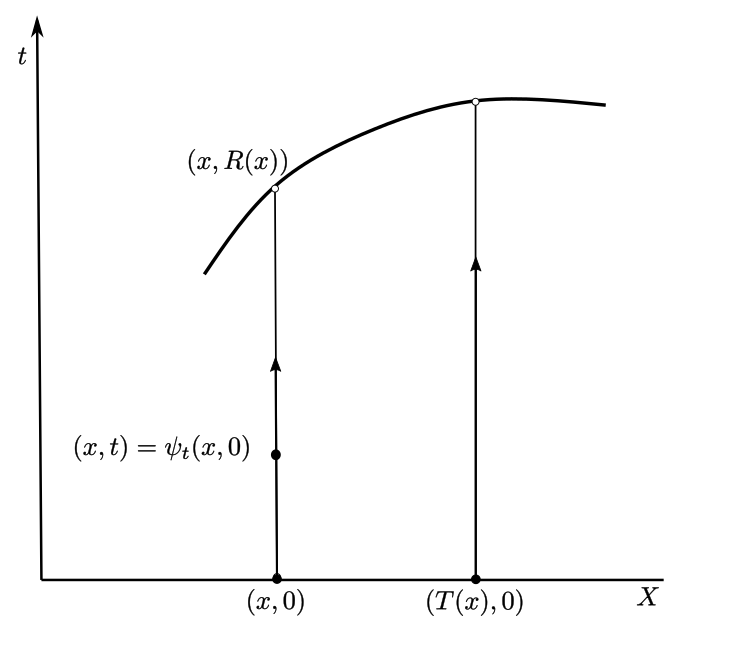}
\end{figure}
\label{suspension}
\end{center}
Using a Markov collection of a hyperbolic set $\Lambda$, we can obtain a symbolic representation of the flow $g_{t}$.
Let $\Lambda$ be a locally maximal hyperbolic set for the flow $g_{t}$, and given $\Pi_{1},..,\Pi_{n}$ the associated Markov collection. Consider the $n\times n$ matrix $A$ with entries
\begin{align*}
a_{ij}=
\left
\lbrace
\begin{array}{l l}
1 \text{  if } \text{int} T(\Pi_{i})\cap \text{int}\Pi_{j}\neq\emptyset,\\
0 \text{  otherwise,}
 \end{array}
 \right.
\end{align*} 
where $T$ is the Poincar\'e map. We also consider the set $\Sigma_{A}$ given by:
\begin{equation*}
\Sigma_{A}=\{\omega=(\omega_{n})_{n\in \mathbb{Z}}:\forall n\in \mathbb{Z}, a_{\omega_{n}\omega_{n+1}}=1\},
\end{equation*}
together with the shift map $\sigma:\Sigma_{A}\rightarrow \Sigma_{A} $ defined by $\sigma((\omega_{n})_{n\in\mathbb{Z}})=(\omega_{n+1})_{n\in\mathbb{Z}}$. Since the flow is topologically mixing, the matrix $A$ is primitive aperiodic, i.e. there exists $n_{0}\in\mathbb{N}$ such that $A^{n_{0}}>0$. We endow $\Sigma_{A}$ with the metric $\hat{d}$ given by 
$$\hat{d}(\omega, \omega'):=e^{-m},$$
where $m$ is the greatest integer such that $\omega_{i}=\omega_{i}'$ whenever $|i| < m.$\\
Let $q,q'$ be two positive integers, and $a:=a_{-q},.., a_{0},..,a_{q'}$ be a finite symbol sequence, we define a $(-q,q')$-cylinder by $$C_{-q,q'}(a)=\{(\omega_{n})_{n\in\mathbb{Z}}\in \Sigma_{A}: \omega_{-q}=a_{-q},...,\omega_{q'}=a_{q'}\}.$$
We denote by $C_{-q,q'}(\omega)$ the $(-q,q')$-cylinder containing a point $\omega\in\Sigma_{A}$.

\begin{proposition}\label{gibbs}
Let $\tilde{h}:\Sigma\rightarrow\mathbb{R}$ be a  H\"{o}lder continuous function. An invariant measure $\nu$ is called a Gibbs measure for the potential $\tilde{h}$, if there exists a constant $P_{\sigma}(\tilde{h})\in\mathbb{R}$, called the pressure, such that for some $k_{\tilde{h}}\geq 1$, for any $\omega$ and any $q,q'\geq0$,
\begin{equation}\label{gibbs.formula}
\frac{1}{k_{\tilde{h}}}\leq\frac{\nu(C_{-q,q'}(\omega))}{\exp\left(\sum_{k=-q}^{q'}\tilde{h}(\sigma^{k}(\omega))-(q+q'+1)P_{\sigma}(\tilde{h})\right)}\leq k_{\tilde{h}}
\end{equation}
\end{proposition}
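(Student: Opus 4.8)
As written, \eqref{gibbs.formula} is really the \emph{definition} of a Gibbs measure, so the substantive content is the classical Sinai--Ruelle--Bowen statement: for every H\"older $\tilde h$ on the topologically mixing subshift $(\Sigma_A,\sigma)$ there is a $\sigma$-invariant probability measure $\nu$ and a number $P_\sigma(\tilde h)$ (the pressure) for which \eqref{gibbs.formula} holds, and $\nu$ is the unique such measure. My plan is to (i) pass to the one-sided shift, (ii) build the Ruelle transfer operator and invoke Ruelle--Perron--Frobenius to produce $\nu$ and $P_\sigma(\tilde h)$, (iii) read off the bounds \eqref{gibbs.formula} from a bounded-distortion estimate, and (iv) deduce uniqueness from mixing. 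First I would recall the standard Sinai reduction (see \cite{Bowen}): every H\"older $\tilde h$ on $\Sigma_A$ is cohomologous, through a bounded H\"older transfer function $u$, to a potential $\tilde h^{+}$ depending only on the coordinates $\omega_0,\omega_1,\dots$, i.e. $\tilde h=\tilde h^{+}+u\circ\sigma-u$. The $N$-th Birkhoff sums of $\tilde h$ and $\tilde h^{+}$ over an orbit segment differ by the uniformly bounded amount $u\circ\sigma^{N}-u$, so \eqref{gibbs.formula} for $\tilde h$ is equivalent, after enlarging $k_{\tilde h}$, to the same estimate for $\tilde h^{+}$, and a $\sigma$-invariant measure is Gibbs for $\tilde h$ iff its projection to the one-sided shift $\Sigma_A^{+}$ is Gibbs for $\tilde h^{+}$; hence it suffices to work on $(\Sigma_A^{+},\sigma)$ with potential $\tilde h^{+}$.

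On the space of $\hat d$-H\"older functions I would introduce the transfer operator $(\mathcal{L}f)(\omega)=\sum_{\sigma\omega'=\omega}e^{\tilde h^{+}(\omega')}f(\omega')$. Since $A$ is primitive ($A^{n_0}>0$) and $\tilde h^{+}$ is H\"older, the Ruelle--Perron--Frobenius theorem gives $\beta>0$, a strictly positive H\"older eigenfunction $h$, and a Borel probability measure $\nu_0$ on $\Sigma_A^{+}$ with $\mathcal{L}h=\beta h$, $\mathcal{L}^{*}\nu_0=\beta\nu_0$, $\int h\,d\nu_0=1$, and $\beta^{-m}\mathcal{L}^{m}g\to h\int g\,d\nu_0$ uniformly for continuous $g$. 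I would set $\nu^{+}:=h\,\nu_0$, which the two eigen-equations show to be $\sigma$-invariant, and define $P_\sigma(\tilde h):=\log\beta$; that this coincides with the topological pressure of $\tilde h$ I would simply quote from the variational principle.

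For \eqref{gibbs.formula} I would estimate $\nu^{+}$ of a one-sided $m$-cylinder by iterating the eigenmeasure relation,
\[
\nu_0\big([a_0,\dots,a_{m-1}]\big)=\beta^{-m}\!\int \mathcal{L}^{m}\big(\mathbf{1}_{[a_0,\dots,a_{m-1}]}\big)\,d\nu_0,\qquad \mathcal{L}^{m}\big(\mathbf{1}_{[a]}\big)(\omega)=\!\!\sum_{\substack{\sigma^{m}\omega'=\omega\\ \omega'\in[a]}}\!\! e^{S_{m}\tilde h^{+}(\omega')},
\]
where $S_{m}\tilde h^{+}=\sum_{k=0}^{m-1}\tilde h^{+}\circ\sigma^{k}$. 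The crucial input is bounded distortion: if $\omega',\omega''$ lie in the same $m$-cylinder then $\hat d(\sigma^{k}\omega',\sigma^{k}\omega'')\le e^{-(m-k)}$, so H\"older continuity of $\tilde h^{+}$ gives $|S_{m}\tilde h^{+}(\omega')-S_{m}\tilde h^{+}(\omega'')|\le C$ with $C$ depending only on $\tilde h^{+}$. Hence $\mathcal{L}^{m}(\mathbf{1}_{[a]})(\omega)$ is, uniformly in $\omega$, comparable to $e^{S_{m}\tilde h^{+}(\eta)}$ (for any fixed $\eta\in[a]$) times the number of admissible length-$m$ preimages of $\omega$ inside $[a]$, and for $m\ge n_0$ primitivity pins that count between $1$ and a fixed constant. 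Integrating against $\nu_0$ and using $0<\inf h\le\sup h<\infty$ then yields constants $c_2>c_1>0$, independent of $m$ and of the word $a$, with $c_1\le \nu^{+}([a_0,\dots,a_{m-1}])\exp(-\sum_{k=0}^{m-1}\tilde h^{+}(\sigma^{k}\eta)+m\log\beta)\le c_2$ for every $\eta\in[a_0,\dots,a_{m-1}]$. Finally I would push this back to $\Sigma_A$: a $(-q,q')$-cylinder is the image under a suitable power of $\sigma$ of a one-sided block cylinder of length $m=q+q'+1$, so $\sigma$-invariance of $\nu$ and the cohomology $\tilde h=\tilde h^{+}+u\circ\sigma-u$ turn the previous bound into exactly \eqref{gibbs.formula}, with $\sum_{k=-q}^{q'}\tilde h(\sigma^{k}\omega)$ in the exponent and $P_\sigma(\tilde h)=\log\beta$.

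For uniqueness, if $\nu_1$ and $\nu_2$ both satisfy \eqref{gibbs.formula} with the same pressure, then their ratio on every cylinder lies in $[k_{\tilde h}^{-2},k_{\tilde h}^{2}]$, so a density argument makes $d\nu_1/d\nu_2$ bounded away from $0$ and $\infty$; since a Gibbs measure for a H\"older potential is ergodic (indeed mixing), this derivative is constant and $\nu_1=\nu_2$. The step I expect to be the real obstacle is the construction of the leading eigendata of $\mathcal{L}$ with a spectral gap on the H\"older space (Ruelle--Perron--Frobenius): this is exactly where primitivity of $A$ and H\"older regularity of the potential are used, and it is the only genuinely substantial ingredient, the rest being routine bookkeeping; in the paper it is natural simply to cite Bowen \cite{Bowen} (or Parry--Pollicott) for the whole package.
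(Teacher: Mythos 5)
You correctly diagnose that, despite being tagged \emph{proposition}, the displayed statement is a pure \emph{definition}: it assigns the name ``Gibbs measure'' to an invariant measure satisfying \eqref{gibbs.formula}. Accordingly the paper gives no proof here at all; after stating the definition it simply says ``In what follows, let $\nu$ be the Gibbs measure on $\Sigma_A$ associated to some H\"older continuous potential $\tilde h$,'' silently importing the existence and uniqueness theorem from the literature (Bowen, Ruelle, Parry--Pollicott). So there is nothing in the paper to compare your argument against line by line.

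What you actually prove is the surrounding Bowen--Ruelle--Sinai existence/uniqueness theorem, and your outline is the standard and correct route: reduce to the one-sided shift via Sinai's cohomology lemma, run Ruelle--Perron--Frobenius on the transfer operator to produce $(\beta, h, \nu_0)$, derive \eqref{gibbs.formula} from bounded distortion of Birkhoff sums on $m$-cylinders plus primitivity of $A$ to control the number of admissible preimages, and get uniqueness by combining the two-sided cylinder estimate with ergodicity. The only thing to keep in mind is that ergodicity of the candidate measure is itself a consequence of the RPF spectral gap (or of the convergence $\beta^{-m}\mathcal L^m g \to h\int g\,d\nu_0$ that you already quoted), so your uniqueness step is not circular but does lean on the same package. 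In short: your proof is correct, but it is a proof of the classical background theorem rather than of anything the paper itself argues at this point, and a single citation to Bowen would have matched the paper's treatment exactly.
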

In what follows, let $\nu$ be the Gibbs measure on $\Sigma_{A}$ associated to some  H\"{o}lder continuous potential $\tilde{h}$.
\begin{definition}
We consider the coding map $\chi:\Sigma_{A}\rightarrow X$, defined by:
$
\chi(\omega)=\underset{n}{\bigcap}\overline{\underset{|k|\leq n}{\bigcap}T^{-k}(\text{int}\Pi_{\omega_{k}})}
$.
One can easily verify that the following diagram
\begin{equation*}
\xymatrix{
    \Sigma_{A} \ar[r]^\sigma \ar[d]_\chi & \Sigma_{A} \ar[d]^{\chi} \\
    X \ar[r]_{T}       & X }
\end{equation*}


is commutative i.e.
\begin{equation}\label{commute}
\chi\circ\sigma=T\circ\chi,
\end{equation}
and we have $\chi_{*}\nu=\nu_{X}$ the measure on $X$.
\end{definition}
As at the beginning of this section, we define in the same way the suspension flow $\mathbf{S}=\{\mathbf{S}_{t}\}_{t\in\mathbb{R}}$ over $\sigma|_{\Sigma_{A}}$ with the H\"older height function $r=R\circ\chi$.
We extend $\chi$ to finite-to-one onto map $\chi:\Delta\rightarrow\Lambda$ by $\chi(\omega,s)=(g_{s}\circ\chi)(\omega)$,
for $(\omega,s)\in\Delta$.
where $\Delta:=\{(\omega,s)\in \Sigma_{A}\times\mathbb{R} \text{ : } 0\leq s\leq r(\omega) \}$, with $r(\omega)=R(\chi(\omega))$. Due to \eqref{commute}, for every $t\in\mathbb{R}$ the following diagram

\begin{equation*}
\xymatrix{
    \Delta \ar[r]^{\mathbf{S}_{t}} \ar[d]_\chi & \Delta \ar[d]^{\chi} \\
    \Lambda \ar[r]_{g_{t}}       & \Lambda }
\end{equation*}

commutes, expressing that the flows $g_{t}$ and $\mathbf{S}_{t}$ are conjugated, i.e. $\chi\circ \mathbf{S}_{t}=g_{t}\circ\chi$. The measure $\mu_{\Delta}=\frac{\nu\times Leb|\Delta}{\int_{\Sigma_{A}}r d\nu}$ is a probability measure on $\Delta$, which is invariant by $\mathbf{S}_{t}$, where $Leb$ is the Lebesgue measure on $\mathbb{R}$. And we have $\chi_{*}\mu_{\Delta}=\mu$, where we recall that $\mu$ is an equilibrium measure for the flow $g_{t}$.\\

Let $h_\nu(\sigma)$ be the entropy of the invariant measure $\nu$ on the system $(\Sigma_A, \sigma)$. For flows, the entropy is defined as the entropy of the time-1 map. We recall the Abramov's formula (see \cite{Abramov}) which establishes a relation between the entropy of the suspension flow $\mathbf{S}_{t}$ and $h_\nu (\sigma)$ as follows:
\begin{equation}\label{Abramov1}
    h_{\mu_{\Delta}}(\mathbf{S}_{t}):= h_{\mu_{\Delta}}(\mathbf{S}_{1})=\dfrac{h_{\nu}(\sigma)}{\int_{\Sigma_A}r d\nu}.
\end{equation}
Moreover, since the systems $(\Lambda, g_t,\mu)$ and $(\Delta,\mathbf{S}_t,\mu_\Delta)$ are conjugated, they have the same entropy, thus $h_{\mu_{\Delta}}(\mathbf{S}_t)=h_{\mu}(g_t)$. For the same reason, $h_{\nu}(\sigma)=h_{\nu_X}(T)$. So we have the following relation 
 \begin{equation}\label{Abramov2}
    h_\mu(g_t)=\frac{h_{\nu_X}(T)}{\int_{X}R d\nu_{X}},
\end{equation}
 which we will use in the proof of Theorem \ref{theorem1flow}.\\

\subsection{Balls and Coding}\label{section balls and coding}
We denote by $\Sigma_{A}^{+}$ the set of (one-sided) sequences $(\omega_{n})_{n\geq0}$ such that: $$(\omega_{n})_{n\geq0}=(w_{n})_{n\geq0} \mbox{ for some }  (w_{n})_{n\in\mathbb{Z}}\in \Sigma_{A},$$
and by $\Sigma_{A}^{-}$ the set of (one-sided) sequences $(\omega_{n})_{n\leq0}$ such that: $$(\omega_{n})_{n\leq0}=(w_{n})_{n\leq0} \mbox{ for some }  (w_{n})_{n\in\mathbb{Z}}\in \Sigma_{A}.$$
We also consider the shift maps $\sigma^{+}:\Sigma_{A}^{+}\rightarrow\Sigma_{A}^{+}$ and $\sigma^{-}:\Sigma_{A}^{-}\rightarrow\Sigma_{A}^{-}$ defined by
$$ \sigma^{+}((\omega_{n})_{n\geq0})=(\omega_{n+1})_{n\geq0} \mbox{ and }\sigma^{-}((\omega_{n})_{n\leq0})=(\omega_{n-1})_{n\leq0}$$
Now we describe how we use symbolic dynamics to characterize  distinct points in a stable or unstable manifold. Given $x\in\Lambda$, take $\omega\in \Sigma_{A}$ such that $\chi(\omega)=x$. Let $\Pi(x)$ be a rectangle of the Markov collection containing $x$.
Let $\chi^{+}:\Sigma_{A}\rightarrow\Sigma_{A}^{+}$ and $\chi^{-}:\Sigma_{A}\rightarrow\Sigma_{A}^{-}$ be the projection maps, defined by:
$$\chi^{+}((\omega_{n})_{n\in\mathbb{Z}})=(\omega_{n})_{n\geq0} \mbox{ and }\chi^{-}((\omega_{n})_{n\in\mathbb{Z}})=(\omega_{n})_{n\leq0}.$$
For each $\omega'\in \Sigma_{A}$, we have
$$\chi(\omega')\in W^{u}_{loc}(x)\cap \Pi(x) \mbox{ whenever } \chi^{-}(\omega')=\chi^{-}(\omega),$$
and
$$\chi(\omega')\in W^{s}_{loc}(x)\cap \Pi(x) \mbox{ whenever } \chi^{+}(\omega')=\chi^{+}(\omega).$$
Therefore, writing $\omega=(\omega_{n})_{n\in\mathbb{Z}}$, the set $W^{u}_{loc}(x)\cap \Pi(x)$ can be identified with the cylinder set:
\begin{equation}
C^{+}_{\omega_{0}}=\{(a_{n})_{n\geq0}\in\Sigma_{A}^{+}: a_{0}=\omega_{0}\}\subset\Sigma_{A}^{+}
\end{equation}
and the set $W^{s}_{loc}(x)\cap \Pi(x)$ can be identified with the cylinder set:
\begin{equation}
C^{-}_{\omega_{0}}=\{(a_{n})_{n\leq0}\in\Sigma_{A}^{-}: a_{0}=\omega_{0}\}\subset\Sigma_{A}^{-}.
\end{equation}

We define the measure $\nu^{u}$ on $\Sigma_{A}^{+}$ such that for any cylinder $ C_{0,n}$, we have 
\begin{equation}\label{measure-u}
    \nu^{u}(\chi^{+}(C_{0,n}))=\nu(C_{0,n}).
\end{equation}
Similarly, we define the measure $\nu^{s}$ on $\Sigma_{A}^{-}$ such that for any cylinder $C_{-n,0}$, we have 
\begin{eqnarray}\label{measure-s}
    \nu^{s}(\chi^{-}(C_{-n,0}))=\nu(C_{-n,0}).
\end{eqnarray}
Now, as we are considering a conformal Axiom $A$ flow $(g_t)_t$ on a locally maximal hyperbolic set $\Lambda$, it means that there exist continuous functions $A^{(u)}$ (respectively $A^{(s)}$) on $\Lambda\times\mathbb{R}$ such that for every $x\in\Lambda$, and $t\in\mathbb{R}$,
\begin{equation}\label{conformalu}
dg_t|_{E^{u}(x)}=A^{(u)}(x,t)I^{(u)}(x,t),
\end{equation}
respectively,
\begin{equation}\label{conformals}
dg_t|_{E^{s}(x)}=A^{(s)}(x,t)I^{(s)}(x,t),
 \end{equation}
 where $I^{u}(x,t): E^u(x)\rightarrow E^u(g_t (x)) $ and $I^{s}(x,t): E^s(x)\rightarrow E^s(g_t (x))$ are isometries.\\ 
Given $x\in\Lambda$, we define functions $a^{(u)}(x)$ and $a^{(s)}(x)$ by
\begin{equation}\label{au}
a^{(u)}(x):=\dfrac{\partial}{\partial t}\log A^{(u)}(x,t)|_{t=0}=\underset{t\rightarrow0}{\lim} \ \frac{\log||dg_{t}|_{E^{u}(x)}||}{t},
\end{equation}
\begin{equation*}
a^{(s)}(x):=\dfrac{\partial}{\partial t}\log A^{(s)}(x,t)|_{t=0}=\underset{t\rightarrow0}\lim \ \frac{\log||dg_{t}|_{E^{s}(x)}||}{t}.
\end{equation*}
Since the subspaces $E^{u}(x)$ and $E^{s}(x)$ depend H\"older continuously on $x$ the functions $a^{(u)}(x)$ and $a^{(s)}(x)$ are also H\"older continuous. Note that there exist constants $\bar{c}$ and $\bar{c}'$ such that $a^{(u)}(x)>\bar{c}>0$ and $a^{(s)}(x)<\bar c'<0$ for every $x\in \Lambda$. For any $x\in \Lambda$ and $t\in\mathbb{R}$, we have:
\begin{equation}\label{dgt au}
||dg_{t}(v)||=||v||\exp\int_{0}^{t} a^{(u)}(g_{\tau}(x))d\tau \quad \mbox{ for any } v\in E^{u}(x),
\end{equation}
and
\begin{equation}\label{dgt as}
||dg_{t}(w)||=||w||\exp\int_{0}^{t} a^{(s)}(g_{\tau}(x))d\tau \quad \mbox{ for any } w\in E^{s}(x).
\end{equation}
For every $x\in \Lambda$, the Lyapunov exponent at $x$ takes on two values which are given by
\begin{equation}\label{lyapunov-}
\lambda^{s}(x):=\underset{t\rightarrow\infty}\lim \ \frac{\log||dg_{t}(x)|_{E^{s}(x)}||}{t}=\underset{t\rightarrow \infty}{\lim} \ \dfrac{1}{t}\int_{0}^{t}a^{(s)}(g_{\xi}(x))d\xi <0,
\end{equation}
\begin{equation}\label{lyapunov+}
\lambda^{u}(x):=\underset{t\rightarrow \infty}{\lim} \ \frac{\log||dg_{t}(x)|_{E^{u}(x)}||}{t}= \underset{t\rightarrow \infty}{\lim} \ \dfrac{1}{t}\int_{0}^{t}a^{(u)}(g_{\xi}(x))d\xi >0.
\end{equation}
If $\mu$ is a $g_t$-invariant measure, then by Birkhoff's ergodic theorem, these limits exist $\mu-$almost everywhere.\\
Let $\omega\in\Sigma_A$, and $x\in \Lambda$, such that $x=\chi(\omega)$. et $\tilde{a}^{(u)}$ and $\tilde{a}^{(s)}$ be the pull back of the functions $a^{(u)}$ and $a^{(s)}$ by the coding map $\chi$, defined by

\begin{equation}\label{tilde-a-us}
\tilde a^{(u)}(\omega,t):=a^{(u)}(g_t(\chi(\omega)))  \mbox{ and }\tilde a^{(s)}(\omega,t):=a^{(s)}(g_t(\chi(\omega))).
\end{equation}
Let also $\mathbf{a}^{(u)}$ and $\mathbf{a}^{(s)}$ be the H\"older continuous function on $\Sigma_{A}$ defined by
\begin{equation}\label{textbf-a-us}
\mathbf{a}^{(u)}(\omega)=\exp\int_{0}^{r(\omega)}\tilde{a}^{(u)}(\omega,t)dt, \quad \mathbf{a}^{(s)}(\omega)=\exp\int_{0}^{r(\omega)}\tilde{a}^{(s)}(\omega,t)dt.
\end{equation}
These are respectively the dilatation and contraction in the unstable and stable direction of the flow $(g_{t})_{t\in\mathbb{R}}$ between two consecutive visits in the Poincar\'e section (the configuration  at these two visits are coded by $\omega$ and $\sigma(\omega)$ respectively). This what the following lemma says
\begin{lemma}\label{lemmaX}
For any $\omega\in\Sigma_{A}$ such that $x=\chi(\omega)$, we have
\begin{equation*}
\mathbf{a}^{(u)}(\omega)=|D_{x}T_{|E^{u}}|\mbox{ and } \mathbf{a}^{(s)}(\omega)=|D_{x}T_{|E^{s}}|.
\end{equation*}
\end{lemma}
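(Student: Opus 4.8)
The plan is to express the derivative of the Poincar\'e map $T$ in terms of the flow derivative $dg_t$ and the geometry of the transversal sections, and then match this with the integral formula \eqref{textbf-a-us}. Recall that for $x=\chi(\omega)\in\Pi_i$ we have $T(x)=g_{R(x)}(x)\in\Pi_j$, and that the unstable directions of $T$ at $x$ are obtained by intersecting the unstable manifold $W^u_{loc}(x)$ with the section. First I would note that, since $E^u(x)$ is one-dimensional (we are in a conformal, three-dimensional setting) and $T$ maps $W^u_{loc}(x,\Pi_i)$ into $W^u_{loc}(T(x),\Pi_j)$, the quantity $|D_xT_{|E^u}|$ is just the ratio of the (one-dimensional) Riemannian lengths of an infinitesimal unstable arc before and after applying $T$.

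Next I would relate this ratio to the flow. Write $T=P_{\Pi_j}\circ g_{R(x)}$ where $P_{\Pi_j}$ is the Poincar\'e projection onto the section $\Pi_j$ along the flow. The key geometric observation is that the flow direction $E^0$ is preserved under the holonomy/projection, and that $E^u\oplus E^0$ is a $dg_t$-invariant plane bundle; moreover the projection $P_{\Pi_j}$ acts along $E^0$, so it sends the image $dg_{R(x)}E^u(x)\subset E^u(g_{R(x)}x)\oplus E^0(g_{R(x)}x)$ isomorphically onto $E^u(T(x))$ along the flow line. By the invariance of the unstable bundle under the flow, $dg_{R(x)}(E^u(x))=E^u(g_{R(x)}x)$ exactly (not just up to the flow direction), since $g_t$ maps unstable manifolds to unstable manifolds; hence the projection $P_{\Pi_j}$ restricted to this line is an isometry onto $E^u(T(x))$ up to the transversality angle — and here the conformality, Proposition \ref{barreiracL}, and the fact that both sections are transversal to the flow with the same angle structure is what makes the projection contribute no net distortion in the appropriate identification. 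Thus $|D_xT_{|E^u}|=\|dg_{R(x)}|_{E^u(x)}\|$.

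Finally, I would plug in \eqref{dgt au}: for $v\in E^u(x)$,
\begin{equation*}
\|dg_{R(x)}(v)\|=\|v\|\exp\int_0^{R(x)}a^{(u)}(g_\tau(x))\,d\tau,
\end{equation*}
so that $\|dg_{R(x)}|_{E^u(x)}\|=\exp\int_0^{R(x)}a^{(u)}(g_\tau(x))\,d\tau$. Using $x=\chi(\omega)$, $R(x)=R(\chi(\omega))=r(\omega)$, and the definition $\tilde a^{(u)}(\omega,t)=a^{(u)}(g_t(\chi(\omega)))$ from \eqref{tilde-a-us}, this is precisely $\exp\int_0^{r(\omega)}\tilde a^{(u)}(\omega,t)\,dt=\mathbf{a}^{(u)}(\omega)$, as claimed. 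The stable case is symmetric, working with $g_{-t}$ and $a^{(s)}$, or equivalently applying the unstable argument to $T^{-1}$ and the reversed flow, using the second identity of \eqref{conformals}.

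The main obstacle is the second step: carefully justifying that the Poincar\'e projection $P_{\Pi_j}$ along the flow introduces no length distortion on the unstable line — i.e. that $|D_xT_{|E^u}|$ genuinely equals the pure flow expansion $\|dg_{R(x)}|_{E^u}\|$ and not that quantity times a holonomy Jacobian. This rests on the flow-invariance of $W^u$ (so the image already lies in the unstable direction of the target point, transverse to the section) together with conformality in dimension three; I would make this precise using the Lipschitz structure of the maps $\varphi_D$, $P_D$ and the bracket $\{\cdot,\cdot\}$ from Proposition \ref{barreiracL}, noting that the relevant identification of $W^u_{loc}(x,\Pi)$ with $P_D(U_\tau(D)\cap W^u_{loc}(x))$ is exactly the one built into the definition of $W^u_{loc}(x,\Pi)$ given just before Proposition \ref{barreiracL}.
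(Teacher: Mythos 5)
Your core computation is the same as the paper's: the paper's entire proof is one sentence — apply \eqref{dgt au} and \eqref{dgt as} at $t=R(x)$ — and your final step reproduces it. Where you go further is in noticing the point the paper silently skips: $T$ is a map of the section, so $D_xT|_{E^u}$ a priori involves not only $dg_{R(x)}$ but also the Poincar\'e holonomy $P_{\Pi_j}$ that brings the image back onto the target disk. That is the right thing to worry about.

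However, your handling of this point does not hold up. You observe correctly that $dg_{R(x)}\bigl(E^u(x)\bigr)=E^u(T(x))$, but then claim that $P_{\Pi_j}$ restricted to this line is "an isometry onto $E^u(T(x))$ up to the transversality angle" and that conformality and "the same angle structure" give "no net distortion." These two phrases contradict each other, and the second is wrong as stated. $E^u(T(x))$ is in general not tangent to $\Pi_j$; the projection along $E^0$ carries it to the distinct line $T_{T(x)}\Pi_j\cap\bigl(E^u(T(x))\oplus E^0(T(x))\bigr)$, and the length scaling of that projection is governed by the angle between $E^u\oplus E^0$ and the transversal disk, a quantity that varies with the point and with the choice of Markov rectangle. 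Conformality of the flow in dimension three constrains $dg_t$ on $E^u$ and $E^s$, not the geometry of the auxiliary sections, so it cannot make these angle factors cancel. The honest conclusion is one of the following: either read $|D_xT|_{E^u}|$ as the derivative measured in the ambient $E^u$-metric transported to the section via the $E^0$-projection, in which case the holonomy factor is eliminated by the identification and the lemma is essentially definitional (this is evidently the paper's tacit convention, and then its one-line proof is complete); or measure in the section's metric, in which case the holonomy contributes a factor $\kappa(T(x))/\kappa(x)$ with $\kappa$ uniformly bounded above and below, which telescopes under iteration and only alters $\prod_i\mathbf{a}^{(u)}(\sigma^i\omega)$ by a uniform constant. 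The latter is all Lemma \ref{lemmaX} is ever used for — namely inside Proposition \ref{PropESS}, whose statement already absorbs such a constant into $C$ — but then the lemma should be weakened to an equality up to a uniform multiplicative constant rather than asserted exactly. Your write-up should pick one of these readings rather than assert exact equality with no distortion.
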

\begin{proof}It comes directly from \eqref{dgt au} and \eqref{dgt as} applied to $x\in X$ and $t=R(x)$.
\end{proof}

As we will work with the Poincar\'e section, these quantities will naturally appear in our calculations.

\begin{proposition}[]\label{diamcyl}
There exist $c>0$  and $\alpha_{1}>0$, such that for any positive integer $k$  and any 2-sided $k$-cylinder  $C_{-k,k}(\omega)$,
\begin{equation*}
diam\  \chi\left( C_{-k,k}(\omega)\right)\leq c e^{-\alpha_{1}k}
\end{equation*}
\end{proposition}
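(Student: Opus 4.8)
The plan is to reduce the diameter estimate for $\chi(C_{-k,k}(\omega))$ to two separate estimates, one along the unstable direction and one along the stable direction, exploiting that any rectangle has a product structure via the bracket $\{\cdot,\cdot\}$. Concretely, if $\omega,\omega'\in C_{-k,k}(\omega)$ then $x=\chi(\omega)$ and $y=\chi(\omega')$ lie in the same rectangle $\Pi_i$ of the Markov collection, and the point $z=\{x,y\}$ (equivalently $\chi(\omega'')$ where $\omega''=\chi^{-}(\omega)$ on the negative coordinates and $\chi^{+}(\omega')$ on the nonnegative ones, which still lies in $C_{-k,k}(\omega)$) satisfies $z\in W^{u}_{loc}(x,\Pi_i)\cap W^{s}_{loc}(y,\Pi_i)$. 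By Proposition~\ref{barreiracL} the bracket map is Lipschitz with constant $c_L$, so $d(x,y)\le c_L\bigl(d(x,z)+d(z,y)\bigr)$, and it suffices to bound $d(x,z)$ (points sharing the negative past up to $-k$, hence on a local unstable manifold) and $d(z,y)$ (points sharing the future up to $k$, hence on a local stable manifold), each by $c\,e^{-\alpha_1 k}$.

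Next I would carry out the stable-direction estimate; the unstable one is symmetric under time reversal. Since $z$ and $y$ agree in coordinates $0,1,\dots,k$, the Poincaré iterates $T^j(z),T^j(y)$ for $0\le j\le k$ stay in a common rectangle $\Pi_{\omega_j}$, and $z,y$ lie on a common local stable manifold for $T$. Now apply the flow contraction estimate \eqref{ds}: writing $z=\psi_0(z)$ and following the suspension flow, after passing through $k$ Poincaré returns we have flowed for a total time $t_k=S_k R(z)$ (up to the bounded discrepancy coming from the heights, which are bounded below and above since $R$ is the return time to a Poincaré section and hence $R\ge\beta$ for some $\beta>0$); hence $t_k\ge \beta k$. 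Applying \eqref{ds} with this $t_k$, together with Lipschitz control of $\varphi_D,\varphi_D^{-1}$ (constant $k_{\varphi_D}$) to pass between the section metric and the ambient metric on $M$, gives
\begin{equation*}
d(z,y)\le \kappa\, k_{\varphi_D}^{2}\, e^{-\vartheta\beta k}\,\operatorname{diam}\Lambda,
\end{equation*}
which is of the form $c\,e^{-\alpha_1 k}$ with $\alpha_1=\vartheta\beta$. The symmetric argument with $g_{-t}$ on $W^u_{loc}$ handles $d(x,z)$.

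Combining the two pieces through the Lipschitz bracket yields $\operatorname{diam}\chi(C_{-k,k}(\omega))\le c'\,e^{-\alpha_1 k}$ after adjusting the constant, uniformly in $\omega$ and $k$ because all the constants ($c_L$, $\kappa$, $\vartheta$, $\beta$, $k_{\varphi_D}$, $\operatorname{diam}\Lambda$) are global. The main obstacle I anticipate is the bookkeeping needed to relate ``sharing $2k+1$ symbolic coordinates'' to ``having flowed for time comparable to $k$ while staying on a genuine local stable/unstable manifold of the flow'': one must be careful that the local product structure of the rectangle (Proposition~\ref{barreiracL}) is what lets us split an arbitrary pair in the cylinder into a purely-stable and a purely-unstable pair, and that the uniform lower bound $R\ge\beta>0$ on the height function (which holds because $X$ is a Poincaré section, condition~2 in its definition, and $R$ is the first return time) is exactly what converts the combinatorial count $k$ into a linear lower bound on elapsed time. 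Everything else is routine application of the hyperbolicity estimates and Lipschitz constants already recorded in the excerpt.
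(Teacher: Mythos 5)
The paper states Proposition~\ref{diamcyl} without proof, treating it as a standard consequence of the Markov coding of a hyperbolic flow, so there is no in-paper argument to compare against; I assess your proposal on its own. Your overall skeleton is right: split $d(x,y)$ through the corner point $z=\chi(\omega'')$ (with $\omega''$ taking $\omega$'s past and $\omega'$'s future) into a piece along a stable leaf and a piece along an unstable leaf, and convert ``$k$ Poincar\'e returns'' into ``flow time at least $\beta k$'' using the uniform lower bound $R\ge\beta>0$ on the roof function. The formula you write down, $d(z,y)\le \kappa\, k_{\varphi_D}^{2}\, e^{-\vartheta\beta k}\,\mathrm{diam}\,\Lambda$, is also of the correct form. (A minor point: $d(x,y)\le d(x,z)+d(z,y)$ is just the triangle inequality and does not need the factor $c_L$; the bracket's Lipschitz property matters for showing $z$ is well defined and the stable/unstable projections behave, not here.)

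The genuine gap is that you run the hyperbolic contraction in the wrong direction and cite the wrong block of coordinates, so the inequality you invoke does not bound $d(z,y)$. Since $z\in W^s_{loc}(y)$, the two points already agree on \emph{all} $i\ge 0$; the extra information supplied by the cylinder $C_{-k,k}$ is that they also agree on the past block $-k\le i<0$. The correct step is therefore to pass to $T^{-k}z$ and $T^{-k}y$ (which agree on $i\ge 0$ after the shift, hence lie on a common local stable leaf with $d(T^{-k}z,T^{-k}y)\le\mathrm{diam}\,\Lambda$), and then flow \emph{forward} for time $S_kR(T^{-k}z)\ge\beta k$, applying \eqref{ds} to contract down to $z,y$. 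What you wrote instead starts at $z,y$, flows forward through $k$ returns, and applies \eqref{ds} there --- but that only bounds $d(g_{t_k}z,g_{t_k}y)$ in terms of $d(z,y)$, which is the inequality in the wrong direction and yields nothing about $d(z,y)$ itself. Symmetrically, for the unstable piece $d(x,z)$ the operative agreement is on $0\le i\le k$ and one flows backward from $T^kx,T^kz$. Once this orientation is fixed, the rest of your argument (bracket decomposition, Lipschitz constants $c_L,k_{\varphi_D}$, the lower bound $R\ge\beta$, and the Markov property to keep the $T^{\mp j}$-iterates inside rectangles so that they remain on a common local leaf) goes through.
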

The following lemma is a result of regularity of $\mathbf{a}^{(u)}$ and of  $\mathbf{a}^{(s)}$ which will be very useful later.
\begin{lemma}\label{bfauholder}
There exist $\alpha>0$, $\alpha'>0$, $c_{\mathbf{a}}^{u}>0$ and $c_{\mathbf{a}}^{s}>0$ such that, for every integer $k$, for any $\omega$, $\omega'$ such that $\hat{d}(\omega,\omega')\leq e^{-k}$,
\begin{equation}\label{alpha,cau,cas}
\mathbf{a}^{(u)}(\omega)\leq \mathbf{a}^{(u)}(\omega')\exp( c_{\mathbf{a}}^{u}e^{-\alpha k}),\quad
\mathbf{a}^{(s)}(\omega)\leq \mathbf{a}^{(s)}(\omega')\exp(c_{\mathbf{a}}^{s}e^{-\alpha' k})
\end{equation}
\end{lemma}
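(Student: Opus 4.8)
The plan is to prove the estimate for $\mathbf{a}^{(u)}$; the estimate for $\mathbf{a}^{(s)}$ is entirely symmetric. Recall from \eqref{textbf-a-us} that $\mathbf{a}^{(u)}(\omega)=\exp\int_{0}^{r(\omega)}\tilde a^{(u)}(\omega,t)\,dt$, so taking logarithms it suffices to bound
\[
\int_{0}^{r(\omega)}\tilde a^{(u)}(\omega,t)\,dt-\int_{0}^{r(\omega')}\tilde a^{(u)}(\omega',t)\,dt
\]
by $c_{\mathbf a}^{u}e^{-\alpha k}$ whenever $\hat d(\omega,\omega')\le e^{-k}$. I would split this difference into two pieces: the ``common'' part $\int_{0}^{m(\omega,\omega')}\big(\tilde a^{(u)}(\omega,t)-\tilde a^{(u)}(\omega',t)\big)\,dt$ over the interval $[0,\min(r(\omega),r(\omega'))]$, and a ``tail'' part $\int$ over the short interval between $r(\omega)$ and $r(\omega')$, on which $|\tilde a^{(u)}|$ is bounded by $\sup_{\Lambda}|a^{(u)}|<\infty$.

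First I would control the tail. Since $r=R\circ\chi$ is H\"older continuous on $\Sigma_{A}$ (stated in Section \ref{suspensionflow} and in Proposition \ref{diamcyl}'s neighbourhood of results), there are constants $c_{r}>0$ and $\alpha_{r}>0$ with $|r(\omega)-r(\omega')|\le c_{r}e^{-\alpha_{r}k}$ when $\hat d(\omega,\omega')\le e^{-k}$; multiplying by $\|a^{(u)}\|_{\infty}$ gives a bound of the required exponential form for the tail contribution. Next, for the common part: for $t$ in the overlap interval, $\tilde a^{(u)}(\omega,t)=a^{(u)}(g_{t}(\chi(\omega)))$ and $\tilde a^{(u)}(\omega',t)=a^{(u)}(g_{t}(\chi(\omega')))$. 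Because $a^{(u)}$ is H\"older on $\Lambda$ (with exponent, say, $\eta$) and $g_{t}$ is Lipschitz on the relevant compact time interval $[0,\sup R]$ with a uniform Lipschitz constant $L$, we get $|\tilde a^{(u)}(\omega,t)-\tilde a^{(u)}(\omega',t)|\le C\big(L\,d(\chi(\omega),\chi(\omega'))\big)^{\eta}$. Finally, $\hat d(\omega,\omega')\le e^{-k}$ means $\omega$ and $\omega'$ agree on coordinates $|i|<k$, so $\omega,\omega'$ lie in a common $(-(k-1),k-1)$-cylinder and Proposition \ref{diamcyl} yields $d(\chi(\omega),\chi(\omega'))\le c\,e^{-\alpha_{1}(k-1)}$. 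Combining, the common part is $\le \sup R\cdot C(Lc)^{\eta}e^{-\alpha_{1}\eta(k-1)}$, again of the required form. Taking $\alpha:=\min(\alpha_{r},\alpha_{1}\eta)$ and absorbing all constants into a single $c_{\mathbf a}^{u}$ (and adjusting by the harmless factor $e^{\alpha_{1}\eta}$ coming from the shift $k\mapsto k-1$) gives the bound on the log, hence on $\mathbf a^{(u)}(\omega)/\mathbf a^{(u)}(\omega')$ after exponentiating and using $e^{x}\le \exp(x)$ trivially; note we only need the one-sided inequality stated, so no further care is needed about signs.

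The main obstacle is the bookkeeping around the mismatched integration limits: one must be careful that the flow-orbit comparison $g_{t}(\chi(\omega))$ versus $g_{t}(\chi(\omega'))$ is only meaningful (and the H\"older-continuity of $a^{(u)}$ on $\Lambda$ only applicable) for $t$ in the common overlap, and that the remaining sliver of length $|r(\omega)-r(\omega')|$ must be handled by the crude sup bound rather than by continuity. Everything else is a routine combination of: H\"older regularity of $a^{(u)}$ (already noted in the text, just after \eqref{dgt as}), H\"older regularity of the roof function $r$, uniform Lipschitz control of the flow on a compact time window, and the exponential shrinking of cylinder diameters from Proposition \ref{diamcyl}. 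One should also record that the exponents $\alpha,\alpha'$ produced this way are positive and that the constants are uniform in $k$, which they manifestly are since all the ingredient constants are.
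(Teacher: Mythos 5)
Your proof is correct and follows essentially the same route as the paper: insert a cross term to decompose $\log\mathbf a^{(u)}(\omega)-\log\mathbf a^{(u)}(\omega')$ into an ``integrand difference'' part (controlled by H\"older continuity of $a^{(u)}$ together with the cylinder-diameter bound of Proposition~\ref{diamcyl}) and a ``mismatched endpoint'' part (controlled by H\"older continuity of $r$ and the sup norm of $a^{(u)}$). The only cosmetic difference is that the paper cuts at $r(\omega)$ rather than at $\min(r(\omega),r(\omega'))$ — which is harmless since $\tilde a^{(u)}(\omega',t)$ is defined for all $t$ — and you are slightly more explicit than the paper in noting the role of the uniform Lipschitz control of $g_t$ on a compact time window when passing from H\"older continuity of $a^{(u)}$ on $\Lambda$ to H\"older continuity of $\tilde a^{(u)}(\cdot,t)$.
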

\begin{proof}
The function $\tilde{a}^{(u)}(.,t)$ is H\"older continuous. Indeed, we have $\omega'\in C_{-k,k}(\omega)$, then due to Proposition \ref{diamcyl}, there exist $c,\alpha_1>0$ such that $d(\chi(\omega), \chi(\omega'))\leq ce^{-\alpha_{1}k}$. And since $a^{(u)}$ is H\"older continuous, then there exist $c_{a}>0$ and $\alpha_{2}>0$ such that $|\tilde{a}^{(u)}(\omega)-\tilde{a}^{(u)}(\omega')|\leq c_{a}c e^{-\alpha_{1}\alpha_{2}k}$.
Moreover, the height function $r$ is also H\"older continuous, then there is $c_{r}>0$ and $\alpha_{3}>0$ such that
\begin{align*}
   \frac{\mathbf{a}^{(u)}(\omega)}{\mathbf{a}^{(u)}(\omega')} = &\exp\bigg(\int_{0}^{r(\omega)}\tilde{a}^{(u)}(\omega,t)dt-\int_{0}^{r(\omega)}\tilde{a}^{(u)}(\omega',t)dt\\
 & +\int_{0}^{r(\omega)}\tilde{a}^{(u)}(\omega',t)dt-\int_{0}^{r(\omega')}\tilde{a}^{(u)}(\omega',t)dt\bigg)
\leq \exp(c^u_{\mathbf{a}} e^{-\alpha k}), 
\end{align*}
where $c^u_{\mathbf{a}}=\max(||r||_{\infty}cc_{a},\sup|\tilde{a}^{u}(.,t)|c_{r})$ and $\alpha=\min(\alpha_{1}\alpha_{2},\alpha_{3})$. We prove the second inequality of \eqref{alpha,cau,cas} analogously.
\end{proof}

Let $x\in\Lambda$ such that $x=\chi(\omega)$, where $\omega=(\omega_{n})_{n\in\mathbb{Z}}\in\Sigma_{A}$. We define $l_m^s(\omega)$ and $l_n^u(\omega)$ to be the lengths of the stable and unstable manifolds passing by $x$ and contained in the cylinder $\chi(C_{-m,n})$ containing $x$. We denote by $a_{n}^{u}(x):=\displaystyle\prod_{i=0}^{n-1}\left(\mathbf{a}^{u}(\sigma^{i}(\omega))\right)^{-1}$ and $a_{m}^{s}(x):=\displaystyle\prod_{i=1}^{m}\mathbf{a}^{s}(\sigma^{-i}(\omega))$.

\begin{proposition}\label{PropESS}
    There exists a constant $C>0$ such that for all $m,n\geq0$, all cylinder $C_{-m,n}$, and for all $x=\chi(\omega)\in \chi(C_{-m,n})$, we have the following:
   \begin{equation}\label{unstablelength}
\frac{1}{C}a_{n}^{u}(x)\leq l_n^u(\omega)\leq C a_{n}^{u}(x)
         \mbox{\quad and \quad } \frac{1}{C} a_{m}^{s}(x)\leq l_m^s(\omega)\leq C a_{m}^{s}(x),
    \end{equation}     
         in addition we have
         $$\chi(C_{-m,n})\subset B(x,C\max(a_{m}^{s}(x),a_{n}^{u}(x))),$$ and
         there exists $x_{0}$ such that $$B(x_{0},C^{-1}\min(a_{m}^{s}(x),a_{n}^{u}(x)))\subset \chi(C_{-m,n}).$$
\end{proposition}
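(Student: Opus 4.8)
The plan is to reduce everything to two ingredients: the Markov property, which will identify the stable and unstable slices of $\chi(C_{-m,n})$ with iterated $T^{\pm}$--preimages of \emph{full} local leaves inside one of the finitely many rectangles $\Pi_i$, and a uniform bounded--distortion estimate extracted from Lemma~\ref{bfauholder} by summing a convergent geometric series. Throughout I write $x=\chi(\omega)$ with $\omega\in C_{-m,n}$, and I use that by Proposition~\ref{barreiracL} and compactness the length of a full local leaf $W^{u}_{loc}(\cdot,\Pi_i)$ or $W^{s}_{loc}(\cdot,\Pi_i)$ stays in a fixed interval $[\ell_-,\ell_+]\subset(0,\infty)$.

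\emph{Step 1: the length estimates.} First I would observe that a point of $W^{u}_{loc}(x)\cap\chi(C_{-m,n})$ already agrees with $\omega$ on every non-positive coordinate, so the indices $\omega_{-m},\dots,\omega_{-1}$ add nothing and $W^{u}_{loc}(x)\cap\chi(C_{-m,n})=W^{u}_{loc}(x)\cap\chi(C_{0,n})$; iterating the Markov inclusion $T^{-1}(\mathrm{int}(W^{u}_{loc}(y)\cap\Pi_i))\subset\mathrm{int}(W^{u}_{loc}(T^{-1}y)\cap\Pi_k)$ then identifies this arc, up to its length--zero boundary, with $T^{-n}(W^{u}_{loc}(T^{n}x)\cap\Pi_{\omega_n})$. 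Hence
\[
l^{u}_{n}(\omega)=\int_{W^{u}_{loc}(T^{n}x,\,\Pi_{\omega_n})}\bigl|D_{z}T^{-n}_{|E^{u}}\bigr|\,d\ell(z),
\]
and Lemma~\ref{lemmaX} together with the chain rule make the integrand at $z=T^{n}y$ equal to $a^{u}_{n}(y)=\prod_{i=0}^{n-1}(\mathbf{a}^{u}(\sigma^{i}\omega_y))^{-1}$. For such $y$ one has $\hat d(\sigma^{i}\omega_y,\sigma^{i}\omega)\le e^{-(n-i)}$, so Lemma~\ref{bfauholder} gives $a^{u}_{n}(y)\,a^{u}_{n}(x)^{-1}\in[e^{-\kappa_0},e^{\kappa_0}]$ with $\kappa_0:=c^{u}_{\mathbf{a}}\sum_{j\ge1}e^{-\alpha j}<\infty$, uniform in $m,n,\omega$; combined with the leaf--length bounds this yields $C^{-1}a^{u}_{n}(x)\le l^{u}_{n}(\omega)\le C\,a^{u}_{n}(x)$. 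The stable bound is the mirror statement, using $W^{s}_{loc}(x)\cap\chi(C_{-m,n})=T^{m}(W^{s}_{loc}(T^{-m}x)\cap\Pi_{\omega_{-m}})$ and the fact that the Jacobian of $T^{m}$ along $E^{s}$ is $a^{s}_{m}(\cdot)=\prod_{i=1}^{m}\mathbf{a}^{s}(\sigma^{-i}\cdot)$ up to the same uniform distortion.

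\emph{Step 2: the outer inclusion.} Given $y\in\chi(C_{-m,n})$, since $(\omega_y)_0=\omega_0$ the points $x$ and $y$ lie in the same small rectangle $\Pi_{\omega_0}$, so $z:=\{x,y\}$ is defined; $z$ lies on $W^{s}_{loc}(x)$ (agreeing with $\omega$ on the non-negative coordinates) and on $W^{u}_{loc}(y)$ (agreeing with $\omega_y$, hence with $\omega$, on $[-m,0]$), so $z\in W^{s}_{loc}(x)\cap\chi(C_{-m,0})$ and $z\in W^{u}_{loc}(y)\cap\chi(C_{0,n})$. Bounding the ambient distance by the intrinsic length of the connecting arcs and invoking Step~1 (at $x$ and at $y$, with $a^{u}_{n}(y)\asymp a^{u}_{n}(x)$ by the same distortion estimate) gives $d(x,y)\le d(x,z)+d(z,y)\le C'\max(a^{s}_{m}(x),a^{u}_{n}(x))$, which is the stated inclusion after renaming the constant.

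\emph{Step 3: the inner inclusion, and the main difficulty.} Using the bracket once more one sees that $\chi(C_{-m,n})$ is itself a rectangle: $(u,v)\mapsto\{u,v\}$ is a bijection from $U\times S$ onto $\chi(C_{-m,n})$ --- surjectivity and injectivity both coming from the recovery identity $y=\{\{y,x\},\{x,y\}\}$ --- where $U:=W^{u}_{loc}(x)\cap\chi(C_{0,n})$ and $S:=W^{s}_{loc}(x)\cap\chi(C_{-m,0})$ have lengths $\asymp a^{u}_{n}(x)$ and $\asymp a^{s}_{m}(x)$ by Step~1; this map and its partial inverses are bi--Lipschitz with the uniform constant $c_L$ of Proposition~\ref{barreiracL} and extend to the full product $W^{u}_{loc}(x,\Pi_{\omega_0})\times W^{s}_{loc}(x,\Pi_{\omega_0})$, so taking $x_0:=\{u_0,v_0\}$ with $u_0,v_0$ the midpoints of $U,S$ the image of the chart ball of radius $\tfrac12\min(\mathrm{length}\,U,\mathrm{length}\,S)$ about $(u_0,v_0)$ contains a genuine ball $B(x_0,C^{-1}\min(a^{s}_{m}(x),a^{u}_{n}(x)))\subset\chi(C_{-m,n})$. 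I expect the main obstacle to be keeping the distortion constant $\kappa_0$, and hence the final $C$, uniform in $m$, $n$ and the cylinder: this forces one to track exactly on which block of coordinates the competing symbolic sequences agree so that the Hölder errors of Lemma~\ref{bfauholder} sum geometrically, and to discard the length--zero cylinder boundaries where the Markov inclusions are not equalities --- which is precisely why $x_0$ is taken at the centre of the product rather than at $x$ itself.
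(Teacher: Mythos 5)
Your proof of the length estimates~\eqref{unstablelength} is essentially the paper's: you both identify the unstable slice $W^{u}_{loc}(x)\cap\chi(C_{-m,n})=W^{u}_{loc}(x)\cap\chi(C_{0,n})$ with $T^{-n}\bigl(W^{u}_{loc}(T^{n}x,\Pi_{\omega_n})\bigr)$ via the Markov property, both write one length as the integral of the corresponding Jacobian over the other arc, both reduce the integrand to a product of $\mathbf{a}^{u}$ using Lemma~\ref{lemmaX} and the chain rule, and both obtain uniform bounded distortion by summing the H\"older errors of Lemma~\ref{bfauholder} over a geometric series. The only cosmetic difference is that you integrate the Jacobian of $T^{-n}$ over the long leaf, while the paper integrates the Jacobian of $T^{n}$ over the short one; these are equivalent.

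For the two inclusion statements, however, the paper simply invokes Lemma~6.1 of \cite{Pesin.sadovskaya}, whereas you supply a self-contained argument via the bracket product structure: for the outer inclusion you split $d(x,y)\leq d(x,\{x,y\})+d(\{x,y\},y)$ and bound each piece by the length of the corresponding stable or unstable arc (reusing the distortion estimate to compare $a^{u}_{n}(y)$ with $a^{u}_{n}(x)$ for $y$ in the same cylinder), and for the inner inclusion you exhibit $\chi(C_{-m,n})$ as the bi-Lipschitz image of the product $U\times S$ of its two slices, centring the ball at the midpoint $x_{0}$ rather than at $x$ so that $x$'s possible proximity to the boundary is irrelevant. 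This is a genuine and useful elaboration: it makes the constant-tracking visible and avoids an external citation, at the cost of being somewhat longer. The two points you flag as the real work --- keeping the distortion constant uniform in $m,n$ by summing geometrically, and placing the inner ball at the product midpoint --- are indeed exactly the places where care is needed, and your treatment of both is sound.
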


\begin{proof}
Set $x=\chi(\omega)\in X$, where $\omega\in\Sigma_A$, and let $\Pi$ be a rectangle containing $x$. Let also $\Pi_{\omega_n}$ be the rectangle containing $T^n x$. We consider the local unstable manifold $W^{(u)}_{loc}(T^{n}x,\Pi_{\omega_n})$. Set $\gamma :[a,b]\rightarrow T^{-n}\left(W^{(u)}_{loc}(T^{n}x,\Pi_{\omega_n})\right)$, a curve on $M$, such that $|\dot{\gamma}(s)|=1$, whose length is $l_n^u (\omega)$.\\
Let $\omega'_{s}\in \Sigma_{A}$ such that $\gamma(s)=\chi(\omega'_{s})$, then $T^{i}(\gamma(s))=\chi(\sigma^{i}\omega'_{s})$, according to \eqref{commute}. Using the chain rule and Lemma \ref{lemmaX}, we get
$$\ell\left(W^{(u)}_{loc}(T^{n}x,\Pi_n)\right)=\int_{a}^{b}|(D(T^{n}\circ \gamma(s)))|ds=l_n^u (\omega)\prod_{i=0}^{n-1} \mathbf{a}^{(u)}(\sigma^{i}\omega'_{s}).$$
Let us set the constant $C_{a}^{\alpha}:=\exp\left(\frac{ c_{a}}{1-e^{-\alpha}}\right)$. Using Lemma \ref{bfauholder}, then for all $s_1,s_2\in (a,b)$, one can show that
\begin{equation}\label{NNN}
\prod_{i=0}^{n-1} \mathbf{a}^{(u)}(\sigma^{i}\omega'_{s_1})\leq C_{a}^{\alpha}\prod_{i=0}^{n-1} \mathbf{a}^{(u)}(\sigma^{i}\omega'_{s_2}).
\end{equation}
Applying \eqref{NNN}, once with $\omega'_{s_1}=\omega$, and then with $\omega'_{s_2}=\omega$, we get respectively an upper bound and a lower bound as follows
$$(C_{a}^{\alpha})^{-1} l_n^u (\omega)\prod_{i=0}^{n-1}\mathbf{a}^{u}(\sigma^{i}(\omega))\leq\ell\left(W^{(u)}_{loc}(T^{n}x,\Pi_n)\right)\leq C_{a}^{\alpha} l_n^u (\omega)\prod_{i=0}^{n-1}\mathbf{a}^{u}(\sigma^{i}(\omega).$$
Moreover, there exists $C'>1$ such that $\ell\left(W^{(u)}_{loc}(T^{n}x,\Pi_{\omega_n})\right)$ is uniformly bounded from below by $\frac{1}{C'}$ and from above by $C'$. Then by setting $C:=C'C_{a}^{\alpha}$, we get the desired framing for the unstable length in \eqref{unstablelength}
$$\frac{1}{C}\prod_{i=0}^{n-1}\left(\mathbf{a}^{u}(\sigma^{i}(\omega))\right)^{-1}\leq l_{n}^{u}(\omega)\leq C \prod_{i=0}^{n-1}\left(\mathbf{a}^{u}(\sigma^{i}(\omega))\right)^{-1}.$$
 And the proof for the stable length follows exactly the same scheme up to replacing $T$ by $T^{-1}$.  \\
 The proof of the second part of the proposition, comes directly using Lemma 6.1 in \cite{Pesin.sadovskaya}.
\end{proof}

\begin{remarque}\label{remarqueLuLs}
As a consequence of Proposition \ref{PropESS} and  of formulas \eqref{tilde-a-us} and \eqref{textbf-a-us}, going back to the definitions of $l_n^s$ and $l_n^u$, we have:
$$\frac{1}{C}\exp\left(-\sum_{k=0}^{n-1}\int_{0}^{r(\sigma^k(\omega))}a^{(u)}(g_t(\chi(\sigma^{k}\omega)))dt\right)\leq l_{n}^{u}(\omega)\leq C\exp\left(-\sum_{k=0}^{n-1}\int_{0}^{r(\sigma^k(\omega))}a^{(u)}(g_t(\chi(\sigma^{k}\omega)))dt\right)$$ and $$\frac{1}{C}\exp\left(\sum_{k=0}^{m-1}\int_{0}^{r(\sigma^{-k}(\omega))}a^{(s)}(g_t(\chi(\sigma^{-k-1}\omega)))dt\right)\leq l_{m}^{s}(\omega)\leq C\exp\left(\sum_{k=0}^{m-1}\int_{0}^{r(\sigma^{-k}(\omega))}a^{(s)}(g_t(\chi(\sigma^{-k-1}\omega)))dt\right).$$
By Birkhoff's ergodic theorem, the limits 

\begin{equation}\label{LsLu}
    L^s=\underset{n\rightarrow+\infty}{\lim}\frac{\log l_n^s(\omega) }{n} \quad  and \quad L^u=-\underset{n\rightarrow+\infty}{\lim}\frac{\log l_n^u(\omega) }{n}
\end{equation}
exist $\nu$-almost everywhere on $\Sigma_A$. These are the expansion rates of the stable and unstable leaves respectively.

\end{remarque}

 Recall that $R$ is the height function of the suspension flow $(\psi_{t})_t$ defined in Section \ref{suspensionflow}. We show that the Lyapunov exponents of the flow $g_t$ and those of the Poincar\'e transformation $T$, are related in the following way: 
\begin{proposition}\label{relationlLyapynov-flot-T}
For $L^s$ and $L^u$ defined in \eqref{LsLu}, and $\lambda^s$, $\lambda^u$ defined in \eqref{lyapunov-} and \eqref{lyapunov+} respectively, we have almost everywhere: 
    \begin{equation}
        \lambda^s= \dfrac{L^s}{\int_{X}Rd\nu_X}\quad \mbox{ and }  \lambda^u= \dfrac{L^u}{\int_{X}Rd\nu_X}. 
    \end{equation}
\end{proposition}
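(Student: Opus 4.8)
The plan is to connect the two time scales — discrete iterations of the Poincaré map $T$ and continuous flow time $t$ — via a Birkhoff-ergodic-theorem bookkeeping argument, exactly as in the Abramov formula \eqref{Abramov2}. First I would recall from Remark \ref{remarqueLuLs} that, $\nu$-almost everywhere,
\[
L^u=-\lim_{n\to\infty}\frac{\log l_n^u(\omega)}{n}=\lim_{n\to\infty}\frac1n\sum_{k=0}^{n-1}\int_0^{r(\sigma^k\omega)}a^{(u)}\bigl(g_t(\chi(\sigma^k\omega))\bigr)\,dt,
\]
the last equality being Birkhoff's theorem applied to the $\sigma$-invariant function $\omega\mapsto\int_0^{r(\omega)}\tilde a^{(u)}(\omega,t)\,dt$ on $(\Sigma_A,\sigma,\nu)$ (which is in $L^1(\nu)$ since $r\in L^1(\nu)$ and $a^{(u)}$ is bounded). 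Similarly $L^s=-\lim_n\frac1n\log l_n^s$ is the Birkhoff average of $-\int_0^{r(\sigma^{-1}\omega)}\tilde a^{(s)}(\sigma^{-1}\omega,t)\,dt$, which up to a coboundary has the same $\nu$-average as $-\int_0^{r(\omega)}\tilde a^{(s)}(\omega,t)\,dt$.

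Next I would compare this with the flow Lyapunov exponent. By \eqref{lyapunov+}, $\lambda^u(x)=\lim_{t\to\infty}\frac1t\int_0^t a^{(u)}(g_\xi x)\,d\xi$. The key step is to evaluate this limit along the subsequence of return times: for $x\in X$, write $S_nR(x)=\sum_{k=0}^{n-1}R(T^kx)$ for the time of the $n$-th return to the section, so that
\[
\int_0^{S_nR(x)}a^{(u)}(g_\xi x)\,d\xi=\sum_{k=0}^{n-1}\int_0^{R(T^kx)}a^{(u)}\bigl(g_t(T^kx)\bigr)\,dt.
\]
Dividing by $n$ and letting $n\to\infty$, the right-hand side tends to $L^u$ (by the computation above, pushed from $\Sigma_A$ down to $X$ via $\chi_*\nu=\nu_X$, using that $\chi$ intertwines $\sigma$ and $T$), while $\frac1n S_nR(x)\to\int_X R\,d\nu_X$ by Birkhoff applied to $R$ on $(X,T,\nu_X)$. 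Hence
\[
\lambda^u(x)\cdot\int_X R\,d\nu_X=\lim_n\frac1n\int_0^{S_nR(x)}a^{(u)}(g_\xi x)\,d\xi=L^u,
\]
which is the claimed identity; the argument for $\lambda^s$ and $L^s$ is identical with $a^{(s)}$ in place of $a^{(u)}$.

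Two points need care, and the second is the main obstacle. The minor point: one must check that the full-time limit $\lim_{t\to\infty}\frac1t\int_0^t$ genuinely agrees with the limit along the discrete subsequence $t=S_nR(x)$; this follows because consecutive return times differ by at most $\|R\|_\infty$ and $a^{(u)}$ is bounded, so the oscillation of $\frac1t\int_0^t a^{(u)}$ between $S_nR(x)$ and $S_{n+1}R(x)$ is $O(\|R\|_\infty\,\|a^{(u)}\|_\infty/S_nR(x))\to0$. The genuine obstacle is the passage between the symbolic model and the manifold: the identities of Remark \ref{remarqueLuLs} are stated on $\Sigma_A$ for $l_n^u(\omega)$, whereas $\lambda^u$ lives on $\Lambda$, and $\chi$ is only finite-to-one, not a bijection. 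One resolves this by working $\nu$-a.e.\ where $\chi$ is injective (the set of points with a unique preimage has full measure for a Gibbs measure), and by noting that both sides of the desired equality are flow-invariant, hence constant a.e.\ if one wants to invoke ergodicity, or — without ergodicity — simply matching the two Birkhoff averages pointwise through the conjugacy $\chi\circ\sigma=T\circ\chi$ and $\chi_*\mu_\Delta=\mu$. Care with the coboundary term relating $\int_0^{r(\sigma^{-1}\omega)}\tilde a^{(s)}(\sigma^{-1}\omega,\cdot)$ to $\int_0^{r(\omega)}\tilde a^{(s)}(\omega,\cdot)$ finishes the stable case.
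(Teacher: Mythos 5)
Your proposal follows the paper's proof essentially verbatim: the same change of variables turns $\int_0^{S_nR(x)}a^{(u)}(g_\xi x)\,d\xi$ into the Birkhoff sum $\sum_{k=0}^{n-1}\int_0^{R(T^kx)}a^{(u)}(g_t(T^kx))\,dt$, one then divides by $S_nR(x)$ and invokes Birkhoff's theorem twice (on the integrand via Remark~\ref{remarqueLuLs} and on $R$), exactly as in the paper, and your two cautionary remarks (subsequence versus full-time limit, and passing through the finite-to-one coding $\chi$) are correct justifications of steps the paper leaves implicit. The only blemishes are cosmetic: you write an extraneous minus sign in $L^s=-\lim_n\frac1n\log l_n^s$ (the paper's \eqref{LsLu} has no sign on $L^s$, so that $L^s<0$ matches $\lambda^s<0$), and you call the integrand ``$\sigma$-invariant'' where you mean ``$\nu$-integrable'' --- neither affects the argument.
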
 
\begin{proof}
Let $x\in X$ and $s\geq0$, then using the formula of the suspension flow in \eqref{psit}, $g_s(x)=\psi_s(x,0)=(T^nx,s-S_nR(x))$, where $n$ is the unique integer such that $S_{n}R(x)\leq s<S_{n+1}R(x)$. For all $k=0,..., n-1$, using the change of variable $\xi=s+S_kR(x)$, and that for all $i=1,...,k$ the points $(T^{n+i}x, 0)$ and $(T^{n+i-1}x, R(T^{n+i-1}x))$ are identified, we show that 
$$g_{s}(T^kx)=(T^{n+k}x, s-S_n R(T^kx) )=(T^nx,s-S_n R(x)+S_kR(x))=g_{\xi}(x),$$ 
and therefore we get:
\begin{eqnarray*}
    \displaystyle\sum_{k=0}^{n-1} \int_{0}^{R(T^kx)}a^{(u)}(g_{s}(T^kx))ds &=& \displaystyle\sum_{k=0}^{n-1}\int_{S_kR(x)}^{S_{k+1}R(x)}a^{(u)}(g_{\xi}(x))d\xi\\
    &=&\int_{0}^{S_nR(x)}a^{(u)}(g_{\xi}(x))d\xi
\end{eqnarray*}
Let $t$ be such that $t=S_nR(x)$, going back to the definition of $\lambda^u(x)$ in \eqref{lyapunov+}, and taking into consideration that $x=\chi(\omega)$, where $\omega\in\Sigma_A$, and that $T^k(x)=\chi(\sigma^k(\omega))$, we have: 
\begin{eqnarray*}
    \lambda^u (x)&=& \underset{t\rightarrow \infty}{\lim} \ \dfrac{1}{t}\int_{0}^{t}a^{(u)}(g_{\xi}(x))d\xi\\
    &=& \underset{n\rightarrow \infty}{\lim} \ \dfrac{1}{\dfrac{S_nR(x)}{n}}\dfrac{1}{n} \displaystyle\sum_{k=0}^{n-1} \int_{0}^{R(T^kx)}a^{(u)}(g_{s}(T^kx))ds\\
    &=& \dfrac{1}{\int_X Rd\nu_X}\underset{n\rightarrow \infty}{\lim} \ \dfrac{1}{n}\left(-\log(l_n^u(\omega))\right)=\dfrac{L^u}{\int_X Rd\nu_X} \quad \mbox{ a.e.,}
\end{eqnarray*}
where we recall that by Birkhoff's ergodic theorem $\underset{n\rightarrow+\infty}{\lim}\dfrac{-\log l_n^u(\omega) }{n}=L^u$ almost everywhere \eqref{LsLu} . We proceed similarly for the negative Lyapunov exponent.
\end{proof}

Recall that $c_L$ is the Lipschitz constant  of $\{.,.\}$, we prove the following lemmas.
\begin{lemma}\label{lemma1}
Let $\omega\in \Sigma_{A}$, $r>0$ and $c_{L}>0$ be the Lipschitz constant of $\{.,.\}$. Consider the $(-q,q')$-cylinder $C_{-q,q'}(\omega)$ where $q$ and $q'$ are minimal such that $c_{L} l_{q}^{s}(\omega)\leq \frac{r}{2}$ and $c_{L}l_{q'}^{u}(\omega)\leq \frac{r}{2}$, then we have
\begin{equation*}
\chi(C_{-q,q'}(\omega))\subset B(\chi(\omega),r).
\end{equation*}
\end{lemma}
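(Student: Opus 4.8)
The plan is to show that any point $y\in\chi(C_{-q,q'}(\omega))$ lies within distance $r$ of $x:=\chi(\omega)$ by decomposing the path from $x$ to $y$ inside the rectangle into a stable piece and an unstable piece, and controlling each piece by the corresponding leaf-length $l_q^s(\omega)$, $l_{q'}^u(\omega)$ together with the Lipschitz constant $c_L$ of the bracket map $\{.,.\}$. First I would fix $y=\chi(\omega')$ with $\omega'\in C_{-q,q'}(\omega)$, so that $\omega$ and $\omega'$ agree on coordinates $-q,\dots,q'$. Using the coordinate description of local stable/unstable manifolds via $\chi^+$ and $\chi^-$ recalled in Section \ref{section balls and coding}, the bracket point $z:=\{x,y\}=P_D([x,y])$ lies in $W^u_{loc}(x,\Pi)\cap W^s_{loc}(y,\Pi)$; concretely, $z=\chi(\omega'')$ where $\omega''$ agrees with $\omega$ on all negative coordinates (hence up to $-q$) and with $\omega'$ on all nonnegative coordinates (hence up to $q'$). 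So both "legs" $x\rightsquigarrow z$ (an unstable leg) and $z\rightsquigarrow y$ (a stable leg) stay inside the appropriate cylinders.

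Next I would estimate the two legs. The unstable leg from $x$ to $z$ runs along $W^u_{loc}(x,\Pi)$ and stays inside $\chi(C_{0,q'})$, so its diameter is at most $l_{q'}^u(\omega)$ by the very definition of $l_{q'}^u$ as the length of the unstable manifold through $x$ inside that cylinder (Remark \ref{remarqueLuLs}/Proposition \ref{PropESS}); similarly the stable leg from $z$ to $y$ lies in the stable manifold through $z$ inside a $(-q,0)$-cylinder, hence has length at most $l_q^s(\omega)$ up to the Hölder comparability of stable lengths across a common cylinder (Lemma \ref{bfauholder}, which gives a uniform multiplicative constant — this is where one absorbs the fact that the stable length through $z$ rather than through $x$ is used, and why the constant $c_L$ rather than $1$ appears in the hypothesis). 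Then
\begin{equation*}
d(x,y)\le d(x,z)+d(z,y)\le c_L\, l_{q'}^u(\omega)+c_L\, l_q^s(\omega)\le \frac r2+\frac r2=r,
\end{equation*}
using the Lipschitz bound of Proposition \ref{barreiracL} for the map $(x,y)\mapsto\{x,y\}$ to pass from leaf-lengths to actual distances in $M$, and the defining inequalities $c_L l_q^s(\omega)\le r/2$, $c_L l_{q'}^u(\omega)\le r/2$ of the minimal $q,q'$.

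The main obstacle is bookkeeping the geometry of the bracket decomposition precisely: one must check that the intermediate point $z=\{x,y\}$ is genuinely reachable from $x$ along an unstable leaf of controlled length and from $y$ along a stable leaf of controlled length, both staying within the one-sided cylinders determined by the shared coordinates of $\omega$ and $\omega'$, and that projecting through $P_D$ (which introduces at most the flow-box constant $k_{\varphi_D}$, harmless and absorbable into $c_L$) does not spoil the estimate. Handling the Hölder distortion between the stable length through $x$ and through $z$ — so that a single constant $c_L$ suffices in the statement — is the only genuinely delicate point; everything else is the triangle inequality plus the definitions.
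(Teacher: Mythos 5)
Your overall strategy — decompose a point of $\chi(C_{-q,q'}(\omega))$ via the local product structure and use the Lipschitz property of the bracket — is the right one and is what the paper does. But the specific decomposition you chose introduces a genuine gap that the paper's choice avoids.

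First, a small but real inaccuracy: with the paper's conventions $[x,y]=W^s_{loc}(g_t(x))\cap W^u_{loc}(y)$ and $\{x,y\}=P_D([x,y])$, the point $z=\{x,y\}$ lies in $W^s_{loc}(x,\Pi)\cap W^u_{loc}(y,\Pi)$, not in $W^u_{loc}(x,\Pi)\cap W^s_{loc}(y,\Pi)$ as you write, and correspondingly $\omega''$ agrees with $\omega$ on the \emph{nonnegative} coordinates and with $\omega'$ on the \emph{negative} ones. This only swaps which leg is stable and which is unstable, so it would not by itself spoil the bound, but it signals the deeper issue.

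The deeper issue is your handling of the second leg. Decomposing $d(x,y)\le d(x,z)+d(z,y)$ with $z=\{x,y\}$, the leg from $x$ to $z$ is along $W^s_{loc}(x)$, so its length is indeed controlled by $l_q^s(\omega)$ with no extra constant. But the leg from $z$ to $y$ is along $W^u_{loc}(y)$ — a leaf through $y=\chi(\omega')$, not through $x=\chi(\omega)$ — so its length inside the cylinder is $l^u_{q'}(\omega')$, not $l^u_{q'}(\omega)$. You recognize this and invoke a Hölder comparability constant, but then claim this is ``why the constant $c_L$ rather than $1$ appears in the hypothesis.'' That is not correct: $c_L$ is, by Proposition \ref{barreiracL}, the Lipschitz constant of the bracket map $\{\cdot,\cdot\}$, not a bounded-distortion constant for leaf lengths across a cylinder. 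Your argument therefore only yields $\chi(C_{-q,q'}(\omega))\subset B(\chi(\omega), C'r)$ for some $C'$ depending on the distortion constant of Lemma \ref{bfauholder}, with no reason for $C'\le 1$.

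The paper avoids this entirely by writing $y$ itself in product coordinates relative to $x$, i.e. $y=\{y_1,y_2\}$ with $y_1\in W^u_{loc}(x)\cap\chi(C_{-q,q'}(\omega))$ and $y_2\in W^s_{loc}(x)\cap\chi(C_{-q,q'}(\omega))$, so that $d(x,y_1)\le l^u_{q'}(\omega)$ and $d(x,y_2)\le l^s_q(\omega)$ are bounded by leaf lengths \emph{through $x$} directly, and then uses the Lipschitz bound $d(x,y)=d(\{x,x\},\{y_1,y_2\})\le c_L\bigl(d(x,y_1)+d(x,y_2)\bigr)$. This is where $c_L$ genuinely enters, and no distortion estimate is needed. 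I'd suggest reworking your proof along these lines: the triangle inequality through the bracket point $z$ is natural, but the product-coordinate decomposition of $y$ is what makes the constants close up.
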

\begin{proof}
Set $x=\chi(\omega)$. Suppose that $x\in \text{int}\Pi_{\omega_{0}}$, where we consider $\Pi_{\omega_{0}},\Pi_{\omega_{1}},...,\Pi_{\omega_{n}}$  the Markov collection for $g_{t}$ on $\Lambda$. Each rectangle $\Pi_{\omega_{i}}$ is contained in a smooth disk $D_{i}$ and has the product structure $\{. , . \}$ as in \eqref{proj}.
Let $z$ and $z'$ be two points in $\Pi_{\omega_{i}}$, then there exist $x_{1}, x_{1}^{'}\in W^{u}_{loc}(x)$, $x_{2}, x_{2}^{'}\in W^{s}_{loc}(x)$ such that $z=\{x_{1},x_{2}\} \mbox{ and } z'=\{x'_{1},x'_{2}\},$ and we get
\begin{align}\label{l3eqn lemma cylinball}
 \nonumber   \text{diam} \left(\chi(C_{-q,q'}(\omega))\right)&= \max\left\{ d(z, z'), z,z'\in \bigcap_{i=-q}^{q'}T^{-i}(\Pi_{\omega_i})\right\}\\ \nonumber
    &\leq 2 c_{L}\left(d(x_{1},x'_{1}) + d(x_{2},x'_{2})\right)\\
&\leq c_{L} \left( l^{s}_{q}(\omega)+ l^{u}_{q'}(\omega)\right)\leq r,
\end{align}
which proves the lemma due to the conditions considered on $q$ and $q'$.
\end{proof}
\begin{lemma}\label{ballincylinder}
Let $\omega\in\Sigma_A$ such that $x=\chi(\omega)\in X$. Let $r>0$. For all $m,n\geq0$, consider the stable and unstable balls such that:
\begin{equation*}
B^{u}(x, r)\subset \chi(C_{0,n}(\omega)) \mbox{ and } B^{s}(x, r)\subset \chi(C_{-m,0}(\omega)),
\end{equation*}
then we have
\begin{equation*}
B\left(x, \frac{r}{c_{L}}\right)\subset \chi( C_{-m,n}(\omega)).
\end{equation*}
\end{lemma}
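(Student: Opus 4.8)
The plan is to decompose an arbitrary point of $B\left(x,\frac{r}{c_{L}}\right)$ by means of the local product structure of the rectangle $\Pi_{\omega_{0}}$ containing $x$, to push its two ``coordinates'' into the two prescribed stable and unstable balls using the Lipschitz bound of Proposition \ref{barreiracL}, and then to reassemble it inside $\chi(C_{-m,n}(\omega))$.

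Concretely, I would fix $z\in B\left(x,\frac{r}{c_{L}}\right)$ (working inside the section, with $x\in\text{int}\,\Pi_{\omega_{0}}$ and $r$ small enough that $z\in\Pi_{\omega_{0}}$, a smallness already implicit in the hypotheses) and set $z^{u}:=\{z,x\}$ and $z^{s}:=\{x,z\}$. By the definition of the bracket and the description of $W^{u}_{loc}(x,\Pi_{\omega_{0}})$ and $W^{s}_{loc}(x,\Pi_{\omega_{0}})$ recalled in Section \ref{section balls and coding}, one has $z^{u}\in W^{u}_{loc}(x)\cap\Pi_{\omega_{0}}$ and $z^{s}\in W^{s}_{loc}(x)\cap\Pi_{\omega_{0}}$; moreover $z^{u}$ lies on the local stable leaf of $z$ and $z^{s}$ on the local unstable leaf of $z$, which gives the reconstruction identity $z=\{z^{u},z^{s}\}$. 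Since $\{x,x\}=x$ and $(y,y')\mapsto\{y,y'\}$ is $c_{L}$-Lipschitz, we obtain $d(z^{u},x)=d(\{z,x\},\{x,x\})\le c_{L}\,d(z,x)<r$ and likewise $d(z^{s},x)<r$, so that $z^{u}\in B^{u}(x,r)$ and $z^{s}\in B^{s}(x,r)$. The hypotheses then yield $z^{u}\in\chi(C_{0,n}(\omega))$ and $z^{s}\in\chi(C_{-m,0}(\omega))$.

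The step I expect to be the only real obstacle is the product-structure identity
\[
W^{u}_{loc}(x)\cap\chi(C_{0,n}(\omega))=W^{u}_{loc}(x)\cap\chi(C_{-m,n}(\omega)),
\]
together with its stable counterpart $W^{s}_{loc}(x)\cap\chi(C_{-m,0}(\omega))=W^{s}_{loc}(x)\cap\chi(C_{-m,n}(\omega))$: moving along the unstable leaf of $x$ leaves the backward symbols of the coding unchanged (they coincide with those of $x$), so adjoining the constraints $\omega_{-m},\dots,\omega_{-1}$ imposes nothing, and symmetrically for the stable leaf; this is precisely the structure of the symbolic coding attached to the Markov collection recalled in Section \ref{section balls and coding}. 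Granting it, $z^{u}\in W^{u}_{loc}(x)\cap\chi(C_{-m,n}(\omega))\subset\chi(C_{-m,n}(\omega))$ and $z^{s}\in\chi(C_{-m,n}(\omega))$. Since $\chi(C_{-m,n}(\omega))$ is, by the Markov property, a sub-rectangle of $\Pi_{\omega_{0}}$, it is closed under the bracket $\{\cdot,\cdot\}$, whence $z=\{z^{u},z^{s}\}\in\chi(C_{-m,n}(\omega))$. As $z\in B\left(x,\frac{r}{c_{L}}\right)$ was arbitrary, this proves the lemma.
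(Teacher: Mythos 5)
Your proof is correct and follows essentially the same route as the paper: decompose $z\in B(x,r/c_{L})$ via the local product structure into $z^{u}\in W^{u}_{loc}(x,\Pi)$ and $z^{s}\in W^{s}_{loc}(x,\Pi)$, bound $d(z^{u},x)$ and $d(z^{s},x)$ by $r$ using the $c_{L}$-Lipschitz bracket and $\{x,x\}=x$, and recombine inside $\chi(C_{-m,n}(\omega))$. You are a bit more explicit than the paper about why the recombination lands in $\chi(C_{-m,n}(\omega))$ --- the paper compresses this into the shorthand $z\in B^{u}(x,r)\times B^{s}(x,r)\subset\chi(C_{0,n}(\omega))\times\chi(C_{-m,0}(\omega))\subset\chi(C_{-m,n}(\omega))$, while you spell out the coding/Markov reason that adjoining backward (resp.\ forward) symbol constraints costs nothing along an unstable (resp.\ stable) leaf --- but the underlying argument is the same.
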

\begin{proof}
Let $z\in B\left(x, \frac{r}{c_{L}}\right)$. We have $z=\{z^{u},z^{s}\}$, with $z^{u}\in W^{u}_{loc}(x,\Pi)$ and $z^{s}\in W^{s}_{loc}(x, \Pi)$. Note that $z^s = \{z, x\}$, and 
$x=\{x,x\}$, we have $d^s(x, z^s)\leq c_L d(x,z)\leq r $. Similarly, we have $d^{u}(x,z^{u})\leq r$. Therefore, we get
\begin{equation*}  
z\in B^{u}(x, r)\times B^{s}(x, r)\subset\chi(C_{0,n}(\omega))\times \chi(C_{-m,0}(\omega))\subset\chi(C_{-m,n}(\omega)).
\end{equation*}
\end{proof}

In the following lemma, we refer to Theorem 15 in \cite{Product-Barreira-Saussol}, which states that the $r$-neighborhood of the boundary of the Markov partition is at most polynomial in $r$. This also applies in the context of flows for the unstable boundary of an element of the Markov partition:  
\begin{lemma}\label{LemmaB}
    Let $\Lambda$ be a compact locally maximal hyperbolic set of a topologically mixing $C^{1+\alpha}$ hyperbolic flow $(g_t)_t$, and $\nu$ an equilibrium measure for a H\"older continuous function.
    For any element of the Markov partition $\Pi_i$ of $\Lambda$, there exists constants $c>0$ and $k>0$ such that if $r>0$ is small enough, then:
    $$\chi_{*}\nu\{x: d(x:\partial^{u}\Pi_i)<r\}<cr^k.$$
\end{lemma}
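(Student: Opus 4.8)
The plan is to reduce the statement for the unstable boundary $\partial^{u}\Pi_i$ of a Markov element of the flow to the already-known two-sided statement of Theorem~15 in \cite{Product-Barreira-Saussol} by passing to the Poincar\'e section and exploiting the product structure of the rectangle. The key point is that the unstable boundary $\partial^{u}\Pi_i$ of the three-dimensional Markov set $R_i$ is, up to the flow direction and a Lipschitz reparametrization by $\varphi_{D_i}$, the product of $W^{s}_{loc}(x,\Pi_i)$ with the set $\partial^{u}\Pi_i \cap D_i$ which lives inside the two-dimensional section; so the $r$-neighborhood of $\partial^{u}\Pi_i$ in $\Lambda$ pulls back under $\chi$ to a set comparable to an $r$-neighborhood of the corresponding boundary piece in the subshift $\Sigma_A$.

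First I would fix $\Pi_i$, sitting inside a smooth transversal disk $D_i$ with the Lipschitz charts $\varphi_{D_i}, \varphi_{D_i}^{-1}$ and constant $k_{\varphi_{D_i}}$, and the projection $P_{D_i}$. Using that $R_i=\bigcup_{x\in\Pi_i}\bigcup_{0\le t\le R(x)} g_t(x)$ and that the flow direction $E^{0}$ is uniformly transverse to the section, I would show that there is a constant $c_1>0$ with
\begin{equation*}
\{x\in\Lambda : d(x,\partial^{u}\Pi_i)<r\}\ \subset\ \bigcup_{0\le t\le \beta_0} g_t\big(\{y\in\Pi_i : d(y,\partial^{u}\Pi_i\cap D_i)<c_1 r\}\big),
\end{equation*}
so that, since $\chi_{*}\mu_{\Delta}=\mu$ and $\mu$ is built from $\nu$ by the suspension construction with bounded height $R$, it suffices to bound $\nu_X\{y\in\Pi_i : d(y,\partial^{u}\Pi_i\cap D_i)<c_1 r\}$, where $\nu_X=\chi_*\nu$. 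Next I would transfer this to the symbolic side: by Proposition~\ref{PropESS} and Lemma~\ref{lemma1}, a point $y=\chi(\omega')$ lying within distance $\rho$ of $\partial^{u}\Pi_i$ forces $\omega'$ to lie in a cylinder $C_{-q,q'}(\omega')$ whose image has diameter $\lesssim\rho$, hence the set in question is covered by the $\chi$-images of those $(-q,q')$-cylinders that touch the boundary; controlling $\nu$ of this union is exactly the content of Theorem~15 in \cite{Product-Barreira-Saussol} applied to the subshift $(\Sigma_A,\sigma,\nu)$ and its Markov partition into one-cylinders, which gives a polynomial bound $c'\rho^{k}$.

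The main obstacle I anticipate is the first reduction, namely controlling the geometry of the $r$-neighborhood of $\partial^{u}\Pi_i$ \emph{inside the three-dimensional set $\Lambda$} rather than inside the section: one must check that moving a distance $r$ in $M$ from a point of $R_i$ can be decomposed into a bounded flow-time displacement plus a displacement of order $r$ within the section, uniformly over $i$, which uses the uniform transversality of the flow to the disks $D_i$, the Lipschitz property of $\varphi_{D_i}$ and $P_{D_i}$, and the fact that $\Pi_i$ stays away from $\partial D_i$. A secondary technical point is matching the $r$-neighborhood taken with respect to the Riemannian distance $d$ on $M$ with the product (stable $\times$ unstable) distance used implicitly in Theorem~15, which is handled by Proposition~\ref{barreiracL} and the uniform Lipschitz equivalence it provides. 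Once these comparisons of metrics and the flow-box decomposition are in place, the polynomial estimate is inherited directly from \cite{Product-Barreira-Saussol}.
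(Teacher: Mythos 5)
The paper supplies no written proof for Lemma~\ref{LemmaB}; it simply asserts that Theorem~15 of \cite{Product-Barreira-Saussol} (a polynomial bound on the equilibrium measure of the $r$-neighborhood of the boundary of a Markov partition for a hyperbolic map) ``also applies in the context of flows''. Your proposal attempts to fill in this gap, but your preliminary reduction from $\Lambda$ to the section is based on a misreading: the measure $\chi_*\nu$ in the statement is $\nu_X$, supported on the two-dimensional Poincar\'e section $X$, and the set $\{x: d(x,\partial^u\Pi_i)<r\}$ must be taken as a subset of $X$. This is confirmed by how the lemma is applied in Lemma~\ref{LemmaC}, where the set is $\{x: d(T^n x,\partial^u\Pi_n)<\delta_n\}$ with $x\in X$. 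Your flow-box inclusion $\bigcup_{0\le t\le\beta_0}g_t(\cdot)$ and the bookkeeping with $\varphi_{D_i}$ and $P_{D_i}$, which you single out as the ``main obstacle'', therefore do not arise.

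The more substantive gap is in the final step. You propose to apply Theorem~15 ``to the subshift $(\Sigma_A,\sigma,\nu)$ and its Markov partition into one-cylinders''. In the abstract symbolic space $(\Sigma_A,\hat{d})$ the one-cylinders are clopen, so their topological boundary is empty, and the $r$-neighborhood of that boundary is empty; Theorem~15 applied there gives no information. The content of Theorem~15 is about the \emph{geometric} boundary of a Markov partition inside the manifold (or inside the section), whose neighborhoods can carry positive measure precisely because that boundary is an irregular set of positive codimension. The object to which the theorem should be applied is the Poincar\'e return map $T$ on $X$ with Markov partition $\{\Pi_i\}$ and equilibrium measure $\nu_X=\chi_*\nu$, a two-dimensional $C^{1+\alpha}$ hyperbolic map on the section; this is what the paper's citation implicitly invokes. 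Your idea of covering the $r$-neighborhood of $\partial^u\Pi_i$ by $\chi$-images of $(-q,q')$-cylinders of comparable diameter is a reasonable starting point for re-deriving the estimate, but it still requires a genuine combinatorial or Gibbs-measure argument bounding the total $\nu$-mass of cylinders meeting the unstable boundary; as written it is not a reduction to Theorem~15 but an incomplete re-derivation of it.
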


\begin{lemma}\label{LemmaC}
    Let $p>\dfrac{1}{k}$, where $k$ is the constant used in Lemma \ref{LemmaB}. For any $n\in\mathbb{Z}$, set $\delta_n=|n|^{-p}$, we have:
    $$\underset{n\in\mathbb{Z}}{\sum} \chi_{*}\nu\{x: d(T^{n}x:\partial^{u}\Pi_n)<\delta_n\}<\infty.$$
\end{lemma}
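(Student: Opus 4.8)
The plan is to combine the Borel--Cantelli-type polynomial estimate from Lemma~\ref{LemmaB} with the convergence of the series $\sum_n \delta_n^k$. First I would observe that since $\nu$ is shift-invariant, $\chi_*\nu$ is $T$-invariant (this is the content of $\chi_*\nu = \nu_X$ and the commuting diagram \eqref{commute}), so for each $n\in\mathbb{Z}$ we have
\begin{equation*}
\chi_*\nu\{x : d(T^n x, \partial^u \Pi_n) < \delta_n\} = \chi_*\nu\{x : d(x, \partial^u \Pi_n) < \delta_n\},
\end{equation*}
where I implicitly read $\Pi_n$ as $\Pi_{i(n)}$ for whatever indexing is intended (the set of rectangles is finite, so the constants $c,k$ of Lemma~\ref{LemmaB} can be taken uniform over all $\Pi_i$). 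Applying Lemma~\ref{LemmaB} with $r = \delta_n = |n|^{-p}$, valid once $|n|$ is large enough that $\delta_n$ is small, gives
\begin{equation*}
\chi_*\nu\{x : d(T^n x, \partial^u \Pi_n) < \delta_n\} < c\,\delta_n^k = c\,|n|^{-pk}.
\end{equation*}

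Then I would simply sum over $n\in\mathbb{Z}\setminus\{0\}$: since $p > 1/k$ we have $pk > 1$, so $\sum_{n\neq 0} |n|^{-pk} = 2\sum_{n\geq 1} n^{-pk} < \infty$. The finitely many terms with small $|n|$ (where $\delta_n$ is not yet ``small enough'' for Lemma~\ref{LemmaB} to apply, and the $n=0$ term) contribute a finite amount since each is bounded by $1$. Hence the whole series converges.

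The only genuine subtlety — not really an obstacle — is bookkeeping about what $\Pi_n$ means in the statement of Lemma~\ref{LemmaC}: presumably it refers to the rectangle of the Markov collection visited at time $n$, i.e.\ depends on a fixed reference point or coding, but since Lemma~\ref{LemmaB} holds for \emph{every} element $\Pi_i$ with constants that may be chosen uniformly (the collection being finite, take $c = \max_i c_i$ and $k = \min_i k_i$), the argument is insensitive to this choice. I expect the proof to be essentially two lines: invoke $T$-invariance to drop the $T^n$, apply Lemma~\ref{LemmaB} to bound each summand by $c|n|^{-pk}$, and conclude by comparison with a convergent $p$-series since $pk>1$.
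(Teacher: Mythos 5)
Your proof is correct and follows exactly the same route as the paper: invoke $T$-invariance of $\chi_*\nu$ to remove $T^n$, apply Lemma~\ref{LemmaB} to bound each summand by $c|n|^{-pk}$, and conclude by comparison with a convergent $p$-series since $pk>1$. You are in fact slightly more careful than the paper about the finitely many small-$|n|$ terms (including $n=0$) and about taking constants uniform over the finite Markov collection, but these are harmless bookkeeping points the paper implicitly assumes.
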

\begin{proof}
    Using Lemma \ref{LemmaB}, and the invariance of the measure $\chi_{*}\nu$ by $T$, we have:   
    \begin{align*}
        \underset{n\in\mathbb{Z}}{\sum} \chi_{*}\nu\{x: d(T^{n}x:\partial^{u}\Pi_n)<\delta_n\} &=\underset{n\in\mathbb{Z}}{\sum} \chi_{*}\nu\{x: d(x:\partial^{u}\Pi_n)<\delta_n\}\\
        &\leq \underset{n\in\mathbb{Z}}{\sum} c\delta_n^{k}=\underset{n\in\mathbb{Z}}{\sum} |n|^{-pk}<\infty.
    \end{align*}
\end{proof}


\begin{lemma}\label{LemmaD}
Let $\delta>0$. Let $x\in X$ such that $x=\chi(\omega)$, where $\omega\in \Sigma$. If $W^{u}_{\delta}(T^n x)\subset \Pi_{\omega_n}$, then 
$$T^{-n}(W^{u}_{\delta}(T^{n}(x)))\subset \chi(C^{+}_{\omega_0,...,\omega_n}) \quad \mbox{ and } \quad W^{u}_{l^{u}_n (\omega)\times \delta}(x)\subset T^{-n}(W^{u}_{\delta}(T^{n}(x))). $$
\end{lemma}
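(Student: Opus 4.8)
The plan is to prove the two inclusions separately: the first by a finite backward iteration of the Markov property of the section, and the second by feeding that iteration into the distortion estimate of Lemma \ref{bfauholder} together with the length comparison of Proposition \ref{PropESS}.

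\emph{First inclusion.} First I would recall that, under the identification of $W^{u}_{loc}(x)$ with one-sided cylinders from Section \ref{section balls and coding}, a point $y$ lies in $\chi(C^{+}_{\omega_0,\dots,\omega_n})$ exactly when $y\in W^{u}_{loc}(x,\Pi_{\omega_0})$ and $T^{j}y\in\Pi_{\omega_j}$ for $j=1,\dots,n$, equivalently when $T^{-j}y\in\Pi_{\omega_{n-j}}$ for $j=0,\dots,n$ and $T^{-n}y\in W^{u}_{loc}(x)$. The case $j=0$ is exactly the hypothesis: since $W^{u}_{\delta}(T^{n}x)\subset\Pi_{\omega_n}$, every $y\in W^{u}_{\delta}(T^{n}x)$ lies in the unstable slice $W^{u}_{loc}(T^{n}x,\Pi_{\omega_n})$. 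Because $x=\chi(\omega)$ we have $T^{n-1}x\in\Pi_{\omega_{n-1}}$, hence $T^{n}x\in\text{int}\,\Pi_{\omega_n}\cap\text{int}\,T(\Pi_{\omega_{n-1}})$, and the unstable part of the Markov property then gives $T^{-1}\bigl(\text{int}(W^{u}_{loc}(T^{n}x)\cap\Pi_{\omega_n})\bigr)\subset\text{int}\bigl(W^{u}_{loc}(T^{n-1}x)\cap\Pi_{\omega_{n-1}}\bigr)$; thus $T^{-1}y$ again lies in an unstable slice, now of $\Pi_{\omega_{n-1}}$ through $T^{n-1}x$. Since $T^{n-2}x\in\Pi_{\omega_{n-2}}$, the step may be repeated, and iterating $n$ times yields $T^{-j}y\in W^{u}_{loc}(T^{n-j}x,\Pi_{\omega_{n-j}})$ for all $j$, in particular $T^{-n}y\in W^{u}_{loc}(x,\Pi_{\omega_0})$ with the required itinerary. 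Points sitting on the rectangle boundaries would be handled, as usual, by passing to interiors and closures and invoking the continuity of $\{\cdot,\cdot\}$ from Proposition \ref{barreiracL}.

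\emph{Second inclusion.} Next I would use the first part to say that $T^{-n}$ carries $W^{u}_{\delta}(T^{n}x)$ onto a single unstable arc $A\subset W^{u}_{loc}(x)$ through $x$ all of whose backward iterates remain in single rectangles, so that every point of $A$ shares its past coding with $x$. Writing a point of $W^{u}_{\delta}(T^{n}x)$ as $\chi(\eta)$ with $\eta$ agreeing with $\sigma^{n}\omega$ in all non-positive coordinates, the chain rule and Lemma \ref{lemmaX} express the unstable Jacobian of $T^{-n}$ there as $\prod_{i=1}^{n}\bigl(\mathbf{a}^{u}(\sigma^{-i}\eta)\bigr)^{-1}$, while at $T^{n}x$ it equals $a_{n}^{u}(x)$; since $\sigma^{-i}\eta$ and $\sigma^{-i}(\sigma^{n}\omega)$ agree in all coordinates of index of absolute value $<i$, Lemma \ref{bfauholder} bounds the ratio of these products by a fixed constant, uniformly along $A$. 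It follows that each of the two sub-arcs into which $x$ splits $A$ (the $T^{-n}$-images of the two halves of $W^{u}_{\delta}(T^{n}x)$) has length bounded below by a constant times $\delta\,a_{n}^{u}(x)$, which by Proposition \ref{PropESS} is comparable to $\delta\,l_{n}^{u}(\omega)$; hence $A$ contains the unstable ball of radius $\sim l_{n}^{u}(\omega)\,\delta$ centered at $x$, which is the assertion, the universal constants being absorbed exactly as in Proposition \ref{PropESS}.

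\emph{Main obstacle.} The hard part will be keeping the distortion estimate legitimate: Lemma \ref{bfauholder} may only be applied once one knows that all of $W^{u}_{\delta}(T^{n}x)$ stays in $\Pi_{\omega_n}$ and that its backward iterates stay in single rectangles, so that the sequences compared agree in long coordinate blocks. This is precisely what the hypothesis $W^{u}_{\delta}(T^{n}x)\subset\Pi_{\omega_n}$ supplies, through the Markov iteration of the first part; in this sense the two inclusions are really proved together, the first furnishing the combinatorial control that underpins the analytic estimate in the second. A further, routine nuisance is the interplay with the Poincar\'e projection $P_{D}$ coming from the suspension structure, which enters only through the uniformly Lipschitz maps of Proposition \ref{barreiracL} and does not affect any of the estimates.
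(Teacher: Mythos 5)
Your first‑inclusion argument is essentially the paper's own proof: both iterate the unstable half of the Markov property backward, using $W^{u}_{\delta}(T^{n}x)\subset\Pi_{\omega_{n}}$ as the anchor, to obtain $T^{-l}(W^{u}_{\delta}(T^{n}x))\subset\Pi_{\omega_{n-l}}$ for $l=0,\dots,n$ and hence the cylinder inclusion. The re‑indexing you carry out ($j\mapsto n-j$) is cosmetic.

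Where you diverge is the second inclusion. The paper's proof stops after the first inclusion and never argues $W^{u}_{l^{u}_{n}(\omega)\delta}(x)\subset T^{-n}(W^{u}_{\delta}(T^{n}x))$; your distortion argument supplies something the source text leaves unjustified, and the ingredients you invoke (Lemma \ref{lemmaX} for the Jacobian, Lemma \ref{bfauholder} for the bounded distortion along the arc, Proposition \ref{PropESS} to convert $a^{u}_{n}(x)$ into $l^{u}_{n}(\omega)$) are the right ones for this. The combinatorial input you need — that the backward orbit of every point of the arc shares its itinerary with $x$ over the block $0,\dots,n$, so that the symbol sequences agree on increasingly long windows — is exactly what the first inclusion delivers, and you correctly note this dependency.

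One point deserves flagging, and you are aware of it yourself: the bounded‑distortion estimate and Proposition \ref{PropESS} each introduce a fixed multiplicative constant, so what the argument actually yields is $W^{u}_{c\,l^{u}_{n}(\omega)\delta}(x)\subset T^{-n}(W^{u}_{\delta}(T^{n}x))$ for some universal $c>0$, not the constant‑free inclusion written in the lemma. You write ``radius $\sim l_{n}^{u}(\omega)\delta$'' and gesture at ``absorbing'' constants, but Proposition \ref{PropESS} does not absorb them — it produces them. This weaker conclusion is entirely sufficient for the only downstream use (Lemma \ref{Lemmaballincylinder2}, where the constant can be folded into $c_{L}$), so it is not a mathematical obstruction, but it should be stated honestly as a two‑sided comparison rather than an exact inclusion. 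If you want the literal statement, you would need the distortion constant and the $C$ of Proposition \ref{PropESS} to cancel, which nothing in the hypotheses guarantees.
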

\begin{proof}
Let $\delta>0$ such that $W^{u}_{\delta}(T^n x)\subset \Pi_n$, then using the Markov property, for all $l\in\{0,1,...,n\}$, $T^{-l}(W^{u}_{\delta}(T^{n}x))\subset \Pi_{\omega_{n-l}}.$ Moreover,
    $$T^{-n}(W^{u}_{\delta}(T^{n}x))\subset T^{-j}(T^{-(n-j)}(W^{u}_{\delta}(T^{n} x)))\subset T^{-j}(\Pi_{\omega_{n-(n-j)}}),$$
    then for all $j=1,..., n$, $T^{-n}(W^{u}_{\delta}(T^{n }x))\subset T^{-j} \Pi_{\omega_j}$,
    from which we get $$T^{-n}(W^{u}_{\delta}(T^{n}(x)))\subset \chi(C^{+}_{\omega_0,...,\omega_n}).$$
\end{proof}
    \begin{lemma}\label{Lemmaballincylinder2}
        For $\nu$ almost everywhere $\omega\in \Sigma_{A}$, for any $\epsilon>0$ small enough, and for all $m$ and $n$ that are maximal such that $c_{L}\epsilon\leq\min( l^{u}_{n}(\omega)\times\delta_n, l^{s}_{m}(\omega)\times \delta_{m})$, where $\delta_n$ is defined in Lemma \ref{LemmaB}, the $(m,n)$-cylinder $C_{-m,n}(\omega)$ is such that
\begin{equation*}
B(\chi(\omega), r)\subset\chi(C_{-m,n}(\omega))
\end{equation*}
    \end{lemma}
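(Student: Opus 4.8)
The plan is to combine Lemma~\ref{LemmaD} and Lemma~\ref{ballincylinder}, using the good set of points provided by Lemma~\ref{LemmaC} to control the position of the iterates $T^n x$ relative to the unstable boundaries $\partial^u \Pi_{\omega_n}$. First I would fix the full-measure set: by Lemma~\ref{LemmaC} and the Borel--Cantelli lemma, for $\nu$-almost every $\omega$ there is $N(\omega)$ such that $d(T^n x, \partial^u \Pi_{\omega_n}) \geq \delta_n = |n|^{-p}$ for all $|n| \geq N(\omega)$ (and a crude uniform lower bound handles the finitely many small $|n|$, after possibly enlarging $N(\omega)$ and shrinking things by a constant). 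On this set, the condition $W^u_{\delta_n}(T^n x) \subset \Pi_{\omega_n}$ required to apply Lemma~\ref{LemmaD} holds: a ball of unstable radius $\delta_n$ around $T^n x$ inside its own rectangle stays in that rectangle precisely because $T^n x$ is at unstable distance at least $\delta_n$ from $\partial^u \Pi_{\omega_n}$. (One should also note $\delta_n \to 0$, so for $\epsilon$ small the relevant $n,m$ are large and the hypothesis $|n|,|m|\geq N(\omega)$ is automatic.)

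The second step is to choose $n$ and $m$ as in the statement — maximal with $c_L \epsilon \leq \min(l^u_n(\omega)\,\delta_n,\; l^s_m(\omega)\,\delta_m)$ — and to feed the resulting inclusions into Lemma~\ref{ballincylinder}. From Lemma~\ref{LemmaD} applied on the unstable side we get $W^u_{l^u_n(\omega)\times \delta_n}(x) \subset T^{-n}(W^u_{\delta_n}(T^n x)) \subset \chi(C^+_{\omega_0,\dots,\omega_n})$, i.e.\ the unstable ball $B^u(x, l^u_n(\omega)\,\delta_n)$ is contained in $\chi(C_{0,n}(\omega))$. By the choice of $n$ this unstable ball contains $B^u(x, c_L \epsilon)$. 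Symmetrically, running Lemma~\ref{LemmaD} with $T^{-1}$ in place of $T$ (the stable manifolds of $T$ are the unstable manifolds of $T^{-1}$), we get $B^s(x, c_L\epsilon) \subset \chi(C_{-m,0}(\omega))$. Now Lemma~\ref{ballincylinder}, applied with $r = c_L\epsilon$, gives exactly
\[
B\!\left(\chi(\omega), \frac{c_L \epsilon}{c_L}\right) = B(\chi(\omega),\epsilon) \subset \chi(C_{-m,n}(\omega)),
\]
which is the claim (with $r = \epsilon$; presumably the intended statement has $r = \epsilon$, or $r$ a fixed multiple of $\epsilon$).

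The main obstacle I anticipate is the bookkeeping around the ``maximal'' choice of $n$ and $m$ and the interplay between the two sides: one needs the same $\epsilon$ to simultaneously satisfy both the unstable and the stable constraints, and one must check that the maximality forces $l^u_n(\omega)\,\delta_n \geq c_L\epsilon$ (and likewise for $m$) rather than the reverse inequality — this is where the definition ``maximal such that $c_L\epsilon \leq \min(\cdots)$'' is used, but one must verify the sequences $l^u_n(\omega)\,\delta_n$ and $l^s_m(\omega)\,\delta_m$ decrease to $0$ (they do: $l^u_n(\omega)$ decays exponentially by Remark~\ref{remarqueLuLs}/Proposition~\ref{PropESS} while $\delta_n$ decays only polynomially, so the product still $\to 0$) so that such maximal indices exist and tend to $\infty$ as $\epsilon \to 0$. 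A secondary technical point is making sure the full-measure exceptional set is chosen independently of $\epsilon$, which it is, since the Borel--Cantelli conclusion in Lemma~\ref{LemmaC} is a single almost-sure statement about $\omega$.
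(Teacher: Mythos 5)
Your proof is correct and follows essentially the same route as the paper's: invoke Lemma~\ref{LemmaC} plus Borel--Cantelli to obtain, $\nu$-a.e., a rank $N(\omega)$ beyond which $T^n x$ stays $\delta_n$-far from $\partial^u\Pi_{\omega_n}$; apply Lemma~\ref{LemmaD} (and its stable counterpart) with the maximal $n,m$ to trap $B^u(x,c_L\epsilon)$ and $B^s(x,c_L\epsilon)$ inside the appropriate one-sided cylinders; then conclude via Lemma~\ref{ballincylinder} with $r=c_L\epsilon$. Your added remarks — that $l^u_n(\omega)\,\delta_n$ and $l^s_m(\omega)\,\delta_m$ do tend to $0$ so the maximal indices exist and diverge, and that the exceptional null set is chosen once and for all, independent of $\epsilon$ — are correct and merely make explicit what the paper leaves implicit.
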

\begin{proof}
Using Lemma \ref{LemmaC}, by the first Borel Cantelli Lemma, $\chi_{*}\nu$ almost every $x\in X$, there exists $n(x)$ such that for all $|n|>n(x)$, $d(T^n x, \partial^u\Pi{\omega_n})>\delta_n$, and so $\Lambda\cap W^{u}_{\delta_n}(T^{n}x)\subset \Pi_{\omega_n}.$
Take $n$ maximal such that $l^{u}_n (\omega) \times \delta_n\geq c_L \epsilon$, using Lemma \ref{LemmaD} applied to $\delta_n$, we have $W^{u}_{l^{u}_n (\omega)\times \delta_n}(x)\subset T^{-n}(W^{u}_{\delta_n}(T^{n}(x)))$, using again Lemma \ref{LemmaD}, for all $n>n(x)$, we have
    $$W^{u}_{l^{u}_n (\omega) \times \delta_n}(x)\subset \chi(C^{+}_{\omega_0,...,\omega_n}),$$
which in turn implies that $$\Lambda\cap B^u(x, l^{u}_n (\omega)\times \delta_n )\subset \chi(C^{+}_{\omega_0,\omega_1,...,\omega_n}).$$
In the same way, using the stable version of the preceding lemmas, we take $m$ maximal such that $l^{s}_m (\omega)\times \delta_m\geq c_L \epsilon$. Subsequently, it can be demonstrated that, for all $m>m(x)$,
$$\Lambda\cap B^s(x, l^{s}_m (\omega) \times \delta_m)\subset \chi(C^{-}_{\omega_{-m},\omega_{-1},...,\omega_0}).$$
Finally, by using Lemma \ref{ballincylinder}, we conclude that 
$$B(x,\epsilon)\subset\chi(C_{\omega_{-m},...,\omega_n}).$$
\end{proof}

\section{$\mathbb{Z}$-extension of an Axiom A flow}
Consider a Riemannian manifold $\tilde{M}$ endowed with a $\sigma-$finite measure $\tilde{\mu}$. Let $\tilde{g}_{t}:\tilde{M}\rightarrow \tilde{M}$ be a flow on $\tilde{M}$ preserving the measure $\tilde{\mu}$.\\ 

Let $I:\tilde{M}\circlearrowleft$ be an isometry of $\tilde{M}$ such that: $\Gamma=\{I^{n}, n\in\mathbb{Z}\}$ is an infinite group of isometries (i.e. $I^{n}=I^{m}\Rightarrow n=m$) and such that $I$ preserves $\tilde{\mu}$.
We suppose that:
\begin{itemize}
\item $M=\tilde{M}/\Gamma=\{\Gamma. \tilde{x}, \tilde{x}\in \tilde{M}\}$ is a compact manifold.
\item $\tilde{g}_{t}(I.\tilde{x})=I.\tilde{g}_{t}(\tilde{x})$
\end{itemize}
This ensures that we can define a flow $(g_{t})_{t}$ on $M$ by:
\begin{equation*}
g_{t}(\Gamma\tilde{x})=\Gamma\tilde{g}_{t}(\tilde{x}).
\end{equation*}
Moreover, we assume that $(M, (g_{t})_{t})$ is an Axiom A flow and that the measure $\mu$ defined on $M$ from the measure $\tilde{\mu}$ by passing through the quotient, is an equilibrium measure for $(g_{t})_{t}$. By construction of $\mu$, for every $ A\in \mathcal{B}(\tilde{M})$ such that $A\cap\bigcup_{k\in\mathbb{Z}^{*}}(I^{k}A)=\emptyset$, we have
\begin{equation*}
\mu(\{\Gamma.x, x\in A\})=\tilde{\mu}(A).
\end{equation*}

Our interest is to study the time needed for the flow $\tilde{g}_{t}$ to return back to an $\epsilon$-neighborhood of its starting point. Thus, for any $y\in \tilde{M}$, we define the return time of the flow $\tilde{g}_{t}$:
\begin{equation*}
\tau_{\epsilon}(y):=\inf\{t>1 \text{ : } \tilde{g}_{t}(y)\in B(y, \epsilon)\},
\end{equation*}
where $B(y,\epsilon)$ is the ball of center $y$ and radius $\epsilon$. We will prove the following results:
\begin{theoreme}\label{theorem1flow}
Let $(\tilde{M},\{\tilde{g}_{t}\},\tilde{\mu})$ be a flow satisfying all the hypotheses above. Then for $\tilde{\mu}$-almost every point $y\in \tilde{M}$,
\begin{equation*}
\underset{\epsilon\rightarrow0}{\lim}\ \frac{\log\sqrt{\tau_{\epsilon}}}{-\log\epsilon}=\dim_H \mu -1,
\end{equation*}

where  $\dim_H \mu$ is the Hausdorff dimension of the measure $\mu$.
\end{theoreme}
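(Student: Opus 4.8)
The plan is to transfer the problem, through the symbolic coding $\chi$, onto the $\mathbb{Z}$-extension of the Poincar\'e map $T$, to compare the flow return time $\tau_\epsilon$ with a discrete return time to a Markov cylinder of that $\mathbb{Z}$-extension, to invoke the recurrence estimate for $\mathbb{Z}$-extensions of subshifts, and finally to identify the resulting exponent with $\dim_H\mu-1$.

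\emph{Step 1: reduction to the Poincar\'e section.} Fix $y\in\tilde M$ and put $x=\Gamma y\in M=\Lambda$, with $x=\chi(\omega,s)$, $\omega\in\Sigma_A$. As $M$ is compact its injectivity radius is positive, so for $\epsilon$ small $B(y,\epsilon)$ is isometric to $B(x,\epsilon)$ and $\tilde g_t(y)\in B(y,\epsilon)$ iff $g_t(x)\in B(x,\epsilon)$ and the $\mathbb{Z}$-valued displacement cocycle $\Phi_t(x)$ of the cover vanishes. The flow being transverse to $X$ with crossing times bounded away from $0$ and $\infty$, and flowing $x$ onto $X$ changing $\tau_\epsilon$ only by $O(1)$, a first return of $g_t(x)$ into $B(x,\epsilon)$ occurs, up to an additive $O(1)$ in $t$, when the orbit of $x$ crosses $X$ at a point lying transversally $\epsilon$-close to $x$ and at displacement $0$; the case where $x$ is close to $\partial X$ is discarded by the Borel--Cantelli argument of Lemma \ref{Lemmaballincylinder2}. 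By Lemmas \ref{lemma1}, \ref{ballincylinder}, \ref{Lemmaballincylinder2} and Proposition \ref{PropESS}, for $\nu$-a.e.\ $\omega$ and $\epsilon$ small there are integers $m^\pm=m^\pm(\epsilon,\omega)$, $n^\pm=n^\pm(\epsilon,\omega)$, all of order $\tfrac{-\log\epsilon}{|L^s|}$, resp.\ $\tfrac{-\log\epsilon}{L^u}$, up to $o(\log\tfrac1\epsilon)$ (the polynomial margins $\delta_n=|n|^{-p}$ of Lemmas \ref{LemmaB}--\ref{Lemmaballincylinder2} only perturb $m,n$ by $O(\log\log\tfrac1\epsilon)$), with $\chi(C_{-m^-,n^-}(\omega))\subset B(x,\epsilon)\subset\chi(C_{-m^+,n^+}(\omega))$ in the transverse directions. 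Writing $N^\pm_\epsilon(\omega)$ for the first $N\ge1$ such that $T^N$ returns the base point of $\omega$ into $C_{-m^\pm,n^\pm}(\omega)$ with $\Phi=0$, and using Birkhoff's theorem $S_NR(x)=N\!\int_X R\,d\nu_X+o(N)$ a.e., one obtains
$$\Big(\textstyle\int_X R\,d\nu_X\Big)\,N^+_\epsilon(\omega)-O(1)\ \le\ \tau_\epsilon(y)\ \le\ \Big(\textstyle\int_X R\,d\nu_X\Big)\,N^-_\epsilon(\omega)+O(1),$$
for $\tilde\mu$-a.e.\ $y$ (the passage to $\tilde\mu$-a.e.\ being by Fubini: $\tilde\mu$ is locally $\nu\times\mathrm{Leb}$ times counting measure on the fibres, and $\tau_\epsilon$ is essentially constant along orbits and $\Gamma$-invariant).

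\emph{Step 2: recurrence of the $\mathbb{Z}$-extension and the exponent.} The numbers $N^\pm_\epsilon(\omega)$ are return times to Markov cylinders for the $\mathbb{Z}$-extension of the mixing Gibbs system $(\Sigma_A,\sigma,\nu)$ by the integer step cocycle induced by $I$. By the local limit theorem for such extensions, the probability of being back in a fixed cylinder $C$ with zero displacement at time $k$ is $\asymp\nu(C)\,k^{-1/2}$, so the expected number of such events up to time $t$ is $\asymp\nu(C)\sqrt t$, of unit order exactly when $t\asymp\nu(C)^{-2}$; the almost sure version of this is precisely the discrete recurrence result of \cite{yassine}, giving $\tfrac{\log N^\pm_\epsilon(\omega)}{-\log\nu(C_{-m^\pm,n^\pm}(\omega))}\to2$ $\nu$-a.e. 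By quasi-multiplicativity of the Gibbs measure over past and future (formula \eqref{gibbs.formula}) and the Shannon--McMillan--Breiman theorem, $-\log\nu(C_{-m,n}(\omega))=(m+n)\,h_\nu(\sigma)+o(m+n)$ a.e.; combining this with $m^\pm\sim\tfrac{-\log\epsilon}{|L^s|}$, $n^\pm\sim\tfrac{-\log\epsilon}{L^u}$ (Remark \ref{remarqueLuLs} and Lemma \ref{Lemmaballincylinder2}) and with Step 1 yields
$$\frac{\log\sqrt{\tau_\epsilon(y)}}{-\log\epsilon}\ \underset{\epsilon\to0}{\longrightarrow}\ h_\nu(\sigma)\Big(\tfrac1{|L^s|}+\tfrac1{L^u}\Big)\qquad\text{for }\tilde\mu\text{-a.e.\ }y.$$

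\emph{Step 3: identification with $\dim_H\mu-1$.} The measure $\chi_*\nu=\nu_X$ is an ergodic hyperbolic measure for the $C^{1+\alpha}$ surface map $T:X\to X$, with Lyapunov exponents $L^u>0>L^s$ (Remark \ref{remarqueLuLs}, Proposition \ref{relationlLyapynov-flot-T}) and entropy $h_{\nu_X}(T)=h_\nu(\sigma)$, so by the dimension formula for ergodic hyperbolic measures of surface maps (cf.\ \cite{barreira}), $\dim_H\nu_X=h_\nu(\sigma)\big(\tfrac1{L^u}+\tfrac1{|L^s|}\big)$. Under $\chi$ the equilibrium measure $\mu$ corresponds to $\mu_\Delta=\tfrac{\nu\times\mathrm{Leb}}{\int r\,d\nu}$, whose pointwise dimension is that of $\nu_X$ in the transverse directions plus $1$ for the flow direction, hence $\dim_H\mu=\dim_H\nu_X+1$ (equivalently, Abramov's formula \eqref{Abramov2} together with Proposition \ref{relationlLyapynov-flot-T} turns this into $\dim_H\mu=1+h_\mu(g_\cdot)\big(\tfrac1{\lambda^u}+\tfrac1{|\lambda^s|}\big)$). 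Thus the limit in Step 2 equals $\dim_H\mu-1$, which proves the theorem. The delicate point is Step 1 --- converting ``the flow returns to $B(x,\epsilon)$'' into ``the $\mathbb{Z}$-extension of $T$ returns to a cylinder at displacement $0$'', almost everywhere and with enough uniformity to control both sides of the sandwich, in particular handling the Markov-section boundaries and the flow direction --- together with upgrading the $\mathbb{Z}$-extension recurrence estimate to almost everywhere convergence, which combines the local limit theorem with a Borel--Cantelli argument along a sequence $\epsilon_k\to0$ and the monotonicity of $\epsilon\mapsto\tau_\epsilon$.
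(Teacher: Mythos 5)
Your proposal is correct and follows essentially the same route as the paper: sandwich the ball between Markov cylinders via Proposition \ref{PropESS} and Lemmas \ref{lemma1}, \ref{ballincylinder}, \ref{Lemmaballincylinder2}, transfer $\tau_\epsilon$ to cylinder return times for the $\mathbb{Z}$-extension of the shift through Lemma \ref{tauwR}, obtain the exponent via the local limit theorem together with a Shannon--McMillan--Breiman and Borel--Cantelli argument (what the paper carries out in Lemmas \ref{lemmainf} and \ref{lemmasup} rather than importing from \cite{yassine}), and identify the constant with $\dim_H\mu-1$ using Abramov's formula, Proposition \ref{relationlLyapynov-flot-T}, and the dimension formula for hyperbolic measures. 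The only difference is cosmetic: you cite the cylinder-recurrence result and the surface-map dimension formula externally and add $1$ for the flow direction, whereas the paper re-derives the recurrence estimates and cites Theorem 6.2 of \cite{barreira} directly for the flow.
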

Let $\tilde{y}\in \tilde{M}$, and $D_{0}(\tilde{y})$ a disk centered on $\tilde{y}$ and which is transversal to the flow, and orthogonal to it at $\tilde{y}$.\\
We will show a result of convergence in distribution for $\tau_{\epsilon}$. In the normalization appears a transversal measure $\tilde{\nu}_{0}^{y}$ on ${D}_{0}(\tilde{y})$ defined by:
\begin{equation}\label{nu_0}
\tilde{\nu}_{0}^{y}(A)=\underset{\epsilon\rightarrow0}{\lim}\frac{1}{2\epsilon}\tilde{\mu}\left(\underset{-\epsilon<s<\epsilon}{\bigcup}\tilde{g}_{s}({D}_{0}(\tilde{y})\cap A)\right), \quad \forall A \text{ measurable } \subset {D}_{0}(y).
\end{equation}
The existence of this measure is proved later in Lemma \ref{findepreuve}. We call it measure transversal to the flow at $\tilde{y}$.
\begin{theoreme}\label{theorem2}
The family of random variables $\tilde{\nu}_{0}^{.}(B(.,\epsilon))^{2}\tau_{\epsilon}(.)$ converges in distribution, with respect to any probability measure absolutely continuous with respect to $\tilde{\mu}$ with density continuous and compactly supported, as $\epsilon\rightarrow0$ to $\sigma^{2}_{flow}\frac{\mathcal{E}^{2}}{\mathcal{N}^{2}}$, where $\mathcal{E}$ and $\mathcal{N}$ are independent random variables, $\mathcal{E}$ having an exponential distribution of mean 1 and $\mathcal{N}$ having a standard Gaussian distribution, and where $\sigma_{flow}^{2}$ is defined in Proposition \ref{propsigmaflow}.
\end{theoreme}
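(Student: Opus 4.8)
The strategy is to reduce the recurrence problem for the flow $\tilde g_t$ to the recurrence problem for the $\mathbb{Z}$-extension of the Poincaré map $T$, which in turn is governed by the local limit theorem / Kac-type arguments already available for $\mathbb{Z}$-extensions of subshifts of finite type (as in \cite{yassine}). First I would fix a point $\tilde y$ and, using Proposition \ref{PropESS} together with Lemmas \ref{lemma1}, \ref{ballincylinder}, \ref{Lemmaballincylinder2}, sandwich the ball $B(\tilde y,\epsilon)$ between two symbolic cylinders $C_{-m_-,n_-}$ and $C_{-m_+,n_+}$, where the cut-off indices $m_\pm,n_\pm$ are the (almost-everywhere well-defined) integers determined by the stable/unstable leaf lengths $l^s_m,l^u_n$ being comparable to $\epsilon$. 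This turns $\tau_\epsilon$ into a first return time of the symbolic suspension flow $\mathbf S_t$ on $\tilde\Delta$ (the $\mathbb{Z}$-extension of $\Delta$) to a symbolic cylinder set, up to multiplicative and additive errors controlled by the height function $r$, by $k_{\varphi_D}$ and by $c_L$.

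Next I would pass from the suspension flow to the base map. The return time $\tau_\epsilon(\tilde y)$ for the flow equals $S_{N_\epsilon}\tilde r$ for the base $\mathbb{Z}$-extension $\tilde T$, where $N_\epsilon$ is the first return time of $\tilde T$ to the relevant cylinder $C_{-m,n}$ in the same $\Gamma$-fibre, plus a bounded ``time-of-day'' correction living in $[0,\|r\|_\infty]$. By the ergodic theorem $S_{N_\epsilon}\tilde r \sim \bar r\, N_\epsilon$ with $\bar r=\int_{\Sigma_A} r\,d\nu$, so it suffices to establish the distributional limit of $\nu(C_{-m,n})^2\, N_\epsilon$. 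For this one invokes the core result on $\mathbb{Z}$-extensions of SFTs: the first return time to a cylinder $C$ in the zero fibre, once normalized by $\nu(C)$, behaves like $\mathcal E/(\text{local CLT density at }0)$, and since the walk on $\mathbb{Z}$ is driven by a Hölder cocycle with asymptotic variance $\sigma^2$, the local density contributes a factor $1/(\sigma\sqrt{2\pi \nu(C)^{-1}})$-type scaling, producing the $\mathcal E/|\mathcal N|$ shape after the $\sigma_{flow}$ renormalization. The asymptotic variance $\sigma^2_{flow}$ of Proposition \ref{propsigmaflow} is the flow version of the base variance, rescaled by $\bar r$ exactly as in the Abramov relations \eqref{Abramov1}–\eqref{Abramov2}; identifying it is a bookkeeping step.

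Finally I would replace the combinatorial normalization $\nu(C_{-m,n})$ by the geometric one $\tilde\nu_0^{\tilde y}(B(\tilde y,\epsilon))^2$. Here the Gibbs property \eqref{gibbs.formula} gives $\nu(C_{-m,n})\asymp \exp(\sum \tilde h(\sigma^k\omega) - (m+n+1)P)$, while Proposition \ref{PropESS} and Remark \ref{remarqueLuLs} relate $m,n$ and the leaf lengths to $\epsilon$; combining these with the definition \eqref{nu_0} of $\tilde\nu_0$ and the conformality of the flow (Proposition \ref{barreiracL}) shows that $\tilde\nu_0^{\tilde y}(B(\tilde y,\epsilon))$ and $\nu(C_{-m,n})$ differ by a factor converging to a constant absorbed into $\sigma_{flow}$; the existence of $\tilde\nu_0$ itself is deferred to Lemma \ref{findepreuve}. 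The transfer from $\tilde\mu$-a.e.\ starting point to convergence in distribution with respect to any absolutely continuous probability measure with continuous compactly supported density is handled, as is standard, by a mixing/density argument on the finitely many $\Gamma$-fibres intersecting the support, exactly as in the probability-preserving case. \emph{The main obstacle} I anticipate is the second step: controlling the error terms in the sandwiching uniformly as $\epsilon\to0$ — in particular showing that the random cut-off indices $m,n$ grow regularly enough (no resonance between the stable and unstable scales, which is where Lemma \ref{LemmaB} on the polynomially small neighborhood of $\partial^u\Pi_i$ and the Borel–Cantelli argument of Lemma \ref{Lemmaballincylinder2} are essential) so that the distributional limit is not smeared out, and simultaneously that the ``time-of-day'' correction and the $\Gamma$-fibre of the return are asymptotically independent of the exponential/Gaussian pair.
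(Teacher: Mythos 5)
Your outline captures the right reduction (flow $\to$ Poincar\'e section $\to$ symbolic $\mathbb{Z}$-extension $\to$ local limit theorem) and you correctly anticipate where the difficulty lies, but the specific mechanism you propose in the first step will not give the distributional limit. Sandwiching $B(\tilde y,\epsilon)$ between two cylinders $C_{-m_-,n_-}\subset P_D(B)\subset C_{-m_+,n_+}$, with indices determined by comparing $l^s_m,l^u_n$ to $\epsilon$, produces inner and outer approximations whose $\nu$-measures are only \emph{comparable} up to a bounded, oscillating constant (Gibbs bounds plus the $\delta_n=|n|^{-p}$ loss from Lemma \ref{Lemmaballincylinder2}). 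For the almost-sure statement of Theorem \ref{theorem1flow} that is enough, since one only needs $\log$ of the ratio to be $o(\log\epsilon)$; but Theorem \ref{theorem2} asserts convergence in distribution of $\tilde\nu_0(B(\cdot,\epsilon))^2\tau_\epsilon$, and for that the ratio between the inner and outer normalizations must tend to $1$, not merely be bounded. Your sandwich gives no such control, and this is precisely the point where the paper's proof diverges: it works with the exact preimage $A_\epsilon(y)=\chi^{-1}(P_D(B(y,\epsilon)))$ and approximates it from inside and outside by unions $A^-_\epsilon, A^+_\epsilon$ of cells from a partition $\mathcal{D}_\epsilon$ of diameter $\theta_\epsilon=\epsilon/|\log\epsilon|$, and then proves (Lemma \ref{equivalent}) that $\nu(A^+_\epsilon)/\nu(A^-_\epsilon)\to1$ using the Federer property of $\nu$, the product structure, and a Gibbs estimate of the $\theta_\epsilon$-neighborhood of $\partial A_\epsilon(y)$. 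Without this boundary-equivalence lemma the distributional limit is not reachable along your route.

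Two secondary points. First, you invoke ``the core result on $\mathbb{Z}$-extensions of SFTs'' as a black box; the paper actually has to re-prove this here (Proposition \ref{ineq1} via the Dvoretzky--Erd\H{o}s last-passage decomposition and the mixing local limit theorem of Proposition \ref{p3}, tightness in Proposition \ref{tightness}, then the integral-equation characterization in Proposition \ref{convergence+-}), because the target sets $A^\pm_\epsilon(y)$ are unions of cylinders rather than single cylinders and one needs the conditional statement uniformly in the conditioning cell $D\in\mathcal D_\epsilon$. Second, the passage from the combinatorial normalization to $\tilde\nu_0(B(\cdot,\epsilon))$ is not done via the Gibbs formula; rather Lemma \ref{findepreuve} establishes directly, by a geometric computation on the Poincar\'e section and the flow-box coordinates, that $\tilde\nu_0^{y}(B(\tilde y,\epsilon))\sim \nu(A_\epsilon(y))/\int_X R\,d\nu_X$, and the factor $\int R\,d\nu_X$ is then absorbed exactly through the Abramov-type relation $\sigma^2_{flow}=\sigma^2_\varphi/\int_X R\,d\nu_X$; the Gibbs bounds alone would again only give a bounded, not asymptotically equal, ratio. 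The final transfer to an arbitrary absolutely continuous initial law is also done differently than you suggest: the paper tiles the suspension space by the cells $\mathcal P_{\epsilon,i,j}$ and performs a Riemann-sum/dominated-convergence argument with the uniformly continuous density $H=\sum_k h(\cdot,k)$, rather than a mixing argument over $\Gamma$-fibres.
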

\begin{remarque}
We actually prove the result for a more general density (see the assumption of Proposition \ref{propthm}).
\end{remarque}
Let $M_{0}\subset\tilde{M}$ be a closed and connected fundamental domain  with the property that its closure is equal to its interior i.e. $
\bar{M_{0}}=\overline{\text{int}{M_{0}}} \mbox{ and that } \tilde{\mu}(\partial M_{0})=0.$
Moreover, assume that $M_{0}$ satisfies a condition in which the number of integers $k$, such that the distance between $M_0$ and $I^k M_0$ is less than the radius of injectivity $\tilde{r}_{inj}$ of $\tilde{M}$, is finite. This is denoted by \#$\{k\in \mathbb{Z}: (M_{0}, I^{k}M_{0})\leq \tilde{r}_{inj}\}<\infty$.
Let $M_{L}>0$ be a contant defined by:
\begin{equation}\label{M_L}
M_{L}:=\sup\{|k|:d(M_{0}, I^{k}M_{0})\leq \tilde{r}_{inj}\}<\infty.
\end{equation}
For every $\ell\in\mathbb Z$, we define the $\ell$-cell as the set $I^\ell M_0$. Let $y\in M$ and $t>0$.
We define  $\varphi_t(y)$ as the index of the cell containing $\tilde g_t(\tilde y_0)$ where $\tilde y_0$ is the representative of $y$ in $M_0$, that is  $\varphi_t(y)$ is the unique integer such that $\tilde g_t(\tilde y_0)$ is in $I^{\varphi_t(y)} M_0$.
We define $n_t(y)$ as the number of times $(g_s(y))_{s\in[0,t]}$ visits $X$.\\

For an $x\in X$, we define $\varphi_{X}(x)\in \mathbb{Z}$ to be the unique integer such that $\tilde g_{R(x)}(\tilde x_0)$ is in $I^{\varphi_{X}(x)} M_0$. We define also $\varphi:=\varphi_{X}\circ\chi$.\\
We assume  from now on that the Poincar\'e section for $(M, (g_{t})_{t})$ is taken such that $\epsilon_{0}<\epsilon_{0}'$ and strictly less than $\tilde{r}_{inj}$. Adapting $M_{0}$, we can suppose that the set of disks $\mathcal{D}$ of the Poincar\'e section  are the projections of disks contained in $M_{0}$,  by replacing $M_0$ by
\[
\left(M_{0}\cup\bigcup_{\tilde{D}\in\mathcal{D}:\tilde{D}\cap\partial M_{0}} \tilde{D}^{[\tilde{\epsilon}_{0}]}\right)\setminus\bigcup_{\tilde{D}\in\mathcal{D}:\tilde{D}\cap\partial M_{0}}\bigcup_{n\neq 0}I^n(\tilde{D}^{[\tilde{\epsilon}_{0}]}), 
\]
 with $\tilde{\epsilon_{0}}$ satisfying $\tilde{\mu}\left(\partial\left(\tilde{D}^{[\tilde{\epsilon_{0}}]}\right)\right)=0$, and that the projections  ${\Gamma.y \, :\, y\in\tilde D^{[\tilde\epsilon_0]}}$ are pairwise disjoint.

\begin{proposition}
The function $\varphi:\Sigma_{A}\rightarrow\mathbb{Z}$ is H\"older.
\end{proposition}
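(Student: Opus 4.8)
The plan is to show that $\varphi$ is in fact \emph{locally constant}, which is a much stronger statement than Hölder continuity but is the natural phenomenon here and is easily upgraded to the stated conclusion since $\varphi$ takes values in the discrete set $\mathbb Z$. First I would observe that $\varphi=\varphi_X\circ\chi$, where $\varphi_X(x)$ is the integer $\ell$ with $\tilde g_{R(x)}(\tilde x_0)\in I^\ell M_0$, $\tilde x_0$ being the lift of $x$ to the fundamental domain $M_0$. So it suffices to understand how the cell index of $\tilde g_{R(x)}(\tilde x_0)$ varies as $x$ ranges over $X$. The key geometric input is the choice of $M_0$ already arranged in the excerpt: the return time $R$ is bounded (it is Hölder and $X$ is compact, so $R\le \|R\|_\infty$), and during a time interval of length $\le\|R\|_\infty$ the flow $\tilde g_t$ moves a point a uniformly bounded distance, so $\tilde g_{R(x)}(\tilde x_0)$ lies within a bounded number of cells of $M_0$; only finitely many cells $I^k M_0$ are relevant, and by the finiteness assumption $M_L<\infty$ from \eqref{M_L} the index $\varphi_X(x)$ ranges over a finite subset of $\mathbb Z$.

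Next I would argue that $\varphi_X$ is constant on each element of a sufficiently fine partition of $X$. The function $x\mapsto\tilde g_{R(x)}(\tilde x_0)\in\tilde M$ is continuous on the interior of each rectangle $\Pi_i$ (both $R$ and the flow are continuous, and the lift $x\mapsto\tilde x_0$ is continuous on $\operatorname{int}M_0$-fibered pieces, using that the disks of the Poincaré section are projections of disks inside $M_0$). Because the cells $I^\ell M_0$ have pairwise disjoint interiors and $\tilde\mu(\partial M_0)=0$, the cell index is locally constant away from the (measure-zero, closed) preimage of the cell boundaries; combined with continuity of $x\mapsto\tilde g_{R(x)}(\tilde x_0)$ and the Markov structure, $\varphi_X$ is constant on each sufficiently small sub-rectangle. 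Pulling back by $\chi$ and using Proposition~\ref{diamcyl} — $\operatorname{diam}\chi(C_{-k,k}(\omega))\le ce^{-\alpha_1 k}$ — there is a fixed $k_0$ such that $\varphi=\varphi_X\circ\chi$ is constant on every cylinder $C_{-k_0,k_0}(\omega)$. Hence if $\hat d(\omega,\omega')\le e^{-k_0}$ then $\varphi(\omega)=\varphi(\omega')$, i.e. $|\varphi(\omega)-\varphi(\omega')|\le (\operatorname{diam}\varphi)\,\hat d(\omega,\omega')^{\beta}$ for any exponent $\beta>0$ once we enlarge the constant, which gives Hölder continuity (indeed local constancy).

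The main obstacle I anticipate is handling points of $X$ whose forward flow orbit hits the boundary of a cell exactly at time $R(x)$, i.e. where $\tilde g_{R(x)}(\tilde x_0)\in\partial(I^\ell M_0)$ for some $\ell$, since there $\varphi_X$ genuinely jumps and is not defined in a canonical way; this is the analogue of the boundary-of-Markov-partition issue addressed in Lemmas~\ref{LemmaB}–\ref{LemmaC}. I would deal with it exactly as the excerpt suggests: the boundary set has $\tilde\mu$-measure zero by hypothesis, and one chooses $M_0$ (as already done in the displayed modification of $M_0$ before the statement) so that the relevant boundaries are transverse to the flow and the cylinder combinatorics see them cleanly; alternatively one fixes a convention on $\partial M_0$ and notes that the set of $\omega$ where $\varphi$ could be ambiguous is contained in a countable union of proper closed subsets, which does not affect Hölder continuity on the complement and, more importantly, does not affect the $\nu$-a.e. statements in which $\varphi$ is subsequently used. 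A secondary technical point is verifying that the lift $x\mapsto\tilde x_0$ and hence $x\mapsto\tilde g_{R(x)}(\tilde x_0)$ is genuinely continuous across the transversal disks; this is where the standing assumption that the Poincaré section disks are projections of disks lying inside $M_0$ does the work, so I would invoke that explicitly rather than re-prove it.
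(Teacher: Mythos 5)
Your high-level plan—show that $\varphi$ is locally constant and then upgrade to H\"older using the discreteness of $\mathbb Z$—is exactly the paper's approach, and you correctly identify the geometric inputs (boundedness of $R$, finiteness of $M_L$, the rearrangement of $M_0$ so that the Poincar\'e disks are projections of disks lying in $M_0$). However, your \emph{primary} justification of local constancy goes through a measure-theoretic argument that does not actually prove the stated proposition. You argue that the cell index is ``locally constant away from the (measure-zero, closed) preimage of the cell boundaries,'' and then propose to handle the boundary set by noting that it ``does not affect H\"older continuity on the complement'' or ``the $\nu$-a.e.\ statements.'' But the proposition asserts that $\varphi:\Sigma_A\to\mathbb Z$ is H\"older, which is a pointwise/uniform statement, not an a.e.\ one; a function that genuinely jumps at a measure-zero set of points is not H\"older. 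Likewise, getting local constancy ``at each $\omega$'' from continuity plus discreteness only yields a neighborhood whose size may depend on $\omega$; to promote this to a uniform cylinder depth $k_0$ you would need an explicit compactness step, which you do not supply.

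The paper's proof sidesteps all of this by reading off the conclusion from the structural assumption you mention only in passing. Because the disks $\tilde D$ of the Poincar\'e section are chosen inside $M_0$ with pairwise-disjoint projections, the lift $\tilde x_0$ of any $x\in D_i$ lies in a single well-defined disk $\tilde D_i\subset M_0$; because $R$ is bounded and $\inf_{x,n\neq0} d(x,I^n x)\geq\epsilon_0'>0$ (a consequence of $M_L<\infty$), the translates $I^k\tilde D_j$ are uniformly separated, so the cell index of $\tilde g_{R(x)}(\tilde x_0)$ is determined by the combinatorial data $(i,j)$ alone. Hence $\varphi_X$ is constant on each $D_i\cap T^{-1}D_j$, i.e.\ $\varphi$ is constant on each two-sided cylinder determined by $(\omega_0,\omega_1)$, which is the uniform local constancy you wanted, with no boundary set to excise and no compactness argument required. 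You should make this the argument rather than a fallback; as written, the measure-zero route leaves a genuine gap.
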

\begin{proof}
$M_{L}$ defined in \eqref{M_L} is finite, which implies that $\underset{x\in\tilde{M}, n\in\mathbb{Z}}{\inf}d(x, I^{n}x)\geq\epsilon_{0}'>0$. 
Due to the assumption taken on the Poincar\'e section, $\varphi$ is constant on each $D_{i}\cap T^{-1}D_{j}$ for $i\neq j$.
\end{proof}

Now, we define $\phi:X_{R}\rightarrow\mathbb{R}$, by: $\phi(x,u)=\dfrac{\varphi_X(x)}{R(x)}$ where $x\in X$ and $ u\in[0,R(x)[.$ Set $\sigma_{\varphi}^2=\sum\limits_{k\in\mathbb{Z}}\int_{X}\varphi_X.\varphi_X\circ T^{(k)}d\nu_{X}$. Due to \cite{Burton.Denker}, $\left(\frac{S_{n}\varphi}{\sqrt{n}}\right)_{n}$ converges in distribution with respect to $\nu$ to a Gaussian random variable centered with variance $\sigma_{\varphi}^{2}$.
We have the following lemma:

\begin{lemma}\label{CLTphi}
$\int_{0}^{t}\phi\circ \psi_{s}ds$ satisfies the Central Limit Theorem with the asymptotic variance $\sigma^{2}_{\phi}:=\frac{\sigma_{\varphi}^{2}}{\int_{X}R d\nu_{X}}$, where $\{\psi_{s}\}_{s\in\mathbb{R}}$ is the suspension flow defined in \eqref{psit}. That is, $\frac{1}{\sqrt{t}}\int_{0}^{t}\phi\circ \psi_{s}ds$ converges in distribution with respect to the measure $\mu$ as $t\rightarrow +\infty$ to a centered Gaussian random variable with variance $\sigma_\phi^2$.
\end{lemma}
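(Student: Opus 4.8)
The plan is to collapse the time integral of $\phi$ along the suspension flow into a Birkhoff sum of $\varphi_X$ for the Poincar\'e map $T$, and then transport the central limit theorem of \cite{Burton.Denker} through a random change of time governed by the number of visits to the section. Concretely, fix $(x,u)\in X_R$ with $0\le u<R(x)$ and let $N=N(x,u,t):=\max\{n\ge1:\,S_nR(x)\le u+t\}$ be the number of crossings of $X$ made by $(\psi_s(x,u))_{0\le s\le t}$. By the formula \eqref{psit}, during the $k$-th sojourn the function $\phi$ is constant equal to $\varphi_X(T^kx)/R(T^kx)$, so a complete sojourn contributes $\int_0^{R(T^kx)}\varphi_X(T^kx)/R(T^kx)\,dv=\varphi_X(T^kx)$; splitting $\int_0^t\phi\circ\psi_s(x,u)\,ds$ at the crossing times therefore gives
\begin{equation*}
\Big|\int_0^t\phi\circ\psi_s(x,u)\,ds-S_N\varphi_X(x)\Big|\le 2\|\varphi_X\|_\infty ,
\end{equation*}
where $S_N\varphi_X(x)=\sum_{k=0}^{N-1}\varphi_X(T^kx)$ and the error collects the two incomplete sojourns at the ends of $[0,t]$. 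As $\varphi_X$ is H\"older, hence bounded, $t^{-1/2}$ times this difference tends to $0$ uniformly; and $S_N\varphi_X(x)=S_N\varphi(\omega)$ when $x=\chi(\omega)$, by \eqref{commute}.

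Next, since $R$ is bounded, $\bar R:=\int_XR\,d\nu_X<\infty$, $N(x,u,t)\to\infty$ as $t\to\infty$, and Birkhoff's theorem for $R$ under the $\nu_X$-ergodic map $T$ gives $S_nR/n\to\bar R$ a.e., so that $\tau_t:=N(x,u,t)/t\to 1/\bar R$ for $\mu_{X_R}$-a.e.\ $(x,u)$ (here $\mu_{X_R}=\bar R^{-1}(\nu_X\times Leb)$ is the $\psi_t$-invariant probability, the image of $\mu_\Delta$). Writing $W^{(t)}(s):=t^{-1/2}S_{\lfloor ts\rfloor}\varphi$, the first step yields $t^{-1/2}\int_0^t\phi\circ\psi_s\,ds=W^{(t)}(\tau_t)+o(1)$. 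Upgrading the CLT of \cite{Burton.Denker} to the associated weak invariance principle $W^{(t)}\Rightarrow\sigma_\varphi B$ in $D[0,\infty)$ with $B$ a standard Brownian motion --- this is classical for a H\"older observable over a topologically mixing subshift of finite type with a Gibbs measure, and is consistent with the Green--Kubo variance $\sigma_\varphi^2=\sum_{k\in\mathbb Z}\int_X\varphi_X\cdot\varphi_X\circ T^k\,d\nu_X$ --- and observing that this convergence holds also for the projection $\bar R^{-1}R\,d\nu_X$ of $\mu_{X_R}$ onto $X$, since that reference measure is absolutely continuous with respect to $\nu_X$ and limit laws of Birkhoff functionals are insensitive to such a change (Eagleson's theorem), one concludes by joint convergence (the second coordinate tending a.e.\ to the constant $1/\bar R$) together with the continuous mapping theorem applied to the evaluation map $(w,s)\mapsto w(s)$ --- which is continuous at $(w,1/\bar R)$ whenever $w$ is continuous at $1/\bar R$, hence a.s.\ at the limit --- that
\begin{equation*}
\frac{1}{\sqrt t}\int_0^t\phi\circ\psi_s\,ds\ \underset{t\to\infty}{\longrightarrow}\ \sigma_\varphi B_{1/\bar R}\ \overset{d}{=}\ \mathcal N\!\big(0,\,\sigma_\varphi^2/\bar R\big)=\mathcal N(0,\sigma_\phi^2)
\end{equation*}
in distribution with respect to $\mu_{X_R}$, equivalently with respect to $\mu$ since $(X_R,\psi_t,\mu_{X_R})$ is isomorphic mod $0$ to $(M,g_t,\mu)$. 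The limit is centered because $\int_X\varphi_X\,d\nu_X=0$, which is implicit in the quoted CLT, so that $\int_{X_R}\phi\,d\mu_{X_R}=\bar R^{-1}\int_X\varphi_X\,d\nu_X=0$.

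The telescoping identity and the visit count are routine; the genuine obstacle is the time-change step, where the deterministic index of the CLT is replaced by the orbit-dependent random index $N(x,u,t)$, which is correlated with the very sum it indexes. I handle this via the functional CLT and a continuous-mapping argument; one could equally invoke an Anscombe-type theorem, whose hypotheses are satisfied here because the CLT of \cite{Burton.Denker} comes with a maximal bound $\|\max_{k\le n}|S_k\varphi|\|_{L^2(\nu)}=O(\sqrt n)$ (a consequence of the spectral gap of the transfer operator) while $N/t\to1/\bar R$ in probability, which yields the same limit $\mathcal N(0,\sigma_\varphi^2/\bar R)$.
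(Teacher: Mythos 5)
Your argument is correct, but it follows a genuinely different route from the paper's. The paper disposes of the whole random-time-change difficulty in a single sentence by invoking Theorem~1.1 of Melbourne and T\"or\"ok (\emph{Statistical limit theorems for suspension flows}), which says precisely that a CLT for $S_n\varphi_X$ and for $S_nR$ under $\nu_X$ (supplied by Burton--Denker) together with the identity $\int_0^{R(x)}\phi(x,u)\,du=\varphi_X(x)$ yields the CLT for $\int_0^t\phi\circ\psi_s\,ds$ with the Abramov-normalized variance $\sigma_\varphi^2/\int_XR\,d\nu_X$. You instead reconstruct the content of that theorem by hand: the telescoping reduction to $S_N\varphi_X$ with a uniformly bounded boundary error, the a.e.\ convergence $N(x,u,t)/t\to1/\bar R$ via Birkhoff, the upgrade of Burton--Denker's CLT to a weak invariance principle, and the passage through the orbit-dependent random index $N$ via joint convergence with a deterministic limit for $\tau_t$ and the continuous-mapping theorem at the evaluation map (or, as you note, an Anscombe-type theorem with the maximal bound). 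Your Eagleson step correctly handles the change from $\nu_X$ to the $R\,d\nu_X/\bar R$ marginal of $\mu_{X_R}$. The two arrive at the same limit and variance. Your route is more self-contained and makes the origin of the $1/\int_XR\,d\nu_X$ factor transparent, but it leans on the weak invariance principle (legitimate here for a H\"older observable over a mixing SFT with a Gibbs measure, but a strictly stronger ingredient than the CLT); the paper's citation of Melbourne--T\"or\"ok encapsulates the time-change machinery and works from the bare CLT, which is the lighter appeal. Neither approach has a gap.
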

\begin{proof}
$S_{n}\varphi$ and $S_{n}R$ satisfy the CLT using \cite{Burton.Denker} and $\int_{0}^{R(x)}\phi(x,u)du=\varphi_X(x)$, then we deduce using Theorem 1.1 in \cite{melbourneTorok} that $\phi$ satisfies the CLT with variance $\frac{\sigma^{2}_{\phi}}{\int_{X}R d\nu_{X}}$.
\end{proof}
\begin{lemma}
For every $(x,u)\in X_{R}$, there exists a real number $N_{L}>0$, such that for all $t>0$ and $\ell\in \mathbb{Z}$, we have
\begin{equation}\label{boundationNL}
\Big|\varphi_{t+u}(x)-\int_{0}^{t}\phi(\psi_{s}(x,u))ds\Big|\leq 2 \Vert\varphi_X\Vert_{\infty}+ N_{L}
\end{equation}
\end{lemma}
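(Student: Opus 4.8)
The plan is to compare the integer-valued cocycle $\varphi_{t+u}(x)$, which counts the net cell displacement of the suspension-flow orbit after time $t+u$ starting from $(x,u)$, with the Birkhoff integral $\int_0^t \phi(\psi_s(x,u))\,ds$ of the function $\phi(x,u)=\varphi_X(x)/R(x)$. The point is that $\phi$ was cooked up precisely so that $\int_0^{R(x)}\phi(x,u)\,du=\varphi_X(x)$: over one full excursion between consecutive visits to the Poincar\'e section $X$, the continuous-time average of $\phi$ reproduces exactly the discrete increment $\varphi_X$. So the two quantities should differ only by boundary terms coming from the (at most one) incomplete excursion at the start and the (at most one) incomplete excursion at the end.

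First I would unwind the definitions. Writing $n=n_{t,u}(x)$ for the number of complete excursions the orbit $(\psi_s(x,u))_{s\in[0,t]}$ makes through $X$ — equivalently the unique integer with $S_nR(x)\le u+t< S_{n+1}R(x)$ once one accounts for the initial height $u$ — one has, by additivity of $\varphi$ along the orbit and the cocycle relation for $\psi$, that $\varphi_{t+u}(x)$ equals $S_n\varphi(x)$ plus a correction term recording the cell index picked up during the initial partial segment $[0,R(x)-u]$ and the final partial segment. Each of those partial-segment corrections is bounded in absolute value by $\Vert\varphi_X\Vert_\infty$ — this is where the $2\Vert\varphi_X\Vert_\infty$ in \eqref{boundationNL} comes from — provided one has first observed that within a single Markov set $R_i$ the cell index can change by at most a fixed amount $N_L$, which follows from the finiteness of $M_L$ in \eqref{M_L} and the fact that $X$ (hence each $R_i$) has diameter smaller than $\tilde r_{inj}$: a flow segment of length at most $\Vert R\Vert_\infty$ staying near a single section can only cross finitely many, and boundedly many, translates $I^\ell M_0$.

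Next I would do the same bookkeeping for $\int_0^t\phi(\psi_s(x,u))\,ds$. Splitting the integral at the times $S_kR(x)-u$ where the orbit hits $X$, the full-excursion pieces $\int_{S_kR(x)-u}^{S_{k+1}R(x)-u}\phi(\psi_s(x,u))\,ds$ each equal $\int_0^{R(T^kx)}\phi(T^kx,v)\,dv=\varphi_X(T^kx)$, so these sum to exactly $S_n\varphi(x)$ (after dealing carefully with whether the very first hit of $X$ happens at $s=R(x)-u$ or, if $u=0$, at $s=0$). The remaining incomplete pieces at the two ends are integrals of $\phi$ over intervals of length at most $\Vert R\Vert_\infty$, and since $|\phi|\le \Vert\varphi_X\Vert_\infty/\inf R$ is bounded, they contribute a bounded error as well. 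Subtracting the two expansions, the common main term $S_n\varphi(x)$ cancels and one is left with a sum of boundary contributions, all controlled by $2\Vert\varphi_X\Vert_\infty+N_L$ after absorbing constants; one should set $N_L$ large enough at the outset to swallow both the single-Markov-set cell-change bound and the $\phi$-integral-over-a-partial-fibre bound.

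The main obstacle, and the only place requiring genuine care rather than bookkeeping, is the very first step: showing that along one excursion through a single Markov set the cell index $\varphi$ moves by at most a fixed constant $N_L$, and keeping the off-by-one issues at the endpoints (initial height $u$, the identification of $(x,R(x))$ with $(Tx,0)$, and whether a partial excursion is genuinely present) straight so that the constants $2\Vert\varphi_X\Vert_\infty$ and $N_L$ appear with the right multiplicities. Everything else is telescoping. I would therefore state and prove the single-excursion bound as the key sub-claim — using $M_L<\infty$, $\mathrm{diam}\,X<\tilde r_{inj}$, and the Lipschitz control on $\varphi_D$ — and then assemble \eqref{boundationNL} by the two parallel decompositions described above.
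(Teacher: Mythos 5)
Your plan is correct and matches the paper's proof: both $\varphi_{t+u}(x)$ and $\int_0^t\phi(\psi_s(x,u))\,ds$ are expanded into the common main term $S_{n_{t+u}(x)}\varphi_X(x)$ plus bounded boundary corrections, and the main terms cancel. One small bookkeeping point is worth straightening out: the $2\Vert\varphi_X\Vert_\infty$ comes from the two boundary pieces of the \emph{integral}, not from $\varphi_{t+u}$. After using $\psi_s(x,u)=\psi_{s+u}(x,0)$ and splitting $\int_0^t\phi\circ\psi_\cdot(x,u)\,ds=\int_0^{t+u}\phi\circ\psi_\cdot(x,0)\,ds-\int_0^u\phi\circ\psi_\cdot(x,0)\,ds$, each incomplete-excursion piece ($\int_{S_nR(x)}^{t+u}$ and $\int_0^u$) is bounded by $\Vert\varphi_X\Vert_\infty$ exactly, because the integrand is the constant $\varphi_X(T^kx)/R(T^kx)$ over an interval of length strictly less than $R(T^kx)$ — not merely by the cruder $R_{\max}\Vert\varphi_X\Vert_\infty/R_{\min}$ your estimate gives. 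Meanwhile $\varphi_{t+u}(x)$, being defined from the representative of $x$ in $M_0$ at height $0$ (not from $(x,u)$), has only a \emph{single} boundary term $\varphi_{t+u-S_nR(x)}(T^nx)$, and it is this term that is bounded by $N_L:=\sup_{x\in M}\sup_{t\le R(x)}|\varphi_t(x)|$; this is finite by $M_L<\infty$ as you note, but is not in general $\le\Vert\varphi_X\Vert_\infty$, since the cell index can overshoot during an excursion. Your proposal swaps these attributions and is coarser on the integral tails, but this is harmless since you correctly anticipated the need to absorb constants into $N_L$, so the argument goes through.
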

\begin{proof}
 For any $(x,u)\in X_R$ and any $t>0$, $n_{t+u}(x)$ is the number of visits to $X$ before $t+u$ starting from $x$ (or equivalently the 
number of visits to $X$ before $t$ starting from $g_u(x)$), we have
\begin{equation}\label{decomposition}
    \int_{0}^{t}\phi(\psi_{s}(x,u))ds 
    =\sum_{k=0}^{n_{t+u}(x)-1}\int_{S_{k}R(x)}^{S_{k+1}R(x)}\phi(x,s)ds 
    +\int_{S_{n_{t+u}(x)}R(x)}^{t+u}\phi(x,s)ds-\int_{0}^{u}\frac{\varphi_X (x)}{R(x)}ds.
\end{equation}
Using the definition of $\varphi_X$ and that the points $(x,s+S_{k}R(x))$ and $(T^k x,s)$ are identified, we have  
\[
\int_{S_{k}R(x)}^{S_{k+1}R(x)}\phi(x,s)ds= \int_{0}^{R(T^k x)}\phi(x,s+S_{k}R(x))ds= \int_{0}^{R(T^k x)}\phi(T^k x,s)ds= \varphi_X(T^k x),
\]
from which we get that the second term of \eqref{decomposition} is equal to $S_{n_{t+u}(x)}\varphi_X(x).$
Now, whenever $S_{n_{t+u}(x)}R(x)\leq s\leq t+u<S_{n_{t+u}(x)+1}R(x)$, it implies that $0\leq s-S_{n_{t+u}(x)}R(x)<R(T^{n_{t+u}(x)} x)$, then we have $\phi(x,s)$ which is identified with $\phi(T^{n_{t+u}(x)}x,s-S_{n_{t+u}(x)}R(x))$ is in turn equal to $\frac{\varphi_{X}(T^{n_{t+u}(x)}(x))}{R(T^{n_{t+u}(x)}(x))}$. 
Thus, we can conclude that the last two terms in \eqref{decomposition} are bounded by $\Vert\varphi_X\Vert_{\infty}$. 
Furthermore
\begin{align*}
\varphi_{t+u}(x)&=\varphi_{(S_{n_{t+u}(x)}R(x))}(x)+\varphi_{(t+u-S_{n_{t+u}(x)}R(x))}(T^{n_{t+u}(x)}(x))\\
&=S_{(n_{t+u}(x))}\varphi_X(x)+\varphi_{(t+u-S_{(n_{t+u}(x))}R(x))}(T^{n_{t+u}(x)}(x))\, .
\end{align*}
Setting the constant $N_{L}=\underset{x\in M}{\sup}\underset{t\leq R(x)}{\sup}|\varphi_{t}(x)|$, by combining these estimates together, we get the desired result.
\end{proof}

We aim to infer the statistical properties of $\varphi_{t}$ from the corresponding statistical properties of $\phi$ in the base dynamics. Thus we present the following proposition:
\begin{proposition}\label{propsigmaflow}
$\varphi_{t}$ satisfies the Central Limit Theorem with the variance $\sigma^{2}_{flow}:=\frac{\sigma^{2}_{\varphi}}{\int_{X}Rd\nu_{X}}$. That is, $\frac{1}{\sqrt{t}}\varphi_{t}$ converges in distribution with respect to the measure $\mu$ as $t\rightarrow +\infty$ to a centered Gaussian random variable with variance $\sigma_{flow}^2$.
\end{proposition}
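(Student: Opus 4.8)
The plan is to transfer the Central Limit Theorem already obtained for the Birkhoff-type integral $\int_{0}^{t}\phi\circ\psi_{s}\,ds$ in Lemma \ref{CLTphi} to the cocycle $\varphi_{t}$ itself, using the uniform comparison \eqref{boundationNL} together with a Slutsky-type argument. Concretely, Lemma \ref{CLTphi} tells us that $\frac{1}{\sqrt{t}}\int_{0}^{t}\phi\circ\psi_{s}\,ds$ converges in distribution, with respect to $\mu$, to a centered Gaussian random variable of variance $\sigma_{\phi}^{2}=\frac{\sigma_{\varphi}^{2}}{\int_{X}R\,d\nu_{X}}$, which is by definition exactly $\sigma_{flow}^{2}$. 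Hence it suffices to prove that $\frac{1}{\sqrt{t}}\bigl(\varphi_{t}-\int_{0}^{t}\phi\circ\psi_{s}\,ds\bigr)$ tends to $0$ uniformly (and thus in $\mu$-probability) as $t\to\infty$.

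To do so I would first record the cocycle identity $\varphi_{t+u}(x)=\varphi_{u}(x)+\varphi_{t}(g_{u}(x))$, which follows by unfolding the definition of $\varphi_{t}$: the representative of $g_{t}(y)$ in $M_{0}$ is obtained from $\tilde g_{t}(\tilde y_{0})$ by applying the isometry $I^{-\varphi_{t}(y)}$, so composing the flow maps translates the cell index additively. Writing an arbitrary point $y\in\Lambda$ as $y=g_{u}(x)$ with $x\in X$ and $0\le u<R(x)$, and using that $|\varphi_{u}(x)|\le N_{L}$ whenever $u\le R(x)$ by the very definition of $N_{L}$, the bound \eqref{boundationNL} gives
\begin{align*}
\Bigl|\varphi_{t}(y)-\int_{0}^{t}\phi(\psi_{s}(x,u))\,ds\Bigr|
&\le\Bigl|\varphi_{t+u}(x)-\int_{0}^{t}\phi(\psi_{s}(x,u))\,ds\Bigr|+|\varphi_{u}(x)|\\
&\le 2\Vert\varphi_{X}\Vert_{\infty}+2N_{L},
\end{align*}
a bound uniform in $y$ and in $t>0$. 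Here $\int_{0}^{t}\phi(\psi_{s}(x,u))\,ds$ is precisely the function $\int_{0}^{t}\phi\circ\psi_{s}\,ds$ of Lemma \ref{CLTphi}, read on $\Lambda$ through the identification $(x,u)\leftrightarrow g_{u}(x)$. Dividing by $\sqrt{t}$ and letting $t\to\infty$ yields the desired uniform vanishing.

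Finally I would invoke Slutsky's lemma: since $\frac{1}{\sqrt{t}}\int_{0}^{t}\phi\circ\psi_{s}\,ds$ converges in distribution with respect to $\mu$ to $\mathcal{N}(0,\sigma_{flow}^{2})$, while $\frac{1}{\sqrt{t}}\varphi_{t}-\frac{1}{\sqrt{t}}\int_{0}^{t}\phi\circ\psi_{s}\,ds\to 0$ uniformly, a fortiori in $\mu$-probability, the sum $\frac{1}{\sqrt{t}}\varphi_{t}$ converges in distribution with respect to $\mu$ to the same Gaussian, which is exactly the assertion of the proposition. I do not expect a genuine obstacle here: the only point that requires care is the bookkeeping in the comparison step — correctly matching $\varphi_{t}$ on $\Lambda$ (equivalently on $M$) with $\varphi_{t+u}$ in the suspension model and absorbing the fractional-time contribution coming from the starting phase $u$ — but all the resulting error terms are bounded by fixed constants, hence negligible after normalization by $\sqrt{t}$.
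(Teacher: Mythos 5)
Your proof is correct and takes essentially the same route as the paper: use the CLT for $\int_{0}^{t}\phi\circ\psi_{s}\,ds$ from Lemma \ref{CLTphi}, then transfer it to $\varphi_{t}$ via the uniform bound \eqref{boundationNL} and Slutsky. You are in fact slightly more careful than the paper at the bookkeeping step, making the cocycle relation $\varphi_{t+u}(x)=\varphi_{u}(x)+\varphi_{t}(g_{u}(x))$ and the bound $|\varphi_{u}(x)|\le N_{L}$ explicit to pass from $\varphi_{t+u}(x)$ to $\varphi_{t}(y)$, whereas the paper leaves this implicit; both give the same $O(1)$ error, negligible after dividing by $\sqrt{t}$.
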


\begin{proof}
Setting $\epsilon_{t}:=\frac{2 \Vert\varphi_X\Vert_{\infty}}{\sqrt{t}}+\frac{N_{L}}{\sqrt{t}}$, and using Lemma \ref{boundationNL}, we get $\left\vert\frac{\varphi_{t+u}(x)}{\sqrt{t}}-\frac{\int_{0}^{t}\phi(\psi_{s}(x,u))ds}{\sqrt{t}}\right\vert\leq \epsilon_{t}$, which goes to 0, as $t$ goes to infinity. Therefore, using Lemma \ref{CLTphi}, we deduce the CLT for $\varphi_{t}$.
\end{proof}


\section{Proof of the almost sure convergence Theorem}
To simplify, we identify $x\in X$ with its representative in $M_{0}\subset \tilde{M}$. We want to know the time needed for the flow $\tilde{g}_{t}$ starting from a point $y\in \tilde{M}$ to return back into an $\epsilon$-neighborhood of this point, say $B(y,\epsilon)$. Setting $y=\tilde{g}_{s}(I^{n_{0}}x)$, the point $y$ corresponds to the coordinates $(x, s, n_{0})$, where $x$ is a point on the Poincar\'e section $X$ $\approx M_{0}\subset \tilde{M} $. Projecting on $D$, let $k_{D}=\underset{D}{\sup}\{ \mbox{constant Lipschitz of }P_{D}\}$, we can observe that:
\begin{equation}\label{inclusion1}
B_{D}\left(x,\frac{\epsilon}{k_{D}}\right)\subset P_{D}(B_{M}(y,\epsilon))\subset B_{D}\left(x,\epsilon k_{D}\right).
\end{equation}

\begin{definition}
Let us consider the function $\varphi_{X}: X\rightarrow \mathbb{Z}$ introduced at the beginning of Section 3. Recall that $\sigma_{\varphi}^{2}$ is the asymptotic variance of $\varphi$. We consider the $\mathbb{Z}-$extension $\tilde{T}$ of $T$ by $\varphi_{X}$:
\begin{eqnarray*}
\tilde{T} : X \times \mathbb{Z} &\rightarrow& X \times \mathbb{Z}\\
(x,m)&\rightarrow& (T x, m+\varphi_{X}(x)).
\end{eqnarray*}
\end{definition}
We define the first return time of the transformation $\tilde{T}$ into $P_{D}(B(g_{s}(x),\epsilon))$ in the cell $n_{0}$ by:
\begin{equation*}
w_{\epsilon}(x)=\min\left\{k\geq1: T^{k}(x)\in P_{D}(B(g_{s}(x),\epsilon))\mbox{ and } \sum_{j=0}^{k-1}\varphi_X\circ T^{j}(x)=0 \right\}.
\end{equation*}
We recall the first return time of the flow $\tilde{g}_{t}$ to $B(y,\epsilon)$ for $y\in \tilde{M}$:
\begin{equation}
\tau_{\epsilon}(y)=\inf\{t>1, \tilde{g}_{t}(y)\in B(y,\epsilon)\}.
\end{equation}
\begin{lemma}\label{tauwR}
Let $y\in \tilde{M}$ and $x\in X$, such that $y=(x,s)$, where $s<R(x)$. The return time of the flow $\tilde{g}_{t}$ and that of the transformation $\tilde{T}$ are related in a manner such that, for almost every $y=(x,s)$
\begin{equation}\label{xsimy}
\tau_{\epsilon}(y)\sim w_{\epsilon}(x)\int_{X}R d\nu_{X}.
\end{equation}
\end{lemma}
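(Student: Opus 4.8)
The plan is to relate the continuous-time return $\tau_\epsilon(y)$ to the discrete return time $w_\epsilon(x)$ of the $\mathbb Z$-extension $\tilde T$, and then to convert the number of discrete returns into a physical time via the Birkhoff sum of the height function $R$. First I would unpack what it means for $\tilde g_t(y)$ to return to $B(y,\epsilon)$: writing $y=(x,s)$ in suspension coordinates with $s<R(x)$ and in the cell $n_0$, the orbit $(\tilde g_t(y))_{t>1}$ visits the Poincar\'e section $X$ at the successive times $S_{n_{t}}R(x)-s$; and $\tilde g_t(y)\in B(y,\epsilon)$ forces both the base point to return near $x$ on the section — using the transversality of $D$ and the Lipschitz projection $P_D$, this is exactly $T^k(x)\in P_D(B(g_s(x),\epsilon))$ up to the inclusions in \eqref{inclusion1} — and the cell index to return to $n_0$, i.e. $\sum_{j=0}^{k-1}\varphi_X\circ T^j(x)=0$. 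Thus the first return of the flow occurs at (approximately) the $w_\epsilon(x)$-th visit to the section, so that $\tau_\epsilon(y)$ is sandwiched between $S_{w_\epsilon(x)-1}R(x)-s$ and $S_{w_\epsilon(x)+1}R(x)-s$ plus $O(1)$ error terms coming from the flow direction (the displacement $s$ along the fibre, the fact that the return need not happen exactly on the section but within a bounded time of it, and the $t>1$ constraint).

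Next I would control $S_{w_\epsilon(x)}R(x)$. Since $R$ is H\"older and $\nu_X$ is the ($T$-invariant) projected Gibbs measure, Birkhoff's ergodic theorem gives $\frac{1}{N}S_N R(x)\to\int_X R\,d\nu_X$ for $\nu_X$-a.e.\ $x$. The key point is that $w_\epsilon(x)\to\infty$ as $\epsilon\to0$ for a.e.\ $x$ (the orbit cannot return to an arbitrarily small ball in bounded discrete time, by recurrence and the fact that $\mu$ has no atoms); combining this with the ergodic average yields $S_{w_\epsilon(x)}R(x)\sim w_\epsilon(x)\int_X R\,d\nu_X$. The bounded additive error terms identified above are then negligible compared to $S_{w_\epsilon(x)}R(x)\to\infty$, so they disappear in the asymptotic equivalence. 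Putting the two ingredients together gives $\tau_\epsilon(y)\sim w_\epsilon(x)\int_X R\,d\nu_X$, which is \eqref{xsimy}.

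The main obstacle, I expect, is the careful bookkeeping in the first step: one must make sure that ``$\tilde g_t(y)$ returns to the $\epsilon$-ball'' really is equivalent, up to a bounded number of section-visits and up to the harmless $k_D$-distortion of the ball under $P_D$, to ``$\tilde T^k$ returns to $P_D(B(g_s(x),\epsilon))$ in cell $0$''. Subtle points are: (i) the return of the flow might occur \emph{between} two consecutive section visits rather than at a visit, which is why one gets a $\pm1$ in the index $w_\epsilon(x)\pm1$ and an additive $O(\Vert R\Vert_\infty)$ error — harmless because $w_\epsilon\to\infty$; (ii) the condition $t>1$ in the definition of $\tau_\epsilon$ only excludes finitely many initial section visits, again absorbed into the asymptotics; (iii) one needs the $\mathbb Z$-cell of $\tilde g_t(y)$ to be read off correctly from $\sum_{j=0}^{k-1}\varphi_X\circ T^j(x)$, which is precisely the definition of the cocycle $\varphi_X$ driving $\tilde T$, together with the assumption (via $M_L<\infty$ and the choice of Poincar\'e section inside $M_0$) that the cells are genuinely separated so that ``being in $B(y,\epsilon)$'' does not accidentally cross a cell boundary. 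Once these are handled, the statement follows as the a.e.\ limit described above; note also that the exceptional null set of $x$ is mapped to a $\tilde\mu$-null set of $y$ after integrating over the fibre coordinate $s$ and summing over cells $n_0$.
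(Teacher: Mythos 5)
Your proposal is correct and follows essentially the same route as the paper: translate the flow return into a discrete return on the Poincar\'e section, sandwich $\tau_\epsilon(y)$ between Birkhoff sums $S_{w_\epsilon(x)\pm O(1)}R(x)$ up to bounded error terms in the fibre direction, and conclude by Birkhoff's ergodic theorem together with $w_\epsilon\to\infty$ a.e.\ so the additive $O(\|R\|_\infty)$ corrections vanish in the asymptotic equivalence. You are in fact slightly more explicit than the paper about why the sandwich inequalities hold (the $\pm1$ index shift, the $t>1$ constraint, the cell-separation assumption, and the null-set bookkeeping when passing from a.e.\ $x$ to a.e.\ $y$); the paper simply writes down the sandwich and bounds $|s-s'|\leq R_{\max}$, leaving those points implicit.
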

\begin{proof}
We have
$$\sum_{k=0}^{w_{\epsilon}(x)-1}R\circ T^{k}(x)\leq \tau_{\epsilon}(y)+s<\sum_{k=0}^{w_{\epsilon}(x)}R\circ T^{k}(x),$$
from which we get that $\tau_{\epsilon}(y)+s=\sum_{k=0}^{w_{\epsilon}(x)-1}R\circ T^{k}(x)+s',$
where $0\leq s'=s'_\epsilon(x)<R\circ T^{w_{\epsilon}(x)}\leq \Vert R\Vert_\infty$.\\
Now using the Birkhoff Ergodic Theorem, for $\nu_{X}-$a.e. $x\in X$, we have:
\begin{equation}\label{birkhoff for roof function}
\underset{n\rightarrow\infty}{\lim}\frac{1}{n}\sum_{k=0}^{n-1}R(T^{k}(x))= \int_{X} R d\nu_{X}.
\end{equation}
Since, the height function $R$ is bounded from below and above by $R_{\min}$ and $R_{\max}$, $\big|\tau_{\epsilon}(y)-\sum_{k=0}^{w_{\epsilon}(x)-1}R\circ T^{k}\big|=|s-s'|\leq R_{\max}$, hence using \eqref{birkhoff for roof function}
\begin{equation*}
w_{\epsilon}(x)\int_{X} R d\nu_{X}\underset{\epsilon\rightarrow0}{\sim}\sum_{k=0}^{w_{\epsilon}(x)-1}R\circ T^{k}-R_{\max}\leq\tau_{\epsilon}(y)\leq\sum_{k=0}^{w_{\epsilon}(x)-1}R\circ T^{k}+R_{\max}\underset{\epsilon\rightarrow0}{\sim}w_{\epsilon}(x)\int_{X} R d\nu_{X},
\end{equation*}
since $w_{\epsilon}\rightarrow +\infty$  $\nu_{X}$-a.s., then as $\epsilon\rightarrow0$, we get \eqref{xsimy}.
\end{proof}

Now, proving our results for $\tau_{\epsilon}(.)$ will be done by studying the asymptotic behavior of $w_{\epsilon}(.)$. Due to \eqref{inclusion1} and to Proposition \ref{PropESS}, we will start by studying the asymptotic behavior of the first return time into a cylinder, from which we can conclude the behavior of $w_{\epsilon}(.)$.
Thus we define the first return time in a $(-q,q')$-cylinder of a starting point $\omega\in\Sigma_{A}$ by:
\begin{equation*}
\tau_{q,q'}(\omega):=\min\{m\geq1: S_{m}\varphi(\omega)=0 \mbox{ and } \sigma^{m}(\omega)\in C_{-q,q'}(\omega)\}.
\end{equation*}
 Let us denote by $G_{n}(q,q')$ the set of points for which $n$ is a return time to the cylinder $C_{-q,q'}(\omega)$ :
\begin{equation*}
G_{n}(q, q'):=\{\omega\in \Sigma_{A} : S_{n}\varphi(\omega)=0 \text{ and }\sigma^{n}(\omega)\in C_{-q,q'}(\omega)\}.
\end{equation*}Thus, we have:
\begin{equation*}
\tau_{q,q'}(\omega)=\min\{m\geq1 : \omega\in G_{m}(q,q')\}.
\end{equation*}
We have already mentioned that $(S_{n}\varphi)_{n}$ satisfies a central limit theorem. Furthermore it satisfies the following mixing local limit theorem:
\begin{proposition}\label{p3}[ see \cite{yassine}]
There exists a real number $C_{1}>0$ such that, for all integers $n,q ,q',k$ such that $n-2k\geq m_{0}$ and $m_{0}<q\leq k$, for all $(-q,q')$-cylinders $A$ of $\Sigma_A$ and all measurable subset $B$ of $\Sigma_A^+$, we have:
\begin{equation*}
\bigg|\nu\left(A\cap\{S_{n}\varphi=0\}\cap \sigma^{-n}(\sigma^{k}((\chi^+)^{-1}(B)))\right)-\frac{\nu(A)\nu^u(B)}{ \sqrt{2\pi}\sqrt{n-k}\sigma_{\varphi}}\bigg| \leq C_{1}\frac{\nu^u(B)k\nu(A)}{n-2k}.
\end{equation*}
\end{proposition}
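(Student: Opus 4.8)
\emph{Overall approach.} This is the mixing local limit theorem of \cite{yassine}; I would reprove it by the Nagaev--Guivarc'h spectral method for the transfer operator of $(\Sigma_A,\sigma)$ twisted by the character $e^{it\varphi}$, combined with Fourier inversion on $\mathbb{Z}$. Write $E$ for the event whose measure is to be estimated. Since $S_n\varphi$ is $\mathbb{Z}$-valued, $\mathbf{1}_{\{S_n\varphi=0\}}=\frac1{2\pi}\int_{-\pi}^{\pi}e^{itS_n\varphi}\,dt$, and $\sigma^{-n}\sigma^{k}(\chi^+)^{-1}(B)=\sigma^{-(n-k)}(\chi^+)^{-1}(B)$, so
\begin{equation*}
\nu(E)=\frac1{2\pi}\int_{-\pi}^{\pi}\int_{\Sigma_A}\mathbf{1}_A\;e^{itS_n\varphi}\;\bigl(\mathbf{1}_B\circ\chi^+\circ\sigma^{n-k}\bigr)\,d\nu\,dt .
\end{equation*}
Recall that $\varphi$ depends only on $\omega_0,\omega_1$ (it is constant on the sets $D_i\cap T^{-1}D_j$), so $S_n\varphi$ and $\mathbf{1}_B\circ\chi^+\circ\sigma^{n-k}$ are functions of the forward coordinates. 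Conditioning on the forward $\sigma$-algebra, using $(\chi^+)_{*}\nu=\nu^u$ (see \eqref{measure-u}) and the Gibbs property of $\nu$ (Proposition \ref{gibbs}) to replace $\mathbf{1}_A$ by a H\"older density $u_A$ on $\Sigma_A^+$ with $\int u_A\,d\nu^u=\nu(A)$, and then the twisted-operator identity $\mathcal L_t^{m}=\mathcal L^{m}(e^{itS_m\varphi}\,\cdot\,)$ together with $\int(\mathcal L^{m}w)h\,d\nu^u=\int w\,(h\circ(\sigma^+)^{m})\,d\nu^u$, one arrives at
\begin{equation*}
\nu(E)=\frac1{2\pi}\int_{-\pi}^{\pi}\int_{\Sigma_A^+}\bigl(\mathcal L_t^{\,n-k}u_A\bigr)\,h_t\,d\nu^u\,dt,\qquad h_t:=e^{itS_k\varphi}\,\mathbf{1}_B ,
\end{equation*}
where $\mathcal L$ is the normalized Ruelle operator of $(\Sigma_A^+,\sigma^+,\nu^u)$ and $\mathcal L_t u:=\mathcal L(e^{it\varphi}u)$; here the split $S_n\varphi=S_{n-k}\varphi+S_k\varphi\circ\sigma^{n-k}$ is used, and the regularity control on $u_A$ is where $q>m_0$ (and, below, $q\le k$) enters.

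\emph{Contribution of small $t$.} By Nagaev--Guivarc'h perturbation theory, using that the cocycle $\varphi$ is aperiodic (not cohomologous to a function valued in a proper subgroup of $\mathbb{Z}$ --- the property that makes the $\mathbb{Z}$-extension recurrent, and which must be read off from the geometry of the extension), there is $\delta>0$ with $\mathcal L_t=\lambda_t\Pi_t+N_t$ for $|t|<\delta$, where $\lambda_t=1-\tfrac{\sigma_{\varphi}^2}{2}t^2+o(t^2)$, $\Pi_t$ is a rank-one projection analytic in $t$ with $\Pi_0u=(\int u\,d\nu^u)\mathbf{1}$, and $\|N_t^{m}\|\le C\rho^{m}$ with $\rho<1$. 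Hence, for $|t|<\delta$,
\begin{equation*}
\int\bigl(\mathcal L_t^{\,n-k}u_A\bigr)h_t\,d\nu^u=\lambda_t^{\,n-k}\,\nu(A)\!\int h_t\,d\nu^u+O\!\bigl(|t|\,\lambda_t^{\,n-k}\,\nu(A)\,\nu^u(B)\bigr)+O\!\bigl(\rho^{\,n-k}\,\nu(A)\,\nu^u(B)\bigr).
\end{equation*}
Since $\bigl|\int h_t\,d\nu^u-\nu^u(B)\bigr|\le|t|\,k\,\|\varphi\|_{\infty}\,\nu^u(B)$, integrating $\tfrac1{2\pi}\int_{|t|<\delta}\lambda_t^{\,n-k}\,dt$ via $t\mapsto t/\sqrt{n-k}$ produces the Gaussian main term $\tfrac{\nu(A)\,\nu^u(B)}{\sqrt{2\pi}\,\sigma_{\varphi}\sqrt{n-k}}$, the $o(t^2)$ correction contributes $O((n-k)^{-1})$, and the factor $|t|\,k$ (from replacing $\int h_t\,d\nu^u$ by $\nu^u(B)$ and from the $\Pi_t$-versus-$\Pi_0$ error) contributes $O\bigl(k/(n-k)\bigr)$, everything times $\nu(A)\nu^u(B)$; the concentration of $u_A$ is absorbed by a bounded number of regularizing iterations of $\mathcal L$, legitimate because $n-2k\ge m_0$ and $q\le k$.

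\emph{Contribution of $|t|$ away from $0$, and conclusion.} For $\delta\le|t|\le\pi$, aperiodicity forces the spectral radius of $\mathcal L_t$ to be $<1$; by continuity of $t\mapsto\mathcal L_t$ and compactness of $[\delta,\pi]$ there is $\theta<1$ with $\|\mathcal L_t^{\,n-k}\|\le C\theta^{\,n-k}$, so this range contributes $O(\theta^{\,n-k}\,\nu(A)\,\nu^u(B))$. Adding the pieces and using $n-k\ge n-2k\ge m_0$ --- so that $(n-k)^{-1}$, $\rho^{\,n-k}$, $\theta^{\,n-k}$ and the regularization loss are each bounded by a constant times $1/(n-2k)$ --- gives $\nu(E)=\tfrac{\nu(A)\nu^u(B)}{\sqrt{2\pi}\,\sigma_{\varphi}\sqrt{n-k}}+O\bigl(k\,\nu(A)\,\nu^u(B)/(n-2k)\bigr)$, as claimed.

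\emph{Main obstacle.} The two genuinely delicate points are, first, keeping every error \emph{proportional to} $\nu(A)\nu^u(B)$ rather than to crude uniform norms: the density $u_A$ is concentrated (its H\"older norm is large, of order $\nu(A)$ times an exponential in $q,q'$), so one must exploit the smoothing of $\mathcal L$ over the middle block of length $\sim n-2k$, which is exactly what the hypotheses $m_0<q\le k$ and $n-2k\ge m_0$ are calibrated to allow; and second, the aperiodicity of $\varphi$, on which both the quadratic expansion of $\lambda_t$ near $0$ and the uniform spectral gap for $|t|$ bounded away from $0$ rest --- this is an input about the $\mathbb{Z}$-extension itself, equivalent to the almost-sure finiteness of the return time.
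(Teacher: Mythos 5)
The paper itself gives no proof of this proposition: it is invoked by citation to \cite{yassine}, where the corresponding mixing local limit theorem for $\mathbb{Z}$-extensions of subshifts of finite type is established. Your Nagaev--Guivarc'h reconstruction is the expected route for such a statement, and the main pieces --- Fourier inversion over $\mathbb{Z}$, the identity $\sigma^{-n}\sigma^k(\chi^+)^{-1}(B)=\sigma^{-(n-k)}(\chi^+)^{-1}(B)$, the split $S_n\varphi=S_{n-k}\varphi+S_k\varphi\circ\sigma^{n-k}$ (which is precisely where the factor $k$ in the error comes from), the quadratic expansion of the leading eigenvalue on $|t|<\delta$, and the uniform spectral radius bound away from $0$ coming from aperiodicity of $\varphi$ --- are exactly the ingredients one needs, so the overall structure is sound and consistent with what the citation is supplying.

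Two places where the sketch is imprecise and would need tightening to become a proof. First, ``a bounded number of regularizing iterations'' understates the issue: after conditioning on the forward $\sigma$-algebra, the density $u_A$ is supported on a one-sided cylinder of length $\sim q'$ with sup norm $\sim\nu(C_{-q,0})$, so its H\"older norm is of order $\nu(C_{-q,0})e^{\alpha q'}$; taming this requires on the order of $q'$ (not $O(1)$) applications of $\mathcal L$, and the past block costs another $q$ iterations, which is exactly what the hypothesis $q\le k$ buys (those iterations live inside the length-$k$ block near $\sigma^{n-k}$). Your sketch does not explain how the $q'$-dependence is eliminated from the error term --- note that the stated bound $C_1\nu^u(B)k\nu(A)/(n-2k)$ is uniform in $q'$, which is a nontrivial claim and the one genuinely delicate bookkeeping point in \cite{yassine}. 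Second, when you write $\Pi_t$ is ``analytic'' and produce the $O(|t|\,\lambda_t^{\,n-k}\nu(A)\nu^u(B))$ term, you need the pairing $\langle \mathcal L_t^{n-k}u_A, h_t\rangle$ to be estimated with $u_A$ normalized in $L^1(\nu^u)$ rather than in the H\"older norm (otherwise you lose the proportionality to $\nu(A)$); this is precisely the tension you flag in your ``main obstacle'' paragraph, but as written the intermediate display does not yet reflect the mechanism by which it is resolved. These are gaps of detail rather than of strategy; the approach is the right one.
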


Now, we study the behavior of $\tau _{q,q'}$, and prove the following lemma:

\begin{lemma}\label{lemmainf}
Let $a,b\geq0$ be such that $2(a+b)>\frac{1}{h_{\nu}}$, where $h_\nu:=h_{\nu}(\sigma)$ is the
entropy of $(\Sigma_A,\sigma,\nu)$.
For any sequences $(q_{n})_{n}$ and $(q'_{n})_{n}$ such that $q_{n}\sim a\log n $, $q_{n}'\sim b\log n $, almost surely, we have:
\begin{equation}
\underset{n\rightarrow\infty}{\liminf}\frac{\log\sqrt{\tau_{q_{n},q_{n}'}}}{q_{n}+q_{n}'}\geq\frac{1}{2(a+b)}.\\
\end{equation}
\end{lemma}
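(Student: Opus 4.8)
The plan is to show that for every fixed $\theta<\tfrac{1}{2(a+b)}$ one has, for $\nu$-a.e. $\omega$, that $\tau_{q_n,q'_n}(\omega)>e^{2\theta(q_n+q'_n)}$ for all large $n$; since $\theta$ is arbitrary this gives the claimed $\liminf$. The subtle point is that a direct Borel–Cantelli over $n$ is not quite summable (the cylinders $C_{-q_n,q'_n}(\omega)$ have $\nu$-measure of order $n^{-(a+b)h_\nu}$, and $(a+b)h_\nu$ may be $<1$), so I would first reduce to a geometrically spaced subsequence: pick $n_j=\lfloor e^{j/K}\rfloor$ (so that $q_{n_{j+1}}+q'_{n_{j+1}}\sim q_{n_j}+q'_{n_j}$), note that $\tau_{q,q'}$ is non-decreasing in $q$ and in $q'$ (shrinking the cylinder can only increase the return time) and that, since $q_n\sim a\log n$, $q'_n\sim b\log n$, a conclusion along $(n_j)$ transfers to the full sequence. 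Hence it suffices to prove $\sum_j\nu\!\big(\tau_{q_{n_j},q'_{n_j}}\le N_j\big)<\infty$ with $N_j:=e^{2\theta(q_{n_{j+1}}+q'_{n_{j+1}})}$.

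Writing $Q_j:=q_{n_j}+q'_{n_j}$, the inclusion $\{\tau_{q,q'}\le N\}\subseteq\bigcup_{m=1}^{N}G_m(q,q')$ reduces the task to estimating $\sum_{m=1}^{N_j}\nu(G_m(q_{n_j},q'_{n_j}))$. I split at $2q_{n_j}+m_0$. For the short range $m<2q_{n_j}+m_0$ (only $O(\log n_j)$ values), the event $\sigma^m\omega\in C_{-q_{n_j},q'_{n_j}}(\omega)$ forces $\omega$ to be $m$-periodic on a window of length $\asymp Q_j$, and a standard quasi-periodicity estimate — using that the Gibbs measure $\nu$ has positive entropy $h_\nu>0$ and therefore charges no periodic orbit too heavily, so that $\max_{A}\nu(A)$ over cylinders decays exponentially in $Q_j$ — bounds the total short-range contribution by a quantity that is exponentially small in $Q_j$ (hence summable along the geometric subsequence). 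For the long range $2q_{n_j}+m_0\le m\le N_j$ we have $m-2q_{n_j}\ge m_0$ and $q_{n_j}>m_0$ for $j$ large, so Proposition \ref{p3} applies with $n=m$, $k=q_{n_j}$: decomposing $G_m(q_{n_j},q'_{n_j})=\bigsqcup_A\big(A\cap\{S_m\varphi=0\}\cap\sigma^{-m}A\big)$ over the $(-q_{n_j},q'_{n_j})$-cylinders $A$ and choosing $B=B_A$ to be the $\chi^+$-image of the $(0,Q_j)$-cylinder carrying the word of $A$ (so that $\sigma^{q_{n_j}}((\chi^+)^{-1}(B_A))=A$ and $\nu^u(B_A)\asymp\nu(A)$ by the Gibbs property), Proposition \ref{p3} gives $\nu(G_m(q_{n_j},q'_{n_j}))\lesssim\sum_A\big(\tfrac{\nu(A)^2}{\sqrt m}+\tfrac{q_{n_j}\nu(A)^2}{m-2q_{n_j}}\big)$. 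Intersecting once more with the full-measure event on which $\nu(C_{-q_{n_j},q'_{n_j}}(\omega))\le e^{-Q_j(h_\nu-\eta)}$ — true a.e. by the Shannon–McMillan–Breiman theorem (equivalently Birkhoff for $\psi:=\tilde h-P_\sigma(\tilde h)$, which has $\int\psi\,d\nu=-h_\nu$), and whose complement is summable along the geometric subsequence by the large-deviation bound for the Gibbs measure $\nu$ — the surviving cylinders satisfy $\sum_A\nu(A)^2\le\big(\max_A\nu(A)\big)\sum_A\nu(A)\le e^{-Q_j(h_\nu-\eta)}$.

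Summing over $m$ in the long range, using $\sum_{m=1}^{N_j}m^{-1/2}\le 2\sqrt{N_j}$, $\sqrt{N_j}=e^{\theta(q_{n_{j+1}}+q'_{n_{j+1}})}\sim e^{\theta Q_j}$, and $q_{n_j}\log N_j=O((\log n_j)^2)$, we obtain $\sum_{m}\nu(G_m(q_{n_j},q'_{n_j}))\lesssim e^{-Q_j(h_\nu-\eta)}e^{\theta Q_j}$. Since $Q_j\sim(a+b)\log n_j$ and $n_j\asymp e^{j/K}$, this is comparable to $e^{(j/K)(a+b)(\theta-h_\nu+\eta)}$; the hypothesis $2(a+b)h_\nu>1$, i.e. $h_\nu>\tfrac{1}{2(a+b)}>\theta$, lets us fix $\eta>0$ with $\theta<h_\nu-\eta$, so the series over $j$ is a convergent geometric series. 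Adding the (also summable) tails of the short-range estimate and of the Shannon–McMillan event, Borel–Cantelli gives that a.s. $\tau_{q_{n_j},q'_{n_j}}(\omega)>N_j$ eventually, whence, by the monotonicity of $\tau_{q,q'}$ and $q_n\sim a\log n$, $q'_n\sim b\log n$, a.s. $\liminf_n\frac{\log\sqrt{\tau_{q_n,q'_n}}}{q_n+q'_n}\ge\theta$; letting $\theta\uparrow\tfrac{1}{2(a+b)}$ finishes the proof.

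The main obstacle is exactly this summability: a union bound over $m\le N$ makes $\nu(\tau_{q,q'}\le N)$ of size $(\sum_A\nu(A)^2)\sqrt N$, which is not summable over the full index set $n$ when $(a+b)h_\nu<1$; the two devices that rescue it are (i) passing to a geometrically spaced subsequence together with monotonicity of $\tau_{q,q'}$, converting the offending sum into a geometric series (so any strictly negative exponent suffices, which is where the hypothesis $2(a+b)h_\nu>1$ enters, guaranteeing $\theta<h_\nu$), and (ii) restricting to typical cylinders via Shannon–McMillan, so that $\sum_A\nu(A)^2$ is controlled by $e^{-Q_j(h_\nu-\eta)}$. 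Controlling the short-return terms uniformly in the cylinder depth — i.e. the quasi-periodicity estimate and the use of $h_\nu>0$ — is the other technical point requiring care.
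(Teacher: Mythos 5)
Your opening premise --- that ``a direct Borel--Cantelli over $n$ is not quite summable'' --- is a misreading of the situation, and it is what forces you into the more elaborate (and ultimately gappy) route. The paper does perform a direct Borel--Cantelli over $n$: it estimates the measure of the single event $G_n(q_n,q'_n)$ (return at time \emph{exactly} $n$), not the union $\bigcup_{m\le n}G_m$. Restricting to the Shannon--McMillan set $K_N^\delta$, Proposition~\ref{p3} applied with $A=C$, $k=q_n$ and $B=\chi^+(\sigma^{-q_n}C)$ gives
\[
\nu\bigl(K_N^\delta\cap G_n(q_n,q'_n)\bigr)=\sum_{C\in\mathcal C^\delta_{-q_n,q'_n}}\Bigl(\tfrac{\nu(C)^2}{\sqrt{2\pi n}\,\sigma_\varphi}+O\bigl(\tfrac{q_n\nu(C)^2}{n}\bigr)\Bigr)
=O\!\left(n^{-\frac12-(a+b)(h_\nu-\delta)}\right),
\]
and the exponent exceeds $1$ exactly under the hypothesis $2(a+b)h_\nu>1$ (for $\delta$ small). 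So the series \emph{is} summable --- the $1/\sqrt n$ from the local limit theorem and the extra factor $\nu(C)$ from the self-intersection are precisely what rescue it; there is no need for a geometric subsequence at all. The passage from ``a.e.\ $\omega\in K_N^\delta$ is in $G_n(q_n,q'_n)$ for only finitely many $n$'' to ``$\tau_{q_n,q'_n}(\omega)>n$ eventually'' uses the monotone inclusion $G_m(q_n,q'_n)\subset G_m(q_m,q'_m)$ for $m\le n$ (the cylinder only shrinks as $n$ grows), together with the observation that an infinite set of small $m$ with $\sigma^m\omega\in\bigcap_n C_{-q_n,q'_n}(\omega)=\{\omega\}$ would force $\omega$ to be periodic, a $\nu$-null event. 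This directly gives $\liminf\frac{\log\sqrt{\tau_{q_n,q'_n}}}{q_n+q'_n}\ge\frac{1}{2(a+b)}$ with no $\theta\uparrow$ limit.

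The actual gap in your proposal is the short-range estimate. By working with $\nu(\tau_{q_{n_j},q'_{n_j}}\le N_j)$ through a union bound over all $m\le N_j$, you create a range $m<2q_{n_j}+m_0$ where the local limit theorem is unavailable, and your claim that the short-range total is ``exponentially small in $Q_j$'' does not follow from $\max_A\nu(A)\le e^{-\gamma_0 Q}$. Indeed, for each $m$ there are up to $|\mathcal A|^m$ cylinders $A$ (where $\mathcal A$ is the alphabet) for which $A\cap\sigma^{-m}A\neq\emptyset$, and each such intersection is a cylinder of depth $Q+m$; the natural bound is
\[
\sum_{m<2q+m_0}\sum_{A}\nu(A\cap\sigma^{-m}A)
\;\lesssim\;
\sum_{m<2q+m_0}\min\bigl(1,k_{\tilde h}e^{-\gamma_0 m}\bigr)\,|\mathcal A|^m\,\max_A\nu(A),
\]
and when $|\mathcal A|\,e^{-\gamma_0}>1$ this is of size $|\mathcal A|^{2q}e^{-\gamma_0 Q}$ or $|\mathcal A|^{2q}e^{-(h_\nu-\delta)Q}$, which is \emph{not} exponentially small in $Q$ in general. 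Concretely, for a skewed Bernoulli measure one has $\gamma_0=P-\sup\tilde h<h_\nu<\log|\mathcal A|$, and for $a$ not much smaller than $a+b$ the exponent $2a\log|\mathcal A|-(a+b)(h_\nu-\delta)$ is positive, so the claimed bound blows up rather than decays. In short: your long-range part (Proposition~\ref{p3} plus $\sum_A\nu(A)^2\le e^{-(h_\nu-\eta)Q_j}$, summed over $m$ and the geometric subsequence) is fine, but the short-range part needs a genuinely different argument, and the cleanest fix is to adopt the paper's strategy of estimating only $\nu(G_n(q_n,q'_n))$ --- which keeps you permanently in the long-range regime.
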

\begin{proof}
Let us denote by $\mathcal{C}_{-q,q{'}}$ the set of $(-q,q')$-cylinders of $\Sigma$. Let $\delta>0$ be such that $\frac{1}{2}+(a+b)h_{\nu}> 1+\delta(a+b)$, denote by $\mathcal{C}_{-q,q{'}}^{\delta}\subset\mathcal{C}_{-q,q{'}}$ the set of cylinders $C \in \mathcal{C}_{-q,q{'}}$ such that $\nu(C)\in (e^{-(h_{\nu}+\delta)(q+q{'})},e^{-(h_{\nu}-\delta)(q+q{'})})$. For any $\omega\in \Sigma_{A}$, recall that  $C_{-q,q{'}}(\omega)\in \mathcal{C}_{-q,q'}$ is the $(-q,q')$-cylinder which contains $\omega$. By the Shannon-McMillan-Breiman theorem, the set $K_{N}^{\delta}=\{\omega\in \Sigma: \forall q,q' \geq N , C_{-q,q'}(\omega)\in  \mathcal{C}^{\delta}_{-q,q'}\}$ has a measure $\nu(K_{N}^{\delta})> 1-\delta$ provided $N$ is sufficiently large.
According to Proposition \ref{p3}, whenever $n\geq N$, we have :
\begin{eqnarray*}
\nu(K_{N}^{\delta}\cap  G_{n}(q_{n},q_{n}'))
&=&\nu\left(\{\omega\in K_{N}^{\delta}:  S_{n}\varphi(\omega)=0 \mbox{ and } \theta^{n}(\omega)\in C_{-q_{n},q_{n}'}(\omega)\}\right)\\
&=&\sum_{C\in \mathcal{C}_{-q_{n},q_{n}'}^{\delta}}\nu(C\cap \{S_{n}\varphi=0\}\cap\theta^{-n}(C))\\
&=&\sum_{C\in \mathcal{C}^{\delta}_{-q_{n},q_{n}'}}\left(\frac{\nu(C)^{2}}{\sqrt{2\pi}\sigma_{\varphi}\sqrt{n}} + O\left(\frac{\nu(C)q_{n}\nu(C)}{n} \right)\right).
\end{eqnarray*}
Hence it follows that: $$\nu(K_{N}^{\delta}\cap G_{n}(q_{n},q_{n}'))
=O\left(\sum_{C\in \mathcal{C}_{-q_{n},q_{n}'}^{\delta}}\frac{\nu(C)^{2}}{\sqrt{n}}\right) =O\left(\frac{1}{n^{\frac{1}{2}+(a+b)(h_{\nu}-\delta)}}\right),
$$
but $\frac{1}{2}+(a+b)(h_{\nu}-\delta)>1$ then $\sum_{n}\nu(K_{N}^{\delta}\cap G_{n}(q_{n},q_{n}')) < \infty$.
Hence by the first Borel Cantelli Lemma, for a.e. $\omega\in K_{N}^{\delta}$, if $n$ is large enough, we get  $\tau_{q_{n},q_{n}'}> n$, proceeding similarly as at the end of the second item of section 3.2.\\
Moreover, we have the following implication
\begin{equation*}
\tau_{q_{n},q_{n}'}> n
\Rightarrow (a+b)\log\sqrt{\tau_{q_{n},q_{n}'}}>\frac{1}{2}\log n^{a+b}\sim\frac{1}{2}(q_{n}+q_{n}')\\
\end{equation*}
which in turn implies that
\begin{equation*}
\underset{n\rightarrow \infty}{\liminf}\ \frac{\log\sqrt{\tau_{q_{n},q'_{n}}}}{q_{n}+q'_{n}}\geq \frac{1}{2(a+b)} \quad a.e.,
\end{equation*}
\end{proof}
\begin{lemma}\label{lemmasup}
Let $a,b \geq0$ such that $2(a+b)<\frac{1}{h_{\nu}}$, where again
$h_\nu:=h_{\nu}(\sigma)$ is the entropy of $(\Sigma_A, \sigma,\nu)$. For any sequences of integers $(q_{n})_{n}$ and $(q'_{n})_{n}$ such that $q_{n}\sim a\log n $, $q_{n}'\sim b\log n $, we have almost surely:
\begin{equation*}
\underset{n\rightarrow\infty}{\limsup}\frac{\log\sqrt{\tau_{q_{n},q_{n}'}}}{q_{n}+q_{n}'}\leq\frac{1}{2(a+b)}.\\
\end{equation*}
\end{lemma}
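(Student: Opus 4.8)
The plan is to establish a complementary upper bound to the one proved in Lemma \ref{lemmainf}, this time via the \emph{first} Borel--Cantelli Lemma applied in the opposite direction: rather than showing $\tau_{q_n,q_n'}>n$ eventually, I would show that along a suitable subsequence the return time $\tau_{q_n,q_n'}$ does not exceed $n$. Concretely, fix $\eta>0$ small and set $n_j$ to be an exponentially growing sequence (say $n_j=\lfloor\theta^j\rfloor$ for $\theta>1$ close to $1$, to be tuned against $\eta$). The idea is to sum the probabilities of the events $F_j:=\{\omega:\tau_{q_{n_j},q_{n_j}'}(\omega)>n_j\}$ and show this sum is finite; then a.e. $\omega$ belongs to only finitely many $F_j$, so $\tau_{q_{n_j},q_{n_j}'}\le n_j$ eventually along the subsequence, and one then interpolates in $n$ using monotonicity of $\tau_{q,q'}$ in $(q,q')$ together with the fact that $q_n,q_n'$ vary slowly (logarithmically). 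Unwinding as at the end of Lemma \ref{lemmainf} then gives $\limsup \frac{\log\sqrt{\tau_{q_n,q_n'}}}{q_n+q_n'}\le\frac{1}{2(a+b)}$.

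The key estimate is a lower bound on $\nu\big(\bigcup_{m=1}^{n}G_m(q_n,q_n')\big)$, or equivalently an upper bound on $\nu(F_n)=\nu\big(\bigcap_{m=1}^n\{\omega\notin G_m(q_n,q_n')\}\big)$. The natural route is a second-moment / Chebyshev argument: writing $N_n(\omega):=\#\{1\le m\le n: \omega\in G_m(q_n,q_n')\}$ for the number of returns before time $n$, one has $\{\tau_{q_n,q_n'}>n\}=\{N_n=0\}$, so by Paley--Zygmund/Cauchy--Schwarz $\nu(N_n=0)\le 1-\frac{(\mathbb{E}_\nu N_n)^2}{\mathbb{E}_\nu N_n^2}$. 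From Proposition \ref{p3}, restricted to the good set $K_N^\delta$ (on which cylinder measures are $\asymp e^{-h_\nu(q+q')}$), the first moment over cylinders of controlled size is $\mathbb{E}_\nu N_n \asymp \sum_{m}\frac{\nu(C)}{\sqrt m}\asymp \nu(C)\sqrt n$, and since $n\gg e^{2(a+b)h_\nu\log n}= n^{2(a+b)h_\nu}$ precisely when $2(a+b)h_\nu<1$, this first moment tends to infinity. The second moment $\mathbb{E}_\nu N_n^2=\sum_{m,\ell}\nu(\omega\in G_m\cap G_\ell)$ must be shown to be $\asymp (\mathbb{E}_\nu N_n)^2$, i.e. the returns are asymptotically uncorrelated; this is handled by splitting into $|m-\ell|$ small versus large and invoking the mixing local limit theorem (Proposition \ref{p3}) again for the large-gap terms, while the diagonal and near-diagonal terms contribute a lower-order $O(\mathbb{E}_\nu N_n)$. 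One concludes $\nu(N_n=0)=O\big(1/\mathbb{E}_\nu N_n^2 \cdot \text{(variance)}\big)$, which along the exponential subsequence $n_j$ is summable once the exponent is genuinely $>1$, which holds because $2(a+b)h_\nu<1$ leaves a power of $n$ to spare.

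The main obstacle I anticipate is the second-moment estimate: controlling $\sum_{1\le m<\ell\le n}\nu(G_m(q_n,q_n')\cap G_\ell(q_n,q_n'))$ uniformly. The difficulty is that $G_\ell\cap G_m$ involves simultaneously three constraints ($\sigma^m\omega$ in the cylinder, $\sigma^\ell\omega$ in the cylinder, and the ergodic sum $S_\ell\varphi=0$ with the intermediate sum $S_m\varphi=0$), so one cannot apply Proposition \ref{p3} as a black box; one needs a version conditioning on the configuration at time $m$, i.e. a local limit theorem for $S_{\ell-m}\varphi$ on the shifted system, and careful bookkeeping to ensure the cylinder windows at times $m$ and $\ell$ overlap the base window $C_{-q_n,q_n'}(\omega)$ in a way compatible with the hypotheses $m_0<q\le k$ of Proposition \ref{p3}. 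A cleaner alternative, which I would try first, is to avoid the full second moment by a direct union-type argument: choose a \emph{single} well-chosen time $m\approx n$ (or a sparse set of times $m_i$ spaced by more than $2k$), estimate $\nu(\tau_{q_n,q_n'}>n)\le \nu(\bigcap_i\{\omega\notin G_{m_i}\})$, and bound the latter by iterating Proposition \ref{p3} with the independence-up-to-error it provides across well-separated times; with $\asymp \sqrt n/(q_n)$ admissible widely-spaced times, each contributing a factor $(1-c\nu(C)/\sqrt n)$, the product is $\le \exp(-c\sqrt n\,\nu(C)/q_n)\to 0$ fast enough to be summable along $n_j$. Either way, once $\nu(F_{n_j})$ is shown summable the rest is the routine interpolation and logarithmic-rescaling already used in Lemma \ref{lemmainf}.
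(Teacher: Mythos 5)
Your strategy is genuinely different from the paper's, but it has a structural gap that causes it to fail. The paper does \emph{not} use a second-moment / Paley--Zygmund argument; it uses the Dvoretzky--Erd\H{o}s ``last-visit'' decomposition. For $l=1,\dots,n$ it sets $A_l:=G_l(q_n,q_n')\cap\sigma^{-l}\{\tau_{q_n,q_n'}>n-l\}$ (``$l$ is the last return time in $[1,n]$''), observes these sets are pairwise disjoint so $\sum_{l=0}^n\nu(A_l)=1$, and then lower-bounds each $\nu(C\cap A_l)$ via a \emph{single} application of Proposition~\ref{p3} (since $C\cap\{\tau>n-l\}$ is a union of forward cylinders, it fits the role of $B$ in the mixing LLT). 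Summing over $l\ge L_n$ and using the monotonicity $\nu(C\cap\{\tau>n-l\})\ge\nu(C\cap\{\tau>n\})$ yields $1\gtrsim\nu(C)\,\nu(C\cap\{\tau>n\})\,\sqrt n$, hence $\nu(K_N^\delta\cap\{\tau>n\})=O\!\left(n^{-(\frac12-(a+b)(h_\nu+\delta))}\right)$, and a polynomial subsequence $n_p$ makes this summable; Borel--Cantelli finishes.

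The gap in your proposal is the second-moment estimate itself, and it is not a matter of ``careful bookkeeping'': the Chebyshev/Paley--Zygmund route simply does not give a vanishing bound here. You claim the off-diagonal terms contribute at most $O((\mathbb E_\nu N_n)^2)$ with the diagonal lower order, and conclude $\nu(N_n=0)=O(\mathrm{Var}(N_n)/(\mathbb E N_n)^2)\to 0$. But in this null-recurrent, local-CLT regime the returns are positively correlated and the variance is of the \emph{same order} as $(\mathbb E N_n)^2$. Concretely, conditioning on a cylinder $C$, $\mathbb E[N_n\mid C]\sim\frac{2}{\sqrt{2\pi}\sigma_\varphi}\nu(C)\sqrt n$ while $2\sum_{1\le m<\ell\le n}\nu(\text{visit at }m\text{ and }\ell\mid C)\sim\frac{\nu(C)^2}{\pi\sigma_\varphi^2}\sum_{m<\ell}\frac{1}{\sqrt{m(\ell-m)}}\sim\frac{\nu(C)^2 n}{\sigma_\varphi^2}$; the ratio $(\mathbb E N_n)^2/\mathbb E N_n^2$ converges to $2/\pi$, not to $1$, so Paley--Zygmund gives only $\nu(N_n=0\mid C)\le 1-2/\pi+o(1)$, a constant bounded away from $0$. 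This reflects the Darling--Kac phenomenon: the renormalized return counter converges to a Mittag--Leffler law, not to a constant, so second-moment concentration must fail. Your ``sparse-times iterated Proposition~\ref{p3}'' alternative runs into the same wall, since the events $\{S_{m_i}\varphi\neq 0\}$ are cumulative (cocycle) conditions, not local ones, and no product/iteration structure is provided by the one-shot LLT of Proposition~\ref{p3}. The last-visit decomposition in the paper is precisely the device that sidesteps this: it turns the problem into a bound that requires only a \emph{first}-moment-type estimate, avoiding the correlation structure entirely.
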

\begin{proof}
We keep the same notations as in the proof of the previous lemma with $\delta>0$ such that $\frac{1}{2}+(a+b)h_{\nu}< 1+\delta(a+b)$.
For all $l=1,...,n$, we define:
\begin{equation*}
A_{l}(q_{n},q_{n}'):=G_{l}(q_{n},q_{n}')\cap \theta^{-l}\{\tau_{q_{n},q_{n}'}>n-l\}.
\end{equation*}
Let us take $L_{n}:=\lceil n^{a'}\rceil$, with $a'>2(a+b)(h_{\nu}+\delta-\gamma)$. The sets $A_{l}(q_{n},q_{n}')$ are pairwise disjoint thus:
\begin{equation*}
1= \sum_{l=0}^{n}\nu(A_{l}(q_{n},q_{n}'))\geq\sum_{l=L_{n}}^{n}\sum_{C\in \mathcal{C}^{\delta}_{q_{n},q_{n}'}}\nu(C\cap A_{l}(q_{n},q_{n}')).
\end{equation*}
Note that $C\cap\{\tau_{q_{n},q_{n}'}>n-l\}$ is a union of $C_{-q_n,n-l+q'_n}$-cylinders, then using Proposition \ref{p3}, for any $C\in\mathcal{C}^{\delta}_{q_{n}}$ provided $q_{n}\geq N$ and $l\geq L_{n}$, we get
\begin{eqnarray*}
\nu(C\cap A_{l}(q_{n},q_{n}'))
&=&\nu(C\cap\{S_{l}\varphi=0\}\cap\theta^{-l}(C\cap\{\tau_{q_{n},q_{n}'}>n-l\}))\\
&=&\left(\frac{\nu(C)}{\sqrt{2\pi}\sigma_{\varphi}}+O\left(\frac{q_{n}\nu(C)}{\sqrt{l-2q_{n}}}\right)\right)\frac{1}{\sqrt{l-q_{n}}}\nu(C\cap\{\tau_{q_{n},q_{n}'}>n-l\})\\
&\geq & c  n^{-(a+b)(h_{\nu}+\delta)}\frac{1}{\sqrt{l}}\nu(C\cap\{\tau_{q_{n},q_{n}'}>n-l\}).
\end{eqnarray*}
Note that
\begin{equation*}
\nu\left(K_{N}^{\delta}\cap\{\tau_{q_{n},q_{n}'}>n\}\right)\leq \sum_{C\in \mathcal{C}^{\delta}_{q_{n},q_{n}'}}\nu\left(C\cap\{\tau_{q_{n},q_{n}'}>n\}\right),
\end{equation*}
but, we observe that for any $C\in \mathcal{C}_{-q_{n},q_{n}'}^{\delta}$
$$\sum_{l=L_{n}}^{n}\nu(C\cap A_{l}(q_{n},q_{n}'))
\geq  cn^{-(a+b)(h_{\nu}+\delta)}\nu(C\cap\{\tau_{q_{n},q_{n}'}>n\})\left(\sqrt{n}-\sqrt{L_{n}}\right),
$$
so we obtain:
\begin{equation*}
1\geq \sum_{C\in \mathcal{C}^{\delta}_{q_{n},q_{n}'}}\sum_{l=L_{n}}^{n}\nu(C\cap A_{l}(q_{n},q_{n}'))\geq \sum_{C\in \mathcal{C}^{\delta}_{-q_{n}+q_{n}'}}n^{-(a+b)(h_{\nu}+\delta)}\nu(C\cap\{\tau_{q_{n},q_{n}'}>n\})\left(\sqrt{n}-\sqrt{L_{n}}\right),
\end{equation*}
from which one gets
\begin{equation*}
\sum_{C\in \mathcal{C}^{\delta}_{k_{n}}}\nu(C\cap\{\tau_{q_{n},q_{n}'}>n\})= O\left(\frac{1}{n^{\frac{1}{2}-(a+b)(h_{\nu}+\delta)}}\right).
\end{equation*}
Now let us take $n_{p}:=p^{-\frac{4}{1-2(a+b)(h_{\nu} +\delta)}}$. We have $\sum_{p\geq1}\nu(K_{N}^{\delta}\cap\{\tau_{q_{n_{p}},q_{n_{p}}'}>n_{p}\})<+\infty.$
Hence, by the first Borel Cantelli lemma, for $\nu$-almost everywhere $\omega\in K_{N}^{\delta}$, for every $p$ large enough, $\tau_{q_{n_{p}},q_{n_{p}}'}\leq n_{p}$.
This implies that
\begin{equation*}
\underset{n\rightarrow +\infty}{\limsup}\frac{\log\sqrt{ \tau_{q_{n},q_{n}'}}}{q_{n}+q_{n}'}\leq \frac{1}{2(a+b)}.
\end{equation*}
\end{proof}

\begin{proof}[Proof of Theorem \ref{theorem1flow}]
Let $y=g_s(x)$, with $x$ coded by an $\omega\in\Sigma_{A}$ (i.e. $x=\chi(\omega)$) and such that 
\[
\lim_{n\rightarrow +\infty}\frac{\log a_n^u(x)}{n}=
-L^u
=\mathbb E_{\nu}[\log \mathbf{a}^u]
\quad\mbox{and}\quad
\lim_{n\rightarrow +\infty}\frac{\log a_n^s(x)}{n}=L^s=\mathbb E_{\nu}[\log \mathbf{a}^s]\, ,
\]
and satisfying~\eqref{xsimy}.
Fix an $\epsilon>0$ and $s>0$. 
Recall that, due to~\eqref{inclusion1}, 
\[B_D(x,\epsilon/k_D)\subset   P_{D}(B(g_{s}(x),\epsilon))\subset B_D(x,\epsilon k_D)\, .
\]
Using Proposition \ref{PropESS} and the notation $C$ therein, 
let $q_\epsilon$ and $q'_\epsilon$ be the smallest non-negative integers such that
\[
C\max\left(a_{q_\varepsilon}^s(x),a_{q'_\varepsilon}^u(x)\right)\le \varepsilon/k_D< C\min\left(a_{q_\varepsilon-1}^s(x),a_{q'_\varepsilon-1}^u(x)\right)\, .
\]
Using Lemma~\ref{Lemmaballincylinder2}, we fix $p_\epsilon$ and $p'_\epsilon$ the largest integers such that
\[ c_{L}\epsilon k_D\leq\min( l^{u}_{p'_{\epsilon}}(\omega)\times\delta_n, l^{s}_{p_{\epsilon}}(\omega)\times \delta_{m}).\]
%
Observe that $P_{D}(B(g_{s}(x),\epsilon))$ is then controlled by 
\[\chi(C_{-q_{\epsilon},{q'}_{\epsilon}}(\omega))  \subset  
B_D(x,\epsilon/k_D)\subset P_{D}(B(g_{s}(x),\epsilon))\subset B_D(x,\epsilon k_D)\subset\chi(C_{-p_{\epsilon},{p'}_{\epsilon}}(\omega))\, .\]
Hence we have:
\begin{equation}\label{3 returns}
\tau_{p_{\epsilon},p'_{\epsilon}} (\omega) < w_{\epsilon}(x) < \tau_{q_{\epsilon}, q'_{\epsilon}}(\omega).
\end{equation}
By definition, $\log a_{q_\epsilon}^s(x)\sim\log \epsilon\sim\log a_{q'_\epsilon}^u (x)$ and therefore
\[
\lim_{\epsilon\rightarrow 0}\frac{\log \epsilon}{ q_\epsilon}=
\lim_{\epsilon\rightarrow 0}\frac{\log a^s_{q_\epsilon}(x)}{ q_\epsilon}=L^s\quad\mbox{and}\quad\lim_{\epsilon\rightarrow 0}\frac{\log \epsilon}{ q'_\epsilon}=\lim_{\epsilon\rightarrow 0}\frac{\log a^u_{q'_\epsilon}(x)}{ q'_\epsilon}=
-L^u
\, .
\]
Furthermore, by definition of $p_\epsilon$ and 
$p'_\epsilon$,
\[
l_{1+p_\epsilon}^s(\omega)\times \delta_m<c_L\epsilon k_D<l_{p_\epsilon}^s(\omega)\times\delta_m
\quad\mbox{and}\quad
l_{1+p'_\epsilon}^u(\omega)\times\delta_n<\epsilon k_D<l_{p'_\epsilon}^u(\omega)\times\delta_n\, .
\]
and so
\[
\lim_{\epsilon\rightarrow 0}\frac{\log\epsilon}{p_\epsilon}
=\lim_{\epsilon\rightarrow 0}\frac{\log 
(l_{p_\epsilon}^s(\omega)\times\delta_m)}{p_\epsilon}=\lim_{\epsilon\rightarrow 0}\frac{\log 
l_{p_\epsilon}^s(\omega)}{p_\epsilon}=L^s\]
and
\[
\lim_{\epsilon\rightarrow 0}\frac{\log\epsilon}{p'_\epsilon}
=\lim_{\epsilon\rightarrow 0}\frac{\log 
(l_{p'_\epsilon}^u(\omega)\times\delta_n)}{p'_\epsilon}=\lim_{\epsilon\rightarrow 0}\frac{\log l_{p'_\epsilon}^u(\omega)}{p'_\epsilon}=
-L^u \, .
\]

Let $\eta\in(0,1)$, and $
\alpha_{q}=\frac{1-\eta}{2h_{\nu}}\frac{1}{\frac{1}{L^u}-\frac{1}{L^s}}.$ For $\epsilon>0$, large enough, we set $
a:=\frac{\alpha_{q}}{|L^s|}$, $
b:=\frac{\alpha_{q}}{L^u}$, and $N_{\eta,\epsilon}:=\lceil\log\epsilon^{\frac{1}{\alpha}}\rceil$. We have $a+b= \frac{1-\eta}{2h_{\nu}}<\frac{1}{2h_\nu}$, then using Lemma \ref{lemmasup} (applied to $a, b, N_{\alpha_{q},\epsilon}$), we have almost surely


\begin{equation*}
\underset{\epsilon\rightarrow0}{\limsup}\frac{\log\sqrt{\tau_{q_{\epsilon},q_{\epsilon}'}}}{q_{\epsilon}+q_{\epsilon}'}\leq\frac{h_{\nu}}{1-\eta},
\end{equation*}

but $q_{\epsilon}+q'_{\epsilon}\leq \frac{(a+b)|\log\epsilon|}{\alpha}\leq 
\left(\frac{1}{L^u}-\frac{1}{L^s}\right)|\log\epsilon|$, which gives almost everywhere
\begin{equation}\label{limsup}
\underset{\epsilon\rightarrow0}{\limsup}\frac{\log\sqrt{\tau_{q_{\epsilon},q_{\epsilon}'}}}{|\log\epsilon|}\leq \frac{h_{\nu}}{1-\eta}
\left(\frac{1}{L^u}-\frac{1}{L^s}\right).
\end{equation}

%
On the other hand, for all $\epsilon>0$ large enough and $\eta\in(0,1)$, let $\alpha_{p}=\frac{1+\eta}{2h_{\nu}}\frac{1}{\frac{1}{L^u}-\frac{1}{L^s}}.$ Setting $a:=\frac{\alpha_{p}}{L^s}$, $b:=\frac{\alpha_{p}}{L^u}$, and $N_{\eta,\epsilon}:=\lceil\log\epsilon^{\frac{1}{\alpha}}\rceil$. As $a+b=\frac{1+\eta}{2h_{\nu}}>\frac{1}{2h_{\nu}}$, then using Lemma \ref{lemmainf} (applied to $a, b, N_{\alpha_{p},\epsilon}$), we have almost everywhere
\begin{equation*}
\underset{\epsilon\rightarrow0}{\liminf}\frac{\log\sqrt{\tau_{p_{\epsilon},p_{\epsilon}'}}}{p_{\epsilon}+p_{\epsilon}'}\geq\frac{h_{\nu}}{1+\eta}.
\end{equation*}
In the same way as previously, $p_{\epsilon}+p'_{\epsilon}\geq \frac{(a+b)|\log\epsilon|}{\alpha}\geq \left(\frac{1}{L^u}-\frac{1}{L^s}\right)|\log\epsilon|$, from which we get
\begin{equation}\label{liminf}
\underset{\epsilon\rightarrow0}{\liminf}\frac{\log\sqrt{\tau_{p_{\epsilon},p_{\epsilon}'}}}{|\log\epsilon|}\geq \frac{h_{\nu}}{1+\eta}\left(\frac{1}{L^u}-\frac{1}{L^s}\right).
\end{equation}


Therefore for all $\eta\in(0,1)$, due to \eqref{3 returns}, using \eqref{limsup}, \eqref{liminf} and Lemma \ref{tauwR}, we have the following two formulas, for almost everywhere
\begin{equation*}
\underset{\epsilon\rightarrow0}{\limsup}\frac{\log\sqrt{\tau_{\epsilon}}}{|\log\epsilon|}\leq\underset{\epsilon\rightarrow0}{\limsup}\frac{\log\sqrt{\tau_{q_{\epsilon},q_{\epsilon}'}}}{|\log\epsilon|}\leq\frac{h_{\nu}}{1-\eta}\left(\frac{1}{L^u}-\frac{1}{L^s}\right),
\end{equation*}

\begin{equation*}
\underset{\epsilon\rightarrow0}{\liminf}\frac{\log\sqrt{\tau_{\epsilon}}}{|\log\epsilon|}\geq\underset{\epsilon\rightarrow0}{\liminf}\frac{\log\sqrt{\tau_{p_{\epsilon},p_{\epsilon}'}}}{|\log\epsilon|}\geq\frac{h_{\nu}}{1+\eta}\left(\frac{1}{L^u}-\frac{1}{L^s}\right).
\end{equation*}

Now using the relation between $L^u$, $L^s$ and the Lyapunov exponents of the flow $(g_t)_t$ (see Proposition \ref{relationlLyapynov-flot-T} ), and the Abramov's formula \eqref{Abramov2}, we get

\begin{equation*}
\underset{\epsilon\rightarrow0}{\lim}\ \frac{\log\sqrt{\tau_{\epsilon}}}{|\log\epsilon|}=h_{\nu}\left(\frac{1}{L^u}-\frac{1}{L^s}\right)=h_{\mu}\left(\dfrac{1}{\lambda^u} -\dfrac{1}{ \lambda^s}\right), \quad \mbox{ almost everywhere.} 
\end{equation*}

It follows from Theorem 6.2 in \cite{barreira} that this limit coincides with the Hausdorff dimension $\dim_H\nu$ of the measure $\nu$, i.e. with $\dim_H\mu-1$.

\end{proof}

\section{Proof of the convergence in distribution Theorem}

Given $\epsilon>0$, we fix $\theta_{\epsilon}=\frac{\epsilon}{|\log\epsilon|}$. We construct a partition $\mathcal{D}_{\epsilon}$ of $\Sigma_{A}$ of diameter $<\theta_{\epsilon}$. Considering $c_{g}>0$ the Lipschitz constant of $(x,s)\mapsto g_{s}(x)$ on $X\times[0,\beta_{0}]$, $c_{L}>0$ the Lipschitz constant in Proposition \ref{barreiracL}, and the constant $C>0$ in Proposition 1.1.6. Let $\omega=(\omega_{n})_{n\in\mathbb{Z}}\in\Sigma_{A}$, we set $n_{1}(\omega)=n_{1}(\omega,\epsilon)$, and $n_{2}(\omega)=n_{2}(\omega,\epsilon)$, as follows:
$$n_{1}(\omega):=\min\left\{n\geq1 \mbox{ s.t.} \underset{\underset{\omega'_{0}=\omega_{0},..,\omega'_{n}=\omega_{n}}{\omega'\in\Sigma_{A}}}{\sup}\prod_{k=0}^{n}\mathbf{a}^{(u)}(\sigma^{k}\omega')^{-1}<\frac{\theta_{\epsilon}}{2Cc_{g}c_{l}}\right\},$$
and $$n_{2}(\omega):=\min\left\{n\geq1 \mbox{ s.t.} \underset{\underset{\omega'_{-1}=\omega_{-1},..,\omega'_{-n}=\omega_{-n}}{\omega'\in\Sigma_{A}}}{\sup}\prod_{k=-n}^{-1}\mathbf{a}^{(s)}(\sigma^{-k}\omega')<\frac{\theta_{\epsilon}}{2Cc_{g}c_{l}}\right\}.$$ We set the following quantities:
\begin{equation}\label{MepsQeps}
M_{\epsilon}:=2\ \underset{\omega\in\Sigma_{A}}{\max}\ n_{1}(\omega)+1 \mbox{. }  Q_{\epsilon}:= \sup_{\omega\in\Sigma_{A}}n_2(\omega) \mbox{ and define } M_{\epsilon}:=2Q_{\epsilon}+1.
\end{equation}

One can verify that $n_{1}(\omega)$ and $n_{2}(\omega)$ are well defined and finite. Thus for all $\omega\in\Sigma_{A}$, the family of cylinders $D_{\epsilon}(\omega)=C_{-n_{2}(\omega),n_{1}(\omega)}(\omega)$ forms a finite partition $\mathcal{D}_{\epsilon}=\{D_{\epsilon,i}, i=1,...,N_{\epsilon}\}$ of $\Sigma_{A}$. Now, we define the family of sets $(\mathcal{P}_{\epsilon,i,j})_{\epsilon,i,j}$, such that 
\[
\mathcal{P}_{\epsilon,i,j}=\left\{g_{s}(\chi(x)), (x,s)\in D_{\epsilon,i}\times\left[\frac{j\theta_{\epsilon}}{c_{g}},\frac{(j+1)\theta_{\epsilon}}{c_{g}}\right[\right\},
\] and for any element $y_{\epsilon,i,j}\in\mathcal{P}_{\epsilon,i,j}$, there exists $\omega_{\epsilon,i}\in D_{\epsilon,i}$ the smallest element in the lexicographic order such that $y_{\epsilon,i,j}=\chi(\omega_{\epsilon,i},j\theta_{\epsilon})$.

\begin{lemma}\label{diamP}
$\forall \epsilon>0,$ $i=1,...,N_{\epsilon}$ and $j\geq0$, the family $(\mathcal{P}_{\epsilon,i,j})_{\epsilon,i,j}$ satisfies
$$\text{diam}(\mathcal{P}_{\epsilon,i,j})<\theta_{\epsilon}.$$
\end{lemma}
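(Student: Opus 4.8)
The plan is to estimate the diameter of $\mathcal P_{\epsilon,i,j}$ by splitting the distance between two of its points $y=g_s(\chi(x))$ and $y'=g_{s'}(\chi(x'))$ into a spatial part (how far apart $\chi(x)$ and $\chi(x')$ can be, given that $x,x'$ lie in the same atom $D_{\epsilon,i}$ of $\mathcal D_\epsilon$) and a temporal part (how far apart $s$ and $s'$ can be, given that both lie in an interval of length $\theta_\epsilon/c_g$), and then gluing the two with the $c_g$-Lipschitz property of $(z,t)\mapsto g_t(z)$ on $X\times[0,\beta_0]$. Fix $\epsilon>0$, $i$ and $j$, pick any $\omega\in D_{\epsilon,i}$, so that $D_{\epsilon,i}=C_{-n_2(\omega),n_1(\omega)}(\omega)$, and write $m=n_2(\omega)$, $n=n_1(\omega)$.

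For the spatial part, note that $\chi(x)\in\chi(C_{-m,n}(\omega))$ for every $x\in D_{\epsilon,i}$, so by Proposition~\ref{PropESS} applied at the point $\chi(\omega)$,
\[
\chi(D_{\epsilon,i})\ \subset\ \chi(C_{-m,n}(\omega))\ \subset\ B\bigl(\chi(\omega),\,C\max(a_m^s(\chi(\omega)),a_n^u(\chi(\omega)))\bigr),
\]
hence $\operatorname{diam}(\chi(D_{\epsilon,i}))\le 2C\max(a_m^s(\chi(\omega)),a_n^u(\chi(\omega)))$. Now the thresholds $\theta_\epsilon/(2Cc_gc_L)$ in the definitions of $n_1(\omega)$ and $n_2(\omega)$ are put there precisely so that $C\,a_n^u(\chi(\omega))<\theta_\epsilon/(2c_gc_L)$ and $C\,a_m^s(\chi(\omega))<\theta_\epsilon/(2c_gc_L)$ (up to a uniformly bounded factor coming from the one-step index shift between $a_n^u=\prod_{k=0}^{n-1}(\mathbf a^{(u)}\circ\sigma^k)^{-1}$ and the product $\prod_{k=0}^{n}(\mathbf a^{(u)}\circ\sigma^k)^{-1}$ appearing there, which is absorbed into the constants). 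Thus $\operatorname{diam}(\chi(D_{\epsilon,i}))\le\theta_\epsilon/(c_gc_L)\le\theta_\epsilon/c_g$, using $c_L\ge 1$.

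For the temporal part, let $y=g_s(\chi(x))$ and $y'=g_{s'}(\chi(x'))$ lie in $\mathcal P_{\epsilon,i,j}$, so $x,x'\in D_{\epsilon,i}$ and $s,s'\in[j\theta_\epsilon/c_g,(j+1)\theta_\epsilon/c_g)$, whence $|s-s'|<\theta_\epsilon/c_g$ and $s,s'$ stay in the range on which $(z,t)\mapsto g_t(z)$ is $c_g$-Lipschitz for the sup product metric on $X\times[0,\beta_0]$. Therefore
\[
d(y,y')\ \le\ c_g\max\bigl(d(\chi(x),\chi(x')),\,|s-s'|\bigr)\ <\ c_g\cdot\frac{\theta_\epsilon}{c_g}\ =\ \theta_\epsilon,
\]
using $d(\chi(x),\chi(x'))\le\operatorname{diam}(\chi(D_{\epsilon,i}))\le\theta_\epsilon/c_g$ and $|s-s'|<\theta_\epsilon/c_g$. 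Taking the supremum over $y,y'\in\mathcal P_{\epsilon,i,j}$ gives $\operatorname{diam}(\mathcal P_{\epsilon,i,j})<\theta_\epsilon$.

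This lemma amounts to a pair of routine Lipschitz estimates, so I expect no essential difficulty; the only delicate point is the bookkeeping of constants --- verifying that $C$ (from Proposition~\ref{PropESS}), $c_g$, $c_L$ and $\|\mathbf a^{(u)}\|_\infty,\|\mathbf a^{(s)}\|_\infty$ are exactly the constants built into the thresholds defining $n_1(\omega),n_2(\omega)$, and tracking the one-step index shift between those thresholds and the quantities $a_n^u,a_m^s$ supplied by Proposition~\ref{PropESS}. One should also note that only the values of $j$ with $[j\theta_\epsilon/c_g,(j+1)\theta_\epsilon/c_g)\subset[0,\beta_0]$ are relevant, so that the $c_g$-Lipschitz bound on $X\times[0,\beta_0]$ indeed applies.
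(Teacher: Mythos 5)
Your proof is correct and follows essentially the same route as the paper: bound $\operatorname{diam}(\chi(D_{\epsilon,i}))$ by $\theta_\epsilon/c_g$ using the thresholds built into $n_1(\omega)$ and $n_2(\omega)$, bound the temporal spread by $\theta_\epsilon/c_g$, and glue the two with the $c_g$-Lipschitz property of $(z,t)\mapsto g_t(z)$. The only (cosmetic) difference is in the spatial step: the paper invokes the explicit diameter estimate $\operatorname{diam}(\chi(C_{-q,q'}(\omega)))\le c_L\,(l_q^s(\omega)+l_{q'}^u(\omega))$ from the proof of Lemma~\ref{lemma1}, whereby the factor $c_L$ cancels exactly against the $c_L$ in the thresholds and one lands on $\theta_\epsilon/c_g$ with no side remark; you instead go through the ball containment of Proposition~\ref{PropESS}, which costs you the extra observation $c_L\ge 1$ (valid, since $\{\cdot,\cdot\}$ restricts to the identity on the diagonal). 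Both are sound, and your remark about the one-step index shift between the threshold products and $a_n^u,a_m^s$ correctly flags the only bookkeeping subtlety.
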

\begin{proof}
Let $\omega,\omega'\in\Sigma_{A}$, due to \eqref{l3eqn lemma cylinball}, there exists $c_{L}>0$ such that $$\text{diam}(\chi(D_{\epsilon,i}))\leq c_{L}(l^{s}_{n_{2}}(\omega_{\epsilon,i})+l^{u}_{n_{1}}(\omega_{\epsilon,i}))
\leq c_{L}\left(\frac{\theta_{\epsilon}}{2c_{g}c_{L}}+\frac{\theta_{\epsilon}}{2c_{g}c_{L}}\right)<\frac{\theta_{\epsilon}}{c_{g}},$$
from which we get that $$d(g_{s}(\chi(\omega), g_{s'}(\chi(\omega'))))\leq c_{g}\max(|s-s'|, d(\chi(\omega), \chi(\omega')))<\theta_{\epsilon}.$$
\end{proof}
\begin{lemma}\label{ballincylpartition}
For all $\epsilon>0$, there exists $K>0$ such that and for every $i=1,...,N_{\epsilon}$, there exists $\omega_{i}:=\omega_{i,\epsilon}\in\Sigma_{A}$ such that
\begin{equation*}
B\left(\chi(\omega_{i}),\frac{\theta_{\epsilon}}{K}\right)\subset \chi(D_{\epsilon,i}).
\end{equation*}
\end{lemma}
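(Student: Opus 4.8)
The plan is to read this lemma off directly from the second assertion of Proposition~\ref{PropESS} (the one giving an inscribed ball in $\chi(C_{-m,n})$) together with the very definitions of the stopping times $n_1$ and $n_2$ used to build the partition $\mathcal D_\epsilon$. The only genuine ingredient — and the only step requiring care — is a bounded–distortion estimate that converts the supremum over a cylinder appearing in the definitions of $n_1,n_2$ into a lower bound for the corresponding product of the $\mathbf a^{(u)}$'s (resp.\ $\mathbf a^{(s)}$'s) evaluated at a single point; this is exactly what the H\"older regularity of $\mathbf a^{(u)},\mathbf a^{(s)}$ in Lemma~\ref{bfauholder} provides (it is the constant $C^\alpha_a$ occurring in~\eqref{NNN}), and the point to watch is that this constant be uniform over all cylinders of $\mathcal D_\epsilon$ and all small $\epsilon$.

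Fix an index $i$ and $\epsilon>0$ small. Let $\omega^{(0)}\in D_{\epsilon,i}$ be arbitrary; since $\mathcal D_\epsilon$ is a partition, $D_{\epsilon,i}=C_{-m,n}(\omega^{(0)})$ with $m=n_2(\omega^{(0)})$ and $n=n_1(\omega^{(0)})$, and set $x_i=\chi(\omega^{(0)})$. Applying the second part of Proposition~\ref{PropESS} to this cylinder and this point yields a point $x_0\in\chi(D_{\epsilon,i})$ with
\[
B\bigl(x_0,\ C^{-1}\min(a^s_m(x_i),a^u_n(x_i))\bigr)\subset\chi(D_{\epsilon,i}),
\]
and, since $x_0\in X=\chi(\Sigma_A)$, we may choose $\omega_i\in\Sigma_A$ with $\chi(\omega_i)=x_0$. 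It therefore suffices to bound $a^s_m(x_i)$ and $a^u_n(x_i)$ from below by a fixed multiple of $\theta_\epsilon$, independently of $i$ and $\epsilon$.

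By minimality of $n=n_1(\omega^{(0)})$ the defining condition of $n_1$ is violated at level $n-1$, that is, $\sup\prod_{k=0}^{n-1}\mathbf a^{(u)}(\sigma^k\omega')^{-1}\ge\theta_\epsilon/(2Cc_gc_L)$, the supremum being over all $\omega'\in\Sigma_A$ agreeing with $\omega^{(0)}$ on the coordinates $0,\dots,n-1$. For any such $\omega'$ the shifts $\sigma^k\omega'$ and $\sigma^k\omega^{(0)}$ satisfy $\hat d(\sigma^k\omega',\sigma^k\omega^{(0)})\le e^{-\min(k+1,\,n-k)}$ for $0\le k\le n-1$, so by Lemma~\ref{bfauholder}, and because $\sum_{k=0}^{n-1}e^{-\alpha\min(k+1,\,n-k)}$ is bounded independently of $n$, there is a constant $\widehat C\ge1$, independent of $i$, $\epsilon$ and $n$, with $\prod_{k=0}^{n-1}\mathbf a^{(u)}(\sigma^k\omega')^{-1}\le\widehat C\,a^u_n(x_i)$ for all such $\omega'$. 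Hence $a^u_n(x_i)\ge\theta_\epsilon/(2Cc_gc_L\widehat C)$, and the stable direction is handled identically using $\mathbf a^{(s)}$, $n_2$ and the backward coordinates, giving $a^s_m(x_i)\ge\theta_\epsilon/(2Cc_gc_L\widehat C)$. Setting $K:=2C^2c_gc_L\widehat C$ we conclude
\[
B\!\left(\chi(\omega_i),\frac{\theta_\epsilon}{K}\right)\subset B\bigl(x_0,\ C^{-1}\min(a^s_m(x_i),a^u_n(x_i))\bigr)\subset\chi(D_{\epsilon,i}),
\]
with $K$ depending neither on $i$ nor on $\epsilon$, which is even stronger than claimed.
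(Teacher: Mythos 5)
Your argument is correct, and in spirit it follows the paper's: both proofs lean on the minimality of $n_1,n_2$ to get a lower bound on the ``size'' of $D_{\epsilon,i}$ of order $\theta_\epsilon$, and then invoke the inscribed-ball statement to conclude. The two presentations differ in how they package the intermediate steps. The paper lower-bounds the \emph{lengths} $l^u_{n_1}(\omega')$ and $l^s_{n_2}(\omega'')$ for suitable $\omega',\omega''$ in the cylinder, selects a point $\omega^u$ (resp.\ $\omega^s$) carrying a full unstable (resp.\ stable) ball of radius $\approx\theta_\epsilon$ inside the corresponding local leaf, and then glues the two one-dimensional balls together by $\{\omega^s,\omega^u\}$ using Lemma~\ref{ballincylinder}; the bounded-distortion step is left implicit there. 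You instead lower-bound the \emph{products} $a^u_n(x_i),a^s_m(x_i)$ at a single prescribed $x_i$ by making the distortion argument explicit (summing $e^{-\alpha\min(k+1,n-k)}$ is the key uniformity), and then cite the inscribed ball already contained in the second assertion of Proposition~\ref{PropESS}, which spares you from reconstructing it via $\{\cdot,\cdot\}$. Your route is a bit more modular and makes the uniformity of $K$ in $\epsilon$ and $i$ transparent, whereas the paper redoes in the proof of this lemma what Proposition~\ref{PropESS} already encodes. Both give the same conclusion with a $K$ independent of $\epsilon$. Two minor remarks: your ``violated at level $n-1$'' step tacitly assumes $n_1(\omega^{(0)})\ge2$ (and likewise $n_2\ge2$), which holds for all small $\epsilon$ since $\mathbf a^{(u)}>1$ and the products are bounded below factor-by-factor, and the paper makes the same implicit assumption; and you correctly note that $x_0$ lies in $\chi(C_{-m,n})\subset X$, hence admits a code $\omega_i$, a point the paper sidesteps by constructing $\chi(\omega_i)=\{\omega^s,\omega^u\}$ directly.
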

\begin{proof}

For all $\omega\in\Sigma_{A}$, set $q=n_{1}(\omega)$, and $q'=n_{2}(\omega)$. Using the definition of $n_{1}(\omega)$ and that of $l^{u}_{q}$, there exist $\omega'$ and $c_{1}=\frac{\mathbf{a}^{u}(\sigma^{k}(\omega'))^{-1}}{2Cc_{g}c_{L}}>0$, such that
$l^{u}_{q}(\omega')\geq c_{1}\theta_{\epsilon},$
thus in turn there exists $\omega^{u}\in W^{u}_{q}(\omega')$ such that
$$B^{u}\left(\omega^{u}, \frac{c_{1}\theta_{\epsilon}}{3}\right)\subset W^{u}_{q}(\omega').$$
Similarly, using the definition of $n_{2}(\omega)$ and that of $l^{s}_{q'}$, there exist $\omega''\in\Sigma_{A}$, $c_{2}>0$ such that
$l^{s}_{q'}(\omega'')\geq c_{2}\theta_{\epsilon}$, and $\omega^{s}\in W^{s}_{q'}(\omega'')$ such that
$B^{s}\left(\omega^{s},\frac{c_{2}\theta_{\epsilon}}{3}\right)\subset W^{s}_{q'}(\omega).$
Let $k>\frac{3c_{L}}{\min(c_{1},c_{2})}$. Set $\omega_{i}\in \Sigma_{A}$ such that $\chi(\omega_{i})=\{\omega^{s},\omega^{u}\}$, then using Lemma \ref{ballincylinder}, we get $$B\left(\chi(\omega_{i}),\frac{\theta_{\epsilon}}{k}\right)\subset C_{-q,q'}.$$
which finishes the proof.

\end{proof}

Set $\mathcal{P}_{\epsilon}=\{\mathcal{P}_{\epsilon,i,j}\mbox{ : } i=1,...,N_{\epsilon}; j\geq0, D_{\epsilon,i}\times[j\theta_{\epsilon},(j+1)\theta_{\epsilon}[\subset \Delta\}$. We will consider the elements of $\mathcal{P}_{\epsilon}$ which intersect the boundary of $\Delta$, thus we consider the set \[
\bar{\mathcal{P}}_{\epsilon}:=\{\mathcal{P}_{\epsilon,i,j}\mbox{ : } i=1,...,N_{\epsilon}; j\geq0, D_{\epsilon,i}\times[j\theta_{\epsilon},(j+1)\theta_{\epsilon}[\cap\partial\Delta\neq\emptyset\}.
\]
In the following lemma, we show that the measure of the union of the elements of $\bar{\mathcal{P}}_{\epsilon}$ can be neglected in our study.
\begin{lemma}
The $\nu$-measure of the union of the elements of $\bar{\mathcal{P}}_{\epsilon}$ is $O(\theta_{\epsilon})$.
\end{lemma}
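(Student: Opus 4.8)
The plan is to transfer the problem to the suspension $\Delta$, where each element $\mathcal{P}_{\epsilon,i,j}\in\bar{\mathcal P}_\epsilon$ is the $\chi$-image of a tower $D_{\epsilon,i}\times[j\theta_\epsilon,(j+1)\theta_\epsilon)\subset\Sigma_A\times\mathbb R$. Since $\chi_*\mu_\Delta=\mu$ with $\mu_\Delta=\frac{\nu\times Leb|_\Delta}{\int_{\Sigma_A}r\,d\nu}$, and $\chi$ is injective off a $\mu_\Delta$-null set (the lift of $\bigcup_i\partial\Pi_i$ together with the seam $\{s=0\}$), it is enough to bound, up to the fixed constant $\int_{\Sigma_A}r\,d\nu$, the $(\nu\times Leb)$-measure of the union of those towers. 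The point of the argument is that the cylinders $\{D_{\epsilon,i}\}_i$ already form a partition of the whole of $\Sigma_A$, so a tower over $D_{\epsilon,i}$ can only meet $\partial\Delta$ at the floor $\{s=0\}$ or at the roof $\{s=r(\omega)\}$; I will treat these two families separately.

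For the floor family: a tower $D_{\epsilon,i}\times[j\theta_\epsilon,(j+1)\theta_\epsilon)$ meets $\{s=0\}$ only for $j=0$, so their union lies in the slab $\{(\omega,s)\in\Delta:0\le s\le\theta_\epsilon\}$, of $(\nu\times Leb)$-measure at most $\theta_\epsilon$ since $\nu$ is a probability measure; this already contributes $O(\theta_\epsilon)$. For the roof family, write $\mathrm{osc}_i$ for the oscillation of $r$ on $D_{\epsilon,i}$ and $r_i^\pm$ for its maximum/minimum there. A layer of height $\theta_\epsilon$ over $D_{\epsilon,i}$ can reach the roof only if it meets $[r_i^-,r_i^+]$, so the union of roof-touching towers over $D_{\epsilon,i}$ is contained in $D_{\epsilon,i}\times[r_i^--\theta_\epsilon,r_i^+]$, of measure $\le\nu(D_{\epsilon,i})(\mathrm{osc}_i+\theta_\epsilon)$. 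Summing over $i$ and using $\nu(\Sigma_A)=1$, the roof contribution is at most $\sum_i\nu(D_{\epsilon,i})\,\mathrm{osc}_i+\theta_\epsilon$, so it remains to show $\sum_i\nu(D_{\epsilon,i})\,\mathrm{osc}_i=O(\theta_\epsilon)$.

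The key step is the uniform bound $\mathrm{osc}_i=O(\theta_\epsilon)$. By construction $D_{\epsilon,i}=C_{-n_2(\omega),n_1(\omega)}(\omega)$ with $n_1(\omega),n_2(\omega)\ge1$, hence $D_{\epsilon,i}\subset C_{0,1}(\omega)$ and $\chi(D_{\epsilon,i})$ lies inside a single sub-rectangle $\Pi_{\omega_0}\cap T^{-1}\Pi_{\omega_1}$. On such a sub-rectangle the first-return time $R$ is the restriction to $\Lambda$ of the $C^1$ function furnished by the implicit function theorem applied to the $C^1$ map $(x,t)\mapsto g_t(x)$ and the smooth transversal disk carrying $\Pi_{\omega_1}$; hence $R$ is Lipschitz there, with a constant $L_R$ uniform over the finitely many sub-rectangles. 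Combining this with the diameter bound $\mathrm{diam}\,\chi(D_{\epsilon,i})<\theta_\epsilon/c_g$ obtained in the proof of Lemma~\ref{diamP}, we get $\mathrm{osc}_i\le L_R\theta_\epsilon/c_g$, so $\sum_i\nu(D_{\epsilon,i})\,\mathrm{osc}_i\le L_R\theta_\epsilon/c_g$. Adding the floor and roof bounds gives $(\nu\times Leb)\bigl(\bigcup\bar{\mathcal P}_\epsilon\bigr)=O(\theta_\epsilon)$, hence the claimed estimate.

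The step I expect to be the main obstacle is precisely this oscillation bound: one really needs that $n_1,n_2\ge1$ confines each $D_{\epsilon,i}$ to a single smoothness domain of $R$, since only there is $R$ Lipschitz, whereas on all of $\Sigma_A$ the roof function $r$ is in general merely H\"older, and a H\"older bound $\mathrm{osc}_i=O(\theta_\epsilon^\gamma)$ with $\gamma<1$ would give only $o(1)$ rather than $O(\theta_\epsilon)$ for the roof family.
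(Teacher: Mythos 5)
Your argument is correct, but it takes a genuinely different route from the paper's, which is essentially a one-liner: by Lemma~\ref{diamP} each $P\in\bar{\mathcal P}_\epsilon$ has $M$-diameter less than $\theta_\epsilon$, and since $P$ meets $\partial\Delta$ (whose image under $\chi$ lies in the Poincar\'e section $X$), the paper places $P$ directly in the $s$-slab $\{(\omega,s)\in\Delta: r(\omega)-\theta_\epsilon<s\le r(\omega)\}$, whose $\mu_\Delta$-measure is at most $\theta_\epsilon/\int_{\Sigma_A} r\,d\nu$; the conversion from $M$-metric distance to $s$-coordinate distance rests on the transversality of the flow to $X$, which is left implicit. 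You instead split $\bar{\mathcal P}_\epsilon$ into floor ($j=0$) and roof towers and, for the roof part, reduce the estimate to a uniform oscillation bound $\mathrm{osc}_i(r)=O(\theta_\epsilon)$, which you derive from Lipschitz regularity of the return time $R$ on each sub-rectangle $\Pi_{\omega_0}\cap T^{-1}\Pi_{\omega_1}$ combined with the bound $\mathrm{diam}\,\chi(D_{\epsilon,i})<\theta_\epsilon/c_g$ established inside the proof of Lemma~\ref{diamP}. Your route buys an explicit account of precisely where a mere H\"older bound on $r$ (which is all the paper records, e.g.\ in Lemma~\ref{bfauholder}) would fall short of the claimed order; what it costs is an extra regularity input, Lipschitz continuity of $R$, which is not invoked anywhere else in the paper and which tacitly requires the standard (but not automatic) fact that, on each sub-rectangle, the return time agrees with the $C^1$ hitting time of the disk carrying $\Pi_{\omega_1}$ furnished by the implicit function theorem. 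Two minor remarks: $n_1\ge1$ alone already confines $D_{\epsilon,i}$ to $C_{0,1}(\omega)$, so the condition on $n_2$ plays no role in that containment; and the paper's stated bound $\theta_\epsilon/\int r\,d\nu$ only accounts for the roof slab, so your separate floor-tower term (contributing an additional $O(\theta_\epsilon)$) is a genuine, if harmless, supplement.
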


\begin{proof}

Let $P\in \bar{\mathcal{P}}_{\epsilon}$. Then according to Lemma \ref{diamP}, $P$ is contained in the $\theta_{\epsilon}$-neighborhood of the graph of $r:\Sigma_{A}\rightarrow[0,+\infty]$. So
\begin{equation*}
    \bigcup_{P\in \bar{\mathcal{P}}_{\epsilon}} P\subset\{(\omega,s)\in\Delta: r(\omega)-\theta_{\epsilon}<s\leq r(\omega)\},
\end{equation*}   
whose measure (w.r.t. $\mu_{\Delta}$) is bounded by $\dfrac{\theta_{\epsilon}}{\int_{\Sigma_{A}}rd\nu}$.
\end{proof}
From now on, let us set $A_{\epsilon}(y):=\chi^{-1}(P_{D}(B(y,\epsilon)))$. We denote by $A_{\epsilon-\theta_{\epsilon}}(y_{\epsilon,i,j})$ and $A_{\epsilon+\theta_{\epsilon}}(y_{\epsilon,i,j})$ the projections of the balls $B(y_{\epsilon,i,j},\epsilon-\theta_{\epsilon})$ and $B(y_{\epsilon,i,j},\epsilon+\theta_{\epsilon})$  on $\Sigma_{A}$ respectively, we set $A_{\epsilon}^{-}=\cup_{k\in I}D_{\epsilon,k}$ and $A^{+}_{\epsilon}=\cup_{l\in J}D_{\epsilon,l}$, where $I=\{k: D_{\epsilon,k}\subset A_{\epsilon-\theta_{\epsilon}}(y_{\epsilon,i,j})\}$ and
$J=\{l:D_{\epsilon,l}\cap A_{\epsilon+\theta_{\epsilon}}(y_{\epsilon,i,j})\neq\emptyset\}$.
Then we have the following inclusions, for $y\in\mathcal{P}_{\epsilon,i,j} $:
\begin{equation}\label{inclusionshift}
A_{\epsilon}^{-}(y_{\epsilon,i,j})\subset A_{\epsilon-\theta_{\epsilon}}(y_{\epsilon,i,j})\subset A_{\epsilon}(y)\subset A_{\epsilon+\theta_{\epsilon}}(y_{\epsilon,i,j})\subset A_{\epsilon}^{+}(y_{\epsilon,i,j}).
\end{equation}
Let $y\in M$, if $\epsilon>0$ is small enough, $y\in \bigcup_{P\in\mathcal{P}_{\epsilon}}P$, there exists $i,j$ such that $y\in \mathcal{P}_{\epsilon,i,j}$. Consider the ball $B(y,\epsilon)$, and let $y_{\epsilon,i,j}\in\mathcal{P}_{\epsilon,i,j}$, we have:
\begin{equation}
B(y_{\epsilon,i,j},\epsilon-\theta_{\epsilon})\subset B(y,\epsilon)\subset B(y_{\epsilon,i,j},\epsilon+\theta_{\epsilon}).
\end{equation}
This comes from the fact that the diameter of $\mathcal{P}_{\epsilon,i,j}$ is bounded by $\theta_{\epsilon}$. Furthermore, after projecting on $\Sigma_{A}$, let $x_{\epsilon,i}\in X$ such that $x_{\epsilon,i}=\chi(\omega_{\epsilon,i})$, we note that there exist $\alpha_{1}$, $\alpha_{2}>0$, such that:
\begin{equation}\label{alpha1alpha2}
\nu(\chi^{-1}(B(x_{\epsilon,i}, \alpha_{1}\epsilon)))\leq \nu(A_{\epsilon}(y_{\epsilon,i,j}))\leq \nu(\chi^{-1}(B(x_{\epsilon,i},\alpha_{2}\epsilon))).
\end{equation}

In what follows, we consider the notations 
$$A_{\epsilon}^{-}(y):=A_{\epsilon}^{-}(y_{\epsilon,i,j})\mbox{, } A_{\epsilon}^{+}(y):=A^{+}_{\epsilon}(y_{\epsilon,i,j}) \mbox{ and } D_{\epsilon}(x):=D_{\epsilon,i},$$ 
where $x$ is the point of $X$ such that $y=g_{s}(x)$ for some $s\in[0,R(x)[$ and if $y\in B(y_{\epsilon,i,j},\epsilon)$.

%

\begin{proposition}
Recall that a measure $\nu$ on a metric space is called \textbf{Federer} if there exists a constant $c_{F}>0$ such that for any point $x$ and any $r>0$,
\begin{equation*}
\nu(B(x,2r))\leq c_{F}\nu(B(x,r)).
\end{equation*}
\end{proposition}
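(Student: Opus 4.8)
The plan is to deduce the doubling inequality from the Gibbs property~\eqref{gibbs.formula}, using the dictionary between metric balls and cylinders set up in Section~\ref{section balls and coding}. I would first establish the symbolic statement, namely that $\nu$ is Federer on $(\Sigma_A,\hat d)$. In that metric every ball is a symmetric cylinder: if $r\in(e^{-(m+1)},e^{-m}]$ then $B(\omega,r)=C_{-m,m}(\omega)$, and since $2r\in(2e^{-(m+1)},2e^{-m}]\subset(e^{-(m+1)},e^{-(m-1)})$ (here we use $2<e$), the ball $B(\omega,2r)$ is either $C_{-m,m}(\omega)$ or $C_{-(m-1),m-1}(\omega)$. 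Hence it is enough to bound $\nu\big(C_{-(m-1),m-1}(\omega)\big)\big/\nu\big(C_{-m,m}(\omega)\big)$, and applying~\eqref{gibbs.formula} to the numerator (with $q=q'=m-1$) and to the denominator (with $q=q'=m$) gives
\[
\frac{\nu\big(C_{-(m-1),m-1}(\omega)\big)}{\nu\big(C_{-m,m}(\omega)\big)}
\le k_{\tilde h}^{\,2}\exp\!\Big(2P_\sigma(\tilde h)-\tilde h(\sigma^{-m}\omega)-\tilde h(\sigma^{m}\omega)\Big)
\le k_{\tilde h}^{\,2}\,e^{\,2|P_\sigma(\tilde h)|+2\|\tilde h\|_\infty},
\]
a bound uniform in $\omega$ and in $r$. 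So $\nu$ is Federer on $\Sigma_A$.

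I would then transport this to the measure $B\mapsto\nu(\chi^{-1}(B))=\chi_*\nu(B)$ on $X$, which is the quantity entering~\eqref{alpha1alpha2}, and to $\mu$ on $M$ by passage to the quotient. Fix $x=\chi(\omega)$ and a small $r>0$. For the inner estimate, Lemma~\ref{lemma1} and its stable counterpart give a cylinder $C_{-q,q'}(\omega)\subset B(x,r)$ with $q,q'$ minimal such that $c_Ll^{s}_{q}(\omega)\le r/2$ and $c_Ll^{u}_{q'}(\omega)\le r/2$; since $\mathbf a^{(u)}>1$ and $\mathbf a^{(s)}<1$ uniformly (recall $a^{(u)}>\bar c>0>\bar c'>a^{(s)}$ and $R$ is bounded away from $0$), while both stay bounded away from $0$ and $\infty$, minimality together with Proposition~\ref{PropESS} (which gives $l^{s}_{q}\asymp a^{s}_{q}$ and $l^{u}_{q'}\asymp a^{u}_{q'}$) forces $l^{s}_{q}(\omega)\asymp r\asymp l^{u}_{q'}(\omega)$ with constants independent of $x$ and $r$; hence $\chi_*\nu(B(x,r))\ge\nu(C_{-q,q'}(\omega))$. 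For the outer estimate, one covers $B(x,2r)$ by its intersections with the (uniformly boundedly many) rectangles of the Markov collection it meets, and encloses each such piece in a cylinder whose stable and unstable lengths are again of order $r$, so that its depth differs from $(q,q')$ by $O(1)$; by~\eqref{gibbs.formula} the $\nu$-weights of these finitely many cylinders are then all comparable to $\nu(C_{-q,q'}(\omega))$ --- the comparison of the relevant Birkhoff sums of $\tilde h$ being controlled by the H\"older regularity of $\tilde h$, of $\mathbf a^{(u)}$ and of $\mathbf a^{(s)}$ (Proposition~\ref{diamcyl} and Lemma~\ref{bfauholder}), since the orbits in play stay exponentially close along the first $\asymp q$ iterates. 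Summing these bounded weights gives $\chi_*\nu(B(x,2r))\le c_F\,\chi_*\nu(B(x,r))$, and the Lipschitz product structure $\{\cdot,\cdot\}$ of Proposition~\ref{barreiracL} lets one pass freely between a ball and the product of its stable and unstable traces, so the same constant works throughout.

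The genuinely delicate point is this outer inclusion: a metric ball need not sit inside any single cylinder, since it may straddle the boundary $\partial\Pi_i$ of the Markov partition. Bounding the number of rectangles it meets is harmless --- it is at most the total number of rectangles of the collection once $r$ is small --- but choosing the enclosing cylinders of comparable depth, and controlling the distortion of $\tilde h$ along orbits issued from neighbouring rectangles in spite of $\chi$ being only H\"older rather than bi-Lipschitz, is precisely where the estimates on boundary neighbourhoods come in (Lemma~\ref{LemmaB} and the construction used in Lemma~\ref{Lemmaballincylinder2}). Everything else follows at once from the Gibbs inequality~\eqref{gibbs.formula} together with the bounded-distortion lemmas already proved.
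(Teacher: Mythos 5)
The item you were asked to prove is in fact only a \emph{definition}: the ``proposition'' environment is (mis)used to recall what Federer means, and the actual assertion that $\nu$ (more precisely $\chi_*\nu$ on $X$) has this property appears just afterwards in Remark~\ref{federer remark}, where it is not proved but \emph{cited}: the paper invokes Theorem~5.1 of \cite{Pesin.sadovskaya} and gives no argument of its own. So your proposal is automatically a genuinely different route: you attempt a self-contained proof from the Gibbs inequality~\eqref{gibbs.formula} and the geometric lemmas of Section~\ref{section balls and coding}, whereas the paper delegates the whole statement to the literature.

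Your symbolic step is clean and correct: on $(\Sigma_A,\hat d)$ every ball is a symmetric cylinder, doubling changes its depth by at most one, and \eqref{gibbs.formula} with $\|\tilde h\|_\infty<\infty$ gives a uniform constant. This part is fine.

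The transport to $(X,d)$ is where a genuine gap remains, and you name the right obstruction yourself but do not close it. Two specific problems. First, the argument needs \emph{uniform} constants in $x$ and $r$, but the devices you lean on for the outer inclusion --- Lemma~\ref{LemmaB}, Lemma~\ref{LemmaC} and the construction of Lemma~\ref{Lemmaballincylinder2} --- are only almost-everywhere statements via Borel--Cantelli (``for $\nu$-a.e.\ $\omega$, for $\epsilon$ small enough depending on $\omega$''), so they cannot supply the $\forall x,\ \forall r$ quantifier that Federer requires; indeed Remark~\ref{federer remark} iterates the inequality, which only makes sense with a uniform $c_F$. Second, when $B(x,2r)$ straddles $\partial\Pi_i$, the pieces you want to enclose may touch the boundary of arbitrarily many sub-cylinders, so the minimal cylinder containing such a piece need not have depth within $O(1)$ of $(q,q')$; your appeal to ``the orbits in play stay exponentially close along the first $\asymp q$ iterates'' implicitly uses geometric, not symbolic, H\"older continuity of $\tilde h$ (i.e.\ that $\tilde h$ factors through $\chi$ and is H\"older on $X$), which is true for the equilibrium potential but is not stated, and even granting it the enclosing cylinders are not produced. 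In short, your reduction to bounded distortion is the right heuristic, but the step ``encloses each such piece in a cylinder whose stable and unstable lengths are again of order $r$'' is exactly what a correct proof must construct, and in the present form it is asserted rather than established. The cleanest repair would follow Pesin--Sadovskaya: use the local product structure $\nu\sim h\,\nu^u\otimes\nu^s$ with bounded density (already invoked in the proof of Lemma~\ref{equivalent}), reduce the doubling property to a one-dimensional statement for the conditional measures $\nu^u,\nu^s$ along the leaves, and there exploit conformality and the Gibbs property for one-sided cylinders to get uniform bounded distortion; this sidesteps the boundary issue entirely because each one-dimensional ball meets at most two boundary pieces.
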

\begin{remarque}\label{federer remark}
Due to Theorem 5.1 in \cite{Pesin.sadovskaya}, the measure $\nu$ is Federer. By a direct induction, for all $m\geq0$,
\begin{equation}
\nu(B(x,2^{m}r))\leq c_{F}^{m}\nu(B(x,r)).
\end{equation}
\end{remarque}

We will show a result of convergence in distribution for $\tau_{\epsilon}$. We will start by proving the two following lemmas:

\begin{lemma}\label{divideboundary}
There exist $\eta_{0}>0$ and $\epsilon>0$ such that if $\frac{\delta}{\epsilon}<\eta_{0}$, for any $x\in (\partial A_{\epsilon}(y))^{[\delta]}$, then there exists $z\in \partial A_{\epsilon}(y)$, such that $z\in W^{u}(x)$ or $W^{s}(x)$.
\end{lemma}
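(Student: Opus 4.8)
The plan is to pass from the symbolic side to the geometric side on the section $D$, prove a transversality statement about the boundary of the projected ball, and then lift the conclusion back through the coding map. Write $\Omega:=P_{D}(B(y,\epsilon))\subset D$, so that $A_{\epsilon}(y)=\chi^{-1}(\Omega)$, and recall from \eqref{inclusion1} that $B_{D}(c,\epsilon/k_{D})\subset\Omega\subset B_{D}(c,\epsilon k_{D})$ with $c=P_{D}(y)$. First I would record two facts. (i) Since $\epsilon$ is small, $\Omega$ is a topological disk which, after rescaling by $\epsilon$, is $C^{1}$-close to a round disk (the metric sphere $\partial B(y,\epsilon)$ is smooth, the flow direction is nearly constant on $B(y,\epsilon)$, and $P_{D}$ is a submersion), so $\partial\Omega$ is a simple closed curve whose tangent direction turns slowly at scales small compared with $\epsilon$, i.e.\ $\partial\Omega$ is ``almost straight'' at such scales. (ii) On the compact set $\Lambda\cap D$ the continuous line fields $E^{s}$ and $E^{u}$ make an angle bounded below by some $\theta_{1}>0$ and are uniformly continuous, and the local leaves $W^{s}_{loc}(\cdot,\Pi)$, $W^{u}_{loc}(\cdot,\Pi)$ inside the rectangles have uniformly bounded curvature and length bounded below by some $L_{0}>0$. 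Since $E^{s}(p)$ and $E^{u}(p)$ span $T_{p}D$ with angle $\ge\theta_{1}$, at every $p\in\partial\Omega\cap\Lambda$ the tangent to $\partial\Omega$ makes angle $\ge\theta_{1}/2$ with at least one of $E^{s}(p)$, $E^{u}(p)$.

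Now take $x\in(\partial A_{\epsilon}(y))^{[\delta]}$ and $p\in\partial A_{\epsilon}(y)$ with $\hat d(x,p)<\delta$. By Proposition \ref{diamcyl} the map $\chi$ is H\"older, so $\bar x:=\chi(x)$ lies within $\delta':=c\,\delta^{\alpha_{1}}$ of $\bar p:=\chi(p)\in\overline{\Omega\cap X}\cap\overline{X\setminus\Omega}$, hence within $O(\delta')$ of $\partial\Omega$; choosing $\eta_{0}$ (and $\epsilon$) small we may assume $\delta'\ll\epsilon\sin\theta_{1}$. Let $*\in\{s,u\}$ be the direction for which $\partial\Omega$ is transverse to $E^{*}$ near $\bar p$. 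The leaf $W^{*}_{loc}(\bar x)$ sits inside the rectangle through $\bar x$, hence in $\Lambda$; it passes through $\bar x$ with tangent $E^{*}(\bar x)$ close to $E^{*}(\bar p)$ by uniform continuity, so, comparing $W^{*}_{loc}(\bar x)$ and $\partial\Omega$ with their tangent lines at scale $O(\delta')\ll\epsilon$, the leaf crosses $\partial\Omega$ transversally at some point $\bar z$ within $O(\delta')$ of $\bar p$. Then $\bar z\in W^{*}_{loc}(\bar x)\subset\Lambda$, and because the crossing is transversal there are points of $\Lambda\cap\Omega$ and of $\Lambda\setminus\Omega$ on this leaf arbitrarily close to $\bar z$, so $\bar z\in\partial(\Omega\cap X)$. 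Finally, by the identification of $*$-leaves in $X$ with $*$-leaves in $\Sigma_{A}$ from Section \ref{section balls and coding}, there is $z\in\chi^{-1}(\bar z)$ with $z\in W^{*}(x)$; approximating $\bar z$ along the leaf from inside and from outside $\Omega$ and lifting through $\chi$ shows $z\in\partial A_{\epsilon}(y)$. This $z$ is the point required by the lemma.

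I expect the main obstacle to be the geometric control of $\partial\Omega=\partial P_{D}(B(y,\epsilon))$: one must verify that it is a simple closed curve whose tangent turns slowly at scales small compared with $\epsilon$, so that the transversality of $\partial\Omega$ with one of the two foliations is inherited by the nearby leaf $W^{*}_{loc}(\bar x)$; and one must track how the scales $\delta$ and $\epsilon$ transform under the merely H\"older coding map $\chi$, which is the reason the statement is phrased with a threshold $\eta_{0}$ on the ratio $\delta/\epsilon$ rather than an absolute bound on $\delta$. The remaining ingredients — uniform transversality and continuity of $E^{s},E^{u}$, bounded geometry of the local leaves, and the leaf correspondence through $\chi$ — are standard for the Markov coding of an Axiom A flow and have already been set up in Section 2.
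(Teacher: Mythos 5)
Your argument is essentially the same as the paper's one-line proof, which simply invokes the transversality of the stable and unstable foliations together with the $C^{1}$ regularity (with curvature of order $1/\epsilon$) of $\partial P_{D}(B(y,\epsilon))$; you have fleshed out exactly that idea with the leaf-crossing argument. However there is one quantitative step that, as written, would break the uniformity in $\epsilon$ that the lemma needs. You pass from $\hat d(x,p)<\delta$ in the symbolic metric to $d(\chi(x),\chi(p))\le c\,\delta^{\alpha_{1}}=:\delta'$ via Proposition~\ref{diamcyl}, and then ask for $\delta'\ll\epsilon\sin\theta_{1}$. But with $\alpha_{1}<1$ one has $\delta'/\epsilon\le c\,\eta_{0}^{\alpha_{1}}\epsilon^{\alpha_{1}-1}$, which tends to $+\infty$ as $\epsilon\to0$; the hypothesis $\delta/\epsilon<\eta_{0}$ therefore does \emph{not} give $\delta'\ll\epsilon$ uniformly in $\epsilon$, and the lemma is in fact invoked (in Lemma~\ref{equivalent}) for all small $\epsilon$ with $\delta\sim\epsilon/|\log\epsilon|$, for which $c\delta^{\alpha_{1}}\gg\epsilon$.

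The way out is that the $\delta$-neighborhood $(\partial A_{\epsilon}(y))^{[\delta]}$ should be read in the Riemannian metric on the section $X$ (equivalently, pulled back through $\chi$), not in the symbolic metric $\hat d$: the cylinders $D_{\epsilon,i}$ of the partition $\mathcal D_{\epsilon}$ are built precisely so that their diameters after applying $\chi$ are $<\theta_{\epsilon}$, and the set $A_{\epsilon}^{+}(y)\setminus A_{\epsilon}^{-}(y)$ is identified with a $\theta_{\epsilon}$-collar of $\partial\chi(A_{\epsilon}(y))$ in $X$. With that reading, $\bar x=\chi(x)$ is already within $\delta$ (not $c\delta^{\alpha_{1}}$) of $\partial\Omega$, and the condition $\delta/\epsilon<\eta_{0}$ directly controls the geometry; your H\"older step should simply be deleted. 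The rest of the argument is sound, provided one works with the truncated leaves $W^{*}_{\mathrm{loc}}(\bar x,\Pi)$ inside the rectangle (these are contained in $\Lambda$, unlike the full local leaves of the ambient flow), so that the crossing point $\bar z$ is automatically in $\Lambda\cap\partial\Omega$ and can be lifted to $z\in\partial A_{\epsilon}(y)\cap W^{*}(x)$ through the coding.
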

\begin{proof}

This comes from the transversality of $W^{s}(x)$ and $W^{u}(x)$ and the fact that $\partial A_{\epsilon}(y)$ is $C^{1}$ of order $\frac{1}{\epsilon}$.

\end{proof}
\begin{lemma}\label{equivalent}
For all $\epsilon>0$, the $\nu$-measures of the sets $A_{\epsilon}^{+}(y)$ and $A_{\epsilon}^{-}(y)$ are equivalent.
\end{lemma}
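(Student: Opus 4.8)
The plan is to trap $A_{\epsilon}^{-}(y)$ and $A_{\epsilon}^{+}(y)$ between the $\chi$-pull-backs of two balls whose radii differ by only a fixed multiplicative constant, and then to conclude by the Federer property of $\nu$ (Remark~\ref{federer remark}). First I would produce a constant $c_{1}>0$, depending only on the (uniform) Lipschitz constants $c_{g}$, $c_{L}$, $k_{D}$ of $(x,s)\mapsto g_{s}(x)$, of $\{.,.\}$ and of $P_{D}$, such that
\[
A_{\epsilon-c_{1}\theta_{\epsilon}}(y_{\epsilon,i,j})\ \subset\ A_{\epsilon}^{-}(y)\ \subset\ A_{\epsilon}^{+}(y)\ \subset\ A_{\epsilon+c_{1}\theta_{\epsilon}}(y_{\epsilon,i,j}).
\]
The middle inclusion follows from the definitions of $I$ and $J$ together with~\eqref{inclusionshift}. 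For the outer two I would use that every element $D_{\epsilon,k}$ of $\mathcal{D}_{\epsilon}$ satisfies $\mathrm{diam}\,\chi(D_{\epsilon,k})<\theta_{\epsilon}/c_{g}$ — this is exactly the estimate established in the proof of Lemma~\ref{diamP}, via~\eqref{l3eqn lemma cylinball} and the defining inequalities for $n_{1}(\omega)$ and $n_{2}(\omega)$ — so that, by the Lipschitz continuity of $P_{D}$, any $D_{\epsilon,k}$ meeting $A_{\epsilon+\theta_{\epsilon}}(y_{\epsilon,i,j})$ lies entirely inside $A_{\epsilon+c_{1}\theta_{\epsilon}}(y_{\epsilon,i,j})$, while any $D_{\epsilon,k}$ meeting $A_{\epsilon-c_{1}\theta_{\epsilon}}(y_{\epsilon,i,j})$ lies entirely inside $A_{\epsilon-\theta_{\epsilon}}(y_{\epsilon,i,j})$ and hence has its index in $I$.

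Next I would invoke~\eqref{alpha1alpha2}, whose proof uses only the same uniform Lipschitz bounds and therefore applies with $\epsilon$ replaced by $\epsilon\pm c_{1}\theta_{\epsilon}$: there are uniform $\alpha_{1},\alpha_{2}>0$ with
\[
\nu\!\left(A_{\epsilon}^{-}(y)\right)\ \geq\ \nu\!\left(\chi^{-1}\!\left(B(x_{\epsilon,i},\alpha_{1}(\epsilon-c_{1}\theta_{\epsilon}))\right)\right),\qquad
\nu\!\left(A_{\epsilon}^{+}(y)\right)\ \leq\ \nu\!\left(\chi^{-1}\!\left(B(x_{\epsilon,i},\alpha_{2}(\epsilon+c_{1}\theta_{\epsilon}))\right)\right).
\]
Since $\theta_{\epsilon}/\epsilon=1/|\log\epsilon|\to 0$, for $\epsilon$ small one has $\epsilon/2\leq\epsilon-c_{1}\theta_{\epsilon}$ and $\epsilon+c_{1}\theta_{\epsilon}\leq 2\epsilon$, hence $\alpha_{2}(\epsilon+c_{1}\theta_{\epsilon})\leq 2^{N}\alpha_{1}(\epsilon-c_{1}\theta_{\epsilon})$ with $N:=\lceil\log_{2}(4\alpha_{2}/\alpha_{1})\rceil$. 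Applying the doubling inequality of Remark~\ref{federer remark} exactly $N$ times (to $\nu\circ\chi^{-1}=\chi_{*}\nu$) gives $\nu(\chi^{-1}(B(x_{\epsilon,i},2^{N}\alpha_{1}(\epsilon-c_{1}\theta_{\epsilon}))))\leq c_{F}^{\,N}\,\nu(\chi^{-1}(B(x_{\epsilon,i},\alpha_{1}(\epsilon-c_{1}\theta_{\epsilon}))))$, and combining this with the two previous displays yields $\nu(A_{\epsilon}^{-}(y))\leq\nu(A_{\epsilon}^{+}(y))\leq c_{F}^{\,N}\nu(A_{\epsilon}^{-}(y))$, with $c_{F}^{\,N}$ independent of $\epsilon$ and $y$, which is the asserted equivalence.

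The only real obstacle is the first step: checking that the partition elements straddling the sphere of radius $\epsilon$ around $y_{\epsilon,i,j}$ are confined to a $c_{1}\theta_{\epsilon}$-thin shell, which rests on the already proved diameter bound on the $D_{\epsilon,k}$ and on translating between neighborhoods in $D$ and in $\Sigma_{A}$ through $\chi$ and $P_{D}$; everything after that is a routine application of the Federer property. If one wanted the sharper statement that the ratio tends to $1$, one could instead estimate the shell $A_{\epsilon}^{+}(y)\setminus A_{\epsilon}^{-}(y)$ directly: by Lemma~\ref{divideboundary} it is covered by stable and unstable ``tubes'' of width $O(\theta_{\epsilon})$ about $\partial A_{\epsilon}(y)$, and the local product structure of the Gibbs measure $\nu$ together with the leaf-length estimates of Proposition~\ref{PropESS} (the boundary leaves having length $\asymp\epsilon$) bound its $\nu$-measure by $O(\theta_{\epsilon}/\epsilon)\,\nu(A_{\epsilon}(y))=O(1/|\log\epsilon|)\,\nu(A_{\epsilon}^{-}(y))$. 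This refinement is not needed for the statement as phrased.
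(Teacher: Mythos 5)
There is a genuine gap, and it stems from the interpretation of ``equivalent.'' Look at how the lemma is used in the proof of Proposition~\ref{propthm}: right after invoking Lemma~\ref{equivalent}, the paper writes $\nu(A_{\epsilon}^{-}(y))(1-\alpha(\epsilon)^{(y)})\leq\nu(A_{\epsilon}^{+}(y))\leq\nu(A_{\epsilon}^{-}(y))(1+\alpha(\epsilon)^{(y)})$ with $\alpha(\epsilon)^{(y)}\to 0$, i.e.\ the \emph{ratio tends to $1$}. That is what ``equivalent'' means here, and it is exactly what the paper's proof establishes by showing the shell $A_{\epsilon}^{+}(y)\setminus A_{\epsilon}^{-}(y)=\left(\partial A_{\epsilon}(y)\right)^{[\delta]}$ has $\nu$-measure $o(\nu(A_{\epsilon}(y)))$. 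Your main argument, culminating in $\nu(A_{\epsilon}^{-}(y))\leq\nu(A_{\epsilon}^{+}(y))\leq c_{F}^{N}\nu(A_{\epsilon}^{-}(y))$ via the Federer property, only shows the two measures are \emph{comparable} (bounded ratio), with a constant $c_{F}^{N}$ that does not decay to $1$ as $\epsilon\to 0$. That weaker conclusion is not sufficient to run \eqref{use+eq-} and would break the normalization in the final distributional limit, so the proof does not establish the statement that is actually needed.

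Ironically, the part you label as an optional ``refinement'' at the end is precisely the content of the paper's proof, and it is not optional. The paper uses Lemma~\ref{divideboundary} to decompose the $\delta$-neighborhood of $\partial A_{\epsilon}(y)$ into pieces $\mathcal{I}^{u}$ and $\mathcal{I}^{s}$, then estimates the conditional measure of each piece along a leaf by comparing, via the Gibbs inequality \eqref{gibbs.formula}, a $q_{i}$-cylinder of leaf-length $\asymp\theta_{\epsilon}$ to a $p_{i}$-cylinder of leaf-length $\asymp\epsilon$: the ratio $\nu^{u}(C_{0,q_{i}})/\nu^{u}(C_{0,p_{i}})\leq k_{\tilde h}^{2}\exp\bigl((-\gamma+P_{\sigma}(\tilde h))(q_{i}-p_{i})\bigr)$ goes to zero because $q_{i}-p_{i}\to\infty$ (roughly $\asymp\log|\log\epsilon|$, since $\theta_{\epsilon}/\epsilon=1/|\log\epsilon|$). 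It then integrates over the transverse direction using the absolute continuity of $\nu$ with respect to $\nu^{u}\otimes\nu^{s}$ (Proposition~2.3 of \cite{Pesin.sadovskaya}), and finally relates the ball to $A_{\epsilon}(y)$ by Federer, as in \eqref{usingFederer}. Your one-sentence sketch of this has the right geometric picture but is too thin to stand alone (it does not invoke the Gibbs property nor the $\nu\ll\nu^{u}\otimes\nu^{s}$ disintegration, both of which are doing real work), and in any case it should be promoted from an afterthought to the proof itself, with the Federer argument playing only the auxiliary role it plays in \eqref{usingFederer}.
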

\begin{proof}
We know that the cylinders of $\mathcal{D}_{\epsilon}$ are of diameter less than $\theta_{\epsilon}$, and the projection map on $\Sigma_{A}$ is Lipschitz with constant $k_{L}$. Set $\delta=(k_{L}+1)\theta_{\epsilon}$ and let $k_{0}$ be the smallest positive number satisfying $k_{L} \epsilon +\delta+ \epsilon\leq k_{0}\epsilon$. By the coding map $\chi$, we have on the Poincar\'e section $X$ that:
\begin{eqnarray*}
A_{\epsilon}^{+}(y)\backslash A_{\epsilon}^{-}(y)&=&A_{\epsilon}(y)^{[(k_L+1)\theta_{\epsilon}]}\cap (X\backslash A_{\epsilon}(y))^{[(k_L+1)\theta_{\epsilon}]}\\
&=&\left(\partial A_{\epsilon}(y)\right)^{[\delta]},
\end{eqnarray*}
thus we need to prove that the measure of the $\delta$-neighborhood of $A_{\epsilon}(y)$ is negligible with respect to the measure of $A_{\epsilon}(y)$. Indeed, let $m_{F}>0$ be such that $k_{0}\leq 2^{m_{F}}\alpha_{1}$, then using the Federer property (Remark \ref{federer remark}), and the inclusion in \eqref{alpha1alpha2}, there exists $c_{F}>0$ such that:
\begin{equation}\label{usingFederer}
\nu(\chi^{-1}(B(x, k_{0}\epsilon)))\leq c_{F}^{m_{F}}\nu(\chi^{-1}(B(x,\alpha_{1}\epsilon)))\leq c_{F}^{m_{F}}\nu(A_{\epsilon}(x)),
\end{equation}
hence it's sufficient to prove that $\nu\left(\left(\partial A_{\epsilon}(y)\right)^{[\delta]}\right)$ is negligible with respect to $\nu\left(\chi^{-1}(B(x, k_{0}\epsilon))\right)$.\\
Now, due to Lemma \ref{divideboundary}, we will work on two parts of $\partial A_{\epsilon}(y)^{[\delta]}$:\\
Let $x_{0}$ be a point in $D$. Let $x\in W^{s}(x_{0})$, we set $I^{u}(x):=W^{u}_{loc}(x)\cap \left(\partial A_{\epsilon}(y)\right)^{[\delta]}$, such that $\forall \xi_{x}\in I^{u}(x)$, there exists $z\in \partial A_{\epsilon}$ as in Lemma \ref{divideboundary} and where $z\in W^{u}(\xi_{x})$. On the other hand, we set $I^{s}(x):=W^{s}_{loc}(x)\cap \left(\partial A_{\epsilon}(y)\right)^{[\delta]}$, such that $\forall \xi_{x}'\in I^{s}(x)$, there exists $z'\in \partial A_{\epsilon}$ as in Lemma \ref{divideboundary} and where $z'\in W^{s}(\xi_{x}')$.\\
Hence, we have $\partial A_{\epsilon}(y)^{[\delta]}=\mathcal{I}^{u}\cup\mathcal{I}^{s}$, where
\begin{equation*}
\mathcal{I}^{u}:=\bigcup_{x\in W^{s}(x_{0})}I^{u}(x) \mbox{ and } \mathcal{I}^{s}:=\bigcup_{x\in W^{s}(x_{0})}I^{s}(x).
\end{equation*}
Our aim is to estimate the measure of $\mathcal{I}^{u}$ with respect to that of $B(x, k_{0}\epsilon)$. We will start working on $\nu^{u}(\pi^{+}(\chi^{-1}(I^{u})))$ and then after integrating over $\Sigma_{A}^{-}\times\Sigma_{A}^{+}$, we will conclude the estimation of $\nu(\chi^{-1}(\mathcal{I}^{u}))$.\\
Let $q_{i}$ be the greatest integer such that $l^{u}_{q_{i}}(\omega_{i})>3c\delta$, then we have:
\begin{equation*}
I^{u}(x)\subset \bigcup_{i=1}^{2}\chi(C_{-\infty,q_{i}}(\omega_{i}))\subset B(x,k_{0}\epsilon),
\end{equation*} and so $\nu^{u}(\pi^{+}(\chi^{-1}(I^{u})))\leq \sum_{i=1}^{2}\nu^{u}(C_{0,q_{i}}(\pi^{+}(\omega_{i})))$.
Knowing that $W^{u}(x)=\chi\left(C_{-\infty,0}(\omega)\right)=\bigcap_{i=1}^{\infty}\chi\left(C_{-i, 0}(\omega)\right)$, let $p_{i}$ be the smallest integer such that
\begin{equation*}
    \displaystyle\prod_{k=0}^{p_i -1}\left(\mathbf{a}^{u}(\sigma^{k}(\omega_i)\right)^{-1}> k_{0}\epsilon, \quad \displaystyle\prod_{k=0}^{p_i}\left(\mathbf{a}^{u}(\sigma^{k}(\omega_i)\right)^{-1} \leq k_{0}\epsilon
\end{equation*}
then using Proposition \ref{PropESS}, we have
\begin{equation}
\chi\left(C_{-\infty, p_{i}}(\omega)\right)\subset W^{u}(x)\cap B(x,k_{0}\epsilon),
\end{equation}
but $\nu^{u}(C_{0,p_{i}}(\pi^{+}(\omega_{i})))\leq \nu^{u}(\pi^{+}(\chi^{-1}(B(x, k_{0}\epsilon)\cap W^{u}(x))))$, thus, since $\nu^u$ is a Gibbs measure (see \eqref{measure-u} and \eqref{gibbs.formula}), by Proposition \ref{gibbs} there exists a constant $k_{\tilde{h}}\geq1$, and knowing that $\max{\tilde{h}}<-\gamma$ for some $\gamma>0$, we get:
\begin{eqnarray*}
\frac{\nu^{u}(\pi^{+}(\chi^{-1}(I^{u})))}{\nu^{u}(\pi^{+}(\chi^{-1}(B(x, k_{0}\epsilon)\cap W^{u}(x))))}&\leq& \sum_{i=1}^{2}\frac{\nu^{u}(C_{0,q_{i}}(\pi^{+}(\omega_{i})))}{\nu^{u}(C_{0,p_{i}}(\pi^{+}(\omega_{i})))}\\
&\leq& \sum_{i=1}^{2} k^{2}_{\tilde{h}} \exp\left(\sum_{k=p_{i}}^{q_{i}-1}\tilde{h}(\sigma^{k}(\omega_{i}))-(p_i -q_i)P_\sigma(\tilde{h})\right)\\
&\leq& \sum_{i=1}^{2} k^{2}_{\tilde{h}} \exp((-\gamma+P_{\sigma}(\tilde{h}))(q_{i}-p_{i}))\\
&\leq&\eta(\epsilon),
\end{eqnarray*}
where $\eta(\epsilon)\rightarrow0$ as $\epsilon$ goes to 0, since $q_{i}=o(p_{i})$, and $p_{i}\rightarrow\infty$.
Set $\tilde{I^{u}}=\{(\omega^{-}, \omega^{+}): \exists \omega=(\omega_{n})_{n\in\mathbb{Z}}\in \Sigma_{A} \mbox{ s.t.} \chi(\omega)\in I^{u}; \omega^{-}=(\omega_{n})_{n\leq0} \mbox{ and } \omega^{+}= (\omega_{n})_{n\geq 0}\}$, and in same way we define the set $\tilde{B}(x, k_{0}\epsilon)$. We have the following:
\begin{eqnarray*}
\nu^{u}\otimes\nu^{s}(\chi^{-1}(\mathcal{I}^{u}))&=&\int_{\Sigma_{A}^{-}}\left(\int_{\Sigma_{A}^{+}}\mathds{1}_{\tilde{I}^{u}({\omega}^{-},{\omega}^{+})}d\nu^{u}(\omega^{+})\right)d\nu^{s}(\omega^{-})\\
&=& \int_{\Sigma_{A}^{-}}\nu^{u}(\{\omega^{+}: (\omega^{-},\omega^{+})\in \tilde{I}^{u} \})d\nu^{s}(\omega^{-})\\
&\leq& \eta(\epsilon)\int_{\Sigma_{A}^{-}}\nu^{u}(\{\omega^{+}: (\omega^{-},\omega^{+})\in \tilde{B}(x,k_{0}\epsilon) \})d\nu^{s}(\omega^{-})\\
&\leq& \eta(\epsilon) \nu^{u}\otimes\nu^{s}(\chi^{-1}(B(x,k_{0}\epsilon)))
\end{eqnarray*}
On the other hand, the measure $\nu$ is absolutely continuous with respect to the product measure $\nu^{u}\otimes\nu^{s}$ with bounded density function $h$ (see Proposition 2.3 in \cite{Pesin.sadovskaya}), then there exist $\kappa_{1}$, $\kappa_{2}>0$ such that, using the previous inequality, we get:
\begin{eqnarray*}
\nu(\chi^{-1}(\mathcal{I}^{u}))&=& \int_{\Sigma_{A}}\mathds{1}_{\chi^{-1}(\mathcal{I}^{u})}h(\omega)d(\nu^{u}\otimes\nu^{s})(\omega)\\
&\leq& \kappa_{2} \nu^{u}\otimes\nu^{s}(\chi^{-1}(\mathcal{I}^{u}))\\
&\leq& \kappa_{2} \eta(\epsilon)\nu^{u}\otimes\nu^{s}(\chi^{-1}(B(x,k_{0}\epsilon)))\\
&\leq&\tilde{\eta}(\epsilon)\nu(\chi^{-1}(B(x,k_{0}\epsilon))).
\end{eqnarray*}
where $\tilde{\eta}(\epsilon)=\frac{\kappa_{2}}{\kappa_{1}}\eta(\epsilon)$.\\
Proceeding with the same strategy as above, there exists $\bar{\eta}(\epsilon)$ such that $\nu(\chi^{-1}(\mathcal{I}^{s}))\leq \bar{\eta}(\epsilon)\nu(\chi^{-1}(B(x,k_{0}\epsilon)))$ and this finishes the proof since $\tilde{\eta}(\epsilon)$ and $\bar{\eta}(\epsilon)$ goes to 0 as $\epsilon\rightarrow0$.
\end{proof}
\begin{definition}
We fix $y\in \Sigma_{A}$. For $x\in A_{\epsilon}(y)$, we define the first return time to $A_{\epsilon}(y)\times\{0\}$ in $\Sigma_{A}\times\mathbb{Z}$, by
\begin{equation*}
w_{A_{\epsilon}(y)}(x)
=\inf\{n\geq1, S_{n}\varphi(x)=0\mbox{ and }\sigma^{n}x\in A_{\epsilon}(y)\}.
\end{equation*}
We define in the same way $w_{A_{\epsilon}^{\pm}(y)}(x)=\min\{n\geq1 : \sigma^{n}(x)\in A_{\epsilon}^{\pm}(y)\}$ the first return time of a point $x$ into $A_{\epsilon}^{+}(y)$.
\end{definition}
\begin{remarque}
We will start by studying the asymptotic behavior of $w_{A_{\epsilon}(y)}(x)$ to deduce that of $\tau_{\epsilon}(x)$.
\end{remarque}

\begin{proposition}\label{ineq1}
For every $y\in M$, every $\epsilon>0$, consider $n_\epsilon=n_\epsilon(t,y):=\left(\frac{t}{\nu(A_{\epsilon}^{\pm}(y))}\right)^{2}$ and $M_\epsilon$ and $Q_\epsilon$ as defined in \eqref{MepsQeps}. Let $D$ be a set satisfying either ($D\in \mathcal{D}_{\epsilon}$ and $D\subset A_{\epsilon}^{\pm}(y)$) or $D=A_{\epsilon}^{\pm}(y)$, we have:
\begin{itemize}
\item[a.]$1\geq \nu(w_{A_{\epsilon}^{\pm}(y)}>n_{\epsilon}|D)+\dfrac{1}{\sqrt{2\pi}\sigma_{\varphi}}\displaystyle\sum_{r=M_{\epsilon}}^{n_{\epsilon}}\frac{\nu(A_{\epsilon}^{\pm}(y)\cap \{w_{A_{\epsilon}^{\pm}(y)}>n_{\epsilon}-r\})}{\sqrt{r-Q_{\epsilon}}}-o(1)$.
\item[b.]$1\leq \nu(w_{A_{\epsilon}^{\pm}(y)}>n_{\epsilon}|D)+\dfrac{1}{\sqrt{2\pi}\sigma_{\varphi}}\displaystyle\sum_{r=M_{\epsilon}}^{n_{\epsilon}}\frac{\nu(A_{\epsilon}^{\pm}(y)\cap \{w_{A_{\epsilon}^{\pm}(y)}>n_{\epsilon}-r\})}{\sqrt{r-Q_{\epsilon}}}+ \nu(w_{A_{\epsilon}^{\pm}(y)}\leq M_{\epsilon}|D)+o(1)$.
\end{itemize}
\end{proposition}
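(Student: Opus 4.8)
The plan is to obtain the two inequalities by decomposing the event according to the first return time to $A_\epsilon^\pm(y)$ (without the constraint $S_n\varphi=0$) together with a fine analysis of the $\mathbb{Z}$-component via the mixing local limit theorem in Proposition \ref{p3}. Concretely, fix $D$ as in the statement, and for $x$ with $\sigma^{r}(x)\in A_\epsilon^\pm(y)$ write the first such $r$ as $w_{A_\epsilon^\pm(y)}(x)$; we split $\{1\le r\le n_\epsilon\}$ into the ``small'' range $r< M_\epsilon$, the ``good'' range $M_\epsilon\le r\le n_\epsilon$, and the tail $\{w_{A_\epsilon^\pm(y)}>n_\epsilon\}$. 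On the good range we must count, among the $x\in D$ that first hit $A_\epsilon^\pm(y)$ at time $r$, those for which additionally $S_{r}\varphi(x)=0$; by the Markov structure this event is (a union over cylinders of) the set in Proposition \ref{p3}, with the cylinder $A$ ranging over the sub-cylinders of $A_\epsilon^\pm(y)$ on which $\{w_{A_\epsilon^\pm(y)}>n_\epsilon-r\}$ is constant, and $B$ the relevant one-sided projection. Summing the main term $\frac{\nu(A)\nu^u(B)}{\sqrt{2\pi}\sqrt{r-k}\,\sigma_\varphi}$ of Proposition \ref{p3} over those cylinders reconstitutes exactly $\frac{1}{\sqrt{2\pi}\sigma_\varphi}\frac{\nu(A_\epsilon^\pm(y)\cap\{w_{A_\epsilon^\pm(y)}>n_\epsilon-r\})}{\sqrt{r-Q_\epsilon}}$ up to the choice $k=Q_\epsilon$, which is legitimate since the cylinders of $\mathcal D_\epsilon$ contained in $A_\epsilon^\pm(y)$ have past-depth at most $Q_\epsilon$ and future-depth at most $M_\epsilon$, matching the hypotheses $m_0<q\le k$ and $n-2k\ge m_0$ of Proposition \ref{p3} once $\epsilon$ is small.

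First I would set up the exact identity
\[
\mathds 1_D = \mathds 1_{D\cap\{w_{A_\epsilon^\pm(y)}>n_\epsilon,\,S_{n_\epsilon}\varphi=0\}} + \sum_{r=1}^{n_\epsilon}\mathds 1_{D\cap\{w_{A_\epsilon^\pm(y)}=r,\,S_r\varphi=0\}}\cdot(\text{tail after }r) + (\text{remainder}),
\]
more precisely partition $D$ by the value of the first return time of $\tilde T$ to $A_\epsilon^\pm(y)\times\{0\}$; integrating against $\nu(\cdot\mid D)$ and using that this first return time is a.s. finite gives $1=\sum_{r\ge 1}\nu(\text{first }\tilde T\text{-return to }A_\epsilon^\pm(y)\times\{0\}\text{ occurs at time }r\mid D)$. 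Truncating the sum at $n_\epsilon$ produces the tail term $\nu(w_{A_\epsilon^\pm(y)\times\{0\}}>n_\epsilon\mid D)$; this is bounded below by $\nu(w_{A_\epsilon^\pm(y)}>n_\epsilon\mid D)$ (dropping the constraint) and this is how the first term on the right of (a) appears, while for (b) one keeps it with the correct sign and absorbs the discrepancy between $w$ for $\tilde T$ and $w$ for $\sigma$ into the $\nu(w_{A_\epsilon^\pm(y)}\le M_\epsilon\mid D)$ correction. For each $r$ in the good range, I would apply Proposition \ref{p3} with $n=r$, $k=Q_\epsilon$, $A$ ranging over the finitely many $(-Q_\epsilon,M_\epsilon)$-cylinders composing $A_\epsilon^\pm(y)$, and $B$ the future-cylinder describing $\{w_{A_\epsilon^\pm(y)}>n_\epsilon-r\}$ shifted by $\sigma^{-r}\sigma^{Q_\epsilon}$; summing the error bound $C_1\frac{\nu^u(B)k\nu(A)}{n-2k}$ over $r$ and over the cylinders gives $O\!\big(Q_\epsilon\sum_{r=M_\epsilon}^{n_\epsilon}\frac{\nu(A_\epsilon^\pm(y)\cap\{w>n_\epsilon-r\})}{r-2Q_\epsilon}\big)$, which one checks is $o(1)$ using $Q_\epsilon=o(\sqrt{n_\epsilon})$ (from the definition of $n_\epsilon$ and $Q_\epsilon$ being at most of order $|\log\epsilon|$) together with the trivial bound $\nu(A_\epsilon^\pm(y)\cap\{w>n_\epsilon-r\})\le\nu(A_\epsilon^\pm(y))$ and a comparison of $\sum 1/\sqrt r$ with $\sum 1/r$. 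The small-range contribution $\sum_{r< M_\epsilon}$ is handled crudely: it is at most $\nu(w_{A_\epsilon^\pm(y)}\le M_\epsilon\mid D)$, giving the extra correction in (b), and is nonnegative so it only helps in (a).

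I expect the main obstacle to be the bookkeeping of constants between the two coding levels — matching the partition $\mathcal D_\epsilon$ with the hypotheses $m_0<q\le k$ and $n-2k\ge m_0$ of Proposition \ref{p3}, and in particular verifying that $A_\epsilon^\pm(y)$ (and each $D\in\mathcal D_\epsilon$ inside it) really is a union of cylinders whose past-depth is $\le Q_\epsilon$ and future-depth is $\le M_\epsilon$, so that the single parameter $k=Q_\epsilon$ works uniformly in $r$ — and, relatedly, controlling the error term uniformly: one needs $\sum_{r=M_\epsilon}^{n_\epsilon}\frac{\nu(A_\epsilon^\pm(y)\cap\{w>n_\epsilon-r\})}{r}$ to be $o(1/Q_\epsilon)$, which requires knowing $\nu(A_\epsilon^\pm(y))\le \mathrm{const}\cdot\nu(A_\epsilon(y))$ and hence $\nu(A_\epsilon^\pm(y))\to 0$, together with the elementary estimate $\sum_{r=M_\epsilon}^{n_\epsilon}\frac{1}{r}=O(\log n_\epsilon)=O(|\log\epsilon|)$, so that the product with $Q_\epsilon\nu(A_\epsilon^\pm(y))=O(|\log\epsilon|\,\nu(A_\epsilon(y)))$ and the extra $1/(r-2Q_\epsilon)$-vs-$1/\sqrt{r}$ gain does tend to zero. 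The rest is a direct substitution of the main term of Proposition \ref{p3} and rearrangement.
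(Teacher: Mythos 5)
Your decomposition is the wrong one: you partition $D$ by the \emph{first} $\tilde T$-return time to $A_\epsilon^\pm(y)\times\{0\}$, whereas the argument requires a partition by the \emph{last} visit time in $\{0,\dots,n_\epsilon\}$ (the Dvoretzky--Erd\"os device the paper invokes). The difference is material. First return places the no-intermediate-return constraint on the interval $\{1,\dots,r-1\}$, i.e.\ on the \emph{past} of the visit at time $r$; last passage places it on $\{r+1,\dots,n_\epsilon\}$, i.e.\ on the \emph{future}. Only the future-side version is compatible with Proposition \ref{p3}, which needs each term to have the shape $A\cap\{S_r\varphi=0\}\cap\sigma^{-r}\bigl(\sigma^k(\chi^+)^{-1}(B)\bigr)$ with $A$ a cylinder whose depth is bounded \emph{uniformly in $r$}. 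Under your first-return partition the constraint on $\{1,\dots,r-1\}$ would have to be absorbed into $A$, making its future depth grow like $r$ and destroying the error bound $C_1\nu^u(B)k\nu(A)/(r-2k)$.

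Notice that your own subsequent application of the local limit theorem --- taking $B$ to encode $\{w_{A_\epsilon^\pm(y)}>n_\epsilon-r\}$ --- only makes sense under last passage: there the $r$-th piece of $D$ is $D\cap\{S_r\varphi=0\}\cap\sigma^{-r}\bigl(A_\epsilon^\pm(y)\cap\{w_{A_\epsilon^\pm(y)}>n_\epsilon-r\}\bigr)$, whose inner factor is a union of $(-Q_\epsilon,\cdot)$-cylinders and hence of the form $\sigma^{Q_\epsilon}((\chi^+)^{-1}B)$ for some $B\subset\Sigma_A^+$, matching Prop.~\ref{p3} with $A=D$, $n=r$, $k=Q_\epsilon$; and the $r=0$ piece is precisely $D\cap\{w_{A_\epsilon^\pm(y)}>n_\epsilon\}$, so the tail term appears directly --- there is no need (and no room) to ``drop the $S_n\varphi=0$ constraint''; beware that $w_{A_\epsilon^\pm(y)}$, like $w_{A_\epsilon(y)}$, does carry the $S_n\varphi=0$ condition, despite the paper's abbreviated notation. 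Your error-term control ($Q_\epsilon\nu(A_\epsilon^\pm)\log n_\epsilon=o(1)$) and your treatment of the range $r<M_\epsilon$ (bounded by $\nu(w_{A_\epsilon^\pm(y)}\le M_\epsilon\mid D)$ for (b), dropped for (a)) are correct and match the paper's, but they must be grafted onto the last-passage decomposition to produce the sum over $\nu\bigl(A_\epsilon^\pm(y)\cap\{w_{A_\epsilon^\pm(y)}>n_\epsilon-r\}\bigr)$ that appears in the statement.
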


\begin{proof}
We use the same method as Dvoretzky and Erd\"os in \cite{DvoretzkyErdos}, \cite{penesaussolbilliard} and \cite{penesaussol} and\cite{pzs}. We make a partition of the cylinder $D$ according to the last passage $r\leq n$ in the time interval $0,...,n$ of the orbit of $(x,0)$ by the map $\tilde{T}$ into $A_{\epsilon}^{\pm}(y)\times\{0\}$. This can be seen as follows:\\
For simplicity, we will denote by $A_{\epsilon}^{\pm}$ instead of $A_{\epsilon}^{\pm}(y)$,
\begin{equation}\label{dvoretzky}
\nu(D)=\sum_{r=0}^{n} \nu\left(x\in D;S_{r}\varphi(x)=0\mbox{ , } \sigma^{r}(x)\in A_{\epsilon}^{\pm};\forall l=r,...,n, S_{l}\varphi(x)\neq0 \mbox{ or } \sigma^{l}(x)\notin A_{\epsilon}^{\pm}\right).
\end{equation}
Knowing that $S_{r}\varphi(x)=0$, then $$\{\forall l=r,..,n, S_{l}\varphi(x)-S_{r}\varphi(x)\neq0 \mbox{ or } \sigma^{l}(x)\notin A_{\epsilon}^{\pm}\}$$ is equal to $$\{\forall k=1,..,n-r, S_{k+r}\varphi(x)-S_{r}\varphi(x)\neq0 \mbox{ or } \sigma^{k+r}(x)\notin A_{\epsilon}^{\pm}\},$$
but $S_{k+r}\varphi(x)-S_{r}\varphi(x)=\sum_{l=0}^{k-1}\varphi\circ\sigma^{l+r}(x)=S_{k}\varphi(\sigma^{r}x)$, it follows that $\nu(D)$ is equal to
\begin{equation*}
\sum_{r=0}^{n} \nu\left(x\in D;S_{r}(x)=0,\sigma^{r}(x)\in A_{\epsilon}^{\pm}; \sigma^{-r}\{z\in\Sigma_{A}: \forall k=1,..,n-r, S_{k}\varphi(z)\neq0,\sigma^{k}(z)\notin A_{\epsilon}^{\pm}\}\right),
\end{equation*}
from which we get
\begin{eqnarray*}
\nu(D)&=&\sum_{r=0}^{n}\nu\left(D\cap\{S_{r}=0\}\cap \sigma^{-r}\left(A^{\pm}_{\epsilon}\cap\{w_{A^{\pm}_{\epsilon }}>n-r\}\right)\right)\\
&\geq& \nu\left(D\cap A^{\pm}_{\epsilon}\cap\{w_{A^{\pm}_{\epsilon}}>n\}\right)\\
&&+\sum_{r=2q+1}^{n}\nu\left(D\cap\{S_{r}=0\}\cap \sigma^{-r}\left(A_{\epsilon}^{\pm}\cap\{w_{A_{\epsilon}^{\pm}}(x)>n-r\}\right)\right).
\end{eqnarray*}
Recall that $D$ is a $(-q, q')-$cylinder with $q\leq Q_{\epsilon}$, and $A_{\epsilon}^{\pm}$ is a union of $(-Q_{\epsilon}, q'')-$cylinders for some $q''>0$. Let $E=A_{\epsilon}^{\pm}(y)\cap \{w_{A_{\epsilon}^{\pm}}>n\}$ the set of points of $A_{\epsilon}^{\pm}(y)$ which don't return to $A_{\epsilon}^{\pm}$ before $n$, that is $E=A_{\epsilon}^{\pm}\cap\bigcap_{s=1}^{n}\left(\{S_{s}\varphi\neq0\}\cup\sigma^{-s}(\Sigma_{A}\setminus A_{\epsilon}^{\pm})\right)$.\\
we know that $\varphi$ is constant on $1$-cylinders, then $\{S_{s}\varphi\neq0\}$ is  a union of $s$-cylinders, and $\sigma^{-s}(\Sigma_{A}\setminus A_{\epsilon}^{\pm})=\Sigma_{A}\setminus \sigma^{-s}(A_{\epsilon}^{\pm})$ is a union of $(-Q_{\epsilon}+s,q''+s)$-cylinders. Then we get that $E$ is a union of $(-Q_{\epsilon}, q'')$-cylinders, for some $q''>0$. Hence $E$ can be written as $\sigma^{Q_{\epsilon}}({\Pi_{+}}^{-1}B)$, where $B$ is a subset of $\Sigma_{A}^{+}$, and $\Pi_{+}$ is the canonical projection of $\Sigma_{A}$ on $\Sigma_{A}^{+}$.\\
Now, let $n=n_{\epsilon}$, with all these conditions, we apply the Local Limit Theorem in Proposition \ref{p3}, thus there is $C_{1}>0$ such that:
\begin{eqnarray*}
\nu(D)
&\geq&\nu\left(D\cap\{w_{A_{\epsilon}^{\pm}}(x)>n_{\epsilon}\}\right)+\sum_{r=M_{\epsilon}}^{n_{\epsilon}}\frac{\nu(D)\nu(A_{\epsilon}^{\pm}\cap\{w_{A_{\epsilon}^{\pm}}(x)>n_{\epsilon}-r\})}{\sqrt{2\pi}\sqrt{r-(Q_{\epsilon}+l)}\sigma_{\varphi}}\\
&-& C_{1}\sum_{r=M_{\epsilon}}^{n_{\epsilon}}\frac{\nu(A_{\epsilon}^{\pm}\cap\{w_{A_{\epsilon}^{\pm}}(x)>n_{\epsilon}-r\})Q_{\epsilon}\nu(D)}{r-2Q_{\epsilon}}.
\end{eqnarray*}
There is $C_{3}>0$ such that the error term is controlled by
\begin{equation}\label{errorterm>}
\sum_{r=M_{\epsilon}}^{n_{\epsilon}}\frac{\nu(A_{\epsilon}^{\pm})q\nu(D)}{r-2Q_{\epsilon}}\leq C_{3}Q_{\epsilon}\log{n_{\epsilon}}\nu(D)\nu(A_{\epsilon}^{\pm}),
\end{equation}
since $Q_{\epsilon}\nu(A_{\epsilon}^{\pm})\log{n_{\epsilon}}=o(1)$, hence we get
\begin{eqnarray*}
\nu(D)&\geq& \nu\left(D\cap\{w_{A^{\pm}_{\epsilon}}(x)>n_{\epsilon}\}\right) +\nu(D)\sum_{r=M_{\epsilon}}^{n_{\epsilon}}\frac{\nu(A_{\epsilon}^{\pm}\cap\{w_{A_{\epsilon}^{\pm}}(x)>n_{\epsilon}-r\}}{\sqrt{2\pi}\sqrt{r-Q_{\epsilon}}\sigma_{\varphi}}\\
&&-o(\nu(D)),
\end{eqnarray*}
and thus dividing by $\nu(D)$ yields (a) for $D\in\mathcal{D}_{\epsilon}$ such that $D\subset A_{\epsilon}^{\pm}(y)$. 

We prove inequality (b) following the same strategy. For $n_{\epsilon}=\left(\frac{t}{\nu(A_{\epsilon}^{\pm}(y))}\right)^{2}$, using the same decomposition as in \eqref{dvoretzky}, we have
\begin{equation*}
\nu(D)=\sum_{r=0}^{n_{\epsilon}}\nu\left(D\cap\{S_{r}=0\}\cap \sigma^{-r}\left(A_{\epsilon}^{\pm}(y)\cap\{w_{A^{\pm}_{\epsilon}(y)}(x)>n_{\epsilon}-r\}\right)\right):=\sum_{r=0}^{n_{\epsilon}}G_{r,\epsilon}(x)
\end{equation*}
For $r=0$, the first term is equal to $\nu\left(D\cap\{w_{A^{\pm}_{\epsilon}(y)}(x)>n_{\epsilon}\}\right)$. We then compute the sum of terms between 1 and $M_{\epsilon}$

\begin{eqnarray*}
&\sum\limits_{r=1}^{M_{\epsilon}}&\nu\left(D\cap\{S_{r}=0\}\cap \sigma^{-r}\left(A_{\epsilon}^{\pm}(y)\cap\{w_{A_{\epsilon}^{\pm}(y)}>n_{\epsilon}-r\}\right)\right)\\
&=& \nu\left(\bigcup_{r=1}^{M_{\epsilon}} D\cap\{S_{r}=0\}\cap \sigma^{-r}\left(A_{\epsilon}^{\pm}(y)\cap\{w_{A_{\epsilon}^{\pm}(y)}>n_{\epsilon}-r\}\right)\right)\\
&\leq&\nu(D\cap\{w_{A_{\epsilon}^{\pm}(y)}\leq M_{\epsilon}\}).
\end{eqnarray*}
Then using the local limit theorem in Proposition \ref{p3}, there exists $C_{4}>0$ such that we have the following:
\begin{eqnarray*}
\sum^{n_{\epsilon}}_{r=M_{\epsilon}} G_{r,\epsilon}(x)&\leq& \frac{1}{\sqrt{2\pi}\sigma_{\varphi}}\sum^{n_{\epsilon}}_{r=M_{\epsilon}}\frac{\nu(D)\nu(A^{\pm}_{\epsilon}\cap\{w_{A_{\epsilon}^{\pm}}>n_{\epsilon}-r\})}{\sqrt{r-Q_{\epsilon}}}\\
&&+C_{4}\frac{\nu(A^{\pm}_{\epsilon}\cap\{w_{A_{\epsilon}^{\pm}}>n_{\epsilon}-r\})Q_{\epsilon}\nu(D)}{r-2Q_{\epsilon}}.
\end{eqnarray*}
Similarly as in \eqref{errorterm>}, the error term is controlled by $o(\nu(D))$. 
Combining these assumption together, we obtain
\begin{eqnarray}
\nu(D)&\leq& o(\nu(D))+\nu(D\cap\{w_{A_{\epsilon}^{\pm}}>n_{\epsilon}\})+\nu(w_{A_{\epsilon}^{\pm}}\leq M_{\epsilon}|D)\nu(D)\nonumber\\
&+&\frac{1}{\sqrt{2\pi}\sigma_{\varphi}}\nu(D)\nu(A^{\pm}_{\epsilon}\cap\{w_{A_{\epsilon}^{\pm}}>n_{\epsilon}\})\left(\sum^{n_{\epsilon}}_{r=M_{\epsilon}}\frac{1}{\sqrt{r-Q_{\epsilon}}}\right)
\end{eqnarray}
Dividing by $\nu(D)$ yields the desired inequality.
\end{proof}
\begin{corollary}\label{Aeps=Di}
Under the hypothesis of Proposition \ref{ineq1}, where $n_{\epsilon}$, $M_{\epsilon}$ and $D$ are considered similarly, we have:
\begin{itemize}
\item[a.] $1+o(1)= \nu(w_{A_{\epsilon}^{\pm}(y)}>n_{\epsilon}|D)+\frac{1}{\sqrt{2\pi}\sigma_{\varphi}}\displaystyle\sum_{r=M_{\epsilon}}^{n_{\epsilon}}\frac{\nu(A_{\epsilon}^{\pm}(y)\cap \{w_{A_{\epsilon}^{\pm}(y)}>n_{\epsilon}-r\})}{\sqrt{r-Q_{\epsilon}}}+O\left(\nu(w_{A_{\epsilon}^{\pm}(y)}\leq M_{\epsilon}|D)\right)$
\item[b.]$1+o(1)= \nu(w_{A_{\epsilon}^{\pm}(y)}>n_{\epsilon}|A_{\epsilon}^{\pm}(y))+\frac{1}{\sqrt{2\pi}\sigma_{\varphi}}\displaystyle\sum_{M_{\epsilon}}^{n_{\epsilon}}\frac{\nu(A_{\epsilon}^{\pm}(y)\cap \{w_{A_{\epsilon}^{\pm}(y)}>n_{\epsilon}-r\})}{\sqrt{r-Q_{\epsilon}}}+O\left(\nu(w_{A_{\epsilon}^{\pm}(y)}\leq M_{\epsilon}|A_{\epsilon}^{\pm}(y))\right).$
\end{itemize}
\end{corollary}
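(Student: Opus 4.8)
The plan is to read off Corollary \ref{Aeps=Di} as an immediate consequence of the two-sided estimate furnished by Proposition \ref{ineq1}. First I would introduce the shorthand
\[
S_{D,\epsilon}:=\nu(w_{A_{\epsilon}^{\pm}(y)}>n_{\epsilon}|D)+\frac{1}{\sqrt{2\pi}\sigma_{\varphi}}\sum_{r=M_{\epsilon}}^{n_{\epsilon}}\frac{\nu(A_{\epsilon}^{\pm}(y)\cap\{w_{A_{\epsilon}^{\pm}(y)}>n_{\epsilon}-r\})}{\sqrt{r-Q_{\epsilon}}}
\]
for the common quantity appearing in both items of Proposition \ref{ineq1}. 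With this notation, item (a) of that proposition reads $1\geq S_{D,\epsilon}-o(1)$, i.e.\ $S_{D,\epsilon}\leq 1+o(1)$, while item (b) reads $1\leq S_{D,\epsilon}+\nu(w_{A_{\epsilon}^{\pm}(y)}\leq M_{\epsilon}|D)+o(1)$, i.e.\ $S_{D,\epsilon}\geq 1-\nu(w_{A_{\epsilon}^{\pm}(y)}\leq M_{\epsilon}|D)-o(1)$.

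Combining the two bounds yields
\[
1-\nu(w_{A_{\epsilon}^{\pm}(y)}\leq M_{\epsilon}|D)-o(1)\ \leq\ S_{D,\epsilon}\ \leq\ 1+o(1),
\]
hence $1+o(1)=S_{D,\epsilon}+O\!\left(\nu(w_{A_{\epsilon}^{\pm}(y)}\leq M_{\epsilon}|D)\right)$, which is precisely statement (a). Statement (b) is then the particular instance $D=A_{\epsilon}^{\pm}(y)$: since the set $A_{\epsilon}^{\pm}(y)$ is one of the two admissible choices of $D$ allowed in the hypothesis of Proposition \ref{ineq1}, it suffices to substitute it into the displayed chain of inequalities, so part (a) of the Corollary already contains part (b).

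The only delicate point, which I regard as the main --- if mild --- obstacle, is that the $o(1)$ and $O(\cdot)$ symbols must be understood uniformly over the admissible sets $D$, so that the conclusion holds simultaneously for $D=A_{\epsilon}^{\pm}(y)$ and for every element of the finite partition $\mathcal{D}_{\epsilon}$ contained in $A_{\epsilon}^{\pm}(y)$. This uniformity is already built into the proof of Proposition \ref{ineq1}: there, after dividing by $\nu(D)$, the error terms were bounded by a fixed multiple of $Q_{\epsilon}\,\nu(A_{\epsilon}^{\pm}(y))\log n_{\epsilon}$, a quantity independent of $D$ which tends to $0$ by the choice of $n_{\epsilon}$, $M_{\epsilon}$ and $Q_{\epsilon}$. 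Thus nothing is required beyond recording the two-sided estimate above.
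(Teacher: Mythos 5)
Your proposal is correct and is exactly the argument intended by the paper, which states the corollary without further proof as an immediate consequence of Proposition \ref{ineq1}: the two one-sided inequalities (a) and (b) combine to give the two-sided estimate $1-\nu(w_{A_{\epsilon}^{\pm}(y)}\leq M_{\epsilon}|D)-o(1)\leq S_{D,\epsilon}\leq 1+o(1)$, and item (b) of the corollary is just the instance $D=A_{\epsilon}^{\pm}(y)$. Your remark on uniformity of the $o(1)$ over admissible $D$ is apt and consistent with the error bound $C_{3}Q_{\epsilon}\log n_{\epsilon}\,\nu(A_{\epsilon}^{\pm}(y))$ established in the proof of Proposition \ref{ineq1}.
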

%
\begin{lemma}\label{remarkconclude}
From the previous Corollary, we have
\begin{eqnarray*}
\nu(w_{A_{\epsilon}^{\pm}(y)}>n_{\epsilon}|D)&=&\nu(w_{A_{\epsilon}^{\pm}(y)}>n_{\epsilon}|A_{\epsilon}^{\pm}(y))+O\left(\nu(w_{A_{\epsilon}^{\pm}(y)}\leq M_{\epsilon}|A_{\epsilon}^{\pm}(y))\right)\\
&&+O\left(\nu(w_{A_{\epsilon}^{\pm}(y)}\leq M_{\epsilon}|D)\right)+o(1).
\end{eqnarray*}
Thus we will study the convergence in distribution of $w_{A_{\epsilon}^{\pm}(y)}$ with respect to $\nu(.|A_{\epsilon}^{\pm}(y))$ and then we will deduce it with respect to $\nu(.|D)$. So, first we have the following proposition:
\end{lemma}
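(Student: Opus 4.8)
The plan is to derive the identity directly by subtracting the two equalities provided by Corollary~\ref{Aeps=Di}; no new estimate is needed, only the bookkeeping of the error terms.

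First I would write Corollary~\ref{Aeps=Di}(a) for the given admissible set $D$ (that is, either a cylinder of $\mathcal{D}_{\epsilon}$ contained in $A_{\epsilon}^{\pm}(y)$, or $D=A_{\epsilon}^{\pm}(y)$ itself):
\[
1+o(1)= \nu(w_{A_{\epsilon}^{\pm}(y)}>n_{\epsilon}\mid D)+\frac{1}{\sqrt{2\pi}\sigma_{\varphi}}\sum_{r=M_{\epsilon}}^{n_{\epsilon}}\frac{\nu(A_{\epsilon}^{\pm}(y)\cap \{w_{A_{\epsilon}^{\pm}(y)}>n_{\epsilon}-r\})}{\sqrt{r-Q_{\epsilon}}}+O(\nu(w_{A_{\epsilon}^{\pm}(y)}\leq M_{\epsilon}\mid D)),
\]
together with Corollary~\ref{Aeps=Di}(b), which is the same assertion with $D$ replaced by $A_{\epsilon}^{\pm}(y)$:
\[
1+o(1)= \nu(w_{A_{\epsilon}^{\pm}(y)}>n_{\epsilon}\mid A_{\epsilon}^{\pm}(y))+\frac{1}{\sqrt{2\pi}\sigma_{\varphi}}\sum_{r=M_{\epsilon}}^{n_{\epsilon}}\frac{\nu(A_{\epsilon}^{\pm}(y)\cap \{w_{A_{\epsilon}^{\pm}(y)}>n_{\epsilon}-r\})}{\sqrt{r-Q_{\epsilon}}}+O(\nu(w_{A_{\epsilon}^{\pm}(y)}\leq M_{\epsilon}\mid A_{\epsilon}^{\pm}(y))).
\]
The key observation is that the large sum $\sum_{r=M_{\epsilon}}^{n_{\epsilon}}\nu(A_{\epsilon}^{\pm}(y)\cap\{w_{A_{\epsilon}^{\pm}(y)}>n_{\epsilon}-r\})/\sqrt{r-Q_{\epsilon}}$ does not involve $D$ at all: it depends only on $A_{\epsilon}^{\pm}(y)$ and on the parameters $M_{\epsilon},Q_{\epsilon},n_{\epsilon}$. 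Hence, subtracting the second display from the first, this term cancels identically and I am left with
\[
0 = \nu(w_{A_{\epsilon}^{\pm}(y)}>n_{\epsilon}\mid D)-\nu(w_{A_{\epsilon}^{\pm}(y)}>n_{\epsilon}\mid A_{\epsilon}^{\pm}(y))+O(\nu(w_{A_{\epsilon}^{\pm}(y)}\leq M_{\epsilon}\mid D))+O(\nu(w_{A_{\epsilon}^{\pm}(y)}\leq M_{\epsilon}\mid A_{\epsilon}^{\pm}(y)))+o(1).
\]
Moving the difference of the two conditional probabilities to the left-hand side, and using that $O(\cdot)$ already accommodates either sign and that a sum of two nonnegative $O(\cdot)$ terms is again an $O(\cdot)$ term, yields precisely the stated formula.

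The substantive work has already been done in Proposition~\ref{ineq1} and Corollary~\ref{Aeps=Di}; the present lemma is only the cancellation step that reduces the study of the conditional law of $w_{A_{\epsilon}^{\pm}(y)}$ given an arbitrary admissible $D$ to its conditional law given $A_{\epsilon}^{\pm}(y)$ itself, up to the short-return error $\nu(w_{A_{\epsilon}^{\pm}(y)}\leq M_{\epsilon}\mid\cdot)$ that will be controlled in the subsequent propositions. The only point needing a little care—and the nearest thing to an obstacle here—is to check that the $o(1)$ terms furnished by Corollary~\ref{Aeps=Di} are uniform over the admissible sets $D$ and over $y$, so that they can legitimately be merged into a single $o(1)$; this uniformity is already built into the error estimates of Proposition~\ref{ineq1} (the bounds there depend on $D$ only through $\nu(D)$, which divides out).
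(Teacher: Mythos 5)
Your proof is correct and is precisely the argument the paper intends: the lemma is stated as an immediate consequence of Corollary~\ref{Aeps=Di}, and the intended route is exactly the one you take — subtract the two identities, observe that the sum $\sum_{r=M_{\epsilon}}^{n_{\epsilon}}\nu(A_{\epsilon}^{\pm}(y)\cap\{w_{A_{\epsilon}^{\pm}(y)}>n_{\epsilon}-r\})/\sqrt{r-Q_{\epsilon}}$ is independent of $D$ and cancels identically, and collect the remaining $O(\cdot)$ and $o(1)$ terms.
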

\begin{proposition}\label{tightness}
The family of distributions of $\left(\nu(A_{\epsilon}^{\pm}(y))^{2}w_{A_{\epsilon}^{\pm}(y)}\right)_{\epsilon>0}$ is tight with respect to the family of probability measures $\left(\nu(.|A_{\epsilon}^{\pm}(y))\right)_{\epsilon>0}$.
\end{proposition}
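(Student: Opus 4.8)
The plan is to obtain tightness as a soft consequence of the renewal-type identity already established in Corollary~\ref{Aeps=Di}(b), exploiting only the monotonicity of a tail function. Fix $y$, and for $t>0$ write $n_{\epsilon}(t):=n_{\epsilon}(t,y)=\bigl(t/\nu(A_{\epsilon}^{\pm}(y))\bigr)^{2}$ and
\[
F_{\epsilon}(t):=\nu\bigl(w_{A_{\epsilon}^{\pm}(y)}>n_{\epsilon}(t)\bigm|A_{\epsilon}^{\pm}(y)\bigr)=\nu\bigl(\nu(A_{\epsilon}^{\pm}(y))^{2}\,w_{A_{\epsilon}^{\pm}(y)}>t^{2}\bigm|A_{\epsilon}^{\pm}(y)\bigr).
\]
Since $w_{A_{\epsilon}^{\pm}(y)}\ge1$, the function $t\mapsto F_{\epsilon}(t)$ is non-increasing with values in $[0,1]$. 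Tightness of $\bigl(\nu(A_{\epsilon}^{\pm}(y))^{2}w_{A_{\epsilon}^{\pm}(y)}\bigr)_{\epsilon>0}$ with respect to $\bigl(\nu(\,\cdot\mid A_{\epsilon}^{\pm}(y))\bigr)_{\epsilon>0}$ is exactly the uniform upper-tail statement: for every $\eta>0$ there is $T<\infty$ with $\limsup_{\epsilon\to0}F_{\epsilon}(T)\le\eta$. I will also record, by the same computation, the companion lower-tail estimate $\limsup_{\epsilon\to0}(1-F_{\epsilon}(a))\le\eta$ for $a$ small, which is what guarantees that subsequential weak limits charge $(0,\infty)$.

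First I would apply Corollary~\ref{Aeps=Di}(b) with $D=A_{\epsilon}^{\pm}(y)$ and rewrite it through $F_{\epsilon}$, using $\nu\bigl(A_{\epsilon}^{\pm}(y)\cap\{w_{A_{\epsilon}^{\pm}(y)}>n_{\epsilon}(t)-r\}\bigr)=\nu(A_{\epsilon}^{\pm}(y))\,F_{\epsilon}\!\bigl(\nu(A_{\epsilon}^{\pm}(y))\sqrt{\,n_{\epsilon}(t)-r\,}\bigr)$: for each fixed $t>0$,
\[
1+o(1)=F_{\epsilon}(t)+\frac{1}{\sqrt{2\pi}\,\sigma_{\varphi}}\sum_{r=M_{\epsilon}}^{n_{\epsilon}(t)}\frac{\nu(A_{\epsilon}^{\pm}(y))\,F_{\epsilon}\!\bigl(\nu(A_{\epsilon}^{\pm}(y))\sqrt{\,n_{\epsilon}(t)-r\,}\bigr)}{\sqrt{r-Q_{\epsilon}}}+O\!\bigl(\nu\bigl(w_{A_{\epsilon}^{\pm}(y)}\le M_{\epsilon}\mid A_{\epsilon}^{\pm}(y)\bigr)\bigr).
\]
The quantities $M_{\epsilon},Q_{\epsilon}$ of \eqref{MepsQeps} grow only logarithmically in $1/\epsilon$, whereas $n_{\epsilon}(t)$ grows polynomially in $1/\epsilon$ (because $\nu(A_{\epsilon}^{\pm}(y))\to0$), so $M_{\epsilon},Q_{\epsilon}=o(n_{\epsilon}(t))$, hence $\sum_{r=M_{\epsilon}}^{n_{\epsilon}(t)}(r-Q_{\epsilon})^{-1/2}=2\sqrt{n_{\epsilon}(t)}\,(1+o(1))$, and $\nu(A_{\epsilon}^{\pm}(y))\sqrt{n_{\epsilon}(t)}=t$ by definition of $n_{\epsilon}(t)$.

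For the upper tail: by monotonicity $F_{\epsilon}\!\bigl(\nu(A_{\epsilon}^{\pm}(y))\sqrt{n_{\epsilon}(t)-r}\bigr)\ge F_{\epsilon}(t)$ for all $M_{\epsilon}\le r\le n_{\epsilon}(t)$, so the sum in the identity is at least $F_{\epsilon}(t)\,\nu(A_{\epsilon}^{\pm}(y))\cdot2\sqrt{n_{\epsilon}(t)}\,(1+o(1))=2t\,F_{\epsilon}(t)\,(1+o(1))$; the last term is bounded (a conditional probability), so $F_{\epsilon}(t)\bigl(1+\tfrac{2t}{\sqrt{2\pi}\,\sigma_{\varphi}}\bigr)(1+o(1))\le C$ for an absolute constant $C$, whence $\limsup_{\epsilon\to0}F_{\epsilon}(t)\le C\bigl(1+\tfrac{2t}{\sqrt{2\pi}\,\sigma_{\varphi}}\bigr)^{-1}$, which is $\le\eta$ as soon as $t=T$ is large. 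This is tightness. Since the renewal identity of Corollary~\ref{Aeps=Di}(b) (equivalently Proposition~\ref{ineq1}) already does the real work, the only thing to be careful about here is the regime check $M_{\epsilon},Q_{\epsilon}=o(n_{\epsilon}(t))$; the rest is soft.

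The companion lower tail is obtained dually: bounding the summand by $F_{\epsilon}\le1$ makes the sum at most $2t\,(1+o(1))$, so $F_{\epsilon}(t)\ge1-\tfrac{2t}{\sqrt{2\pi}\,\sigma_{\varphi}}-O\!\bigl(\nu\bigl(w_{A_{\epsilon}^{\pm}(y)}\le M_{\epsilon}\mid A_{\epsilon}^{\pm}(y)\bigr)\bigr)+o(1)$, which yields $\limsup_{\epsilon\to0}(1-F_{\epsilon}(t))\le\tfrac{2t}{\sqrt{2\pi}\,\sigma_{\varphi}}\le\eta$ for $t$ small, \emph{once one knows the short-return probability $\nu\bigl(w_{A_{\epsilon}^{\pm}(y)}\le M_{\epsilon}\mid A_{\epsilon}^{\pm}(y)\bigr)\to0$}. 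Establishing this last fact is the genuinely delicate ingredient, a ``no short returns'' estimate: one bounds it by $\nu(A_{\epsilon}^{\pm}(y))^{-1}\sum_{n=1}^{M_{\epsilon}}\nu\bigl(A_{\epsilon}^{\pm}(y)\cap\{S_{n}\varphi=0\}\cap\sigma^{-n}A_{\epsilon}^{\pm}(y)\bigr)$ and estimates each term by a case analysis on $n$ — using the Gibbs (mixing) property together with Proposition~\ref{p3} when the symbolic windows defining $A_{\epsilon}^{\pm}(y)$ and $\sigma^{-n}A_{\epsilon}^{\pm}(y)$ are separated (each term then $O(\nu(A_{\epsilon}^{\pm}(y))^{2})$), and the aperiodicity of $\nu$-a.e.\ $\sigma$-orbit, together with the bounded-distortion description of $A_{\epsilon}^{\pm}(y)$ as an $\asymp\epsilon$-ball (Proposition~\ref{PropESS}, Lemma~\ref{ballincylpartition}, Remark~\ref{federer remark}), when they overlap — so that the whole sum is $O\!\bigl(M_{\epsilon}\,\nu(A_{\epsilon}^{\pm}(y))\bigr)=o(1)$ after normalization. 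Controlling the intermediate range of $n$ uniformly is the main obstacle; everything else is the soft renewal manipulation above, and in fact re-running that manipulation with $o(1)$ in place of the constant $C$ (which needs exactly the same short-return input) produces the functional equation that pins the limit law $\sigma_{flow}^{2}\,\mathcal{E}^{2}/\mathcal{N}^{2}$ in the next step.
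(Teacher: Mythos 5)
Your upper-tail argument is correct and is essentially the paper's proof: the paper also applies the renewal-type inequality with $n_\epsilon=(t/\nu(A_\epsilon^{\pm}(y)))^2$ and $D=A_\epsilon^{\pm}(y)$, uses the same monotonicity observation $\nu(A_\epsilon^{\pm}\cap\{w>n_\epsilon-r\})\ge\nu(A_\epsilon^{\pm}\cap\{w>n_\epsilon\})$, and the same asymptotics $\sum_{r=M_\epsilon}^{n_\epsilon}(r-Q_\epsilon)^{-1/2}\sim 2\sqrt{n_\epsilon}$, to conclude $\limsup_{\epsilon\to0}F_\epsilon(t)\lesssim(1+t)^{-1}$, hence tightness.

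One small point of comparison. You feed in Corollary~\ref{Aeps=Di}(b), which carries the extra $O(\nu(w_{A_\epsilon^\pm(y)}\le M_\epsilon\mid\cdot))$ term, and then absorb it into an absolute constant $C$; the paper instead uses the one-sided Proposition~\ref{ineq1}(a) directly, which has no short-return term at all, and therefore gets the cleaner bound with $C=1$. Both are valid, but the paper's choice makes explicit that no control whatsoever on short returns is needed for tightness. Your closing two paragraphs — the ``companion lower tail'' and the long discussion of how to prove $\nu(w_{A_\epsilon^\pm(y)}\le M_\epsilon\mid A_\epsilon^\pm(y))\to0$ — are not part of, and are not needed for, the tightness statement; that short-return estimate is proved later in the paper (Lemma~\ref{jerome}, via Rousseau's recurrence-rate theorem and a density-point argument rather than the Gibbs/aperiodicity case analysis you sketch), and only enters when one identifies the limit law in Proposition~\ref{convergence+-}. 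You correctly flag this yourself, but it would be cleaner to separate the two concerns: the tightness proof is soft and self-contained once you have Proposition~\ref{ineq1}(a) and the regime check $M_\epsilon,Q_\epsilon=o(n_\epsilon(t))$, which you verify correctly.
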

\begin{proof}
Let $t>0$, using inequality (a) of Proposition \ref{ineq1} with $n_{\epsilon}=\left(\frac{t}{\nu(A^{\pm}_{\epsilon}(y))}\right)^{2}$, we have
\begin{eqnarray*}
1&\geq&\nu(w_{A_{\epsilon}^{\pm}(y)}>n_{\epsilon}|A^{\pm}_{\epsilon}(y))+\frac{1}{\sqrt{2\pi}\sigma_{\varphi}}\sum_{M_{\epsilon}}^{n_{\epsilon}}\frac{\nu(A_{\epsilon}^{\pm}(y)\cap \{w_{A_{\epsilon}^{\pm}(y)}>n_{\epsilon}-r\})}{\sqrt{r-Q_{\epsilon}}}- o(1)\\
&\geq&\nu(w_{A_{\epsilon}^{\pm}(y)}>n_{\epsilon}|A^{\pm}_{\epsilon}(y))+\frac{1}{\sqrt{2\pi}\sigma_{\varphi}}\nu(A_{\epsilon}^{\pm}(y)\cap \{w_{A_{\epsilon}^{\pm}(y)}>n_{\epsilon}\})\sum_{M_{\epsilon}}^{n_{\epsilon}}\frac{1}{\sqrt{r-Q_{\epsilon}}}- o(1),
\end{eqnarray*}
from which it follows that
\begin{equation}
\forall t>0 \quad \nu\left(\ w_{A_{\epsilon}^{\pm}(y)}>\left(\frac{t}{\nu(A^{\pm}_{\epsilon}(y))}\right)^{2}\Big| A^{\pm}_{\epsilon}(y)\right)\leq\frac{1+o(1)}{1+\frac{1}{\sqrt{2\pi}\sigma_{\varphi}}\nu(A_{\epsilon}^{\pm}(y))\underset{M_{\epsilon}}{\overset{n_{\epsilon}}{\sum}}\frac{1}{\sqrt{r-Q_{\epsilon}}}},
\end{equation}
but $\underset{M_{\epsilon}}{\overset{n_{\epsilon}}{\sum}}\frac{1}{\sqrt{r-Q_{\epsilon}}}=2(\sqrt{n_{\epsilon}}-\sqrt{Q_{\epsilon}-M_{\epsilon}})\geq\sqrt{n_{\epsilon}}\geq\frac{t}{\nu(A_{\epsilon}^{\pm}(y))}$, then we get
\begin{equation}
\underset{\epsilon\rightarrow0}{\limsup}\ \nu\left(\nu(A^{\pm}_{\epsilon}(y))^{2}w_{A_{\epsilon}^{\pm}(y)}>t^{2}\mid A^{\pm}_{\epsilon}(y)\right)\leq\frac{1}{1+t},
\end{equation}
which proves the tightness.
\end{proof}
\begin{theoreme}[\cite{Rousseau}]\label{rousseau}
Define $\bar{\tau_{\epsilon}}=\inf\{t>1: g_{t}(y)\in B(y,\epsilon)\}$. For $\mu$-almost every $y\in M$
\begin{equation*}
\underset{\epsilon\rightarrow0}{\lim}\ \frac{\log \bar{\tau_{\epsilon}}(y)}{-\log\epsilon}=\bar{d}-1, 
\end{equation*}
 where $\bar{d}$ is the Hausdorff dimension of the measure $\mu$.
\end{theoreme}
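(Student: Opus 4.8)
The plan is to deduce this from the symbolic model of $(\Lambda,g_t,\mu)$ already built, by running the scheme of the proof of Theorem~\ref{theorem1flow} with the $\mathbb Z$-component switched off; this makes the argument strictly easier, because no local limit theorem---and hence no square root---enters. First I would transfer everything through the conjugacy $\chi$ to the suspension flow $(\Delta,\mathbf S_t,\mu_\Delta)$, write $y=g_s(x)$ with $x=\chi(\omega)\in X$ and $0\le s<R(x)$, and reproduce the Birkhoff argument of Lemma~\ref{tauwR} (which never used the cocycle $\varphi$) to obtain
\[
\bar\tau_\epsilon(y)\ \underset{\epsilon\to0}{\sim}\ \bar w_\epsilon(x)\int_X R\,d\nu_X,\qquad\text{where}\quad \bar w_\epsilon(x):=\min\{k\ge1:\ T^k x\in P_D(B(g_s(x),\epsilon))\}.
\]
It then suffices to identify $\lim_{\epsilon\to0}\log\bar w_\epsilon(x)/|\log\epsilon|$ almost everywhere.

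Next I would squeeze $P_D(B(g_s(x),\epsilon))$ between symbolic cylinders exactly as in the proof of Theorem~\ref{theorem1flow}: by \eqref{inclusion1}, Proposition~\ref{PropESS} and Lemma~\ref{Lemmaballincylinder2} there are integers $q_\epsilon,q'_\epsilon,p_\epsilon,p'_\epsilon$ with
\[
\chi(C_{-q_\epsilon,q'_\epsilon}(\omega))\ \subset\ P_D(B(g_s(x),\epsilon))\ \subset\ \chi(C_{-p_\epsilon,p'_\epsilon}(\omega)),
\]
and, by the identification of cylinder diameters with the stable/unstable lengths $l_\cdot^s,l_\cdot^u$ and Remark~\ref{remarqueLuLs},
\[
\frac{q_\epsilon}{|\log\epsilon|},\ \frac{p_\epsilon}{|\log\epsilon|}\ \longrightarrow\ \frac1{|L^s|},\qquad \frac{q'_\epsilon}{|\log\epsilon|},\ \frac{p'_\epsilon}{|\log\epsilon|}\ \longrightarrow\ \frac1{L^u},
\]
so that $q_\epsilon+q'_\epsilon\sim p_\epsilon+p'_\epsilon\sim\big(\tfrac1{L^u}-\tfrac1{L^s}\big)|\log\epsilon|$. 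Consequently $\bar w_\epsilon(x)$ is trapped between the hitting times of the shift $\sigma$ (with no $S_m\varphi=0$ constraint now) to these two cylinders.

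The core input is the almost sure asymptotics of the return time of $\sigma$ to a shrinking two-sided cylinder: for $\nu$-a.e.\ $\omega$ and any sequences $q_n,q'_n\to\infty$,
\[
\frac{1}{q_n+q'_n}\,\log\min\{m\ge1:\ \sigma^m\omega\in C_{-q_n,q'_n}(\omega)\}\ \longrightarrow\ h_\nu(\sigma).
\]
This is the Ornstein--Weiss return-time theorem; alternatively it follows from the Borel--Cantelli argument of Lemmas~\ref{lemmainf}--\ref{lemmasup}, which here simplifies because the factor $\nu(S_n\varphi=0)\asymp n^{-1/2}$ from Proposition~\ref{p3} is replaced by $1$ (so the thresholds become $a+b>1/h_\nu$ and $a+b<1/h_\nu$ instead of $2(a+b)>1/h_\nu$ and $2(a+b)<1/h_\nu$, and the square root disappears). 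Feeding the two families of cylinders above into this limit and applying the squeeze yields, almost everywhere,
\[
\lim_{\epsilon\to0}\frac{\log\bar w_\epsilon(x)}{|\log\epsilon|}\ =\ h_\nu(\sigma)\Big(\frac1{L^u}-\frac1{L^s}\Big),
\]
hence the same limit for $\log\bar\tau_\epsilon(y)/|\log\epsilon|$ by the equivalence above. Finally, Proposition~\ref{relationlLyapynov-flot-T} and the Abramov formula \eqref{Abramov2} rewrite the right-hand side as $h_\mu(g_t)\big(\tfrac1{\lambda^u}-\tfrac1{\lambda^s}\big)$, and Theorem~6.2 of \cite{barreira} identifies this with $\dim_H\nu=\dim_H\mu-1=\bar d-1$.

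The step I expect to be the main obstacle is the one just described: extracting the a.e.\ asymptotics of the cylinder hitting time along the $\epsilon$- and $\omega$-dependent sequences $(q_\epsilon,q'_\epsilon)$ rather than along a fixed sequence, and gluing the two one-sided Lyapunov-rate controls into a single two-sided cylinder estimate. This is precisely where the Shannon--McMillan--Breiman "good set'' $K_N^\delta$, the primitivity of $A$, and the polynomial control of the Markov boundary (Lemma~\ref{LemmaB}) are needed, just as in the proof of Theorem~\ref{theorem1flow}; with the $\mathbb Z$-cocycle removed there are no further obstructions, and one may also simply invoke \cite{Rousseau}.
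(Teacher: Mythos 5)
The paper does not prove Theorem~\ref{rousseau} at all: it is simply quoted from \cite{Rousseau}, whose argument proceeds in far greater generality (arbitrary observations of flows, not just Axiom A systems over Markov sections) using direct recurrence--dimension estimates. Your proposal is therefore a genuinely different route: a self-contained re-derivation specific to the Axiom A setting, obtained by re-running the scheme of Theorem~\ref{theorem1flow} with the $\mathbb Z$-cocycle constraint removed. The argument is sound as sketched: Lemma~\ref{tauwR} never uses $\varphi$, so the Birkhoff comparison $\bar\tau_\epsilon(y)\sim\bar w_\epsilon(x)\int_X R\,d\nu_X$ goes through verbatim; the squeeze $\chi(C_{-q_\epsilon,q'_\epsilon})\subset P_D(B(g_s x,\epsilon))\subset\chi(C_{-p_\epsilon,p'_\epsilon})$ is exactly the one in the paper; and you correctly observe that dropping the $S_n\varphi=0$ restriction replaces the local-limit factor $\asymp n^{-1/2}$ by $1$, which shifts the critical thresholds in Lemmas~\ref{lemmainf}--\ref{lemmasup} from $2(a+b)\gtrless 1/h_\nu$ to $a+b\gtrless 1/h_\nu$ and removes the square root, producing $\lim\log\bar w_\epsilon/|\log\epsilon|=h_\nu(1/L^u-1/L^s)$ (equivalently, one invokes Ornstein--Weiss directly). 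The passage through Proposition~\ref{relationlLyapynov-flot-T}, Abramov's formula \eqref{Abramov2}, and Theorem~6.2 of \cite{barreira} then gives $\dim_H\mu-1$ as required. What your approach buys is internal consistency with the paper's own machinery (useful pedagogically, and it shows the square root in Theorem~\ref{theorem1flow} is entirely due to the null-recurrent $\mathbb Z$-cocycle); what Rousseau's proof buys is generality beyond the Axiom A / Markov-section framework. Both are correct, and your closing remark that one may ``simply invoke \cite{Rousseau}'' is exactly what the paper does.
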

Consider the following lemma, where we will define a family of sets $D_{\epsilon}$, and which we will apply later in our proofs with $D_{\epsilon}=A_{\epsilon}(y)$ and $D_{\epsilon}\in\mathcal{D}_{\epsilon}$.
\begin{lemma}\label{jerome}
For $\nu$-almost every $x\in\Sigma_{A}$, for all $k_{0}>0,m_{F}>0,c_{F}>0$, let $(D_{\epsilon})_{\epsilon}$ be a family of sets containing $x$ such that $D_{\epsilon}\subset B(x,k_{0}\epsilon)$ and $\nu(B(x, \text{diam}(D_{\epsilon})))\leq c_{F}^{m_{F}}\nu(D_{\epsilon})$. For $M_{\epsilon}$ defined as in \eqref{MepsQeps}, we have
\begin{equation*}
\underset{\epsilon\rightarrow0}{\lim}\ \nu(R_{B(x,k_{0}\epsilon)}\leq M_{\epsilon}|D_{\epsilon})=0,
\end{equation*}
where $R_{B(x, k_{0}\epsilon)}=\inf\{n\geq 1: \sigma^{n}(x)\in B(x,k_{0}\epsilon)\}.$
\end{lemma}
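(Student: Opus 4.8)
The plan is to trap the ball $B(x,k_0\epsilon)$ from outside by a symbolic cylinder and then run the Dvoretzky--Erd\H{o}s last-passage decomposition for the first return under $\sigma$, the only genuinely delicate contribution being that of the short returns. Fix a $\nu$-generic $x=\chi(\omega)$, i.e.\ one for which Lemma~\ref{Lemmaballincylinder2}, Proposition~\ref{PropESS} and Remark~\ref{remarqueLuLs} apply. By Lemma~\ref{Lemmaballincylinder2} there is, for $\epsilon$ small, a cylinder $C_\epsilon:=C_{-q_\epsilon,q'_\epsilon}(\omega)$ with $B(x,k_0\epsilon)\subset\chi(C_\epsilon)$ (so $\chi^{-1}(B(x,k_0\epsilon))\subset C_\epsilon$ up to a $\nu$-null set) whose depth $\ell_\epsilon:=q_\epsilon+q'_\epsilon$ is comparable to $|\log\epsilon|$ uniformly in $x$ --- since $\mathbf{a}^{(u)},\mathbf{a}^{(s)}$ are bounded away from $0$ and $\infty$ --- and with $q'_\epsilon\ge c_\ast\ell_\epsilon$ for a fixed $c_\ast>0$ by Remark~\ref{remarqueLuLs}. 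Because $D_\epsilon\subset B(x,k_0\epsilon)$, any $z\in D_\epsilon$ with $R_{B(x,k_0\epsilon)}(z)\le M_\epsilon$ satisfies $\sigma^{n}z\in C_\epsilon$ for some $1\le n\le M_\epsilon$, whence
\[
\nu\!\left(D_\epsilon\cap\{R_{B(x,k_0\epsilon)}\le M_\epsilon\}\right)\ \le\ \nu\Bigl(C_\epsilon\cap\bigcup_{n=1}^{M_\epsilon}\sigma^{-n}C_\epsilon\Bigr)\ \le\ \sum_{n=1}^{M_\epsilon}\nu(C_\epsilon\cap\sigma^{-n}C_\epsilon),
\]
and it is enough to show this sum is $o(\nu(D_\epsilon))$; dividing by $\nu(D_\epsilon)$ then concludes. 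Recall $M_\epsilon=O(|\log\epsilon|)$ by \eqref{MepsQeps}.

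For the long returns $\ell_\epsilon<n\le M_\epsilon$, the coordinate blocks defining $C_\epsilon$ and $\sigma^{-n}C_\epsilon$ are disjoint, so the quasi-Bernoulli property of the Gibbs measure $\nu$ together with its $\sigma$-invariance gives $\nu(C_\epsilon\cap\sigma^{-n}C_\epsilon)\le K\nu(C_\epsilon)^2$. Summing, this part is at most $KM_\epsilon\nu(C_\epsilon)^2$, which, using the uniform exponential bound $\nu(C_\epsilon)\le Ce^{-c_0\ell_\epsilon}\le C\epsilon^{c_1}$ furnished by the Gibbs property (Proposition~\ref{gibbs}) and $M_\epsilon=O(|\log\epsilon|)$, is $\nu(C_\epsilon)\cdot O(\epsilon^{c_1}|\log\epsilon|)$.

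For the short returns $1\le n\le\ell_\epsilon$, whenever $C_\epsilon\cap\sigma^{-n}C_\epsilon\ne\emptyset$ it is a cylinder carrying the coordinates of $C_\epsilon$ plus exactly $n$ further prescribed coordinates, so quasi-multiplicativity of $\nu$ and the uniform exponential cylinder bound give $\nu(C_\epsilon\cap\sigma^{-n}C_\epsilon)\le C\nu(C_\epsilon)e^{-c_0 n}$. The key observation is that $C_\epsilon\cap\sigma^{-n}C_\epsilon\ne\emptyset$ forces $\omega_j=\omega_{j+n}$ for all $-q_\epsilon\le j\le q'_\epsilon-n$, i.e.\ the coding of $x$ to be $n$-periodic on a window of length $\ell_\epsilon-n+1$. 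Since
\[
\nu\!\left(\{\eta:\ \eta_i=\eta_{i+n}\text{ for }0\le i\le n-1\}\right)\ \le\ K\sum_{|a|=n}\nu(C_{0,n-1}(a))^2\ \le\ K\max_{|a|=n}\nu(C_{0,n-1}(a))\ \le\ Ce^{-c_0 n}
\]
is summable in $n$, the first Borel--Cantelli lemma yields $n_1(x)<\infty$ such that $\omega_0\cdots\omega_{2n-1}$ is not $n$-periodic for every $n\ge n_1(x)$. Consequently: the terms $n_1(x)\le n\le(q'_\epsilon+1)/2$ vanish, because then $[0,q'_\epsilon-n]\supseteq[0,n-1]$ already contains an obstruction to $n$-periodicity; the finitely many terms $n<n_1(x)$ vanish once $\epsilon$ is small enough in terms of $x$, because $x$ is $\nu$-a.e.\ aperiodic and the window $[-q_\epsilon,q'_\epsilon]$ eventually captures the relevant defect; and the remaining terms $(q'_\epsilon+1)/2<n\le\ell_\epsilon$ sum to at most $\ell_\epsilon\cdot C\nu(C_\epsilon)e^{-c_0 q'_\epsilon/2}=\nu(C_\epsilon)\cdot O(\epsilon^{c_2}|\log\epsilon|)$.

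Combining the two ranges, $\sum_{n=1}^{M_\epsilon}\nu(C_\epsilon\cap\sigma^{-n}C_\epsilon)\le\nu(C_\epsilon)\cdot O(\epsilon^{c})$ for some $c>0$. Finally, exactly as in the estimate \eqref{usingFederer}, the Federer property (Remark~\ref{federer remark}) together with the hypothesis $\nu(B(x,\text{diam}(D_\epsilon)))\le c_F^{m_F}\nu(D_\epsilon)$ gives $\nu(C_\epsilon)\le c_F^{\,m_F+O(\log\log(1/\epsilon))}\nu(D_\epsilon)$, i.e.\ $\nu(C_\epsilon)$ equals $\nu(D_\epsilon)$ up to a subpolynomial factor; dividing, $\nu(R_{B(x,k_0\epsilon)}\le M_\epsilon\mid D_\epsilon)=O\!\left(\epsilon^{c}(\log(1/\epsilon))^{O(1)}\right)\to0$, as claimed. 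I expect the main obstacle to be precisely this short-return range: mixing alone is useless there, and one must invoke aperiodicity through the Borel--Cantelli bound above to see that, for $\nu$-a.e.\ $x$, the cylinder $C_\epsilon$ is disjoint from its first $\asymp|\log\epsilon|$ iterates quantitatively enough to absorb both the logarithmic loss from $M_\epsilon$ and the (subpolynomial) loss from the Federer comparison.
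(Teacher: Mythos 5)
Your proof takes a genuinely different route from the paper's. The paper sets up a good set $F_a=\{x:\forall\epsilon\le\epsilon_0,\ M_\epsilon^{-1}R_{B(x,2k_0\epsilon)}(x)>a\}$, proves $\nu(F_a)\to1$ by invoking Rousseau's theorem (Theorem~\ref{rousseau}) on recurrence rates for flows together with Lemma~\ref{tauwR}, applies Egoroff for uniformity, observes $D_\epsilon\cap\{R_{B(x,k_0\epsilon)}\le M_\epsilon\}\subset D_\epsilon\cap F_a^C$, and finishes with the Lebesgue density theorem plus the Federer hypothesis. In contrast, you remain entirely inside the shift, trap $B(x,k_0\epsilon)$ in a cylinder $C_\epsilon$, and bound the short-return event by $\sum_{n\le M_\epsilon}\nu(C_\epsilon\cap\sigma^{-n}C_\epsilon)$; the long-range terms are handled by quasi-Bernoulli estimates, and the short-range ones by the aperiodicity-via-Borel--Cantelli argument. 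This is a legitimate alternative: it is more self-contained (no appeal to Rousseau's flow-level theorem), it is quantitative, and your identification of the short-return range as the delicate point is exactly right and handled correctly.

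However, the final Federer comparison has a genuine gap. You bound $\nu(D_\epsilon\cap\{R\le M_\epsilon\})$ by a quantity $\nu(C_\epsilon)\cdot O(\epsilon^c)$ and then claim $\nu(C_\epsilon)\le c_F^{m_F+O(\log\log(1/\epsilon))}\nu(D_\epsilon)$. This step needs $\log_2\bigl(\mathrm{diam}(\chi(C_\epsilon))/\mathrm{diam}(D_\epsilon)\bigr)=O(\log\log(1/\epsilon))$, i.e.\ $\mathrm{diam}(D_\epsilon)\gtrsim\epsilon/(\log(1/\epsilon))^{O(1)}$. But the lemma's hypothesis only gives $D_\epsilon\subset B(x,k_0\epsilon)$ and the thickness condition $\nu(B(x,\mathrm{diam}(D_\epsilon)))\le c_F^{m_F}\nu(D_\epsilon)$; it places no lower bound on $\mathrm{diam}(D_\epsilon)$. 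If, say, $\mathrm{diam}(D_\epsilon)\asymp\epsilon^{10}$, the Federer doubling loss becomes $c_F^{\Theta(\log(1/\epsilon))}=\epsilon^{-\Theta(1)}$, which can swamp your $\epsilon^{c}$ gain. The paper's proof does not incur this loss because it compares $F_a^C$ with the ball $B(x,\mathrm{diam}(D_\epsilon))$ directly at the scale of $D_\epsilon$ and invokes the Lebesgue density theorem, which works for any rate $\mathrm{diam}(D_\epsilon)\to0$. To repair your argument within the stated generality you would need either to replace $C_\epsilon$ by a cylinder adapted to $\mathrm{diam}(D_\epsilon)$ rather than to $\epsilon$ (and redo the return-time count at that finer scale), or to strengthen the lemma's hypothesis to $\mathrm{diam}(D_\epsilon)\gtrsim\epsilon/(\log(1/\epsilon))^{O(1)}$ --- which is in fact satisfied in the paper's applications, where $D_\epsilon$ is $A_\epsilon^{\pm}(y)$ or a partition element of diameter $\asymp\theta_\epsilon=\epsilon/|\log\epsilon|$.
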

\begin{proof}
Let $\alpha>0, k_{0}>0, m_{F}>0, c_{F}>0$. Choose some $a\in(0,\alpha)$ and set for some $\epsilon_{0}>0$,
\begin{equation*}
F_{a}=\{x\in\Sigma_{A}:\forall\epsilon\leq\epsilon_{0},M_{\epsilon}^{-1}R_{B(x,2k_{0}\epsilon)}(x)>a\}
\end{equation*}
We will show that $\nu(F_{a})\rightarrow1$ as $\epsilon_{0}\rightarrow0$. To this end, let us prove that $\nu-$a.e. $x\in\Sigma_{A}$,
\begin{equation}\label{jeromeinfinity}
\underset{\epsilon\rightarrow0}{\lim}\ M_{\epsilon}^{-1}R_{B(x,2k_{0}\epsilon)}(x)=+\infty.
\end{equation}
An intermediate step to do that is to prove that
\begin{equation}
\underset{\epsilon\rightarrow0}{\lim}\ \frac{\log R_{B(x,2k_{0}\epsilon)}}{-\log\epsilon}= dim_{H}\mu-1, \quad \nu- a.e.
\end{equation}
Let $r_{\min}>0$ and $r_{\max}>0$ the lower and upper bounds of the height function $r$. Using \eqref{xsimy}, for $\mu_{\Delta}-$almost every $y=(x,s)\in \Delta$, we have
\begin{equation*}
\bar{\tau_{\epsilon}}(y)\frac{1}{r_{\max}}\leq R_{B(x,2k_{0}\epsilon)}(x)\leq\bar{\tau_{\epsilon}}(y)\frac{1}{r_{\min}},
\end{equation*}
which gives
\begin{equation*}
\frac{\log r_{\max}}{\log\epsilon}+\frac{\log\bar{\tau_{\epsilon}}(y)}{-\log\epsilon}\leq\frac{\log R_{B(x,2k_{0}\epsilon)}(x)}{-\log\epsilon}\leq \frac{\log r_{\min}}{\log\epsilon}+\frac{\log\bar{\tau_{\epsilon}}(y)}{-\log\epsilon},
\end{equation*}
using Theorem \ref{rousseau}, it follows that $\underset{\epsilon\rightarrow0}{\lim}\ \frac{\log R_{B(x,2k_{0}\epsilon)}}{-\log\epsilon}= dim_{H}\mu-1$, $\nu-$ almost everywhere.\\
Now we go back to show \eqref{jeromeinfinity}. Let $0<\delta_{0}<dim_{H}\mu-1$, we have the following:
\begin{equation}\label{M_eps.eps^-dtendsinfnty}
\log(M_{\epsilon}^{-1}R_{B(x,2k_{0}\epsilon)}(x))=\log(M_{\epsilon}^{-1}\epsilon^{-\delta_{0}})+\log(\epsilon^{\delta_{0}}R_{B(x,2k_{0}\epsilon)}(x))
\end{equation}
Note that the first term of \eqref{M_eps.eps^-dtendsinfnty} goes to $+\infty$, as $\epsilon$ goes to 0. This is true using the definition of $M_{\epsilon}$, there exists $\rho_{0}>0$ such that $M_{\epsilon}<2\frac{|\log\left(\frac{\theta_{\epsilon}}{2Cc_{g}c_{l}}\right)|}{|\log \rho_{u}|}+1\leq \rho_{0}|\log\epsilon|$, which is negligible with respect to $\epsilon^{-\delta_{0}}$.\\
Now, considering the second term, we observe that
\begin{equation*}
\frac{\log(\epsilon^{\delta_{0}}R_{B(x,2k_{0}\epsilon)}(x))}{-\log\epsilon}=-\delta_{0}+\frac{\log R_{B(x,2k_{0}\epsilon)}(x)}{-\log\epsilon},
\end{equation*}
from which we get $\log(\epsilon^{\delta_{0}}R_{B(x,2k_{0}\epsilon)}(x))\underset{\epsilon\rightarrow0}{\rightarrow} (-\delta_{0}+dim_{H}\mu-1)(+\infty)$. And therefore \eqref{jeromeinfinity} is proved.\\
Taking $\epsilon_{0}\rightarrow0$, by Egoroff's Theorem, $M_{\epsilon}^{-1}R_{B(x,2k_{0}\epsilon)}$ converges uniformly to $+\infty$ on $F_{a}$, and $\nu(F_{a}^{C})\rightarrow0$.\\
Let $\epsilon<\epsilon_{0}$. if $x'\in B(x,k_{0}\epsilon)$ and $R_{B(x,k_{0}\epsilon)}(x')\leq M_{\epsilon}$, we have $R_{B(x,2k_{0}\epsilon)}(x')\leq M_{\epsilon}$ as well, hence $x'\in F_{a}^{C}$. Therefore
\begin{equation*}
D_{\epsilon}\cap\{x'\in\Sigma_{A},R_{B(x,k_{0}\epsilon)}(x')\leq M_{\epsilon}\}\subset D_{\epsilon}\cap F^{C}_{a}.
\end{equation*}
Then, this means that for any density point $x$ of the set $F_{a}$ relative to the Lebesgue basis given by $\left(B(.,\delta)\right)_{\delta}$, $\frac{\nu(F_{a}\cap B(x,\delta))}{\nu(B(x,\delta))}\rightarrow 1$, as $\delta\rightarrow0$, and this is true for almost every point $x$. Therefore, we get
\begin{eqnarray*}
\nu(R_{B(x,k_{0}\epsilon)}\leq M_{\epsilon}|D_{\epsilon})&\leq&\nu(F_{a}^{C}|D_{\epsilon})\\
&=&\frac{\nu(F^{C}_{a}\cap D_{\epsilon})}{\nu(D_{\epsilon})}\\
&\leq&\frac{\nu(F^{C}_{a}\cap B(x,\text{diam}(D_{\epsilon})))}{\nu(D_{\epsilon})}\\
&\leq& \nu(F^{C}_{a}| B(x,\text{diam}(D_{\epsilon})))\frac{\nu(B(x,\text{diam}(D_{\epsilon})))}{\nu(D_{\epsilon})},
\end{eqnarray*}
which converges to 0 as $\epsilon\rightarrow0$, since $\underset{\epsilon\rightarrow0}{\lim}\ \nu(F_{a}^{C}|B(x,diam (D_{\epsilon})))=0$ and $\dfrac{\nu(B(x,\text{diam}(D_{\epsilon}))}{\nu(D_{\epsilon})}$ is bounded above by $c_{F}^{m_{F}}$.
\end{proof}
\begin{proposition}\label{convergence+-}
For almost every $y\in M$,
\begin{equation*}
\underset{\epsilon\rightarrow0}{\lim}\ \nu\left((\nu(A^{+}_{\epsilon}(y))^{2}w_{A^{+}_{\epsilon}(y)}>t|A_{\epsilon}^{+}(y)\right)=\mathbb{P}\left(\sigma_{\varphi}^{2}\frac{\mathcal{E}^{2}}{\mathcal{N}^{2}}>t\right), \quad \forall t>0,
\end{equation*}
and 
\begin{equation*}
\underset{\epsilon\rightarrow0}{\lim}\ \nu\left((\nu(A^{-}_{\epsilon}(y))^{2}w_{A^{-}_{\epsilon}(y)}>t|A_{\epsilon}^{-}(y)\right)=\mathbb{P}\left(\sigma^{2}_{\varphi}\frac{\mathcal{E}^{2}}{\mathcal{N}^{2}}>t\right), \quad \forall t>0,
\end{equation*}
where $\mathcal{E}$ and $\mathcal{N}$ are independent random variables, $\mathcal{E}$ follows the exponential distribution of mean 1 and $\mathcal{N}$ having the standard normal distribution.
\end{proposition}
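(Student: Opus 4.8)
The plan is to convert the asymptotic renewal identity furnished by Corollary~\ref{Aeps=Di} into an Abel-type (Volterra) integral equation for the limiting tail, and then to recognise the survival function of $\sigma_\varphi^{2}\mathcal E^{2}/\mathcal N^{2}$ as its unique solution. Throughout, fix $t>0$, write $a_\epsilon:=\nu(A_\epsilon^{\pm}(y))$ and $u_\epsilon(n):=\nu\bigl(w_{A_\epsilon^{\pm}(y)}>n\mid A_\epsilon^{\pm}(y)\bigr)$, and set $n_\epsilon:=\lceil t/a_\epsilon^{2}\rceil$, so that $\{a_\epsilon^{2}\,w_{A_\epsilon^{\pm}(y)}>t\}=\{w_{A_\epsilon^{\pm}(y)}>n_\epsilon\}$ up to an asymptotically harmless shift of the index. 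The first step is to discard the residual term in Corollary~\ref{Aeps=Di}(b). Since $A_\epsilon^{\pm}(y)$ contains the coding point $x$ of $y$, is contained in some ball $B(x,k_0\epsilon)$, and — using the inclusions~\eqref{alpha1alpha2}, Lemma~\ref{equivalent} and the Federer property (Remark~\ref{federer remark}) — satisfies $\nu\bigl(B(x,\operatorname{diam}A_\epsilon^{\pm}(y))\bigr)\le c_F^{m_F}\nu(A_\epsilon^{\pm}(y))$ for a fixed $m_F$, Lemma~\ref{jerome} applies with $D_\epsilon=A_\epsilon^{\pm}(y)$ (in particular for $\mu$-a.e.\ $y$) and gives $\nu\bigl(R_{B(x,k_0\epsilon)}\le M_\epsilon\mid A_\epsilon^{\pm}(y)\bigr)\to0$; since $A_\epsilon^{\pm}(y)\subset B(x,k_0\epsilon)$ forces $\{w_{A_\epsilon^{\pm}(y)}\le M_\epsilon\}\subset\{R_{B(x,k_0\epsilon)}\le M_\epsilon\}$, the error term of Corollary~\ref{Aeps=Di}(b) is $o(1)$. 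Recalling $\nu\bigl(A_\epsilon^{\pm}(y)\cap\{w_{A_\epsilon^{\pm}(y)}>n\}\bigr)=a_\epsilon u_\epsilon(n)$, this yields, uniformly for $t$ in compact subsets of $(0,\infty)$,
\begin{equation*}
1+o(1)=u_\epsilon(n_\epsilon)+\frac{1}{\sqrt{2\pi}\,\sigma_\varphi}\sum_{r=M_\epsilon}^{n_\epsilon}\frac{a_\epsilon\,u_\epsilon(n_\epsilon-r)}{\sqrt{r-Q_\epsilon}},
\end{equation*}
where $M_\epsilon,Q_\epsilon=O(|\log\epsilon|)=o(\sqrt{n_\epsilon})$.

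The second step is to pass to the limit $\epsilon\to0$. The sum is a Riemann sum: with $\rho:=r\,a_\epsilon^{2}\in(0,t)$ one has $\sqrt{r-Q_\epsilon}\sim\sqrt{\rho}\,/a_\epsilon$, mesh $\asymp a_\epsilon^{-2}\,d\rho$, and $n_\epsilon-r\sim(t-\rho)/a_\epsilon^{2}$, so that $\sum_{r=M_\epsilon}^{n_\epsilon}\frac{a_\epsilon u_\epsilon(n_\epsilon-r)}{\sqrt{r-Q_\epsilon}}$ is asymptotic to $\int_0^{t}\frac{u_\epsilon(\lceil(t-\rho)/a_\epsilon^{2}\rceil)}{\sqrt\rho}\,d\rho$. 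Using the monotonicity of $n\mapsto u_\epsilon(n)$ to sandwich the sum between two such integrals, the tightness Proposition~\ref{tightness} (which also provides the integrable majorant $u_\epsilon\le1$ near $\rho=0$ and the decay $\limsup_\epsilon u_\epsilon(\lceil s^{2}/a_\epsilon^{2}\rceil)\le\frac{1}{1+s}$ near $\rho=t$), and a Helly/diagonal extraction producing, along a subsequence, a non-increasing right-continuous limit $G\colon[0,\infty)\to[0,1]$ with $G(0)=1$, $\lim_{s\to\infty}G(s)=0$ and $u_{\epsilon}(\lceil s^{2}/a_{\epsilon}^{2}\rceil)\to G(s)$ at every continuity point of $G$, one obtains that $H(t):=G(\sqrt t)$ solves
\begin{equation*}
1=H(t)+\frac{1}{\sqrt{2\pi}\,\sigma_\varphi}\int_0^{t}\frac{H(v)}{\sqrt{t-v}}\,dv,\qquad t>0.
\end{equation*}
This is a linear Volterra equation of the second kind with an $L^1_{\mathrm{loc}}$ (weakly singular) kernel, hence has a unique locally bounded solution; consequently every subsequential limit $G$ coincides with it, and the convergence holds along $\epsilon\to0$.

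The third step is to identify that solution. Applying the Laplace transform and using that the transform of $t^{-1/2}$ is $\sqrt{\pi/s}$, one gets $\widehat H(s)=\bigl(\sqrt s\,(\sqrt s+\tfrac{1}{\sqrt2\,\sigma_\varphi})\bigr)^{-1}$. On the other hand, conditioning on $\mathcal N$ and using $\mathbb E[e^{-\beta|\mathcal N|}]=2e^{\beta^{2}/2}\bigl(1-\Phi(\beta)\bigr)=e^{\beta^{2}/2}\operatorname{erfc}(\beta/\sqrt2)$, one computes $\mathbb P\bigl(\sigma_\varphi^{2}\mathcal E^{2}/\mathcal N^{2}>t\bigr)=\mathbb E\bigl[e^{-|\mathcal N|\sqrt t/\sigma_\varphi}\bigr]=e^{t/(2\sigma_\varphi^{2})}\operatorname{erfc}\!\bigl(\sqrt t/(\sqrt2\,\sigma_\varphi)\bigr)$, whose Laplace transform, by the classical pair $e^{k^{2}t}\operatorname{erfc}(k\sqrt t)\longmapsto\bigl(\sqrt s\,(\sqrt s+k)\bigr)^{-1}$ with $k=1/(\sqrt2\,\sigma_\varphi)$, is exactly $\widehat H(s)$. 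Hence $G(s)=\mathbb P\bigl(\sigma_\varphi\mathcal E/|\mathcal N|>s\bigr)$, which is the asserted limit; the cases $A_\epsilon^{+}(y)$ and $A_\epsilon^{-}(y)$ are handled verbatim.

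I expect the main obstacle to be the second step, namely the uniform (in $t$) passage from the discrete renewal sum to the integral equation: one must control the Riemann-sum error when $n_\epsilon-r$ is small and when $r$ is comparable to $M_\epsilon$ and $Q_\epsilon$, and justify the interchange of the limit with the summation. This is carried out, following the Dvoretzky--Erd\H{o}s scheme as in \cite{penesaussol} and \cite{yassine}, via the monotonicity sandwich together with the a priori tail estimate of Proposition~\ref{tightness}, which renders the contribution of the endpoints $\rho\approx0$ and $\rho\approx t$ negligible.
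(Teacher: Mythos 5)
Your proof is correct and takes essentially the same route as the paper: tightness (Proposition~\ref{tightness}) together with Corollary~\ref{Aeps=Di}(b) and Lemma~\ref{jerome} yield an integral equation for every subsequential limit, whose unique solution is then identified as the survival function of $\sigma_\varphi\mathcal E/|\mathcal N|$. The paper delegates the passage from the discrete renewal identity to the integral equation, and the identification of its solution, to Lemma~3.5 and Theorem~3.2 of \cite{yassine}, whereas you inline both steps — your Volterra equation is the paper's~\eqref{integraleqn} after the substitution $v=t^{2}(1-u)$ with $H(s)=\mathbb P(\mathcal X>\sqrt s)$, and your Laplace-transform computation carries out the identification explicitly.
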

\begin{proof}
After proving the tightness in Proposition \ref{tightness}, then it will be enough to prove that the advertised limit is the only possible accumulation point of the distribution.
Let $(\epsilon_{p})_{p\geq1}$ be a positive sequence with $\underset{p\rightarrow\infty}{\lim}\ \epsilon_{p}=0$ and such that the conditional distributions of the $(\nu(A_{\epsilon_{p}}^{+})\sqrt{w_{A_{\epsilon_{p}}^{+}}}|A^{+}_{\epsilon_{p}})_{p\geq0}$ converges to the law of some random variable $\mathcal{X}$. Then using Lemma 3.5 in \cite{yassine}, it follows from Lemma \ref{jerome} that $\mathcal{X}$ satisfies the integral equation:
\begin{equation}\label{integraleqn}
1=\mathbb{P}(\mathcal{X}>t)+ t\frac{1}{\sqrt{2\pi}\sigma_{\varphi}}\int_{0}^{1}\frac{\mathbb{P}(\mathcal{X}>t\sqrt{1-u})}{\sqrt{u}}du \quad \forall t>0.
\end{equation}
Then the conclusion follows as in Theorem 3.2 in \cite{yassine}, the distribution of $\mathcal{X}$ coincides with that of $\sigma_{\varphi}\dfrac{\mathcal{E}}{\mid\mathcal{N}\mid}$.
\end{proof}
\begin{proposition}\label{propthm}
    The family $(\nu(A_{\epsilon}(.))^{2}\tau_{\epsilon})_{\epsilon}$ converges in distribution to $\left(\sigma^{2}_{\varphi}\int_{X}R d\nu_{X}\right)\dfrac{\mathcal{E}^{2}}{\mathcal{N}^{2}}$ with respect to any probability measure $\mathbb{P}$ absolutely continuous with respect to $\mu$.
\end{proposition}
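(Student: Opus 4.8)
The plan is to reduce $\tau_\epsilon$ to a return time of the $\mathbb Z$-extension $\tilde T$ of the base dynamics, establish the distributional limit for that return time conditionally on the cells of the partition $\mathcal D_\epsilon$, and then integrate against $\mathbb P$. First, by Lemma~\ref{tauwR}, for $\mu$-a.e.\ $y=(x,s)$ one has $\tau_\epsilon(y)\sim w_\epsilon(x)\int_X R\,d\nu_X$ as $\epsilon\to0$, and through the conjugacy $\chi\circ\sigma=T\circ\chi$ the quantity $w_\epsilon(x)$ is $\nu$-a.e.\ identified with $w_{A_\epsilon(y)}$, the first return time of $\tilde T$ to $A_\epsilon(y)\times\{0\}$, where $A_\epsilon(y)=\chi^{-1}(P_D(B(y,\epsilon)))$. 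It therefore suffices to prove that $\nu(A_\epsilon(\cdot))^2 w_{A_\epsilon(\cdot)}$ converges in distribution with respect to $\mathbb P$ to $\sigma_\varphi^2\,\mathcal E^2/\mathcal N^2$; the constant factor $\int_X R\,d\nu_X$ then yields the asserted limit by a Slutsky-type argument. For $y\in\mathcal P_{\epsilon,i,j}$ the inclusions \eqref{inclusionshift} give $A_\epsilon^-(y)\subset A_\epsilon(y)\subset A_\epsilon^+(y)$, hence $w_{A_\epsilon^+(y)}\le w_{A_\epsilon(y)}\le w_{A_\epsilon^-(y)}$; moreover $\nu(A_\epsilon^-(y))\le\nu(A_\epsilon(y))\le\nu(A_\epsilon^+(y))$ together with Lemma~\ref{equivalent} give $\nu(A_\epsilon^\pm(y))\sim\nu(A_\epsilon(y))$, uniformly. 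Thus $\nu(A_\epsilon(\cdot))^2 w_{A_\epsilon(\cdot)}$ is squeezed between $(1+o(1))\,\nu(A_\epsilon^+(\cdot))^2 w_{A_\epsilon^+(\cdot)}$ and $(1+o(1))\,\nu(A_\epsilon^-(\cdot))^2 w_{A_\epsilon^-(\cdot)}$, and since the target law $\sigma_\varphi^2\mathcal E^2/\mathcal N^2$ is atomless it is enough to prove that each of $\nu(A_\epsilon^\pm(\cdot))^2 w_{A_\epsilon^\pm(\cdot)}$ converges in distribution with respect to $\mathbb P$ to $\sigma_\varphi^2\mathcal E^2/\mathcal N^2$.

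Assume first that $\mathbb P$ has a continuous compactly supported density with respect to $\mu$. Fix $t>0$ and set $F_\epsilon(y):=\nu\big(\nu(A_\epsilon^\pm(y))^2 w_{A_\epsilon^\pm(y)}>t\mid D_\epsilon(x)\big)$, where $D_\epsilon(x)\in\mathcal D_\epsilon$ is the cell containing the $X$-projection $x$ of $y$. By Proposition~\ref{convergence+-}, $\nu\big(\nu(A_\epsilon^\pm(y))^2 w_{A_\epsilon^\pm(y)}>t\mid A_\epsilon^\pm(y)\big)\to\mathbb P(\sigma_\varphi^2\mathcal E^2/\mathcal N^2>t)$ for $\mu$-a.e.\ $y$, and Lemma~\ref{remarkconclude} bounds the difference between this quantity and $F_\epsilon(y)$ by $O\big(\nu(w_{A_\epsilon^\pm(y)}\le M_\epsilon\mid A_\epsilon^\pm(y))\big)+O\big(\nu(w_{A_\epsilon^\pm(y)}\le M_\epsilon\mid D_\epsilon(x))\big)+o(1)$. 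Both error terms vanish by Lemma~\ref{jerome}: $A_\epsilon^\pm(y)$ and $D_\epsilon(x)$ are both contained in $\chi^{-1}(B(x,k_0\epsilon))$ for a suitable $k_0$ and are comparable to that ball through the Federer property (Remark~\ref{federer remark} together with \eqref{alpha1alpha2}), so that $\{w_{A_\epsilon^\pm(y)}\le M_\epsilon\}\subset\{R_{B(x,k_0\epsilon)}\le M_\epsilon\}$ and Lemma~\ref{jerome} applies once with the enclosed set $A_\epsilon^\pm(y)$ and once with $D_\epsilon(x)$. Hence $F_\epsilon(y)\to\mathbb P(\sigma_\varphi^2\mathcal E^2/\mathcal N^2>t)$ for $\mu$-a.e.\ $y$, with $0\le F_\epsilon\le1$.

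It remains to integrate over the partition. Up to the boundary blocks $\bar{\mathcal P}_\epsilon$, whose total $\mathbb P$-measure is $O(\theta_\epsilon)$, the family $\{\mathcal P_{\epsilon,i,j}\}$ partitions $M$, and on $\mathcal P_{\epsilon,i,j}$ the set $A_\epsilon^\pm(y)$ equals the fixed set $A_\epsilon^\pm(y_{\epsilon,i,j})$, so that $\{\nu(A_\epsilon^\pm(y))^2 w_{A_\epsilon^\pm(y)}(x)>t\}$ depends on $y$ only through its $X$-projection. Since $\mu_\Delta$ is, up to a constant, the product of $\nu$ with Lebesgue measure in the flow direction, and the density is continuous with oscillation $o(1)$ on each block $\mathcal P_{\epsilon,i,j}$ (whose diameter is $<\theta_\epsilon\to0$), this oscillation cancels between numerator and denominator of the conditional probability and the blocks on which the density is small contribute negligibly; hence $\mathbb P\big(\nu(A_\epsilon^\pm(y))^2 w_{A_\epsilon^\pm(y)}>t\mid\mathcal P_{\epsilon,i,j}\big)=(1+o(1))\,F_\epsilon(y_{\epsilon,i,j})$, uniformly over the finitely many blocks meeting the support. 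Summing, $\mathbb P\big(\nu(A_\epsilon^\pm(y))^2 w_{A_\epsilon^\pm(y)}>t\big)=\int F_\epsilon\,d\mathbb P+o(1)$, which tends to $\mathbb P(\sigma_\varphi^2\mathcal E^2/\mathcal N^2>t)$ by dominated convergence. As $t>0$ is arbitrary and the limit law is atomless, this is convergence in distribution, and by the first paragraph $\nu(A_\epsilon(\cdot))^2\tau_\epsilon$ converges in distribution to $(\sigma_\varphi^2\int_X R\,d\nu_X)\,\mathcal E^2/\mathcal N^2$ for every $\mathbb P$ with continuous compactly supported density. For a general $\mathbb P\ll\mu$ with density $f$, choose continuous compactly supported probability densities $f_n\to f$ in $L^1(\mu)$; for bounded continuous $g$ one has $\left|\int g(\nu(A_\epsilon(\cdot))^2\tau_\epsilon)(f-f_n)\,d\mu\right|\le\|g\|_\infty\|f-f_n\|_{L^1}$, so letting $\epsilon\to0$ and then $n\to\infty$ transfers the convergence to $\mathbb P$.

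The main obstacle is this last gluing step --- passing from convergence conditional on a cell $D_\epsilon(x)$ to convergence with respect to $\mathbb P$. It requires, simultaneously: that the normalizations $\nu(A_\epsilon^\pm(y))$ remain comparable to $\nu(A_\epsilon(y))$ along the whole orbit of sets (Lemma~\ref{equivalent}); that the sets $A_\epsilon^\pm$ be constant on the small blocks $\mathcal P_{\epsilon,i,j}$, up to the negligible boundary blocks, so that the event under study becomes a function of the base point; and that every error term --- from Lemma~\ref{remarkconclude}, from the oscillation of the density, and from the identification $w_\epsilon\leftrightarrow w_{A_\epsilon(y)}$ together with the squeeze --- be $o(1)$ uniformly over the cells inside the compact support of the density. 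Once these uniformities are in place, the remaining computation is a routine partition-and-dominated-convergence argument.
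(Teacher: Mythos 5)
Your proposal follows essentially the same route as the paper's proof: reduce $\tau_\epsilon$ to the base return time $w_{A_\epsilon(\cdot)}$ via Lemma~\ref{tauwR} and Slutsky, squeeze $A_\epsilon$ between the partition-adapted sets $A_\epsilon^\pm$ using the inclusions~\eqref{inclusionshift} together with Lemma~\ref{equivalent}, pass from conditioning on a cell $D_\epsilon(x)$ to conditioning on $A_\epsilon^\pm(y)$ via Lemma~\ref{remarkconclude} and Lemma~\ref{jerome}, invoke Proposition~\ref{convergence+-} for the conditional limit, and then integrate over the partition $\mathcal P_{\epsilon,i,j}$ with an $L^1$-density approximation and dominated convergence. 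The only real differences are cosmetic: you track the density oscillation multiplicatively (as a ratio on each block, requiring the remark that small-density blocks are negligible) whereas the paper bounds it additively via $\max H - \min H \le 2\omega(H,\theta_\epsilon)$, which sidesteps that care; you work with the matched pairs $\nu(A_\epsilon^\pm)^2 w_{A_\epsilon^\pm}$ after applying the equivalence of $\nu(A_\epsilon^\pm)$, whereas the paper starts from the mixed pairs $\nu(A_\epsilon^+)^2 w_{A_\epsilon^-}$ and converts afterward; and you postpone the $L^1$-density reduction to the end rather than doing it at the outset. One point you elide is the $\mathbb{Z}$-periodicity reduction: the paper explicitly replaces a density $h$ on $\tilde M$ by $H=\sum_k h(\cdot,k)$ on the $0$-cell, which is what makes the integration take place over a compact piece of $M$; you implicitly work on $M$ from the start, which is consistent with the proposition as literally stated but glosses over the fact that $\tau_\epsilon$ lives on $\tilde M$. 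None of these affect correctness.
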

\begin{proof}
By density in $L^1(\mu)$, it is enough to prove
the result for $\mathbb P$ with density $h$ such that $\sum_{k\in\mathbb{Z}} h\circ I^{k}$ is uniformly continuous on $\Lambda$.
Let $\tilde{\mu}=\nu\otimes Leb\otimes\sum_{k\in\mathbb{Z}}\delta_{k}$ be the measure on $\tilde{M}$. Set $\bar{\mu}:=\nu\otimes Leb\otimes\delta_{0}$ the measure restricted on the 0-cell and let $\mathbb{P}$ be a probability measure absolutely continuous with respect to $\mu$ with a density function $h$. Set $H(.):=\sum_{k\in\mathbb{Z}}h(.,k)$, by $\mathbb{Z}-$periodicity, the distribution of $\tau_{\epsilon}$ under $\mathbb{P}$ is the same as under the probability measure absolutely continuous with respect to $\bar{\mu}$ with density $H$.
We have $y=(x,s)\in\bigcup_{P\in\mathcal{P}_{\epsilon}}P$, and $\exists$ $i,j$ such that $y\in\mathcal{P}_{\epsilon,i,j}$. Let $y_{\epsilon,i,j}\in\mathcal{P}_{\epsilon,i,j}$, then:
\begin{equation*}
H(y_{\epsilon,i,j})-w(H,\theta_{\epsilon})\leq H(y)\leq H(y_{\epsilon,i,j})+\omega(H,\theta_{\epsilon}), \quad y\in\mathcal{P}_{\epsilon,i,j}
\end{equation*}
then we have,
\begin{equation}\label{minH}
\underset{y\in\mathcal{P}_{\epsilon,i,j}}{\min}H(y)\geq H(y_{\epsilon,i,j})-\omega(H,\theta_{\epsilon}),
\end{equation}
and
\begin{equation}\label{maxH}
\underset{y\in\mathcal{P}_{\epsilon,i,j}}{\max}H(y)\leq H(y_{\epsilon,i,j})-\omega(H,\theta_{\epsilon}),
\end{equation}
Now we are ready to proof our main result. For simplicity we will denote by $A^{\pm}_{\epsilon,i,j}:=A^{\pm}_{\epsilon}(y_{\epsilon,i,j})$.
Due to \eqref{xsimy} and to the Stutzky's Lemma, it's enough to prove that $\left(\nu(A_{\epsilon})(.))^{2}w_{A_{\epsilon}(.))}\circ P_{D}(.)\right)_{\epsilon}$ converges in distribution with respect to $(H\bar{\mu})$ to $\sigma_{\varphi}^{2}\dfrac{\mathcal{E}^{2}}{\mathcal{N}^{2}}$.\\
For all $\epsilon>0$, $\forall t>0$ we set
\begin{eqnarray}\label{pricipalformula}
E_{\epsilon,t}&:=&(H\bar{\mu})\left(y=(x,s,0):\nu(A_{\epsilon}(y))^{2}w_{\epsilon}(x)>t\right)\nonumber\\
&=&\sum_{i,j}\int_{\mathcal{P}_{\epsilon,i,j}} \mathds{1}_{\left\{\nu(A_{\epsilon,i,j}(y))^{2}w_{A_{\epsilon,i,j}(y)}(x)>t\right\}}H(x,s)d\nu(x)ds.
\end{eqnarray}
Due to the inclusions in \eqref{inclusionshift}, then for $y=(x,s)\in \mathcal{P}_{\epsilon,i,j}$,
\begin{equation}\label{A-w+<Aw<A+w-}
\nu(A_{\epsilon,i,j}^{-})^{2}w_{A_{\epsilon,i,j}^{+}}(x)\leq\nu(A_{\epsilon}(x,s))^{2}w_{A_{\epsilon}(x,s)}(x)\leq\nu(A_{\epsilon,i,j}^{+})^{2}w_{A^{-}_{\epsilon,i,j}}(x),
\end{equation}
therefore, using the inequality \eqref{maxH}, we get:
\begin{eqnarray*}
E_{\epsilon,t}&\leq&\sum_{i,j}\int_{\mathcal{P}_{\epsilon,i,j}} \mathds{1}_{\left\{x:\nu(A^{+}_{\epsilon,i,j})^{2}w_{A^{-}_{\epsilon,i,j}}(x)>t\right\}}\underset{y\in\mathcal{P}_{\epsilon,i,j}}{\max}{H(y)}d\nu(x)ds\\
&\leq&\sum_{i,j}\int_{\mathcal{P}_{\epsilon,i,j}}\mathds{1}_{\left\{x:\nu(A_{\epsilon,i,j}^{+})^{2}w_{A^{-}_{\epsilon,i,j}}(x)>t\right\}}H(y_{\epsilon,i,j})d\nu(x)ds +\sum_{i,j}\int_{\mathcal{P}_{\epsilon,i,j}}\omega(H,\theta_{\epsilon})d\nu ds\\
&\leq&\sum_{i,j}H(y_{\epsilon,i,j})\theta_{\epsilon}\nu\left(\left\{x\in D_{\epsilon,i}:\nu(A^{+}_{\epsilon,i,j})^{2}w_{A^{-}_{\epsilon,i,j}}(x)>t\right\}\right) +\sum_{i,j}\omega(H,\theta_{\epsilon})\nu(D_{\epsilon,i})\theta_{\epsilon}\\
&\leq&\sum_{i,j}H(y_{\epsilon,i,j})\nu(D_{\epsilon,i})\theta_{\epsilon}\nu\left(\nu(A_{\epsilon,i,j}^{+})^{2}w_{A^{-}_{\epsilon,i,j}}(x)>t| D_{\epsilon,i}\right)+\omega(H,\theta_{\epsilon}).
\end{eqnarray*}
For all $y\in \mathcal{P}_{\epsilon,i,j}$, we have $A_{\epsilon}^{-}(y)=A_{\epsilon,i,j}^{-}$, $A_{\epsilon}^{+}(y)=A_{\epsilon,i,j}^{+}$, and $D_{\epsilon,i}=D_{\epsilon}(x)$. Then $\forall t>0$
\begin{eqnarray}\label{Eepsilont}
E_{\epsilon,t}&\leq&\sum_{i,j}H(y_{\epsilon,i,j})\int_{P_{i,j}}\nu\left(\nu(A_{\epsilon}^{+}(y))^{2}w_{A^{-}_{\epsilon}(y)}>t| D_{\epsilon}(x)\right)d\bar{\mu}(y)+\omega(H,\theta_{\epsilon})\nonumber\\
&\leq&\int_{M}H(y)\nu\left(\nu(A_{\epsilon}^{+}(y))^{2}w_{A^{-}_{\epsilon}(y)}>t| D_{\epsilon}(x)\right)d\bar{\mu}(y)+2\omega(H,\theta_{\epsilon}).
\end{eqnarray}
Now we have $\nu(A^{-}_{\epsilon}(y))$ and $\nu(A_{\epsilon}^{+}(y))$ are equivalent (Lemma \ref{equivalent}), then there exists $\alpha(\epsilon)^{(y)}>0$ such that $\alpha(\epsilon)^{(y)}\rightarrow0$ as $\epsilon\rightarrow0$ and
\begin{equation}\label{use+eq-}
\nu(A_{\epsilon}^{-}(y))(1-\alpha(\epsilon)^{(y)})\leq\nu(A_{\epsilon}^{+}(y))\leq\nu(A_{\epsilon}^{-}(y))(1+\alpha(\epsilon)^{(y)}),
\end{equation}
then using  Lemma \ref{remarkconclude}, we get
\begin{align*}
\nu\left(\nu(A_{\epsilon}^{+}(y)^{2}w_{A^{-}_{\epsilon}(y)}>t| D_{\epsilon}(x)\right)
&\leq\nu\left((1+\alpha(\epsilon)^{(y)})^{2}\nu(A_{\epsilon}^{-}(y))^{2}w_{A^{-}_{\epsilon}(y)}>t| D_{\epsilon}(x)\right)\\
&\leq\nu\left((1+\alpha(\epsilon)^{(y)})^{2}\nu(A^{-}_{\epsilon}(y))^{2}w_{A^{-}_{\epsilon}(y)}>t| A^{-}_{\epsilon}(y)\right)\\
&\quad +\ \nu(w_{A_{\epsilon}^{-}(y)}\leq M_{\epsilon}|A_{\epsilon}^{-}(y))-\nu(w_{A_{\epsilon}^{-}(y)}\leq M_{\epsilon}| D_{\epsilon}(x))+o(1).
\end{align*}

Moreover, due to Lemma \ref{jerome}, applied with $A_{\epsilon}^{-}(y)$ and $D_{\epsilon}(x)$, we have $\underset{\epsilon\rightarrow0}{\lim}\ \nu(w_{A_{\epsilon}^{-}(y)}\leq M_{\epsilon}|A_{\epsilon}^{-}(y))=0$, and $\underset{\epsilon\rightarrow0}{\lim}\ \nu(w_{A_{\epsilon}^{-}(y)}\leq M_{\epsilon}| D_{\epsilon}(x))=0$. Moreover, since $H$ is uniformly continuous then $\underset{\epsilon\rightarrow0}{\lim}\ \omega(H,\nu_{\epsilon})=0$. Thus using the Lebesgue's dominated convergence theorem in \eqref{Eepsilont}, we end up having
\begin{equation}\label{convergence<}
\underset{\epsilon\rightarrow0}{\limsup}\ E_{\epsilon,t}\leq \mathbb{P}\left(\left(\sigma^{2}_{\varphi}\int R d\nu_{X}\right)\frac{\mathcal{E}^{2}}{\mathcal{N}^{2}}>t\right), \quad \forall t>0.
\end{equation}
We do analogously the second inequality, thus using \eqref{minH} and the left hand side inequality in \eqref{A-w+<Aw<A+w-}, we get
\begin{eqnarray*}
E_{\epsilon,t}(y)&\geq&\sum_{i,j}\int_{\mathcal{P}_{\epsilon,i,j}} \mathds{1}_{\left\{x:\nu(A^{-}_{\epsilon,i,j})^{2}w_{A^{+}_{\epsilon,i,j}}(x)>t\right\}}\underset{y\in\mathcal{P}_{\epsilon,i,j}}{\min}{H(y)}d\nu(x)ds\\
&\geq&\sum_{i,j}H(y_{\epsilon,i,j})\nu(D_{\epsilon,i})\theta_{\epsilon}\nu\left(\nu(A_{\epsilon,i,j}^{-})^{2}w_{A^{+}_{\epsilon,i,j}}(x)>t| D_{\epsilon,i}\right)+\omega(H,\theta_{\epsilon}).
\end{eqnarray*}
Similarly, due to Lemma \ref{remarkconclude}, there exists $\alpha(\epsilon)^{(y)}>0$ such that
\begin{align*}
\nu\left(\nu(A_{\epsilon}^{-}(y))^{2}w_{A^{+}_{\epsilon}}>t| D_{\epsilon}(x)\right)
&\geq\nu\left(\frac{\nu(A^{+}_{\epsilon}(y))^{2}}{(1+\alpha(\epsilon)^{(y)})^{2}}w_{A^{+}_{\epsilon}}>t| A^{+}_{\epsilon}(y)\right)\\
&+\ \nu\left(w_{A_{\epsilon}^{+}(y)}\leq M_{\epsilon}|A_{\epsilon}^{+}(y))-\nu(w_{A_{\epsilon}^{+}(y)}\leq M_{\epsilon}| D_{\epsilon}(x)\right)+o(1).
\end{align*}
Then using Proposition \ref{convergence+-}, we get:
\begin{equation*}
\underset{\epsilon\rightarrow0}{\lim}\ \nu\left(\frac{\nu(A^{+}_{\epsilon})^{2}}{(1+\alpha(\epsilon)^{(y)})^{2}}w_{A^{+}_{\epsilon}(y)}(x)>t| A^{+}_{\epsilon}(y)\right)=\mathbb{P}\left(\frac{\sigma^{2}_{\varphi}\mathcal{E}^{2}}{\mathcal{N}^{2}}>t\right), \quad \forall t>0
\end{equation*}
By using the same reasoning as demonstrated in the proof of \eqref{convergence<}, we can also establish:
\begin{equation*}
\underset{\epsilon\rightarrow0}{\liminf}\ E_{\epsilon,t}\geq \mathbb{P}\left(\sigma^{2}_{\varphi}\frac{\mathcal{E}^{2}}{\mathcal{N}^{2}}>t\right), \quad \forall t>0,
\end{equation*}
combining these results, we can deduce the convergence in law of $\nu(A_{\epsilon})^{2}w_{A_{\epsilon}(.)}\circ P_{D}:\underset{\epsilon\rightarrow0}{\lim}\ E_{\epsilon,t}= \mathbb{P}\left(\sigma^{2}_{\varphi}\frac{\mathcal{E}^{2}}{\mathcal{N}^{2}}>t\right)$. Therefore by applying Slutsky's Lemma, we obtain
\begin{equation*}
\underset{\epsilon\rightarrow0}{\lim} \ \mathbb{P}(\nu(A_{\epsilon}(.))^{2}\tau_{\epsilon}(.)>t)=\mathbb{P}\left(\left(\sigma^{2}_{\varphi}\int R d\nu_{X}\right)\frac{\mathcal{E}^{2}}{\mathcal{N}^{2}}>t\right), \quad \forall t>0.
\end{equation*}
\end{proof}

Theorem \ref{theorem2} follows from Proposition \ref{propthm} combined with the fact that $\sigma_{flow}^{2}=\dfrac{\sigma_{\varphi}^{2}}{\int_{X}R d\nu_{X}}$ and the following lemma.

\begin{lemma}\label{findepreuve}
For $\tilde{\nu}$ almost every $\tilde{y}\in\tilde{M}$, if $\tilde{D}_{0}$ is small enough, the measure $\tilde{\nu}_{0}$ given by \eqref{nu_0} is well defined and
\begin{equation*}
\frac{\nu(A_{\epsilon}(y))}{\int_{X} R d\nu_{X}} \mbox{ is equivalent to } \tilde{\nu}^{y}_{0}(B(\tilde{y},\epsilon)),
\end{equation*}
with $y:=\Gamma(\tilde{y})\in M$.
\end{lemma}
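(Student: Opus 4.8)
The plan is to recognize $\tilde\nu_0^y$ as the transverse invariant measure obtained by disintegrating $\tilde\mu$ along the flow, to read off the corresponding transverse measure on the Poincar\'e section $X$ from the suspension model, and then to push everything onto the orthogonal disk $D_0(\tilde y)$ by the flow holonomy, where the ball is favorably positioned because $D_0(\tilde y)$ is orthogonal to the flow at $\tilde y$.

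\emph{Existence of $\tilde\nu_0^y$.} Transporting $\tilde\mu$ through the flow conjugacy $\chi$ to the $\mathbb Z$-cover of the suspension $X_R$ --- on which the invariant measure is the manifest product $\frac{\nu_X\otimes Leb\otimes\sum_k\delta_k}{\int_X R\,d\nu_X}$ --- and using that $\tilde g_t$ acts as translation in the fibre coordinate of a flow box while preserving $\tilde\mu$, one gets that in flow-box coordinates $\tilde\mu$ is a product of a finite transverse measure with Lebesgue in the flow direction (standard for flow-invariant measures; alternatively transport the product structure of $\mu_\Delta$). Hence for $\tilde\mu$-a.e. $\tilde y$ and $\tilde D_0$ small enough, $\tilde\mu\bigl(\bigcup_{|s|<\delta}\tilde g_s(\tilde D_0\cap A)\bigr)$ equals $2\delta$ times a quantity independent of $\delta$ (for $\delta$ below the flow-box height), so the limit in \eqref{nu_0} exists and defines $\tilde\nu_0^y$; it is finite since $\tilde\mu$ is finite on a flow box, which meets only finitely many cells. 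The same computation carried out at the base section shows that the transverse measure of $(M,(g_t)_t,\mu)$ along $X$ is $\nu_X/\int_X R\,d\nu_X$, the normalization $\int_X R\,d\nu_X=\int_{\Sigma_A} r\,d\nu$ coming from $\mu_\Delta=\frac{\nu\times Leb}{\int r\,d\nu}$.

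\emph{Transport onto $D_0(\tilde y)$.} Write $y=g_s(x)$ with $x$ in a section disk $D$ and $0\le s<R(x)$; for a.e. $y$ one may assume $y$ is not on the ceiling $\partial\Delta$, so the flow holonomy $h\colon D_0(\tilde y)\to D$ is well defined near $\tilde y$ once $\epsilon$ is small. Using that holonomy of a flow preserves its transverse invariant measures, one has $h_*\tilde\nu_0^y=\nu_X|_D/\int_X R\,d\nu_X$, and $h^{-1}\circ P_D=P_{D_0(\tilde y)}$ on $B(\tilde y,\epsilon)$ (a backward flow-projection onto $D$ followed by a forward flow-projection onto $D_0(\tilde y)$ is the flow-projection onto $D_0(\tilde y)$); combined with $\chi_*\nu=\nu_X$ this yields the \emph{exact} identity
\[
\frac{\nu(A_\epsilon(y))}{\int_X R\,d\nu_X}=\frac{\nu_X\bigl(P_D(B(\tilde y,\epsilon))\bigr)}{\int_X R\,d\nu_X}=\tilde\nu_0^y\bigl(P_{D_0(\tilde y)}(B(\tilde y,\epsilon))\bigr).
\]
Since $D_0(\tilde y)$ is orthogonal to the flow at $\tilde y$, the transverse component of the flow vector vanishes at $\tilde y$, so the flow-projection onto $D_0(\tilde y)$ displaces a point of $B(\tilde y,\epsilon)$ by only $O(\epsilon^2)$ in the directions tangent to $D_0(\tilde y)$; hence, writing $B^{D_0}(\tilde y,\rho)$ for the $\rho$-ball inside $D_0(\tilde y)$,
\[
B^{D_0}(\tilde y,\epsilon)\subset P_{D_0(\tilde y)}(B(\tilde y,\epsilon))\subset B^{D_0}\bigl(\tilde y,\epsilon(1+C\epsilon)\bigr),
\]
and $\tilde\nu_0^y(B(\tilde y,\epsilon))=\tilde\nu_0^y\bigl(B(\tilde y,\epsilon)\cap D_0(\tilde y)\bigr)$ differs from $\tilde\nu_0^y\bigl(B^{D_0}(\tilde y,\epsilon)\bigr)$ only by replacing $\epsilon$ with $\epsilon(1+O(\epsilon^2))$ (extrinsic vs.\ intrinsic ball on the disk).

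\emph{Slow variation of $\tilde\nu_0^y$ --- the main obstacle.} What remains is to prove $\tilde\nu_0^y\bigl(B^{D_0}(\tilde y,\epsilon(1+C\epsilon))\bigr)\sim\tilde\nu_0^y\bigl(B^{D_0}(\tilde y,\epsilon)\bigr)$ as $\epsilon\to0$, for a.e. $\tilde y$; this is where the real work lies, since Federer (doubling) alone only controls factor-$2$ changes, not the $(1+o(1))$ precision needed. I would exploit the local product structure $\nu\asymp\nu^u\otimes\nu^s$ (so $\tilde\nu_0^y$ is comparable to a product of two one-dimensional Gibbs measures) and estimate the $\tilde\nu_0^y$-mass of the thin annulus $\{\,\epsilon\le d(\cdot,\tilde y)<\epsilon(1+C\epsilon)\,\}$: by Propositions \ref{diamcyl} and \ref{PropESS} balls are comparable to cylinders of comparable diameter, while the Gibbs inequality \eqref{gibbs.formula} together with bounded distortion shows that each cylinder of diameter $\asymp\delta\epsilon$ nested in the cylinder of diameter $\asymp\epsilon$ around $\tilde y$ carries at most a $C\delta^{\gamma}$-fraction of $\tilde\nu_0^y(B^{D_0}(\tilde y,\epsilon))$ for some $\gamma>0$, and only $O(1)$ such cylinders meet the annulus (the polynomial decay near partition boundaries in Lemma \ref{LemmaB} excludes anomalous concentration). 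This gives the annulus estimate $\tilde\nu_0^y(\mathrm{annulus})\le C\delta^{\gamma}\tilde\nu_0^y(B^{D_0}(\tilde y,\epsilon))$ with $\delta=C\epsilon$, hence the ratio above is $1+O(\epsilon^{\gamma})$. Combining this with the inclusions of the previous paragraph proves $\frac{\nu(A_\epsilon(y))}{\int_X R\,d\nu_X}\sim\tilde\nu_0^y(B(\tilde y,\epsilon))$, i.e.\ the assertion; together with $\sigma_{flow}^2=\sigma_\varphi^2/\int_X R\,d\nu_X$ it reconciles Proposition \ref{propthm} with Theorem \ref{theorem2} through Slutsky's lemma. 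The delicate point to monitor in the obstacle step is the uniformity of the exponent $\gamma$ and of the distortion constants, which in full generality holds only $\tilde\mu$-almost everywhere --- which is exactly the restriction in the statement.
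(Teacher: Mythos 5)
Your plan is essentially the same as the paper's, phrased in a somewhat more abstract language. The paper computes $\tilde\nu_0^y$ explicitly in the $X_R$-coordinates (representing $D_0(y)$ as a graph $\{(x',h(x'))\}$ and reading off the factor $\nu_X/\int_X R\,d\nu_X$), where you invoke holonomy invariance of the transverse measure directly; both give the exact identity $\nu(A_\epsilon(y))/\int_X R\,d\nu_X=\tilde\nu_0^y\bigl(P_{D_0(\tilde y)}(B(\tilde y,\epsilon))\bigr)$. Your ``$O(\epsilon^2)$ displacement'' argument from orthogonality of $D_0(\tilde y)$ to the flow is precisely the paper's angle estimate (the picture with $z'$, $z''$, $z''_0$ and the bound $d(\chi(z),P_D(z''_0))<K\epsilon^2$). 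So far the two proofs are a faithful translation of one another.

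The place where your write-up has a genuine gap is the final annulus estimate. You claim that the annulus $\{\epsilon\le d(\cdot,\tilde y)<\epsilon(1+C\epsilon)\}$, of thickness $\sim\epsilon^2$, is met by ``only $O(1)$'' cylinders of diameter $\asymp\delta\epsilon=C\epsilon^2$, each carrying a $C\delta^\gamma$-fraction of the ball's mass. That counting is not right: the annulus has length $\sim\epsilon$ in the transverse direction, so covering it by (square-ish) cylinders of diameter $\sim\epsilon^2$ requires of order $1/\epsilon$ of them, and the resulting bound $\epsilon^{-1}\cdot\epsilon^{\gamma}$ need not go to $0$. The correct use of the product structure --- which is exactly how the paper handles it in the proof of Lemma~\ref{equivalent}, to which the present Lemma then refers back --- is to cover the annulus by $O(1)$ \emph{elongated} two-sided cylinders $C_{-m,n}$ with unequal depths ($l^s_m\asymp\epsilon$, $l^u_n\asymp\epsilon^2$ for the unstable side strips, and vice versa for the stable ones), and apply the one-sided Gibbs bound only to the ``thin'' factor; this yields the desired $O(\epsilon^\gamma)$ relative bound with a single cylinder per side. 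Your citation of Lemma~\ref{LemmaB} is also slightly off target for this step: that lemma controls neighborhoods of \emph{partition-element} boundaries, not the boundary of the ball you are estimating. The simplest repair is to note, as the paper does, that $K\epsilon^2\le (k_L+1)\theta_\epsilon$ for $\epsilon$ small and to invoke the already-proved boundary-neighborhood estimate from Lemma~\ref{equivalent} as a black box.
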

\begin{proof}
Let $\tilde{y}\in \tilde{M}$. Note that since we are interested in the asymptotic behavior of  $\tilde\nu_0(B(\tilde y,\epsilon))$ when $\epsilon$ tends to $0$, we can replace $\tilde D_0(\tilde y)$ by a smaller disk if we want.\\
If the disk $\tilde D_0(\tilde y)$ and $\epsilon$ are small enough, the image measure of $\tilde\mu(\underset{-\epsilon<s<\epsilon}{\cup}\tilde g_s(\tilde D_0(\tilde y) \cap \cdot))$ by the projection of  $\tilde M$ to $M$ is equal to the measure   $\mu(\underset{-\epsilon<s<\epsilon}{\cup}g_s(D_0(y) \cap \cdot)$, where $D_0=D_0(y)$ is the disk around $y$ obtained by the projection of the disk $\tilde D_0=\tilde D_0(\tilde y)$.\\
So now we have to show that, for $\mu$-almost every $y$ in $M$, the measure $\tilde{\nu}_0^{y}$ defined on $D_0(y)$ by
\begin{equation*}
\tilde{\nu}_0^{y}(A)=\underset{\epsilon\rightarrow0}{\lim}\frac{1}{2\epsilon}\mu\left(\underset{-\epsilon<s<\epsilon}{\bigcup}g_{s}(D_{0}(y)\cap A)\right), \quad \forall A\subset D_{0}(y).
\end{equation*}
is well defined and that $\tilde{\nu}_0^{y}(B(y,\epsilon))$ is equivalent, when $\epsilon$ tends to 0, to  $\nu(A_\epsilon(y))/\int_X  R d\nu_{X}$.\\
We assume $y\in M\setminus X$ and we assume that $y$ is represented by a couple $(x,s)\in X_R$ with $x\not\in\partial X$. Let us call $D$ the connected component of $X$ containing the point $x$. For $\epsilon$ small enough, $D_0(y)$ small enough, $\underset{-\epsilon<s<\epsilon}{\bigcup}g_s(D_0 (y))$ is contained in $\{(x',s')\in X_R\ :\ x\in D\}$.\\

In $X_R$, the set  $D_0 (y)$ is represented by a curve  $\{(x',h(x')) : x'\in P_{D}(D_0)\}$ with $h$ of class $C^1$ of uniformly bounded derivative. We have, for every measurable set $A\subset D_0(y)$:
\begin{equation*}
\mu\left(\underset{-\epsilon<s<\epsilon}{\bigcup}g_s(D_0(y) \cap A)\right) =  \frac{2\epsilon \nu(\chi^{-1}( P_{D}(D_0(y)\cap A)))}{\int_X R\ d\nu}
\end{equation*}
This ensures the existence of $\nu_0$ and the fact that:
\begin{equation*}
\nu_{0}^{y}(.)=\frac{\nu(\chi^{-1}(P_{D}(.\cap D_{0}(y))))}{\int_{X} R d\nu_{X}}.
\end{equation*}

We have to prove that $\nu(A_{\epsilon}(y))$ is equivalent to $\nu(\chi^{-1}(P_{D}(B(y,\epsilon)\cap D_{0}(y))))$. We observe that $D_{0}(y)\cap B(y,\epsilon)\subset B(y,\epsilon)$. Therefore, we have
\begin{equation*}
A_{\epsilon}(y)=\chi^{-1}(P_{D}(B(y,\epsilon))))\supset \chi^{-1}(\Pi(D_{0}(y)\cap B(y,\epsilon))).
\end{equation*}

We need to prove that $\nu(A_\epsilon(y) \setminus (\chi^{-1}(P_{D}(D_0(y)\cap B(y,\epsilon))))$ is negligible with respect to $\nu(A_\epsilon(y))$.\\
We recall that $(x,s)\mapsto g_s(x)$ is a $C^1$-diffeomorphism of $X_R$ on its image which contains the ball $B(y,\epsilon)$, assuming $\epsilon$ is sufficiently small.\\

Let $z\in A_\epsilon(y) \setminus \chi^{-1}(P_{D}(D_0(y)\cap B(y,\epsilon)))$. Let us consider the point $z'=g_{h(z)}(\chi(z))\in D_0\setminus B(y,\epsilon)$. The diameter of $ A_\epsilon(y)$ is bounded by  $C_y\epsilon$ and $g_{t}$ is $C^1$. Then $d(z',y)\leq C'_y \epsilon.$ Since $z\in A_\epsilon(y)$, there exists $s\in (-h(z), R(z)-h(z))$ such that $z''=g_s(z')\in B(y,\epsilon)$.
Since we have to compose by the exponential map, we are placed in a local chart containing $y$ and in this chart, the picture is the following:\\
\begin{center}
\begin{figure}[!h]
\centering
\includegraphics[scale=0.14]{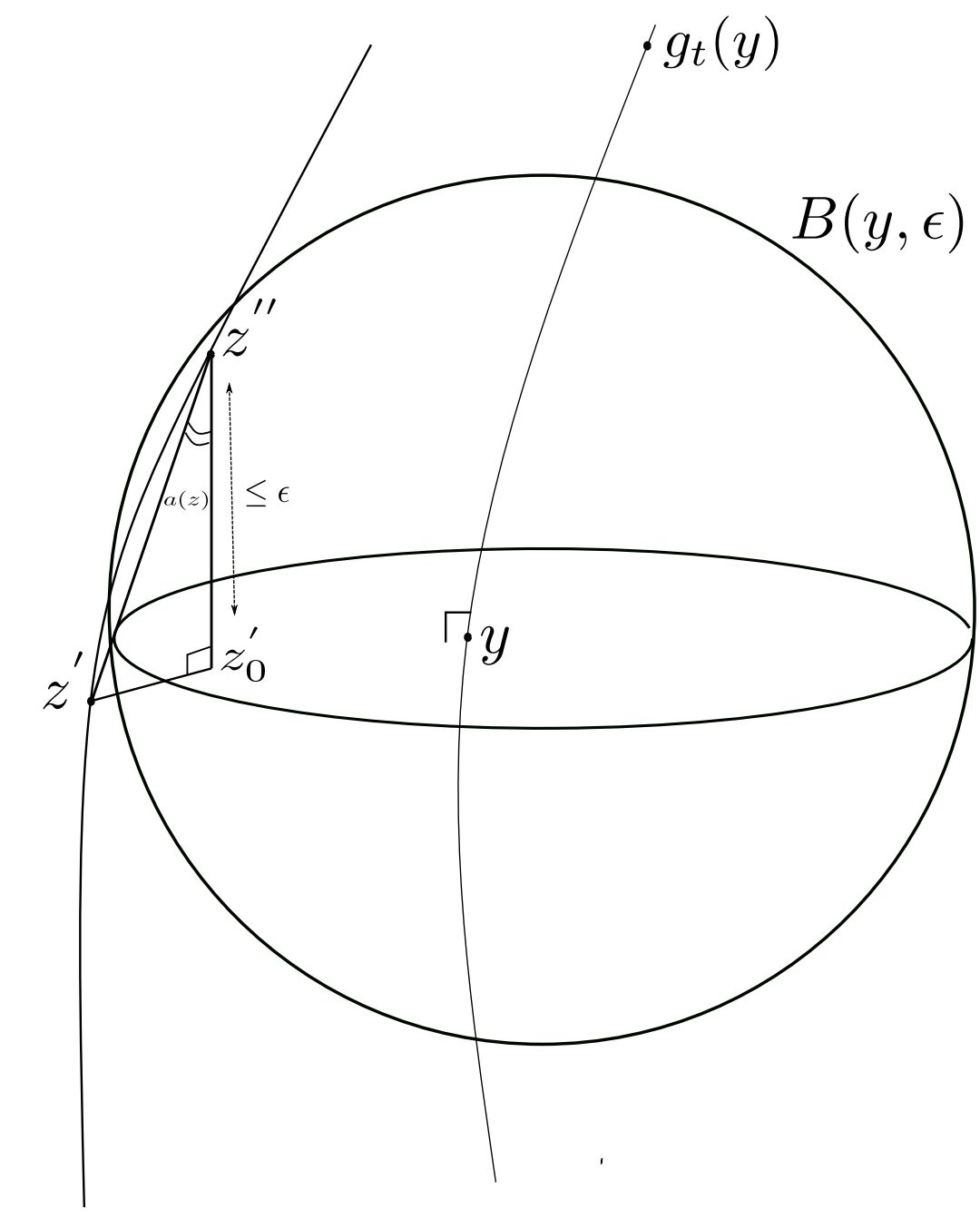}
\end{figure}
\label{boundary}
\end{center}

The angle  $a(z)$ between the direction of the flow at $y$ and the vectors of extremities $z'$ and $z''$ is bounded by $c_g C'_y\epsilon$. So the distance between $z'$ and the orthogonal projection  $z''_0$ of $z''$ on $D_0(y)$ is less than or equal $2\epsilon\tan(c_g C'_y\epsilon)\leq 4 c_g C'_y\epsilon^2$. Then $d(\chi(z),P_{D}(z''_0))<K\epsilon^2$ (for some $K>0$). But $z''_0$ is in $D_0\cap B(y,\epsilon)$. We have thus shown that $A_\epsilon \setminus P_{D}(\chi^{-1}(D_0\cap B(y,\epsilon))$ is contained in $\chi^{-1}\left(   (P_{D}(D_0\cap B(y,\epsilon)))^{[K\epsilon^2]}\right)$. Now the $\nu$-measure of this set is negligible with respect to $\nu(A_\epsilon)$, (see the proof of Lemma \ref{equivalent}).


\end{proof}

\section{Case of geodesic flows on compact smooth surfaces of negative curvature.}

We consider the unit geodesic flow $(\tilde{g}_t)_{t}$ on a $\mathbb{Z}-$cover $\tilde{\mathcal{S}}$ of a $C^{2}$ compact Riemannian surface $\mathcal{S}$ with negative curvature. This flow preserves the Liouville measure $L_{0}$. We recall that $(\tilde{g}_t)_{t}$ defines a flow on the unit tangent bundle $\tilde{M}=T^{1}\tilde{\mathcal{S}}$ of $\tilde{\mathcal{S}}$, and preserves the Liouville measure on $T^{1}\tilde{\mathcal{S}}$. Theorems \ref{theorem1flow} and \ref{theorem2} apply to this model with 
\begin{equation*}
    \mu=\frac{L_0}{2\pi \text{Vol}(\mathcal{S})}.
\end{equation*}

We prove the following result:

\begin{theoreme}\label{geodesicflow}
The following convergence in distribution holds with respect to any probability measure absolutely continuous with respect to $\tilde{\mu}$ with uniformly continuous density:
\[
\frac{\epsilon^2}{2 \text{Vol}(\mathcal{S})}\sqrt{\tau_{\epsilon}(.)}\underset{\epsilon\rightarrow0}{\longrightarrow} \sigma_{flow}\frac{\mathcal{E}}{|\mathcal{N}|}.
\]
where $\mathcal{E}$ and $\mathcal{N}$ are two independent random variables with respective exponential distribution of mean 1 and standard normal distribution.
\end{theoreme}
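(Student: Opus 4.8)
The plan is to deduce Theorem~\ref{geodesicflow} from Theorem~\ref{theorem2} by computing the transversal normalization $\tilde\nu_0^y(B(y,\epsilon))$ explicitly in the geodesic setting. Theorem~\ref{theorem2} already asserts that $\tilde\nu_0^{\cdot}(B(\cdot,\epsilon))^2\tau_\epsilon(\cdot)$ converges in distribution, with respect to any probability measure absolutely continuous with respect to $\tilde\mu$ with uniformly continuous density, to $\sigma_{flow}^2\,\mathcal E^2/\mathcal N^2$; equivalently, $\tilde\nu_0^{\cdot}(B(\cdot,\epsilon))\sqrt{\tau_\epsilon(\cdot)}\to\sigma_{flow}\,\mathcal E/|\mathcal N|$. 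Thus it is enough to show that, for $\tilde\mu$-almost every $y$,
\[
\tilde\nu_0^y(B(y,\epsilon))=\frac{\epsilon^2}{2\,\text{Vol}(\mathcal S)}\,(1+o(1))\qquad(\epsilon\to0),
\]
and then conclude by Slutsky's lemma: writing $\bigl(\tfrac{\epsilon^2}{2\text{Vol}(\mathcal S)}\bigr)^2\tau_\epsilon=\bigl(\tfrac{\epsilon^2/(2\text{Vol}(\mathcal S))}{\tilde\nu_0^y(B(y,\epsilon))}\bigr)^2\cdot\tilde\nu_0^y(B(y,\epsilon))^2\tau_\epsilon$, the first factor tends to $1$ in probability while the second converges to $\sigma_{flow}^2\,\mathcal E^2/\mathcal N^2$, and taking square roots gives the claim.

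First I would identify the measure $\tilde\nu_0^y$ itself. By the reduction carried out in the proof of Lemma~\ref{findepreuve}, one may work downstairs with $\mu=\frac{L_0}{2\pi\text{Vol}(\mathcal S)}$ on $M=T^1\mathcal S$. Fix a representative $y=(p,v)$ and take $D_0(y)$ to be a small disk through $y$ orthogonal to the flow, i.e. orthogonal to the horizontal geodesic direction $\partial_t$. In Fermi coordinates $(t,u)$ along the geodesic determined by $y$ (with $t$ the unit-speed arc length and $u$ the signed distance to that geodesic) together with the fibre angle $\phi$, the geodesic vector field is $\partial_t$, the disk $D_0(y)$ is $\{t=0\}$, and the Liouville measure reads $L_0=(1+O(u^2+t^2))\,dt\,du\,d\phi$ near $y$, since the Fermi Jacobian on $\mathcal S$ equals $1$ to first order on the geodesic and the fibre arc length is $d\phi$. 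Since the geodesic flow is $C^1$ and has velocity $\partial_t$ at $y$, the slab $\bigcup_{|s|<\epsilon}g_s(D_0(y)\cap A)$ has $\mu$-measure $\frac{1}{2\pi\text{Vol}(\mathcal S)}\cdot 2\epsilon\cdot\mathrm{area}(A)\,(1+o(1))$, whence
\[
\tilde\nu_0^y(A)=\lim_{\epsilon\to0}\frac{1}{2\epsilon}\,\mu\Bigl(\bigcup_{|s|<\epsilon}g_s(D_0(y)\cap A)\Bigr)=\frac{\mathrm{area}(A)}{2\pi\text{Vol}(\mathcal S)},
\]
where $\mathrm{area}$ is the Riemannian area induced on $D_0(y)$ by the metric on $T^1\mathcal S$; in particular $\tilde\nu_0^y$ is well defined.

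Next I would estimate $\tilde\nu_0^y(B(y,\epsilon))=\frac{1}{2\pi\text{Vol}(\mathcal S)}\,\mathrm{area}\bigl(D_0(y)\cap B(y,\epsilon)\bigr)$. Because $D_0(y)$ is orthogonal to the flow at $y$ and $B(y,\epsilon)$ is the metric ball of radius $\epsilon$, the set $D_0(y)\cap B(y,\epsilon)$ is, up to a relative error $o(1)$, the image under the exponential map at $y$ of the Euclidean disk of radius $\epsilon$ in the $2$-plane $T_yD_0(y)=(\partial_t)^\perp\subset T_yT^1\mathcal S$. Here one uses that the natural (Sasaki-type) metric on $T^1\mathcal S$ — for which the horizontal and vertical spaces are orthogonal, the horizontal space carries the surface metric, and the fibre is a unit circle parametrized by $\phi$ — is Euclidean to first order in the coordinates $(t,u,\phi)$ above, so curvature corrections change the area only at order $\epsilon^4$. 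Hence $\mathrm{area}(D_0(y)\cap B(y,\epsilon))=\pi\epsilon^2(1+o(1))$ and
\[
\tilde\nu_0^y(B(y,\epsilon))=\frac{\pi\epsilon^2(1+o(1))}{2\pi\text{Vol}(\mathcal S)}=\frac{\epsilon^2}{2\text{Vol}(\mathcal S)}\,(1+o(1)),
\]
which, inserted into Theorem~\ref{theorem2} via Slutsky's lemma as above, yields $\frac{\epsilon^2}{2\text{Vol}(\mathcal S)}\sqrt{\tau_\epsilon(\cdot)}\to\sigma_{flow}\,\mathcal E/|\mathcal N|$.

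The main obstacle is precisely the geometric estimate of the previous paragraph: one must make the first-order flatness of the Liouville measure and of the metric on $T^1\tilde{\mathcal S}$ in Fermi-plus-angle coordinates quantitative enough that both the identity $\tilde\nu_0^y=\frac{\mathrm{area}}{2\pi\text{Vol}(\mathcal S)}$ and the asymptotics $\mathrm{area}(D_0(y)\cap B(y,\epsilon))\sim\pi\epsilon^2$ hold with a genuine $o(1)$ relative error, and check that this may be arranged for $\tilde\mu$-almost every $y$ with the disks $D_0(y)$ taken arbitrarily small (as permitted by Lemma~\ref{findepreuve}). Everything else is a routine substitution into the already-established Theorem~\ref{theorem2}.
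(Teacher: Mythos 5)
Your proposal is correct and takes essentially the same route as the paper: reduce to Theorem~\ref{theorem2} via Slutsky's lemma and then identify the transversal measure $\tilde\nu_0^y$ in local coordinates adapted to the geodesic at $y$, obtaining $\tilde\nu_0^y(B(y,\epsilon))\sim\frac{\epsilon^2}{2\,\mathrm{Vol}(\mathcal S)}$. Two small differences from the paper's execution are worth flagging. First, the chart used in the paper is not Fermi-plus-angle coordinates; it is a flow-adapted chart $\Phi$ in which \emph{every} nearby geodesic (not just the central one) appears as a straight line $t\mapsto(x+t\cos\alpha,\,y+t\sin\alpha,\,\alpha)$. In genuine Fermi coordinates $(t,u,\phi)$ the geodesic vector field equals $\partial_t$ only at $\phi=0$, whereas you seem to use this identity across the whole slab; what actually enters the slab volume is the shear Jacobian $\cos\alpha$, which the paper carries explicitly through the integral $4\epsilon'\int_{-\epsilon}^{\epsilon}\cos\alpha\sqrt{\epsilon^2-\alpha^2}\,d\alpha$. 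Second, your formula $\tilde\nu_0^y(A)=\frac{\mathrm{area}(A)}{2\pi\mathrm{Vol}(\mathcal S)}$ is only an identity to first order at the base point (the exact disintegration carries the $\cos\alpha$ weight and the Liouville density), but this is harmless because you only need the asymptotics of $\tilde\nu_0^y(B(y,\epsilon))$ as $\epsilon\to0$, where $\cos\alpha\to1$ and the density tends to $1$. Both routes land on $\pi\epsilon^2/(2\pi\mathrm{Vol}(\mathcal S))=\epsilon^2/(2\mathrm{Vol}(\mathcal S))$; the paper simply makes the step you label ``the main obstacle'' concrete via the explicit change of variables.
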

\begin{proof}
Due to Theorem \ref{theorem2}, we have to prove that $\tilde{\nu}_{0}^{y}(B(y,\epsilon))\sim\dfrac{\epsilon^2}{2 \text{Vol}(\mathcal{S})}$, as $\epsilon\rightarrow 0$. 
Let $y=(q_0,v_0)\in T^{1}M$ and $\gamma$ the geodesic passing by $q_{0}$ and which is orthogonal to $v_{0}$ at $q_{0}$. And let $D_{0}$ be the disk centered at $q_{0}$ and which is transversal to the flow and orthogonal to it at $q_{0}$, such that:\\
$$D_{0}=\{(q,v)\in T^{1}M \text{ such that } q\in\gamma\}$$

We define the transversal measure $\nu_{0}$ on $D_{0}$, for all measurable $A\subset D_{0}$, by:

\begin{equation*}
\nu_{0}(A)=\underset{\epsilon'\rightarrow 0}{\lim}\frac{1}{2\epsilon'}\mu\left(\underset{-\epsilon'<s<\epsilon'}{\bigcup}g_{s}\left(D_{0}\cap A\right)\right).
\end{equation*}

Let us prove that $\nu_{0}(B((q_0,v_0),\epsilon))\sim\dfrac{\epsilon^2}{2 \text{Vol}(\mathcal{S})}$, as $\epsilon\rightarrow 0$
We consider the chart $\Phi: \mathbb{R}^3\rightarrow T^{1}M$ around $(q_{0}, v_{0})$ which is given by $\Phi(x,y,\alpha)=(q,v)$ such that the geodesic flow in this chart is defined by :
$$\Phi(x+t\cos(\alpha), y+t\sin(\alpha),\alpha)=g_{t}(\Phi(x,y,\alpha))$$
We have:
\begin{itemize}
\item $\Phi(0,0,0)=(q_{0},v_{0})$
\item $\Phi(0,y,\alpha)=(q,v)$, where $q\in\gamma$ and at a distance $|y|$ from $q_{0}$.
\item $\alpha $ is the angle between $v$ and the normal to $\gamma$
\end{itemize}

$\Phi$ is well defined and is a $C^{1}$-diffeomorphism.

Our goal is to compute the $\nu_{0}$-measure of the ball centered at $(q_{0},v_{0})$ and of radius $\epsilon$, that is to estimate the following quantity:

 $$\nu_{0}\left(B((q_{0},v_{0}),\epsilon)\right)=\underset{\epsilon'\rightarrow 0}{\lim}\frac{1}{2\epsilon'}\ \mu\left(\underset{-\epsilon'<s<\epsilon'}{\bigcup}\left(g_{s}(D_{0}\cap B((q_{0},v_{0}),\epsilon))\right)\right).$$

To do this, we consider the set $E_{\epsilon}:=\Phi^{-1}\left(D_{0}\cap B((q_{0},v_{0}),\epsilon)\right)$, then $$E_{\epsilon}=\{(0,y,\alpha) \text{ such that } y^2+\alpha^2 <\epsilon^2 \text{ and } (0,y,\alpha)\in \Phi^{-1}(B((q_{0},v_{0}),\epsilon))\}.$$ 
Since $\Phi$ is a $C^{1}$-diffeomorphism, and $\Phi(0,0,0)=(q_{0},v_{0})$, there exists $k_{\epsilon}>0$ such that
\begin{equation}\label{Eeps}
\{(0,y,\alpha) \text{ s.t. } y^2+\alpha^2 <(1-k_{\epsilon})\epsilon^2\}\subset E_{\epsilon}\subset \{(0,y,\alpha) \text{ s.t. } y^2+\alpha^2 <(1+k_{\epsilon})\epsilon^2\},
\end{equation}
with $k_{\epsilon}\rightarrow 0$ as $\epsilon\rightarrow 0$. Let us set $$V_{\epsilon, \epsilon'}=\underset{-\epsilon'<s<\epsilon'}{\bigcup}g_{s}(D_{0}\cap B((q_{0},v_{0}),\epsilon))$$ and let $L$ be the image measure by $\Phi^{-1}$ of the Liouville measure $L_0$. $L$ has a continuous density with respect to Lebesgue measure and this density is 1 at (0,0,0) with respect to the Lebesgue measure on the chart (associated to the canonical metric $dx^2 +dy^2+d\alpha^2$). Then there exists $k_{L,\epsilon}>0$ such that we have:
$$(1-k_{L,\epsilon})Leb(\Phi^{-1}(V_{\epsilon,\epsilon'}))\leq L(\Phi^{-1}(V_{\epsilon,\epsilon'}))\leq (1+k_{L,\epsilon})Leb(\Phi^{-1}(V_{\epsilon,\epsilon'})), \mbox{ and } \underset{\epsilon \rightarrow 0}{\lim}\ k_{L,\epsilon}=0.$$

Now, our goal is to estimate $Leb(\Phi^{-1}(V_{\epsilon,\epsilon'}))$. But we have

$$\Phi^{-1}(V_{\epsilon,\epsilon'})=\{(s\cos(\alpha),y+s\sin(\alpha),\alpha), |s|<\epsilon', \text{ and } (0,y,\alpha)\in E_{\epsilon}\}.$$

Thus its measure,
\begin{eqnarray*}
\displaystyle\int\displaylimits_{-\epsilon}^{\epsilon}\left(\int\displaylimits_{-\epsilon'\cos(\alpha)}^{\epsilon'\cos(\alpha)}\left(\int_{-\sqrt{(1-k_{\epsilon})\epsilon^2-\alpha^{2}}}^{\sqrt{(1-k_{\epsilon})\epsilon^2-\alpha^{2}}}dy\right)\ ds\ \right) d\alpha\leq &Leb(\Phi^{-1}(V_{\epsilon,\epsilon'}))&\leq \int\displaylimits_{-\epsilon}^{\epsilon}\left(\int\displaylimits_{-\epsilon'\cos(\alpha)}^{\epsilon'\cos(\alpha)}\left(\int_{-\sqrt{(1+k_{\epsilon})\epsilon^2-\alpha^{2}}}^{\sqrt{(1+k_{\epsilon})\epsilon^2-\alpha^{2}}}dy\right)\ ds\ \right)d\alpha\\
4\epsilon'\int_{-\epsilon}^{\epsilon}\cos(\alpha)\sqrt{(1-k_{\epsilon})\epsilon^2-\alpha^{2}}\ d\alpha\leq &Leb(\Phi^{-1}(V_{\epsilon,\epsilon'}))&\leq 4\epsilon'\int_{-\epsilon}^{\epsilon}\cos(\alpha)\sqrt{(1+k_{\epsilon})\epsilon^2-\alpha^{2}}\ d\alpha
\end{eqnarray*}
%
As we have $\cos(\epsilon)<\cos(\alpha)<1$, then we get

$$(1-k_{L,\epsilon})\cos(\epsilon)4\epsilon'\int_{-\epsilon}^{\epsilon}\sqrt{(1+k_{\epsilon})\epsilon^2-\alpha^{2}}\ d\alpha\leq L(\Phi^{-1}(V_{\epsilon,\epsilon'}))\leq (1+k_{L,\epsilon})4\epsilon'\int_{-\epsilon}^{\epsilon}\sqrt{(1+k_{\epsilon})\epsilon^2-\alpha^{2}}\ d\alpha$$
from which we get that
$$2(1-k_{L,\epsilon})\cos(\epsilon)\int_{-\epsilon}^{\epsilon}\sqrt{(1+k_{\epsilon})\epsilon^2-\alpha^{2}}\ d\alpha\leq \underset{\epsilon'\rightarrow 0}{\lim}\frac{1}{2\epsilon'}\ L_{0}(V_{\epsilon,\epsilon'})\leq 2(1+k_{L,\epsilon})\int_{-\epsilon}^{\epsilon}\sqrt{(1+k_{\epsilon})\epsilon^2-\alpha^{2}}\ d\alpha$$

As $\epsilon$ goes to zero, we have $k_{\epsilon}$ and $k_{L,\epsilon}$ go to zero, thus
$$\nu_{0}(B((q_{0},v_{0}),\epsilon)=\underset{\epsilon'\rightarrow 0}{\lim}\frac{1}{2\epsilon'}\mu(V_{\epsilon,\epsilon'})\underset{\epsilon\rightarrow0}{\sim}\frac{2}{2\pi \text{Vol}(\mathcal{S})}\int_{-\epsilon}^{\epsilon}\sqrt{\epsilon^2-\alpha^{2}}$$

Computing this latter integral, we have
$$\int_{-\epsilon}^{\epsilon}\sqrt{\epsilon^2-\alpha^{2}}d\alpha=\frac{\pi}{2} \epsilon^2,$$

and hence we get:
$$\nu_{0}( B((q_{0},v_{0}),\epsilon)\underset{\epsilon\rightarrow0}{\sim}\dfrac{\epsilon^2}{2 \text{Vol}(\mathcal{S})}. $$

\end{proof}

\subsection*{Acknowledgement}
I would like to thank F. P\`{ene} and B. Saussol for their availability to answer all my questions throughout this work. I would  also like to thank A. Boulanger for his help in the proof of a technical point in Riemannian Geometry.


\begin{thebibliography}{99}
\bibitem{Abramov}
\newblock L. Abramov,
\newblock On the entropy of a flow,
\newblock \emph{Dokl. Akad. Nauk SSSR}, \textbf{128} (1959), 873--875.
%
%
\bibitem{abadigalves}
\newblock M. Abadi and A. Galves,
\newblock Inequalities for the occurrence times of rare events in mixing processes,
\newblock \emph{Markov Process. Related Fields}, \textbf{7} (2001), 97--112.
%

\bibitem{barreira} L. Barreira, Dimension Theory of Hyperbolic Flows, Springer Monographs in mathematics. Springer, Cham(2013).

\bibitem{Product-Barreira-Saussol}
\newblock L. Barreira and B. Saussol,
\newblock Product structure of Poincar\'e recurrence,
\newblock \emph{Ergodic Theory and Dynamical Systems}, \textbf{22} (2002), 33--61.

%
%
%
%
%
%


\bibitem{Bowen} R. Bowen, Symbolic dynamics for hyperbolic flows, Am. J.math. 95 (1973), 429-460.


\bibitem{Bressaud-Zweimuller} X. Bressaud, R. Zweim\"uller, Non exponential law of entrance times in asymptotically rare events for intermittent maps with infinite invariant measure, Ann. Henri Poincar\'e 2 (2001), 501-512.  

%
%
%
%
\bibitem{Burton.Denker} R. Burton and M. Denker, On the Central Limit Theorem for Dynamical Systems \emph{Transactions of the American Mathematical Society} \textbf{302}, No. 2 (1987), 715--726

%
%

\bibitem{Doeblin} W. Doeblin, Remarques sur la th\'eorie m\'etrique des fractions continues (French), Compositio Math. 7 (1940) 353–371.

\bibitem{DvoretzkyErdos} A. Dvoretzky and P. Erd\"os, Some problems on random walk in space, Proc. Berkeley sympos. math. Statist. Probab. (1951) 353-367.

%
%
%

\bibitem{hasselblatt} B. Hasselblatt, Regularity of the Anosov splitting and of horospheric foliations, Ergod. Theory Dyn. Syst. 14 (1994), 645-666.


%
%
%
%
\bibitem{LY1}
 Ledrappier and L.-S. Young, The metric entropy of diffeomorphisms I, Characterization of measures satisfying Pesin’s entropy formula. Ann. of Math. (2) \textbf{122} (1985), no. 3, 509-539.

\bibitem{LY2} F. Ledrappier and L.-S. Young, The metric entropy of diffeomorphisms II. Relations between entropy, exponents
and dimension. Ann. of Math. (2) \textbf{122} (1985), no. 3, 540-574.


\bibitem{melbourneTorok}
\newblock I. Melbourne and A. T\"or\"ok,
\newblock Statistical limit theorems for suspension flows,
\newblock \emph{Israel Journal of Mathematics}, \textbf{144} (2004), 191--209.

%
%
%

\bibitem{penesaussolbilliard}
\newblock F. P\`{e}ne and B. Saussol,
\newblock Back to balls in billiards,
\newblock \emph{Comm. Math. Phys.}, \textbf{293} (2010), 837--866.

\bibitem{penesaussol2024}
\newblock F. P\`{e}ne and B. Saussol,
\newblock Quantitative recurrence for $T$,$T^{-1}$ tranformations,
\newblock \emph{Probability Theory and Related Fields}, (2024), 38 pages.

\bibitem{penesaussol}
\newblock F. P\`{e}ne and B. Saussol,
\newblock Quantitative recurrence in two-dimensioinal extended processes,
\newblock \emph{Ann. Inst. Henri Poincar\'{e} Probab. Stat.}, \textbf{45} (2009), 1065--1084.

\bibitem{pzs}
\newblock F. P\`{e}ne, B. Saussol and R. Zweim\"{u}ller,
\newblock Recurrence rates and hitting-time distributions for random walks on the line,
\newblock \emph{The Annals of Probability}, \textbf{41} (2013), 619--635.

\bibitem{pzs2}
\newblock F. P\`{e}ne, B. Saussol and R. Zweim\"{u}ller,
\newblock Return and hitting time limits for rare events of null-recurrent Markov maps,
\newblock \emph{Ergod. Th. Dynam. Sys.}, \textbf{37} (2017), 244--276.

\bibitem{Pesin.sadovskaya} Ya. B. Pesin V. Sadovskaya, Multifractal analysis of conformal Axiom A flows, Communications in Mathematical Physics, \textbf{216} (2001) 277--312.
\bibitem{Pitskel} B. Pitskel, Poisson limit theorem for Markov chains, Ergod. Theory Dyn. Syst.  11-3 (1991) 501–513.

\bibitem{Ratner}M. Ratner, Markov partitions for Anosov flows on n-dimensional manifolds, Isr. J. Math. \textbf{15} (1973), 92--114.

\bibitem{Rousseau} J. Rousseau, Recurrence rates for observations of flows, \emph{Ergodic Theory Dynam. Systems} \textbf{32} (2012), 1727--1751.


\bibitem{saussol}
\newblock B. Saussol,
\newblock An introduction to quantitative poincar\'{e} recurrence in dynamical systems,
\newblock \emph{Reviews in Mathematical Physics}, \textbf{21} (2009), 949--979.
%
%
\bibitem{saussolrapidly}
\newblock B. Saussol,
\newblock Recurrence rate in rapidly mixing dynamical systems,
\newblock \emph{Discrete and Continuous Dynamical Systems}, \textbf{15} (2006), 259--267.

\bibitem{yassine} N. Yassine, Quantitative recurrence of some dynamical systems preserving an infinite measure in dimension one, \emph{Discrete and Continuous Dynamical Systems} \textbf{38} Springer (2018) 343 - 361.


\end{thebibliography}
\end{document}